\author[W.\thinspace{}Kim]{Wansu Kim}
\address{Wansu Kim\\%
Centre for Mathematical Sciences\\%
University of Cambridge\\%
Cambridge, CB3 0WA\\%
United Kingdom}
\email{wk259@dpmms.cam.ac.uk}
\numberwithin{equation}{subsection}
\theoremstyle{plain}
\newtheorem{thm}[subsection]{Theorem}
\newtheorem*{thm*}{Theorem}
\newtheorem{nothm}{Theorem}
\newtheorem{thmsub}[equation]{Theorem}
\newtheorem*{exthm*}{Expected Theorem}
\newtheorem{lem}[subsection]{Lemma}
\newtheorem{lemsub}[equation]{Lemma}
\newtheorem*{lem*}{Lemma}
\newtheorem{prop}[subsection]{Proposition}
\newtheorem{propsub}[equation]{Proposition}
\newtheorem*{prop*}{Proposition}
\newtheorem{noprop}[nothm]{Proposition}
\newtheorem{cor}[subsection]{Corollary}
\newtheorem{corsub}[equation]{Corollary}
\newtheorem*{cor*}{Corollary}
\newtheorem{nocor}[nothm]{Corollary}
\newtheorem{claimsub}[equation]{Claim}
\newtheorem*{claim*}{Claim}
\newtheorem*{conj*}{Conjecture}
\theoremstyle{definition}
\newtheorem{defn}[subsection]{Definition}
\newtheorem*{defn*}{Definition}
\newtheorem{defnsub}[equation]{Definition}
\newtheorem{exasub}[equation]{Example}
\newtheorem*{exa*}{Example}
\theoremstyle{remark}
\newtheorem{rmksub}[equation]{Remark}
\newtheorem*{rmk*}{Remark}
\numberwithin{figure}{subsection}
\numberwithin{table}{subsection}
\newcounter{listnum}
\newcommand{\eE}{\mathbf{E}}
\DeclareMathOperator{\im}{im}
\DeclareMathOperator{\coker}{coker}
\DeclareMathOperator{\pr}{pr}
\DeclareMathOperator{\rank}{rank}
\DeclareMathOperator{\id}{id}
\newcommand{\e}{\mathbf{e}}
\newcommand{\nf}[1]{\underline{#1}}
\newcommand{\ol}[1]{\overline{#1}}
\newcommand{\wt}[1]{\widetilde{#1}}
\newcommand{\tim}{\!\cdot\!}
\newcommand{\geqs}{\geqslant}
\newcommand{\leqs}{\leqslant}
\newcommand{\et}{\text{\rm\'et}}
\DeclareMathOperator{\red}{red}
\newcommand{\wh}[1]{\widehat{#1}}
\newcommand{\XX}{\mathfrak{X}}
\newcommand{\E}{\mathscr{E}}
\newcommand{\cO}{\mathcal{O}}
\newcommand{\cQ}{\mathcal{Q}}
\newcommand{\M}{\mathcal{M}}
\DeclareMathOperator{\Sym}{Sym}
\DeclareMathOperator{\Spec}{Spec}
\DeclareMathOperator{\MaxSpec}{MaxSpec}
\DeclareMathOperator{\Spf}{Spf}
\DeclareMathOperator{\perf}{perf}
\newcommand{\starr}{^\times}
\newcommand{\N}{\mathcal{N}}
\DeclareMathOperator{\Frac}{Frac}
\newcommand{\ra}{\rightarrow}
\newcommand{\xra}[1]{\xrightarrow{#1}}
\newcommand{\hra}{\hookrightarrow}
\newcommand{\thra}{\twoheadrightarrow}
\newcommand{\la}{\leftarrow}
\newcommand{\riso}{\xrightarrow{\sim}}
\newcommand{\liso}{\stackrel{\sim}{\la}}
\newcommand{\com}[1]{^{(#1)}}
\newcommand{\invlim}{\mathop{\varprojlim}\limits}
\newcommand{\Gmhat}{\widehat{\mathbb{G}}_m}
\newcommand{\set}[1]{\{#1\}}
\newcommand{\iv}{^{-1}}
\newcommand{\ivtd}[1]{\tfrac{1}{#1}}
\newcommand{\eps}{\varepsilon}
\newcommand{\Eps}{\mathcal{E}}
\newcommand{\vphi}{\varphi}
\newcommand{\Sig}{\mathfrak{S}}
\newcommand{\Q}{\mathbb{Q}}
\newcommand{\R}{\mathbb{R}}
\newcommand{\cD}{\mathcal{D}}
\newcommand{\PP}{E}
\newcommand{\C}{\mathbb{C}}
\newcommand{\CK}{{\mathbb C}_K}
\newcommand{\Qbar}{\overline{\Q}}
\newcommand{\kbar}{\bar{k}}
\newcommand{\cT}{\mathcal{T}}
\newcommand{\Z}{\mathbb{Z}}
\newcommand{\F}{\mathbb{F}}
\newcommand{\Qp}{\Q_p}
\newcommand{\Kbar}{\overline{K}}
\newcommand{\Zp}{\Z_p}
\newcommand{\Fp}{\F_p}
\DeclareMathOperator{\nilp}{nilp}
\newcommand{\fo}{\mathscr{O}}
\DeclareMathOperator{\Aut}{Aut}
\DeclareMathOperator{\ord}{ord}
\DeclareMathOperator{\Gal}{Gal}
\newcommand{\gal}{\boldsymbol{\mathcal{G}}}
\newcommand{\GRR}{\gal_R}
\newcommand{\GRinfty}{\gal_{R_\infty}}
\newcommand{\GRtinfty}{\gal_{\widetilde{R}_\infty}}
\newcommand{\p}{\mathfrak{p}}
\newcommand{\q}{\mathfrak{q}}
\newcommand{\m}{\mathfrak{m}}
\newcommand{\gF}{\mathfrak{F}}
\newcommand{\gM}{\mathfrak{M}}
\newcommand{\gN}{\mathfrak{N}}
\DeclareMathOperator{\ur}{ur}
\DeclareMathOperator{\Hom}{Hom}
\newcommand{\llie}{\mathscr{L}\mathit{ie}}
\DeclareMathOperator{\HH}{H}
\newcommand{\coh}[1]{\HH^{#1}}
\DeclareMathOperator{\Ext}{Ext}
\newcommand{\rad}{\mathtt{rad}}
\DeclareMathOperator{\Fil}{Fil}
\DeclareMathOperator{\gr}{gr}
\DeclareMathOperator{\cris}{cris}
\DeclareMathOperator{\CRIS}{CRIS}
\newcommand{\Acris}{A_{\cris}}
\newcommand{\Ainf}{A_{\inf}}
\newcommand{\Bcris}{B_{\cris}}
\newcommand{\Dcris}{D_{\cris}}
\newcommand{\DdR}{D_{\dR}}
\newcommand{\Vcris}{V_{\cris}}
\newcommand{\Tcris}{T_{\cris}}
\DeclareMathOperator{\dR}{dR}
\newcommand{\BdR}{B_{\dR}}
\DeclareMathOperator{\tor}{tor}
\DeclareMathOperator{\free}{free}
\newcommand{\MFF}{\mathbf{MF}}
\newcommand{\RMF}{\mathbf{MF}_{R/R_0}(\vphi,\nabla)}
\newcommand{\RMFa}{\mathbf{MF}^{\mathrm{a}}_{R/R_0}(\vphi,\nabla)}
\newcommand{\SMF}{\mathbf{MF}_{S}(\vphi,\nabla)}
\newcommand{\SMFw}{\mathbf{MF}^{\mathrm{w}}_{S}(\vphi,\nabla^u)}
\newcommand{\SMFq}{\mathbf{MF}_{S}(\vphi)}
\newcommand{\SMFqw}{\mathbf{MF}^{\mathrm{Br}}_{S}(\vphi,\nabla^0)}
\newcommand{\SMFtor}{\mathbf{MF'}_{S}(\vphi,\nabla)^{\tor}}
\newcommand{\SMFqtor}{\mathbf{MF'}_{S}(\vphi)^{\tor}}
\newcommand{\SMFqwtor}{\mathbf{MF}^{\prime\mathrm{Br}}_{S}(\vphi,\nabla^0)^{\tor}}
\newcommand{\wtSMF}{\mathbf{MF}_{\wt S}(\vphi,\nabla)}
\newcommand{\DMF}{\mathbf{MF}_{\wh D}(\vphi,\nabla)}
\newcommand{\DMFq}{\mathbf{MF}_{\wh D}(\vphi)}
\newcommand{\AMF}[1]{\mathbf{MF}_{\Acris(#1)}(\vphi,\nabla)}
\newcommand{\PMF}[1]{\mathbf{MF}_{#1}(\vphi,\nabla)}
\newcommand{\PMFq}[1]{\mathbf{MF}_{#1}(\vphi)}
\newcommand{\PMFqw}[1]{\mathbf{MF}^{\mathrm{Br}}_{#1}(\vphi,\nabla^0)}
\newcommand{\PMFtor}[1]{\mathbf{MF}_{#1}(\vphi,\nabla)^{\tor}}
\newcommand{\SM}{\mathbf{Mod}_{\Sig}(\vphi,\nabla)}
\newcommand{\SMq}{\mathbf{Mod}_{\Sig}(\vphi)}
\newcommand{\SMqw}{\mathbf{Mod}^{\mathrm{Ki}}_{\Sig}(\vphi,\nabla^0)}
\newcommand{\SMtor}{\mathbf{Mod}_{\Sig}(\vphi,\nabla)^{\tor}}
\newcommand{\SMqtor}{\mathbf{Mod}_{\Sig}(\vphi)^{\tor}}
\newcommand{\SMqwtor}{\mathbf{Mod}^{\mathrm{Ki}}_{\Sig}(\vphi,\nabla^0)^{\tor}}
\newcommand{\PMq}[1]{\mathbf{Mod}_{#1}(\vphi)}
\newcommand{\PMqw}[1]{\mathbf{Mod}^{\mathrm{Ki}}_{#1}(\vphi,\nabla^0)}
\newcommand{\PMtor}[1]{\mathbf{Mod}_{#1}(\vphi,\nabla)^{\tor}}
\newcommand{\PMqtor}[1]{\mathbf{Mod}_{#1}(\vphi)^{\tor}}
\newcommand{\SMFI}{\mathbf{(Mod\ FI)}_{\Sig}(\vphi,\nabla)}
\newcommand{\SMFIq}{\mathbf{(Mod\ FI)}_{\Sig}(\vphi)}
\newcommand{\SMFIqw}{\mathbf{(Mod\ FI)}^{\mathrm{Ki}}_{\Sig}(\vphi,\nabla^0)}
\newcommand{\EtPhiR}{\mathbf{Mod}^{\et}_{\fo_{\Eps}}(\vphi)}
\newcommand{\EtPhiPrR}{\mathbf{Mod}^{\et,\pr}_{\fo_{\Eps}}(\vphi)}
\newcommand{\EtPhiTorR}{\mathbf{Mod}^{\et,\tor}_{\fo_{\Eps}}(\vphi)}
\newcommand{\LL}{\mathbb{L}}
\DeclareMathOperator{\Rep}{Rep}
\newcommand{\prep}{\Rep_{\Zp}}
\newcommand{\freeprep}{\Rep_{\Zp}^{\free}}
\newcommand{\torprep}{\Rep_{\Zp}^{\tor}}
\newcommand{\Repcris}{\Rep^{\cris}}
\DeclareMathOperator{\sh}{sh}
\DeclareMathOperator{\Isog}{Isog}
\newcommand{\DD}{\mathbb{D}}
\title[Classification of $p$-divisible groups]{The relative Breuil-Kisin classification of $p$-divisible groups and finite flat group schemes}
\begin{document} 

\begin{abstract}
Assume that $p>2$, and let $\fo_K$ be a $p$-adic discrete valuation ring with residue field admitting a finite $p$-basis,  and let $R$ be a formally smooth formally finite-type $\fo_K$-algebra. (Indeed, we allow slightly more general rings $R$.) We construct an  anti-equivalence of categories between the categories of $p$-divisible groups over $R$ and certain semi-linear algebra objects which generalise $(\vphi,\Sig)$-modules of height $\leqs1$ (or Kisin modules). A similar classification result for $p$-power order finite flat group schemes is deduced from the classification of $p$-divisible groups. We also show compatibility of various construction of ($\Zp$-lattice or torsion) Galois representations, including the relative version of Faltings' integral comparison theorem for $p$-divisible groups. We obtain partial results when $p=2$.
\end{abstract}
\keywords{classification of $p$-divisible groups, relative $p$-adic Hodge theory, Breuil modules, Kisin modules}
\subjclass[2000]{11S20, 14F30}

\maketitle
\tableofcontents

\section{Introduction}\label{sec:intro}
One of the main motivations of $p$-adic Hodge theory, initiated by Fontaine, is to prove comparison isomorphisms between various $p$-adic cohomologies of varieties over $p$-adic fields  (such as \'etale, crystalline, and de~Rham cohomologies), and we now have very satisfying theory. Recently, the formalism of $p$-adic Hodge theory  (such as period rings and admissible representations) have been generalised  to the relative setting, most notably by Brinon. See \cite{Brinon:Habilitation} for a summary of the current status of ``relative $p$-adic Hodge theory'', and \cite{Scholze:CdR} for more recent developments built upon the theory of perfectoid spaces \cite{Scholze:Perfectoid}.

One of the most accessible ``test cases'' of (absolute) $p$-adic Hodge theory is $p$-divisible groups. This case is also important in its own right because it is closely related to the study of abelian varieties   with good reductions. Therefore, it is natural to ask what we can say about $p$-divisible groups over a more general base with the formalism of relative $p$-adic Hodge theory.

In this paper we generalise the classifications of $p$-divisible group by strongly divisible modules and $(\vphi,\Sig)$-modules of height $\leqs1$ over a $p$-adic affine formal base which is formally smooth over some $p$-adic discrete valuation ring (with some reasonable finiteness condition). 
We also recover, when $p>2$, the $p$-adic integral Tate module of a $p$-divisible group from the corresponding semi-linear algebraic objects. 

Let us describe our main  results in a simplified setting. (For the actual assumptions on the base ring $R$, we refer to \S\ref{subsec:setting}.) For simplicity, let $k$ be a perfect field of characteristic $p>2$. (When $p=2$ we obtain partial results.) Let $W$ denote its ring of Witt vectors. Let $K$ be a finite totally ramified extension of $\Frac W$, and we let $\fo_K$ denote its valuation ring. \emph{We fix a uniformiser $\varpi\in\fo_K$.} Let $R$ be a \emph{formally smooth} adic $\fo_K$-algebra such that $\wh\Omega_{R/\fo_K}$ is finitely generated over $R$, and $R_{\red}$ is finitely generated over $k$. We further assume that there exists a $W$-subalgebra $R_0\subseteq R$ such that $\fo_K\otimes_WR_0=R$. (The last assumption is automatic by Lemma~\ref{lem:sec}.) Note that all our main results in the ``integral theory'' depend upon the choice of $\varpi$ and $R_0$.

For a $p$-divisible group $G$ over $R$, let $\DD^*(G)$ denote the contravariant Dieudonn\'e crystal associated to $G$. We construct (in \S\ref{subsec:settingBr}) a divided power thickening  $S\thra R$ which generalises $S\thra \fo_K$.

\begin{nothm}[Theorem~\ref{thm:BreuilClassif}]\label{thm:intro:Breuil}
Let   $R$ be as above, and we use the notation from \S\ref{subsec:settingBr}. Assume that $p>2$. Then there is an exact anti-equivalence of categories, defined by $G\rightsquigarrow \DD^*(G)(S)$, from the category of $p$-divisible groups over $R$ to the category $\SMFqw$ of Breuil modules (\emph{cf.,} Definition~\ref{def:BrMod}). 
\end{nothm}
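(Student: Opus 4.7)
The plan is to factor the functor $G\rightsquigarrow\DD^*(G)(S)$ through the category of Kisin modules $\SMqw$ over the ring $\Sig$, which the paper presumably sets up in an earlier section. Specifically, I expect to establish an anti-equivalence
\[
\{p\text{-divisible groups over }R\} \riso \SMqw,\qquad G\mapsto\gM(G),
\]
via the crystalline Dieudonn\'e theory applied to a Frobenius-stable PD thickening attached to $\Sig$. Granting this, the theorem reduces to two statements: (a) the base-change functor $S\otimes_\Sig(-):\SMqw\ra\SMFqw$ is an equivalence of categories, and (b) there is a natural isomorphism $\DD^*(G)(S)\cong S\otimes_\Sig\gM(G)$ of Breuil modules.

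For (b), I would use the crystal property of $\DD^*(G)$. Both $\Sig\thra R$ (after passing to suitable enlargement, since the map $\Sig\ra R$ is not necessarily a PD thickening in the direct sense) and $S\thra R$ are (compatible with) divided power thickenings carrying Frobenius lifts, so the transition morphism between the evaluations of the crystal on these thickenings is canonical, and the Frobenius, Hodge filtration $\Fil^1$, and connection $\nabla^0$ on $S\otimes_\Sig\gM(G)$ match those intrinsic to $\DD^*(G)(S)$. This boils down to unwinding the definitions of the two semi-linear algebra categories and invoking functoriality of $\DD^*$.

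For (a), full faithfulness should come from the fact that $S$ is faithfully flat over $\Sig$ after $p$-adic completion (or at least flat enough to detect maps of $\vphi$-modules), together with the observation that the connection $\nabla^0$ on a Breuil module is determined by the Frobenius via Frobenius descent (this is the analogue of Berthelot's lemma in the relative setting, where the connection on the Dieudonn\'e crystal is forced by $\vphi$ modulo topologically nilpotent terms). Essential surjectivity is the technical heart: given $\cM\in\SMFqw$, I would construct a $\Sig$-submodule $\gM\subseteq\cM$ generated by a carefully chosen $\vphi$-stable basis, following Kisin's argument in the absolute case, and then verify that the induced $S$-module $S\otimes_\Sig\gM$ recovers $\cM$ together with its filtration, Frobenius, and connection.

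The main obstacle is essential surjectivity of the base change $\SMqw\ra\SMFqw$ in the relative setting: Kisin's absolute argument relies on working over a DVR where the structure of the Frobenius filtration is well-understood, and transplanting it to a formally smooth base requires controlling the $R_0$-module structure of $\gM$ in families and verifying compatibility of the connection $\nabla^0$ with the Frobenius descent. I expect this to be handled by reducing modulo $p^n$, applying flat base change to $R/p$, and bootstrapping from the absolute case (over the closed points, or more globally via a descent argument using the fact that the connection is rigid once the Frobenius is fixed). Once (a) and (b) are in place, exactness of the functor follows from exactness of $\DD^*$ combined with the fact that base change and the Kisin functor are both exact on the relevant subcategories.
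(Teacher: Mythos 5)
Your proposal inverts the paper's logical architecture in a way that leaves a genuine gap. You propose to prove the theorem by first establishing an anti-equivalence between $p$-divisible groups and Kisin $\Sig$-modules, and then transporting the result to Breuil $S$-modules via the base-change functor. But the Kisin-module classification is \emph{not} already available at this point in the paper (it is Corollary~\ref{cor:RelKisin}, deduced only \emph{after} Theorem~\ref{thm:BreuilClassif} and Proposition~\ref{prop:CL}), and more importantly your proposed route to it does not work: you suggest obtaining $\gM(G)$ by applying crystalline Dieudonn\'e theory to ``a Frobenius-stable PD thickening attached to $\Sig$.'' No such thickening exists — the surjection $\Sig\thra R$ is not a PD morphism, and its PD envelope is precisely $S$, which returns you to the Breuil module $\DD^*(G)(S)$, not to a $\Sig$-module. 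The Kisin module is related to the crystal only through the Frobenius-twisted tensor $\M\cong S\otimes_{\vphi,\Sig}\gM$, and recovering $\gM$ from $\M$ is exactly the hard algebraic content of Proposition~\ref{prop:CL} (generalising Caruso--Liu), not something that falls out of flatness or Frobenius descent as you suggest in step (a). In the absolute case this classification required either Kisin's rigid-analytic argument or display theory, both of which the paper is explicitly trying to avoid because they are hard to globalise; there is no elementary direct construction of the Kisin module from the crystal.

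The paper's actual proof of Theorem~\ref{thm:BreuilClassif} is a direct deformation-theoretic argument that never touches Kisin modules. One starts from the known equivalence (de~Jong, under \S\ref{cond:dJ}; or Berthelot--Messing full faithfulness, under \S\ref{cond:BM}) between $p$-divisible groups over $\Spec R/(\varpi)$ and crystalline Dieudonn\'e modules $\M_0\in\PMF{R_0}$. Given $\M\in\SMFqw$ with $R_0\otimes_S\M$ in the essential image of $\M_0^*$, one lifts the corresponding $p$-divisible group $G_0$ over $R/(\varpi)$ successively to $R/(\varpi^{i+1})$ for $i<e$ via Grothendieck--Messing, using the trivial PD structure on $(\varpi^i)/(\varpi^{i+1})$ and the filtration supplied by $\Fil^1\M$; and then finally from $R/(p)$ to $R$ using the (topologically nilpotent, when $p>2$) PD ideal $(p)$. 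The key technical ingredient making each step canonical is Lemma~\ref{lem:BreuilClassifLifting}, which uniquely lifts $\vphi$-compatible isomorphisms through $\wh J$-reductions when $\vphi^n(\wh J)$ is sufficiently $p$-divisible. Functoriality and uniqueness at each step give both essential surjectivity and the quasi-inverse. Your item (b) — that evaluating the crystal on $S$ recovers the expected Breuil module with its filtration, Frobenius and connection — is correct and is Proposition~\ref{prop:FiltMod} together with Lemma~\ref{lem:forgetfulN}, but it cannot carry the weight you assign it, since you still need an independent source for the underlying $\Sig$-module, which your plan does not supply.
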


Note that when $R=\fo_K$, one can replace Breuil $S$-modules with certain $W[[u]]$-modules with simpler structure. In the relative setting, we have a subring $\Sig=R_0[[u]]\subseteq S$. We also define Kisin $\Sig$-modules (\emph{cf.,} Definition~\ref{def:KisMod}) in the manner analogous to the  case when $R=\fo_K$, except that we need to consider a connection in the relative setting in general.

\begin{noprop}[Proposition~\ref{prop:CL}]\label{prop:intro:CL}
Assume that $p>2$. 
The scalar extension $\gM\rightsquigarrow S\otimes_{\vphi,\Sig}\gM$ induces an exact equivalence of categories from the category of Kisin $\Sig$-modules to  the category of Breuil $S$-modules. 
\end{noprop}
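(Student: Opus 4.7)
The strategy follows Kisin's classical comparison between $S$-modules and $\Sig$-modules in the case $R=\fo_K$, augmented by an argument that tracks the connection in the relative setting. First I would verify well-definedness and exactness: given a Kisin $\Sig$-module $(\gM,\vphi_\gM,\nabla_\gM)$, the $S$-module $\M := S\otimes_{\vphi,\Sig}\gM$ inherits a Frobenius $\vphi_\M = \vphi_S \otimes \vphi_\gM$, a filtration $\Fil^1\M$ defined as the preimage of $\Fil^1 S \otimes_\Sig \gM$ under the linearization $1\otimes\vphi_\gM \colon S\otimes_{\vphi,\Sig}\gM \to S\otimes_\Sig \gM$, and a connection $\nabla^0_\M$ built from $\nabla_S \otimes 1$ plus a correction coming from $\nabla_\gM$ transported across the scalar extension. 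Verifying that this endows $\M$ with a genuine Breuil-module structure (height $\leqs 1$, compatible filtration, topologically quasi-nilpotent connection) is largely bookkeeping once the axioms are unpacked. Exactness of the functor then follows from flatness of $S$ over $\Sig$ along $\vphi$.

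For the core of the proof I would construct a quasi-inverse. Given $\M \in \SMFqw$, the task is to identify a canonical Kisin $\Sig$-submodule $\gM \subseteq \M$ such that scalar extension recovers $\M$. Kisin's original construction identifies $\gM$ as a $\vphi$-saturated $\Sig$-lattice inside $\M$, produced by combining $\vphi_\M$ with multiplication by $1/E(u)$ on $\Fil^1\M$. In the relative setting I would run the same construction and then check that the Kisin-module axioms on $\gM$, in particular the height $\leqs 1$ condition, are inherited from the corresponding features of $\M$. Full faithfulness follows from the canonical nature of the construction of $\gM \subseteq \M$, and essential surjectivity from the construction itself.

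The main obstacle, and the genuinely new content beyond the absolute case, is to show that $\nabla^0_\M$ preserves the recovered submodule $\gM$, i.e., that $\nabla^0_\M(\gM) \subseteq \gM \otimes_{R_0} \wh\Omega^1_{R_0/W}$ inside $\M \otimes_S \wh\Omega^1_{S/W}$. When $R=\fo_K$ this descent is vacuous, so none of Kisin's original arguments address it. My plan is to exploit topological quasi-nilpotence of $\nabla^0_\M$: fixing local coordinates from $R_0$, one writes $\nabla^0_\M = \sum_i \partial_i \otimes dx_i$ and shows, by a successive-approximation argument along the $u$-adic filtration on $\M$, that each derivation $\partial_i$ sends the $\Sig$-lattice $\gM$ into itself. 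The $\vphi$-equivariance of $\nabla^0_\M$ and the height $\leqs 1$ condition provide the recursion that controls the error terms at each stage. Combining these steps yields the desired exact equivalence.
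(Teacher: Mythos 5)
Your proposal diverges from the paper's argument in several essential respects, and at least one of its central claims rests on a misreading of the definitions.

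\textbf{The ``connection descent'' obstacle is not the obstacle.} You identify as ``the genuinely new content beyond the absolute case'' the task of showing that $\nabla^0_\M$ preserves the recovered $\Sig$-lattice $\gM\subseteq\M$. But this is not a problem that appears in the paper, because in neither category does the connection live on $\gM$. In a Kisin $\Sig$-module $(\gM,\nabla_{\M_0})\in\SMqw$ the connection is a structure on $\M_0:=R_0\otimes_{\vphi,\Sig}\gM$, and in a Breuil $S$-module $(\M,\Fil^1\M,\vphi_1,\nabla_{\M_0})\in\SMFqw$ the connection is likewise a structure on $\M_0:=R_0\otimes_S\M$. Since $R_0\otimes_{\vphi,\Sig}\gM\cong R_0\otimes_S(S\otimes_{\vphi,\Sig}\gM)$, the connection datum on the Kisin side and the Breuil side is literally the same $R_0$-module with connection. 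There is no descent of $\nabla$ to $\gM$ to establish; your proposed successive-approximation argument addresses a nonexistent issue. (If you were working in $\SM$, the connection is on $\M$ itself, and again nothing descends to $\gM$.)

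\textbf{The construction of the quasi-inverse is not Kisin's original one, and the real work is elsewhere.} You describe ``Kisin's original construction'' as producing $\gM$ directly as a $\vphi$-saturated $\Sig$-lattice inside $\M$ by combining $\vphi_\M$ with division by $E(u)$ on $\Fil^1\M$. This is not how Kisin's construction in the absolute case proceeds: his argument goes through a rigid-analytic study of $\vphi$-modules over the open unit disc, and the paper explicitly avoids this route because it is hard to generalise to the relative setting. Instead the paper adapts the Caruso--Liu approach (\cite[\S2.2]{Caruso-Liu:qst}): one first fixes an auxiliary projective $\Sig$-lattice $\gN$ with $S\otimes_\Sig\gN\cong\M$ (Lemma~\ref{lem:SigLatt}) and an ``adapted'' section $B:\M\to\Fil^1\M$ (Lemma~\ref{lem:adaptee}), and then runs a recursive approximation (Lemma~\ref{lem:CL2.2.2}) producing $\gM^{(n)}$, $B^{(n)}$, $C^{(n)}$, $D^{(n)}$ with quantitative divided-power bounds; convergence of the $D^{(n)}$ is the heart of the matter and uses $p>2$ (or a weaker bound when $p=2$). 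Your proposal elides this entire construction. Moreover, the output $\gM$ of the recursion is only determined up to non-canonical choices, so your claim that ``full faithfulness follows from the canonical nature of the construction'' does not hold. Full faithfulness is a separate statement (Lemma~\ref{lem:CL-FF}), proved by base-changing along the $\vphi$-compatible map $\Sig\to\Sig'$ where $\Sig'$ is built from the completed perfection $R_0'$ of the local ring at $(\varpi)$, invoking the known perfect-residue-field DVR case, and then descending a morphism via the intersection identities $S\cap\Sig'=\Sig$ and $S[1/p]\cap\Sig'[1/p]=\Sig[1/p]$. None of this appears in your proposal, so as written the essential faithfulness step is unjustified and the essential surjectivity step is sketched at a level that does not confront the convergence estimates that make the argument work.
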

Proposition~\ref{prop:intro:CL} was proved in \cite[\S2.2]{Caruso-Liu:qst} when $R=\fo_K$ and $p>2$. We modify this proof in the relative case (which also proves some weaker statement when $p=2$).
\begin{nocor}[Corollary~\ref{cor:RelKisin}]\label{cor:intro:BK}
Assume that $p>2$. 
There exists an exact anti-equivalence of categories from the category of $p$-divisible groups over $R$ to the category of Kisin $\Sig$-modules. 
\end{nocor}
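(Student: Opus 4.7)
The plan is to obtain the anti-equivalence as the composition of the two preceding results. By Theorem~\ref{thm:intro:Breuil}, the functor $G \rightsquigarrow \DD^*(G)(S)$ is an exact anti-equivalence from the category of $p$-divisible groups over $R$ to the category $\SMFqw$ of Breuil $S$-modules. By Proposition~\ref{prop:intro:CL}, the scalar-extension functor $\gM \rightsquigarrow S \otimes_{\vphi,\Sig} \gM$ is an exact equivalence from the category of Kisin $\Sig$-modules to $\SMFqw$. Being an equivalence of additive categories, it admits a quasi-inverse, and since equivalences transport kernels and cokernels this quasi-inverse is again exact.

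The strategy is then to define the desired functor as the composition: assign to a $p$-divisible group $G$ over $R$ the (essentially unique) Kisin $\Sig$-module $\gM(G)$ whose scalar extension $S \otimes_{\vphi,\Sig} \gM(G)$ is canonically isomorphic to $\DD^*(G)(S)$. As the composition of an exact anti-equivalence with an exact equivalence, this functor is automatically an exact anti-equivalence of categories, which is exactly what the corollary asserts.

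The real content has already been absorbed into the two inputs: Theorem~\ref{thm:intro:Breuil}, which extends Breuil's classification of $p$-divisible groups to the relative base $R$, and Proposition~\ref{prop:intro:CL}, which adapts the Caruso--Liu argument so as to accommodate the nontrivial connection present in the relative setting. Once these are in hand, the present corollary is purely formal; the only minor verification needed is that a quasi-inverse of an exact equivalence between abelian (or exact) categories is again exact, which follows from the preservation of kernels and cokernels under equivalence. Accordingly, no genuinely new obstacle arises here.
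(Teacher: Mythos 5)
Your proposal matches the paper's argument: Corollary~\ref{cor:RelKisin} is precisely the composite of the anti-equivalence $\M^*$ from Theorem~\ref{thm:BreuilClassif} with a quasi-inverse of the scalar-extension equivalence from Proposition~\ref{prop:CL}, so the structure of your proof is the same as the paper's. The one point worth tightening is your justification of exactness of the quasi-inverse: the categories $\SMqw$ and $\SMFqw$ are not abelian (their objects are finitely generated projective modules with extra structure, and kernels/cokernels of arbitrary morphisms need not exist inside them), so ``exact'' here means exactness of the underlying module sequences, not something characterised by categorical kernels and cokernels. What actually makes the quasi-inverse exact is the unnumbered lemma in \S\ref{subset:settingBK} stating that a complex $0\ra\gM_1\ra\gM_2\ra\gM_3\ra0$ of finite projective $\Sig$-modules is exact if and only if its image under $S\otimes_{\vphi,\Sig}(-)$ is exact; i.e.\ the scalar-extension functor reflects exactness as well as preserving it, which is exactly the property needed for its quasi-inverse to carry short exact sequences to short exact sequences.
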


We expect Corollary~\ref{cor:intro:BK}  to hold even when $p=2$, but at the moment we only obtain some partial results such as  Corollaries~\ref{cor:gMFF} and \ref{cor:BreuilClassif}.

Note that the equivalences of categories in Theorem~\ref{thm:intro:Breuil} and Corollary~\ref{cor:intro:BK} depend upon the choice of $\varpi$ and $R_0$, so they do not behave well under arbitrary base change, but they behave well under \'etale base change. (\emph{Cf.} Lemma~\ref{lem:etloc}.) 

Finally, the semi-linear algebra objects appearing in Theorem~\ref{thm:intro:Breuil} and Corollary~\ref{cor:intro:BK} carry connections. In some cases when the (anti-)equivalences of categories were constructed without connection, we can remove connections from the statement using Vasiu's construction of moduli of connections \cite[\S3]{Vasiu:ys}. (See Corollary~\ref{cor:BreuilClassif} for more details).

Let us now discuss the $p$-adic Tate module representation and integral $p$-adic  comparison isomorphism. Assume that $R$ is a domain, and let $\ol R$ denote the normalisation of $R$ in the affine ring of a pro-universal covering of $\Spec R[\ivtd p]$. Choose $\varpi^{(n)}\in\ol R$ for $n\geqs 0$ so that $\varpi^{(0)} = \varpi$ and $(\varpi^{(n+1)})^p=\varpi^{(n)}$.  
We construct a ``perfectoid subalgebra'' $\wt R_\infty\subset\wh{\ol R}$ (in the sense of Scholze \cite{Scholze:Perfectoid}), which is constructed, roughly speaking, by adjoining $\varpi\com i$ and compatible $p$-power roots of lifts of local $p$-basis of $R/(\varpi)$. (\emph{Cf.} \S\ref{subsec:perfectoid}.)
Set $\GRR:=\Gal(\ol R[\ivtd p]/R[\ivtd p])$ and $\GRtinfty:=\Gal(\wh{\ol R}[\ivtd p]/\wt R_\infty[\ivtd p])$.

We localise $R$ if necessary to ensure that a relative period rings have nice properties\footnote{Indeed, we will work with slightly more general rings than Brinon \cite{Brinon:imperfect, Brinon:CrisDR}. See \S\ref{subsec:BrinonSetting} for more details.}, and embed $S$ into $\Acris(R)$ using our choice of $\varpi^{(n)}$ in such a way that respects all the relevant structures (\emph{cf.,} \S\ref{subsec:Rinfty}). Using this we can define a $\Zp$-lattice $\GRR$-representation $\Tcris^*(\M)$ for any Breuil $S$-module $\M$ (\emph{cf.,} (\ref{eqn:Tcris}), \S\ref{subsec:GalViaN}). Also, by  the  theory of (relative) \'etale $\vphi$-modules, one obtains a natural $\Zp$-lattice $\GRinfty$-representation $\cT^*(\gM)$ for any Kisin $\Sig$-module $\gM$ (\emph{cf.,} Lemma~\ref{lem:FontaineB183}). 

The following theorem is obtained from the same argument as in \cite[\S6, Thm~7]{Faltings:IntegralCrysCohoVeryRamBase} and the proof of \cite[Theorem~2.2.7]{kisin:fcrys}, respectively.
\begin{nothm}[Theorem~\ref{thm:Faltings}, Corollary~\ref{cor:BrinonTrihan}, Proposition~\ref{prop:GRinftyRes}]\label{thm:intro:Faltings}
Suppose that $R$  satisfies the ``refined almost \'etaleness'' assumption~(\S\ref{cond:RAF}). Let $G$ be a $p$-divisible group over $R$, and  $\M:=\DD^*(G)(S)$. Let  $\gM$ be the  Kisin $\Sig$-module corresponding to $G$.
\begin{enumerate}
\item \label{thm:intro:Faltings:ratl}
The $\GRR$-representation $V_p(G)$ is crystalline in the sense of Brinon, and we have  $D_{\cris}^*(V_p(G))\cong D^*(G)$ as filtered $(\vphi,\nabla)$-modules (\emph{cf.,} Example~\ref{exa:BT}).
Furthermore, there is a natural injective $\GRR$-map $T_p(G)\ra \Tcris^*(\M)$, which is an isomorphism if $p>2$.
\item \label{thm:intro:Faltings:Kisin} For any $p$, there is a natural injective $\GRtinfty$-map $\cT^*(\gM)\ra \Tcris^*(\M)$ whose image is $T_p(G)$.
\end{enumerate}
\end{nothm}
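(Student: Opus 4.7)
For part (\ref{thm:intro:Faltings:ratl}), the plan is to adapt Faltings' argument in [Faltings, §6 Thm~7] to the relative setting. First I would construct the comparison map $T_p(G)\to\Tcris^*(\M)$ by contravariant Dieudonn\'e functoriality: a point $x\in G[p^n](\ol R)$ corresponds to a morphism of fppf group sheaves $\underline{\Z/p^n\Z}_R\to G$, hence to a morphism $\DD^*(G)(\Acris(R))\to\DD^*(\underline{\Z/p^n\Z})(\Acris(R))=\Acris(R)/p^n$ respecting Frobenius and filtration; passing to the inverse limit and using $\Acris(R)\otimes_S\M\cong\DD^*(G)(\Acris(R))$ produces the desired $\GRR$-equivariant map. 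To prove crystallinity of $V_p(G)$ in Brinon's sense and $\Dcris^*(V_p(G))\cong D^*(G)$ as filtered $(\vphi,\nabla)$-modules, I would invoke the refined almost \'etaleness (\S\ref{cond:RAF}) to mimic Faltings' almost-purity descent, which forces $\Bcris(R)\otimes_{\Qp}V_p(G)\to\Bcris(R)\otimes_{\Acris(R)}\Tcris^*(\M)[\ivtd p]$ to be an isomorphism respecting all extra structure. For the integral surjectivity when $p>2$, let $L\subseteq\Tcris^*(\M)$ be the $\Acris(R)$-submodule generated by the image of $T_p(G)$; the rational comparison gives $L[\ivtd p]=\Tcris^*(\M)[\ivtd p]$, and I would control the $p$-power torsion cokernel by comparing the divided-power filtrations on $\DD^*(G)(\Acris(R))$ with those on $\Acris(R)/p^n$ induced by the universal vector extension of $G$. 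The hypothesis $p>2$ enters through the topological nilpotence of $\vphi$ on $\Fil^1\Acris/p$ modulo divided powers, which fails at $p=2$.

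For part (\ref{thm:intro:Faltings:Kisin}), the plan is to imitate the proof of [kisin, Thm~2.2.7]. The compatible system $(\varpi^{(n)})$ gives a $\vphi$-equivariant embedding $\Sig\hookrightarrow W(\wt R_\infty^\flat)\hookrightarrow\Acris(R)$ sending $u\mapsto[(\varpi^{(n)})_n]$, compatible with the embedding $S\hookrightarrow\Acris(R)$ of \S\ref{subsec:Rinfty}. Inverting $u$ and completing extends this to $\fo_\Eps\hookrightarrow W(\widehat{\ol R}^\flat)[\ivtd p]$; combined with Lemma~\ref{lem:FontaineB183} these embeddings induce a $(\vphi,\GRtinfty)$-equivariant pairing
\[
\gM\times\cT^*(\gM)\to\Acris(R),
\]
which after using $\M\cong S\otimes_{\vphi,\Sig}\gM$ defines the natural map $\cT^*(\gM)\to\Tcris^*(\M)$. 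When $p>2$, composing with the isomorphism from part~(\ref{thm:intro:Faltings:ratl}) produces a $\GRtinfty$-map $\cT^*(\gM)\to T_p(G)$ of $\Zp$-lattices of the same rank; I would verify bijectivity by reducing modulo $p$ and comparing the Kisin and Dieudonn\'e descriptions on the special fibre. For $p=2$, where $T_p(G)\to\Tcris^*(\M)$ need not be surjective, one argues directly that $\cT^*(\gM)\cong T_p(G)$ as $\GRtinfty$-representations by reducing to a statement over $\wt R_\infty$ and adapting Kisin's original proof, which does not need the integral isomorphism of part~(\ref{thm:intro:Faltings:ratl}).

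The main obstacle is the integral surjectivity in part~(\ref{thm:intro:Faltings:ratl}) for $p>2$: in the absolute case Faltings exploits the explicit $\fo_{\ol K}$-module structure via valuations, whereas in the relative setting one must replace valuation arguments by the ``almost'' vanishing of Tor/Ext along $\Acris(R)\to\widehat{\ol R}$ furnished by refined almost \'etaleness, while simultaneously tracking $\Fil^\bullet$, $\vphi$, $\nabla$, and the $\GRR$-action. Once this is in hand, the $p=2$ portion of part~(\ref{thm:intro:Faltings:Kisin}) is the subtler remaining point, since it requires a direct Galois-theoretic identification rather than the shortcut through $T_p(G)\cong\Tcris^*(\M)$ available in odd residue characteristic.
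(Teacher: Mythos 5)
Your overall plan for part~(\ref{thm:intro:Faltings:ratl}) is in the spirit of the paper's proof (Faltings' argument via the pairing $T_p(G)\cong\Hom_{\ol R}(\Qp/\Zp,G_{\ol R})$ and explicit reduction to $\Gmhat$), but your explanation of why $p>2$ forces the integral isomorphism is not the mechanism the paper uses and, as stated, is not a correct mechanism. The paper shows directly (Theorem~\ref{thm:Faltings}) that the comparison map $\rho_G:\Acris(R)\otimes_S\M\to\Acris(R)\otimes_{\Zp}T_p(G)^*$ is injective with cokernel killed by $t=\log[\epsilon]$; since $p\mid t$ in $\Acris$ if and only if $p=2$ (and then $p$ divides $t$ exactly once), the induced map $T_p(G)\to\Tcris^*(\M)$ is an isomorphism for $p>2$ and has cokernel killed by $p$ for $p=2$. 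This is an elementary observation about $t$, not about ``topological nilpotence of $\vphi$ on $\Fil^1\Acris/p$ modulo divided powers''; the latter has no analogue in the actual argument. The reduction to $\rho_{\Gmhat}$ and the use of the $\GRR$-isomorphism $\Hom_{\ol R}(G_{\ol R},\wh{\mathbb G}_{m,\ol R})\cong T_p(G)^*(1)$ (which lets you sweep out all of $t\cdot T_p(G)^*$) is the heart of the matter and should be made explicit.

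For part~(\ref{thm:intro:Faltings:Kisin}) there is a genuine gap: the two embeddings into $\Acris(R)$ that you claim are compatible are in fact \emph{not} compatible, and reconciling them is exactly the technical content of \S\ref{subsec:STilde}--\S\ref{subsec:Mtilde}. The embedding $S\hra\Acris(R)$ used to define $\Tcris^*$ (\S\ref{subsec:Rinfty}) is $R_0$-linear for the ``first-factor'' structure $R_0\subset R_0\otimes_{\Zp}W(\ol R^\flat)$, whereas the embedding $\Sig\hra\wh\Sig^{\ur}\hra W(\ol R^\flat)\subset\Acris^\nabla(R)$ used to define $\cT^*$ sends $R_0$ into $W(\ol R^\flat)$ via the Cartier morphism (the ``second-factor'' structure). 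These restrict to distinct maps on $R_0$, so a naive $\Acris(R)$-linear extension of the pairing $\gM\times\cT^*(\gM)\to\Acris(R)$ does not, by itself, produce an element of $\Tcris^*(\M)=\Hom_{S,\Fil^1,\vphi,\nabla}(\M,\Acris(R))$. The paper bridges this via the auxiliary divided power ring $\wt S$ with its two sections $\imath_1,\imath_2:S\to\wt S$, the embedding $\tilde\jmath:\wt S\to\Acris(R)$, and the key identity $\imath_2(\M)=\wt\M^{\nabla^u=0}$ (Remark~\ref{rmk:HorSec}); this yields the $\GRtinfty$-isomorphism~(\ref{eqn:GalCompNabla}) between $\Hom_{S,\Fil^1,\vphi,\vphi_1}(\M,\Acris^\nabla(R))$ and $\wt T^*_{\cris}(\wt\M)$, after which the injection~(\ref{eqn:GalCompSig}) uses $\vphi:\wh\Sig^{\ur}\to\Acris^\nabla(R)$ (accounting for the Frobenius twist in $\M=S\otimes_{\vphi,\Sig}\gM$). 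The surjectivity for $p>2$ is then Kisin's mod-$p$ argument as you indicate. Finally, for $p=2$ the paper does not give a direct Kisin-style argument ``over $\wt R_\infty$''; it reduces via the $\vphi$-compatible base change $R\to\fo_{K'}:=W(\Frac(R/\varpi)^{\perf})\otimes_W\fo_K$ to the case of a $p$-adic discrete valuation ring with perfect residue field (using that this base change preserves the lattices $T_p(G)$ and $\cT^*(\gM)$), and then cites the known $p=2$ results of Lau and Kim. Your proposed direct argument would have to re-prove those $p=2$ theorems, which is a substantial additional task the paper avoids.
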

Note that when $G=A[p^\infty]$ for an abelian scheme $A$ over $\Spec R$, Theorem~\ref{thm:intro:Faltings}(\ref{thm:intro:Faltings:ratl}) gives a comparison morphism between the first \'etale homology and the first crystalline homology with constant $\Qp$- and $\Zp$- coefficients.

Very recently, Peter~Scholze proved the comparison isomorphism between relative $p$-adic \'etale cohomology and relative de~Rham cohomology for proper smooth morphisms $f:X\ra Y$ of varieties over a $p$-adic field with perfect residue field \cite[Theorem~1.9]{Scholze:CdR}. Presumably, it should be possible, in the near future, to generalise the proof to handle (log-)crystalline cohomology in place of de Rham cohomology when $f$ has a (log-)smooth integral model, which can be thought of as a ``generalisation'' of (at least the $\Qp$-coefficient statement of) Theorem~\ref{thm:intro:Faltings}(\ref{thm:intro:Faltings:ratl}).

Finally, we state our main result for commutative  finite locally free group schemes\footnote{In this paper, we only consider \emph{commutative} finite locally free group schemes, so we will often suppress the adjective ``commutative''.}: 
\begin{nothm}[Theorem~\ref{thm:RelKisinFF}, Proposition~\ref{prop:GalRepFF}]\label{thm:intro:FF}
Assume that $p>2$. There exists a natural anti-equivalence of categories $H\leftrightsquigarrow\gM^*(H)$ between the category of $p$-power order finite flat group schemes $H$ over $R$ such that $H[p^i]$ is flat for each $i$ and the category $\SMFIqw$ of certain ``torsion Kisin $\Sig$-modules'' (Definition~\ref{def:torKisMod}). 
Furthermore, we have a natural $\GRtinfty$-equivariant isomorphism $H(\ol R)\cong \cT^*(\gM^*(H))$.
\end{nothm}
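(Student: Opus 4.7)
The plan is to bootstrap the result from the $p$-divisible group classification (Corollary~\ref{cor:intro:BK}) and the Galois comparison (Theorem~\ref{thm:intro:Faltings}) by resolving $H$ by a two-term complex of $p$-divisible groups, in the spirit of Raynaud's original approach and its adaptation by Kisin in the absolute case. First, I would show that the hypothesis that $H[p^i]$ is flat for each $i$ allows one to construct a resolution
\begin{equation*}
0 \to H \to G_1 \xra{f} G_2 \to 0
\end{equation*}
where $G_1, G_2$ are $p$-divisible groups over $R$ and $f$ is an isogeny. Applying the exact anti-equivalence of Corollary~\ref{cor:intro:BK} to $f$ gives an injection of Kisin $\Sig$-modules $\gM(G_2) \hookrightarrow \gM(G_1)$ with $p$-power torsion, finitely presented cokernel, and one defines
\begin{equation*}
\gM^*(H) := \coker\bigl(\gM(G_2) \to \gM(G_1)\bigr),
\end{equation*}
equipped with the Frobenius and connection inherited from $\gM(G_1)$.

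Next, I would check that $\gM^*(H)$ indeed lies in the category $\SMFIqw$ of Definition~\ref{def:torKisMod}: the height bound and descent of the connection come directly from the $p$-divisible group resolution, while the stratification-by-flat condition built into \textsf{(Mod FI)} is precisely what the hypothesis on $H[p^i]$ translates into. Independence of $\gM^*(H)$ from the choice of resolution, as well as functoriality and exactness, would follow from the fact that any two such resolutions admit a common refinement and that $\gM(\cdot)$ is exact on isogenies of $p$-divisible groups. For the quasi-inverse, I would run the analogous procedure on the Kisin side: given $\gM\in\SMFIqw$, choose a two-term resolution by Kisin $\Sig$-modules corresponding under Corollary~\ref{cor:intro:BK} to an isogeny of $p$-divisible groups, and take its scheme-theoretic kernel; the compatibility of the two compositions with the identity then reduces to the analogous statement for $p$-divisible groups.

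For the $\GRtinfty$-equivariant isomorphism $H(\ol R) \cong \cT^*(\gM^*(H))$, I would exploit exactness of $\cT^*$ on $\SMFIqw$ (via the \'etale $\vphi$-module formalism from Lemma~\ref{lem:FontaineB183}) together with the identification $T_p(G_i) \cong \cT^*(\gM(G_i))$ from Theorem~\ref{thm:intro:Faltings}. Applying $\cT^*$ to the short exact sequence
\begin{equation*}
0\to\gM(G_2)\to\gM(G_1)\to\gM^*(H)\to 0
\end{equation*}
and comparing with the $\GRtinfty$-equivariant short exact sequence $0 \to T_p(G_2) \to T_p(G_1) \to H(\ol R) \to 0$ obtained from taking $\ol R$-points of the defining resolution of $H$, the desired isomorphism drops out by naturality.

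The main obstacle I anticipate is the construction of the resolution in the first step: over a complete discrete valuation ring this is classical (Raynaud, Berthelot--Breen--Messing), but in the relative setting over a formally smooth $\fo_K$-algebra one must ensure that the resolution can be chosen globally (or else that the $\gM^*$-construction descends from a suitable cover), and that the flatness of $H[p^i]$ rather than merely $H$ is what is needed to embed $H$ into a $p$-divisible group with $p$-divisible quotient. Once Step~1 is in place, the remaining verifications are largely formal and proceed in close analogy with the absolute case treated by Kisin.
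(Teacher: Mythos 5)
Your plan for the forward direction (defining $\gM^*(H)$ via a Raynaud-type resolution and glueing over a Zariski cover) does match the paper's Proposition~\ref{prop:RelKisinFF}, and you correctly identify the local-to-global issue around Raynaud's theorem. Your Galois comparison via exactness of $\cT^*$ and the exact sequence~(\ref{eqn:RednEtPhiMod}) is also what the paper does in Proposition~\ref{prop:GalRepFF}.

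However, there is a genuine gap in your essential surjectivity argument. You write that given $\gM\in\SMFIqw$ you would ``choose a two-term resolution by Kisin $\Sig$-modules corresponding under Corollary~\ref{cor:intro:BK} to an isogeny of $p$-divisible groups,'' but this resolution does not exist in the required category without substantial extra work, for two reasons. First, even resolving the underlying torsion $\vphi$-module of $\gM$ by projective quasi-Kisin $\Sig$-modules $\gN'\hra\gN$ only works Zariski-locally (this is the paper's Lemma~\ref{lem:FrobRaynaud}), and that resolution carries no connection. Second, and much more seriously, the connection $\nabla_{\M_0}$ on $\M_0:=R_0\otimes_{\vphi,\Sig}\gM$ need \emph{not} lift to a $\vphi$-compatible connection on $\N_0:=R_0\otimes_{\vphi,\Sig}\gN$ for any choice of $\gN$; this is precisely the obstruction the paper identifies as ``the most serious problem'' and treats via Vasiu's moduli of connections (Section~\ref{sec:Vasiu}). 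Proposition~\ref{prop:ConnRaynaud} shows that one can only equip $\gN$ with a compatible connection after passing to a faithfully flat \emph{ind-\'etale} (not merely Zariski) cover $\Spf(A_\alpha,(\varpi))\to\Spf(R_\alpha,(\varpi))$ obtained from an Artin--Schreier tower. Moreover, the base ring $A_\alpha$ that results is no longer formally finite-type over $\fo_K$, so Corollary~\ref{cor:intro:BK} cannot be applied directly: one needs the extension of de~Jong's theorem to such covers (Theorem~\ref{thm:dJgen}, proved by fpqc descent from formal completions using excellence), and then a separate descent argument back to $R_\alpha$ and finally to $R$. Without the moduli-of-connections step and the accompanying generalization of the $p$-divisible group classification, your Step~3 does not go through.
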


Let us review previous results when $p>2$. 
When $R=\fo_K$ with perfect residue field, all our main results  are known already. Theorem~\ref{thm:intro:Faltings}(\ref{thm:intro:Faltings:ratl}) is  proved by  Faltings \cite[\S6]{Faltings:IntegralCrysCohoVeryRamBase}, while the first assertion was already proved by Fontaine  \cite[Th\'eor\`eme~6.2]{Fontaine:BT}.   
Theorem~\ref{thm:intro:Breuil} was first proved by Breuil \cite{Breuil:GrPDivGrFiniModFil} and all the remaining statements were proved by Kisin \cite{kisin:fcrys}. 

 Brinon and Trihan \cite{BrinonTrihan:CrisImperf} generalised the results of Kisin \cite{kisin:fcrys} and Faltings \cite[\S6]{Faltings:IntegralCrysCohoVeryRamBase} to $p$-divisible groups over a $p$-adic discrete valuation ring with imperfect residue field admitting finite $p$-basis.

When $\fo_K=W(k)$ for some perfect  field $k$ and $R$ is of topologically finite type over $W(k)$,  Theorem~\ref{thm:intro:Breuil}, Theorem~\ref{thm:intro:Faltings}(\ref{thm:intro:Faltings:ratl}), and Theorem~\ref{thm:intro:FF} can be deduced from the result of  Faltings \cite[\S{VII}]{Faltings:Ccris}. Theorem~\ref{thm:intro:Breuil} can be deduced from an unpublished manuscript by Bloch and Kato \cite{BlockKato:Dieudonne} when $R$ is a formal power series over $W(k)$.

When  $R$ is a complete  regular local ring with perfect  residue field (but not necessarily formally smooth over $\fo_K$), Corollary~\ref{cor:intro:BK} was proved by Eike~Lau \cite{Lau:2010fk} generalising Vasiu and Zink \cite{ZinkVasiu:BreuiloverRegularLocal}. Indeed, they proved a stronger result as they classified $p$-divisible groups by $\Sig$-modules without connection. On the other hand, our result holds for not necessarily local base rings $R$ such as $p$-adic completion of a smooth $\fo_K$-algebra.

Let us first outline the proof our results when $p>2$.
The basic idea is to generalise Kisin \cite[\S{A}, \S2.2, \S2.3]{kisin:fcrys} and Faltings \cite[\S6]{Faltings:IntegralCrysCohoVeryRamBase} in a similar way to Brinon and Trihan \cite{BrinonTrihan:CrisImperf}; namely, we prove Theorem~\ref{thm:intro:Breuil} by ``lifting'' A.J. de\thinspace{}Jong's result on Dieudonn\'e crystals \cite{dejong:crysdieubyformalrigid} via Grothendieck-Messing deformation theory  \cite{messingthesis}, and apply the same argument as \cite[\S6]{Faltings:IntegralCrysCohoVeryRamBase}  to show Theorem~\ref{thm:intro:Faltings}(\ref{thm:intro:Faltings:ratl}). We then prove Proposition~\ref{prop:intro:CL} by generalising \cite[\S2.2]{Caruso-Liu:qst}. This allows us to  avoid the rigid analytic construction (\emph{cf.,} \cite[\S1]{kisin:fcrys} and \cite[\S4]{BrinonTrihan:CrisImperf}), which is hard to generalise to the relative setting. 
To prove Theorem~\ref{thm:intro:FF}, we generalise the strategy of \cite[\S2.3]{kisin:fcrys}. Due to the presence of connections, the actual argument is quite elaborate using the theory of moduli of connections \cite[\S3]{Vasiu:ys}.

We expect that by working with log Dieudonn\'e crystals we can generalise all the results in this paper to $p$-divisible groups over some semi-stable bases using an unpublished result of Bloch and Kato \cite{BlockKato:Dieudonne}, at least when $p>2$; more precisely, we allow $R$ such that $R/(\varpi)$ satisfies either (0.5.1) or (0.5.2) in \cite{BlockKato:Dieudonne}. This case will be studied in the forthcoming paper.

In \S\ref{sec:CotCplx} we prove some lifting result, which is needed for constructing $\Sig$ and $S$. In \S\ref{sec:classif} we introduce various semi-linear algebra objects and prove Theorem~\ref{thm:intro:Breuil} when $p>2$. In \S\ref{sec:Brinon} we recall (and slightly generalise) the construction and basic properties of relative $p$-adic period rings. In \S\ref{sec:Faltings} we prove integral comparison theorem (\emph{cf.,} Theorem~\ref{thm:intro:Faltings}(\ref{thm:intro:Faltings:ratl})). 

In \S\ref{sec:CL} we introduce generalisations of Kisin modules and prove Proposition~\ref{prop:intro:CL}. In \S\ref{sec:EtPhiMod}, we recall some basic results of perfectoid algebras \cite{Scholze:Perfectoid} and the theory of relative \'etale $\vphi$-modules. In \S\ref{sec:FaltingsKisin} we prove Theorem~\ref{thm:intro:Faltings}(\ref{thm:intro:Faltings:Kisin}). 
In \S\ref{sec:finfl} we state our main results on finite locally free group schemes of $p$-power order. The proof uses the theory of moduli of connection (in a slightly generalised form than the original version in \cite[\S3]{Vasiu:ys}), which will be studied in \S\ref{sec:Vasiu}.

\subsection*{Acknowledgement}
The author  appreciates   A. Johan de\thinspace{}Jong,  Mark Kisin, James Newton, Burt Totaro, Teruyoshi Yoshida for their helpful suggestions. He appreciates Brian Conrad and the anonymous referee for their careful reading of the earlier version of this paper as well as helpful suggestions,  and Adrian Vasiu for directing the author's attention to the theory of moduli of connections. He also appreciates  Eike~Lau for pointing out the mistake in \S6 in the earlier manuscriptThis work was supported by Herchel Smith Post-doctoral Research Fellowship.

\section{Settings, review, and some commutative algebra}\label{sec:CotCplx}
\subsection{General convention}
For any ring $A$, a ring endomorphism $\vphi:A\ra A$, and an $A$-module $M$, we write $\vphi^*M:=A\otimes_{\vphi,A}M$. We sometimes write $\vphi_A$ for $\vphi$ to indicate that it is an endomorphism of $A$. 

For any ideal $J\subset A$, we say that $A$ is \emph{$J$-adic} if $A$ is $J$-adically separated and complete. If $J=(x)$, then we say ``$x$-adic'' instead of ``$(x)$-adic''.

For any   $p$-adic ring $A$ we let $\wh\Omega_A(=\wh\Omega_{A/\Zp}):=\varprojlim_n\Omega_{\left(A/p^n\right)/\Zp}$ denote the module of $p$-adically continuous K\"ahler differentials. Note that if $k_0$ is a perfect subfield of $k$ then we have   $\wh\Omega_A \cong \wh\Omega_{A/W(k_0)}$. We will work with $\wh\Omega_A$ only when $A/(p)$ locally has a finite $p$-basis (\emph{cf.,} \cite[Lemmas~1.1.3, 1.3.3]{dejong:crysdieubyformalrigid}), so 
$\wh\Omega_A$ will always be finitely generated over $A$. 

For any noetherian ring $A$, we write $A_{\red}:=A/\rad(A)$, where $\rad(A)$ is the Jacobson radical. If $A$ is $J$-adic and $A/J$ is finitely generated over a field, then $A_{\red}$ is the maximal reduced quotient of $A/J$, which justifies the notation.

For any ring $A$ and a divided power ideal $I\subseteq A$, we denote by $s^{[n]}$ the $n$th divided power of $s\in I$.

\subsection{Assumptions on base rings}\label{subsec:setting}
Throughout the paper, we let $R$ be a $p$-adic flat $\Zp$-algebra with varying  technical assumptions depending on the situation. Here, we will list and motivate the assumptions on $R$ for readers' reference.

\subsubsection{$p$-basis assumption}\label{cond:Breuil}
We assume that $R \cong R_0[u]/E(u)$, where $R_0$ be a $p$-adic flat $\Zp$-algebra such that $R_0/(p)$ locally admits a finite $p$-basis, and 
\[E(u) = p + \sum_{i=1}^e a_i u^i \]
for some integer $e>0$, with $a_i\in R_0$ and $a_e\in R_0\starr$. Note that such $R$ is a finite $R_0$-algebra which is  free of rank~$e$ as an $R_0$-module. 
We will let $\varpi\in R$ denote the image of $u\in R_0[u]$.

If there is a Cohen subring $W\subset R_0$ such that $E(u)\in W[u]$, then we set $\fo_K:=W[u]/E(u)$, which is a finite totally ramified extension of $W$. In this case, $R:=R_0[u]/E(u) = \fo_K\otimes_W R_0$. By Lemma~\ref{lem:sec}  any $p$-adic flat $\fo_K$-algebra $R$ such that $R/(\varpi)$ locally admits a finite $p$-basis can be written in this form. Although it is the prototypical case, we do not restrict to the case when $E(u)\in W[u]$ for some Cohen subring $W$ for the following reason, except in the theory of \'etale $\vphi$ modules in \S\ref{sec:EtPhiMod} and \S\ref{sec:FaltingsKisin}. (See Remark~\ref{rmk:PerfectoidCond} for more discussions.) 
 
If $R$ satisfies  the $p$-basis assumption~(\S\ref{cond:Breuil}), then we can define relative Breuil modules in \S\ref{subsec:settingBr}. In order to relate relative Breuil modules to $p$-divisible groups, one needs more assumptions on $R$. 

\subsubsection{Formally finite-type assumption}\label{cond:dJ}
In addition to the $p$-basis assumption~(\S\ref{cond:Breuil}), we assume that $R$ is $J_R$-adically separated and complete for some finitely generated ideal $J_R$ containing $\varpi$ (defined in \S\ref{cond:Breuil}), and $R/J_R$ is finitely generated over some field $k$. Then $R_0/(p)=R/(\varpi)$ is $J_R/(\varpi)$-adically separated and complete. Note that  $R_0$ is $J_{R_0}$-adically separated and complete, where $J_{R_0}\subset R_0$ denote the kernel of $R_0\thra R/J_R$.

Since $R_0/(p)=R/(\varpi)$ surjects onto $R/J_R$,  it follows that $k$ as above admits a finite $p$-basis by  the $p$-basis assumption~(\S\ref{cond:Breuil}). By cotangent complex consideration it also follows that $R/(\varpi)$ is formally smooth $\Fp$-algebra (\emph{cf.} \cite[Lemma~1.1.2]{dejong:crysdieubyformalrigid} and  \cite[Ch.III, Corollaire~2.1.3.3]{Illusie:CplxCotI}). Therefore, $R/(\varpi)$ satisfies the assumption \cite[(1.3.1.1)]{dejong:crysdieubyformalrigid}, and de\thinspace{}Jong's theorem asserts that the crystalline Dieudonn\'e functor over  $\Spec R/(\varpi)$ is an equivalence of categories; \emph{cf.} \cite[Main~Theorem~1]{dejong:crysdieubyformalrigid}. 
%
 
Clearly, $R_0$ is formally smooth over $\Zp$ with respect to the $J_{R_0}$-adic topology, and the same holds if $\Zp$ is replaced by any Cohen subring $W\subset R_0$. If $E(u)\in W[u]$ and $\fo_K = W[u]/E(u)$, then $R$ is formally smooth over $\fo_K$. Conversely, any formally smooth formally finite-type $\fo_K$-algebra satisfies the formally finite-type assumption~(\S\ref{cond:dJ}) by Lemma~\ref{lem:sec}.

\subsubsection{``Refined almost \'etaleness'' assumption}\label{cond:RAF}
Here, we assume that $R$ is a domain which satisfies the formally finite-type assumption~(\S\ref{cond:dJ}), such that $R[\ivtd p]$ is finite \'etale over $R_0[\ivtd p]$ and we have $\wh\Omega_{R_0} = \bigoplus_{i=1}^d R_0 d T_i$  for some finitely many units $T_i\in R_0\starr$. Here, $\wh\Omega_{R_0}$ is the module of $p$-adically continuous K\"ahler differentials. 
Under this condition, we obtain a  version of refined almost \'etaleness (Theorem~\ref{thm:RAF}), which is slightly more general than \cite[\S5]{Andreatta:GenNormRings}. Note that refined almost \'etaleness is used to show that relative period rings have nice properties (namely, Proposition~\ref{prop:PeriodRings}).

For example, let $W$ be some Cohen ring with residue field $k$ admitting a finite $p$-basis, and set $R_0=W\langle T_1^{\pm1},\cdots, T_{d'}^{\pm1}\rangle$ or $R_0=W[[X_1,\cdots,X_{d'}]]$. Then the normal extension $R:=R_0[[u]/E(u)$ satisfies the ``refined almost \'etaleness'' assumption~(\S\ref{cond:RAF}), where $E(u)$ is as in \S\ref{cond:Breuil} with the additional assumption that $\frac{\partial}{\partial u} E(\varpi)\in R\starr$. (In the first case, $T_i$'s together with a lift of a $p$-basis of $k$ do the job, and in the second case we use $T_i=1+X_i$ instead.) 

\subsubsection{Normality assumption}\label{cond:BM}
In addition to the $p$-basis assumption~(\S\ref{cond:Breuil}), we assume that $R/(\varpi)$ is a finite product of normal domains. In this case, the crystalline Dieudonn\'e functor over  $\Spec R/(\varpi)$ is fully faithful by \cite[Th\'eor\`eme~4.1.1]{Berthelot-Messing:DieudonneIII}.

Let us give some examples. Assume that  $R$ that satisfies the formally finite-type assumption~(\S\ref{cond:dJ}). Then $R$ satisfies the normality assumption~(\S\ref{cond:BM}), which follows from Lemmas~1.1.3,~1.3.3(d) in \cite{dejong:crysdieubyformalrigid}. Moduli of  connections (\emph{cf.} Definition~\ref{def:ModConn}) provides examples of $R$ satisfying the normality assumption~(\S\ref{cond:BM}) but not necessarily the formally finite-type assumption~(\S\ref{cond:dJ}). See Lemma~\ref{lem:Artin-Schreier} for the precise statement. This example plays an important role in the proof of the classification of finite locally free group schemes (Theorem~\ref{thm:RelKisinFF}).
%
%
%
%
%

\subsubsection{lci assumption}\label{cond:dJM}
The definition of relative Breuil modules can be modified for base rings $R$ satisfying a certain lci assumption. (\emph{Cf.} Remarks~\ref{rmk:Brlci} and \ref{rmk:Slci}.) Under additional excellence assumption, we can obtain some full faithfulness result as well. (\emph{Cf.} Remarks~\ref{rmk:lciBreuilClassif}, and \ref{rmk:lciRelKisin}.) In order not to over-complicate our notation, we mainly focus on the setting with the $p$-basis assumption~(\S\ref{cond:Breuil}), and record the results in the lci case in remarks marked with ``(lci case)''.

\subsection{Formally smooth subalgebra}\label{subsec:CommAlg}
In this section, we prove some deformation result, which produces many classes of examples of $R$ which satisfy  the $p$-basis assumption~(\S\ref{cond:Breuil}).

Let $k$ be a field of characteristic $p>0$ which admits a finite $p$-basis (i.e., $[k:k^p]<\infty$), and we choose a Cohen ring $W$ with residue field $k$; i.e., a $p$-adic discrete valuation ring with fixed isomorphism $W/(p) \cong k$. (Note that $W$ is not a ring of Witt vectors unless $k$ is perfect.) Let us fix a ring endomorphism $\vphi_W:W\ra W$ which lifts the $p$th power map $\vphi_k:k \ra k$. 

\begin{lemsub}[\emph{Cf.} {\cite[\S1]{Berthelot-Messing:DieudonneIII}, \cite[Lemma~1.3.3]{dejong:crysdieubyformalrigid}}]\label{lem:lifting}
Let $\bar A$ be a $k$-algebra which locally admits a finite $p$-basis. 
Then there exists a $p$-adic flat $W$-algebra  $A$ which lifts $\bar A$, equipped with a lift of Frobenius $\vphi:A\ra A$ over $\vphi_W:W\ra W$. Such $A$ is formally smooth over $W$ under the $p$-adic topology. If furthermore $\bar A$ is formally of finite type over  $k$ with ideal of definition $\bar J$ (for example, if $\bar A = R/(\varpi)$ where $R$ satisfies the formally finite-type assumption~(\S\ref{cond:dJ})), then any such $A$ is $J$-adically complete where $J$ is the preimage of $\bar J$, and under this topology $A$ is formally smooth.
\end{lemsub}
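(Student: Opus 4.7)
The strategy is deformation-theoretic, controlled by the cotangent complex. The $p$-basis hypothesis guarantees, by \cite[Lemma~1.3.3]{dejong:crysdieubyformalrigid}, that $L_{\bar A/k}\simeq\Omega^1_{\bar A/k}$ is concentrated in degree $0$ and locally free of finite rank as an $\bar A$-module, so $\Ext^i_{\bar A}(L_{\bar A/k},M)=0$ for all $i\geqs 1$ and all $\bar A$-modules $M$. Writing $W_n:=W/p^{n+1}$, I would build a compatible system of flat $W_n$-lifts $A_n$ of $\bar A$ inductively: given $A_n$, the obstruction to producing a flat $W_{n+1}$-lift $A_{n+1}$ together with an isomorphism $A_{n+1}/p^{n+1}\riso A_n$ lies in $\Ext^2_{\bar A}(L_{\bar A/k},\bar A)=0$, while the set of such lifts is a torsor under $\Ext^1_{\bar A}(L_{\bar A/k},\bar A)=0$. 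Setting $A:=\varprojlim_n A_n$ then yields a $p$-adically complete flat $W$-algebra lifting $\bar A$.

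To produce the Frobenius lift, I would apply the same obstruction machinery to the morphism $\vphi_{\bar A}$: noting that $A\otimes_{\vphi_W,W}W$ is another flat $W$-lift of $\bar A\otimes_{\vphi_k,k}k=\bar A$, a lift $\vphi_n\colon A_n\to A_n$ of $\vphi_{\bar A}$ amounts to a morphism between two flat lifts of the identity over the same base, so the obstruction to extending $\vphi_n$ to $\vphi_{n+1}$ lies in $\Ext^1_{\bar A}(L_{\bar A/k},\bar A)=0$, and passage to the inverse limit supplies $\vphi\colon A\to A$ over $\vphi_W$. Formal smoothness of $A$ over $W$ in the $p$-adic topology follows by the same kind of argument: any square-zero $p$-nilpotent test diagram reduces through the tower $\{A_n\}$, where each step is governed by $\Ext^1_{\bar A}(L_{\bar A/k},-)=0$.

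For the formally finite-type case, I would lift generators $\bar f_1,\dots,\bar f_r$ of $\bar J$ to elements $f_i\in A$, take $J:=(p,f_1,\dots,f_r)$, and prove inductively that each $A_n=A/p^{n+1}$ is $J$-adically complete: the base case is the $\bar J$-adic completeness of $\bar A$, and the inductive step uses that the kernel of $A_n\thra A_{n-1}$ is isomorphic as an $\bar A$-module to $\bar A$ (via multiplication by $p^n$), so completeness passes through the extension. Swapping inverse limits then yields $A=\varprojlim_m A/J^m$, and $J$-adic formal smoothness follows from the $p$-adic version because $J\supset (p)$ implies that every $J$-nilpotent thickening is $p$-nilpotent.

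The main obstacle I anticipate is the careful bookkeeping of the cotangent complex over a Cohen ring $W$ that is itself not smooth over $\Zp$ when $k$ is imperfect: one has $L_{W/\Zp}\neq 0$, and some work is required to confirm that $W$-flat deformations of $\bar A$ are still controlled by the relative cotangent complex $L_{\bar A/k}$. This reduction is implicit in \cite[\S1]{Berthelot-Messing:DieudonneIII} and \cite[Lemma~1.3.3]{dejong:crysdieubyformalrigid}, and it is precisely the $p$-basis assumption on $\bar A$---not any perfectness hypothesis on $k$---that makes the relevant Ext groups compute as stated.
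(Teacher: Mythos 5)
Your overall strategy---deformation theory controlled by $L_{\bar A/k}\simeq\Omega_{\bar A/k}$---is exactly the paper's, but two of your steps are misformulated as written. For the Frobenius lift you write ``$\bar A\otimes_{\vphi_k,k}k=\bar A$'' and describe the problem as lifting a morphism ``between two flat lifts of the identity.'' This conflates $\vphi_k^*\bar A$ with $\bar A$, which fails precisely when $k$ is imperfect, the case this lemma is designed to cover (and note neither the absolute nor the relative Frobenius is the identity). The paper instead lifts the \emph{relative} Frobenius $\vphi_k^*\bar A\ra\bar A$ to a genuinely $W$-linear map $\vphi_W^*A\ra A$ (the obstruction lies in $\Ext^1$ against the pullback of $L_{\bar A/k}$, still finite projective and concentrated in degree $0$, hence still vanishing) and then precomposes with the canonical map $A\ra\vphi_W^*A$, $a\mapsto 1\otimes a$, to obtain $\vphi:A\ra A$ over $\vphi_W$. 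Similarly, your one-line claim that formal smoothness ``is governed by $\Ext^1_{\bar A}(L_{\bar A/k},-)=0$'' skips the real content: the obstruction against a square-zero $W_i$-thickening lives in $\Ext^1_{A_{W_i}}(L_{A_{W_i}/W_i},-)$, and one must first show---as the paper does, inductively via \cite[Remark~1.3.4(b)]{dejong:crysdieubyformalrigid}---that $L_{A_{W_i}/W_i}\ra\Omega_{A_{W_i}/W_i}$ is a quasi-isomorphism onto a finite projective module.

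Your $J$-adic step is a genuine variant of the paper's and can be made to work, but ``completeness passes through the extension $0\ra\bar A\ra A_n\ra A_{n-1}\ra 0$'' silently uses Artin--Rees to identify the filtration $\{J^m\cap p^nA_n\}$ on the kernel with the $\bar J$-adic one; you should therefore record that $A_n$ is noetherian (a nilpotent thickening of the noetherian ring $\bar A$ by the finitely generated ideal $pA_n$). The paper instead compares $A$ with its $J$-adic completion $A'$ directly: injectivity of $A\ra A'$ is immediate and surjectivity is checked modulo $p$ (both rings are $p$-adic because $p\in J$), reducing to $\bar J$-adic completeness of $\bar A$---terser, but with the same implicit noetherian input. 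Finally, your closing concern about $L_{W/\Zp}\neq 0$ is well placed; the resolution, which the paper supplies, is that $W$-flat deformations of a $k$-algebra are controlled by $L_{\bar A/k}$, computed via the transitivity triangle $\bar A\stackrel{\LL}{\otimes}_{k}L_{k/\Fp}\ra L_{\bar A/\Fp}\ra L_{\bar A/k}\ra[+1]$ together with \cite[Lemma~1.1.2]{dejong:crysdieubyformalrigid} (not Lemma~1.3.3, as you cite) applied to the first two terms.
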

\begin{proof}
Let us first assume that $\bar A$ locally admits a finite $p$-basis.  By \cite[Lemma~1.1.2]{dejong:crysdieubyformalrigid}, the augmentation map $L_{\bar A/k} \ra \Omega_{\bar A/k}$ is a quasi-isomorphism where $L_{\bar A/k}$ is a cotangent complex.  (Indeed, there is a natural distinguished triangle $\bar A\stackrel{\LL}{\otimes}_{k} L_{k/\Fp} \ra L_{\bar A/\Fp} \ra L_{\bar A/k} \ra [+1]$, and now we apply \cite[Lemma~1.1.2]{dejong:crysdieubyformalrigid} to the first two terms.) Note also that  $ \Omega_{\bar A/k}$ is finite projective over $\bar A$ by the existence of local finite $p$-basis. 

Now if $A_{W_i}$ is a lift of $\bar A$ over $W_i:=W/(p^{i+1})$, then the obstruction class for lifting $A_{W_i}$ over  $W_{i+1}$ lies in $\Ext^2_{\bar A}(L_{\bar A/k}, \bar A) = \Ext^2_{\bar A}(\Omega_{\bar A/k}, \bar A) $, which is zero. (\emph{Cf.} \cite[Ch.III, Corollaire~2.1.3.3]{Illusie:CplxCotI}.) This shows the existence of a $p$-adic flat $W$-lift $A$ of $\bar A$.

Using the same notation as above, let $\tilde\vphi_i:\vphi_W^*A_{W_i} \ra  A_{W_i}$ be a lift of the relative Frobenius map $\vphi:\vphi_k^* \bar A\ra \bar A$ over $k$. The obstruction class for lifting $\tilde\vphi_i$ to $\tilde\vphi_{i+1}:  \vphi_W^*A_{W_{i+1}} \ra A_{W_{i+1}}$ lies in $\Ext^1_{\bar A}(L_{\bar A/k}, \bar A) = \Ext^1_{\bar A}(\Omega_{\bar A/k}, \bar A) $, which is zero. (\emph{Cf.} \cite[Ch.III, Proposition~2.2.2]{Illusie:CplxCotI}.) This shows the existence of  a $W$-lift $\vphi_W^*A \ra A$ of the relative Frobenius map. Now, we obtain $\vphi: A\ra A$ over $\vphi_W$ by precomposing this lift with the natural inclusion $A\ra \vphi_W^*A$ (defined by $a\mapsto 1\otimes a$ for any $a\in A$).

To show that $A$ is formally smooth over $W$ (for the $p$-adic topology), it suffices to show that $A_{W_i}:=A/(p^{i+1})$ is formally smooth over $W_i:=W/(p^{i+1})$ for each $i$.  Indeed, by \cite[Remark~1.3.4(b)]{dejong:crysdieubyformalrigid}, $ \Omega_{A_{W_i}/W_i}$ is  finite projective over $A_{W_i}$. Now one can show inductively that the augmentation map $L_{A_{W_i}/W_i}\ra \Omega_{A_{W_i}/W_i}$ is a quasi-isomorphism, which implies by \cite[Ch.~III, Proposition~3.1.2]{Illusie:CplxCotI} that $A_{W_i}$ is formally smooth over $W_i$. 

Now, assume that $\bar A$ is formally of finite type over some field $k$ with ideal of definition $\bar J$. To show that $A$ is $J$-adically complete (where $J
$ is as in the statement), we let $A'$ be the $J$-adic completion of $A$. The natural map $A\ra A'$ is clearly injective, and the surjectivity can be checked mod~$p$ as both $A$ and $A'$ are $p$-adic. But $A/(p)$ is already $\bar J$-adically complete. The formal smoothness under the $J$-adic topology follows from the formal smoothness under the $p$-adic topology.
\end{proof}

\begin{rmksub}\label{rmk:NonuniqueLift} 
The choice of $A$ is unique up to non-unique isomorphism, while the choice of $\vphi$ is far from unique in general. Therefore, given a continuous map $A\ra A'$  of formally smooth $p$-adic $\Zp$-algebras, it may not be possible in general to choose lifts of Frobenii for $A$ and $A'$ so that $f$ commutes with them. We will see later (Lemma~\ref{lem:etloc}), however, that this is possible (in a unique way) if $f$ is \'etale.
\end{rmksub}

\begin{rmksub}\label{rmk:Cohen}
Let $\bar A$ be an $\Fp$-algebra locally admitting a finite $p$-basis, and $A$ its lift over $\Zp$. Assume that $\bar A$ contains a field $k$, and we choose a Cohen ring $W$ with residue field $k$. Then since $A$ is formally smooth over $\Zp$ by Lemma~\ref{lem:lifting}, there is a lift $W\hra A$ of $k\hra \bar A$.
If we choose a lift of Frobenius $\vphi_W:W\ra W$, then we may choose $\vphi_A:A\ra A$ to be compatible with $\vphi_W$.
\end{rmksub}

For the following lemma, set $\fo_K=W[u]/E(u)$ where $E(u)\in W[u]$ is an Eisenstein polynomial (\emph{cf.}  \S\ref{cond:Breuil}), and let $\varpi\in\fo_K$ be the image of $u$. Let $R$ be a $p$-adic flat $\fo_K$-algebra such that $R/(\varpi)$ locally admits a finite $p$-basis. Let $R_0$ denote a flat $W$-lift of $R/\varpi$ (obtained from applying Lemma~\ref{lem:lifting} to $\bar A = R/(\varpi)$). 
\begin{lemsub}\label{lem:sec}
In the above setting, there exists a $W$-algebra map $\iota:R_0 \ra R$ which lifts the  $k$-isomorphism $R_0/(p)\riso R/(\varpi)$. Furthermore, the induced $\fo_K$-algebra map $\iota_{\fo_K}:R_0\otimes_{W}\fo_K\ra R$ is an isomorphism of topological $\fo_K$-algebras.

If furthermore $R$ is formally of finite type over $\fo_K$ with radical $J_R$ (in particular, $R$ satisfies assumption \S\ref{cond:dJ}), then we may additionally require $\iota$ to be continuous for the $J_{R_0}$-adic topology, where $J_{R_0}\subset R_0$ is the preimage of $J_R/(\varpi)$.
\end{lemsub}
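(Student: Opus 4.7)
The crux is to build the lift $\iota$ using the formal smoothness of $R_0$ over $W$ provided by Lemma~\ref{lem:lifting}, and then to deduce the remaining assertions as essentially formal consequences. We have the composite $W$-algebra map $R_0 \thra R_0/(p) \riso R/(\varpi)$, and we wish to lift it across the surjection $R \thra R/(\varpi)$. Since $E(\varpi)=0$ with $a_e$ a unit, the ideals $(p)$ and $(\varpi^e)$ in $R$ coincide, so the $p$-adic completeness of $R$ implies $\varpi$-adic completeness and $R \cong \varprojlim_n R/(\varpi^n)$. The ideal $(\varpi)\subset R$ contains $p$, hence is open in the $p$-adic topology, so formal smoothness of $R_0$ over $W$ in the $p$-adic topology permits us to lift the given map $R_0 \to R/(\varpi)$ successively through the square-zero extensions $R/(\varpi^{n+1}) \thra R/(\varpi^n)$, and then, by $\varpi$-adic completeness of $R$, to assemble a single $W$-algebra map $\iota: R_0 \to R$.

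Next I would analyse the induced map $\iota_{\fo_K}: R_0\otimes_W \fo_K \to R$. The source is free of rank $e$ over $R_0$, in particular $\fo_K$-flat and $\varpi$-adically complete; the target is $\fo_K$-flat and $\varpi$-adically complete by hypothesis. By construction, $\iota_{\fo_K}$ reduces mod $\varpi$ to the given isomorphism $R_0/(p) \riso R/(\varpi)$. A standard successive-approximation argument in powers of $\varpi$, using $\varpi$-adic completeness of the target, yields surjectivity. For injectivity, let $K := \ker\iota_{\fo_K}$; tensoring the short exact sequence $0\to K\to R_0\otimes_W\fo_K \to R\to 0$ over $\fo_K$ with $\fo_K/(\varpi)$ and using $\fo_K$-flatness of $R$ to kill $\Tor_1$, I obtain an injection of $K/\varpi K$ into $(R_0\otimes_W\fo_K)/(\varpi) \riso R/(\varpi)$, so $K=\varpi K$. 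Since $K$ inherits $\varpi$-adic separatedness from the ambient module, this forces $K=0$. Bicontinuity then comes for free, because both sides carry the $\varpi$-adic topology and $\iota_{\fo_K}$ visibly preserves the $\varpi$-adic filtration.

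For the refinement in the formally-finite-type case, I would invoke the stronger half of Lemma~\ref{lem:lifting}: $R_0$ is also formally smooth over $W$ in the $J_{R_0}$-adic topology. The composite $R_0 \to R/(\varpi)$ is continuous for $J_{R_0}$ on the source and for $J_R/(\varpi)$ on the target, since by definition $J_{R_0}$ is the preimage of $J_R/(\varpi)$. Repeating the lifting construction while bookkeeping topologies---lifting inductively to each quotient $R/(J_R^n + (\varpi^n))$ (whose transition maps have nilpotent kernel and whose defining ideals are open in the $p$-adic topology, so the $J_{R_0}$-adic formal smoothness applies) and then taking the limit in the $J_R$-adic topology of $R$---produces the required continuous $\iota$ sending $J_{R_0}$ into $J_R$. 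The only real subtlety is this last bookkeeping of two simultaneous topologies, and in particular checking at each step that the quotient ideal one is lifting through is open in the $p$-adic topology on $R$; once that is verified, every step is a formal application of Lemma~\ref{lem:lifting}.
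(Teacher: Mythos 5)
Your argument for the first two assertions is correct, though it takes a more hands-on route than the paper. Where the paper simply observes that both $R$ and $R_0\otimes_W\fo_K$ are $\fo_K$-flat lifts of $R/(\varpi)$ and invokes uniqueness of such lifts (no obstruction in $\Ext^1_{R/(\varpi)}(\Omega_{R/(\varpi)/k},-)$, as in the proof of Lemma~\ref{lem:lifting}) to get the isomorphism $\iota_{\fo_K}$ directly, you build $\iota$ explicitly by lifting through the square-zero extensions $R/(\varpi^{n+1})\thra R/(\varpi^n)$ and then verify that $\iota_{\fo_K}$ is an isomorphism by a Nakayama-type surjectivity argument together with a $\Tor_1$-vanishing injectivity argument. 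Both are valid; the paper's approach buys brevity, yours spells out the deformation-theoretic content. Minor point: your claim that $(p)=(\varpi^e)$ in $R$ is correct here because $E(u)$ is an Eisenstein polynomial over $W$ (so this identity already holds in the DVR $\fo_K$), but it does not follow merely from $a_e$ being a unit in the more general setting of \S\ref{cond:Breuil}, so be careful to use the DVR hypothesis.

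For the continuity assertion, your plan has a genuine confusion. You propose to lift ``inductively to each quotient $R/(J_R^n + (\varpi^n))$''; but since $(\varpi)\subset J_R$ one has $J_R^n+(\varpi^n)=J_R^n$, and $R/(\varpi)$ does not factor through $R/J_R^n$. So the only sensible reading of your step is: lift $R_0/J_{R_0}\riso R/J_R$ through $R/J_R^{n+1}\thra R/J_R^n$ and pass to the limit. That does produce a $J_{R_0}$-continuous $\iota'$, but it carries no guarantee that $\iota'\bmod(\varpi)$ equals the prescribed isomorphism $\bar\iota$ from Lemma~\ref{lem:lifting} --- this is precisely the issue the paper addresses by post-composing $\iota'$ with an automorphism of $R_0$ lifting $(\bar\iota^{-1}\circ\bar\iota')^{-1}$, using that $\bar\iota^{-1}\circ\bar\iota'\equiv\id\bmod J_{R_0}/(p)$. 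Your proposal omits this correction, so as written it does not prove the stated claim. (As it happens, there is an even shorter route you could have taken: since $J_{R_0}$ is \emph{defined} as the preimage of $J_R/(\varpi)$, any $\iota$ with $\iota\bmod(\varpi)=\bar\iota$ satisfies $\iota(J_{R_0})\subset J_R+(\varpi)=J_R$ and hence $\iota(J_{R_0}^n)\subset J_R^n$, so the $\iota$ you already built in the first part is automatically $J_{R_0}$-continuous. Either close the gap this way, or carry out the correction step as the paper does.)
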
 
When $R/(\varpi)$ is a finitely generated algebra over $k$, this lemma is a consequence of \cite[1, Exp.~III, Corollaire~6.8]{SGA}. Note that the map $R_0\ra R$ in the statement is far from unique in general.
\begin{proof}
Since $R$ is $\fo_K$-flat, the existence of the isomorphism $\iota_{\fo_K}$ follows from the uniqueness of $\fo_K$-lifts of $R/(\varpi)$ (\emph{cf.,} the proof of Lemma~\ref{lem:lifting}).

Now assume that $R$ satisfies the formally finite-type assumption~(\S\ref{cond:dJ}). It remains to construct a $J_{R_0}$-adically continuous map $\iota$.  By formal smoothness of $R_0$, the natural isomorphism $R_0/J_{R_0}\riso R/J_R$ has a continuous lift to $R_0 \ra R/J_R^{i+1}$ for any $i>0$. By taking limit we get a continuous morphism $\iota':R_0\ra R$. Let $\bar\iota':R_0/(p)\ra R/(\varpi)$ denote the map induced by $\iota'$, and let $\bar\iota:R_0/(p)\ra R/(\varpi)$ be the isomorphism given by the construction of $R_0$ in Lemma~\ref{lem:lifting}. 
By construction,  we have $\bar\iota\iv\circ\bar\iota' \equiv \id_{R_0/(p)}\bmod{J_{R_0}/(p)}$. We obtain $\iota:R_0\ra R$ by modifying $\iota '$ by an automorphism of $R_0$ which lifts $(\bar\iota\iv\circ\bar\iota')\iv$.
\end{proof}

Assume that  $R$ satisfies the formally finite-type assumption~(\S\ref{cond:dJ}), and let $(R_0,J_{R_0})$ and $E(u)\in R_0[u]$  be as in \S\ref{cond:dJ}. Choose a lift of Frobenius $\vphi:R_0\ra R_0$ as in Lemma~\ref{lem:lifting}. Let $I\subset J_{R_0}$ be a closed ideal containing $p$. (Often, we will take either $I=J_{R_0}$ or $I=(p)$.) Set $\ol R:= R_0/I$, and let $\ol R'$ be a finitely generated \'etale  $\ol R$-algebra.
\begin{lemsub}\label{lem:etloc}
With the above setting, there exists a  $I$-adic formally \'etale $R_0$-algebra $R_0'$ such that $R_0'/IR_0' \cong \ol R'$ as $\ol R$-algebras. Such an $R_0'$ is unique up to unique isomorphism, and $\vphi_{R_0}$ uniquely extends to a lift of Frobenius $\vphi_{R_0'}: R_0' \ra R_0'$.

Let $R'$ be an $I$-adic formally \'etale $R$-algebra such that $R'/IR'$ is finitely generated over $R/IR$. Let $R_0'$ be a lift of $\ol R':=R'/(\varpi,I)$. Then there exists a unique $R_0$-homomorphism $R_0'\hra R'$ which induces $R_0'[u]/E(u)\cong R'$ as $R$-algebras. 
\end{lemsub}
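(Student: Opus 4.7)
The plan is to deduce the first assertion from the standard deformation theory of \'etale algebras along infinitesimal thickenings, and to reduce the second assertion to the first using the henselian-pair structure of $(R/IR,(\varpi))$ together with the formal \'etaleness of $R_0'/R_0$.

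For the existence and uniqueness of $R_0'$, since $\ol R'$ is \'etale over $\ol R = R_0/I$ the cotangent complex $L_{\ol R'/\ol R}$ vanishes, so both the obstruction to lifting $\ol R'$ along each thickening $R_0/I^{n+1}\thra R_0/I^n$ and the indeterminacy in such a lift vanish (\emph{cf.,} \cite[Ch.~III, \S2]{Illusie:CplxCotI}). Passing to the inverse limit produces the desired $I$-adic $R_0'$. For the Frobenius lift, I would observe that $\vphi_{R_0}^*R_0' := R_0\otimes_{\vphi_{R_0},R_0}R_0'$ is again an $I$-adic formally \'etale lift of $\vphi_{\ol R}^*\ol R'$, and that the relative Frobenius $\vphi_{\ol R}^*\ol R' \ra \ol R'$ is an isomorphism because $\ol R'$ is \'etale over the characteristic-$p$ base $\ol R$. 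This isomorphism would then lift uniquely to $\vphi_{R_0}^*R_0' \riso R_0'$ by the previous step, and composing with $a\mapsto 1\otimes a$ produces $\vphi_{R_0'}$; any other Frobenius lift over $\vphi_{R_0}$ would coincide with this one by unique lifting along formally \'etale morphisms, since both reduce modulo $I$ to the $p$-th power map on $\ol R'$.

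For the second assertion, I would first apply the first assertion to $\ol R' = R'/(\varpi,I)$ (which is \'etale over $\ol R$ by \'etale base change from $R'/IR'$ over $R/IR$) to produce $R_0'$; the goal is to identify $R'$ with $R_0'\otimes_{R_0}R = R_0'[u]/E(u)$ canonically as an $R$-algebra. By formal \'etaleness of $R_0'/R_0$ for the $I$-adic topologies, constructing a continuous $R_0$-map $R_0' \ra R'$ (viewing $R'$ as an $R_0$-algebra via $R_0\ra R$) reduces to constructing a continuous $\ol R$-map $\ol R' = R_0'/I \ra R'/IR'$. To build this canonically I would use that $(R/IR,(\varpi))$ is a henselian pair: $R$ is $J_R$-adic with $(\varpi)\subseteq J_R$ (\S\ref{cond:dJ}), so $R/IR$ is complete with respect to the image of $J_R$ and hence henselian along the subideal $(\varpi)$. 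The henselian-pair equivalence for \'etale algebras then identifies the finitely generated \'etale $R/IR$-algebra $R'/IR'$ canonically with $\ol R' \otimes_{\ol R} R/IR$, and the map $a\mapsto a\otimes 1$ gives the desired $\ol R$-homomorphism. Lifting via formal \'etaleness and base-changing along $R_0\ra R$ would yield an $R$-algebra map $R_0'[u]/E(u) \ra R'$, which is an isomorphism modulo $I$ and hence is an isomorphism because both sides are $I$-adic formally \'etale over $R$ with the same mod-$I$ reduction. Injectivity of $R_0'\hra R'$ then follows from the inclusion $R_0' \hra R_0'[u]/E(u)\cong R'$ (the latter is $R_0'$-free of rank $e$), and uniqueness of the map is again the formal-\'etaleness lifting property.

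The main obstacle will be the henselian-pair step: the classical equivalence for henselian pairs is most commonly stated for \emph{finite} \'etale algebras, whereas $R'/IR'$ is a priori only finitely generated \'etale, hence quasi-finite. I would either invoke the finer statement valid for finitely presented \'etale morphisms over henselian pairs (in the spirit of Gabber--Elkik), or else pass to local henselizations of $R/IR$ at closed points of $\Spec\ol R$, where quasi-finite \'etale becomes finite \'etale, and glue the resulting canonical isomorphisms.
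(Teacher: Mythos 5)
Your treatment of the first two assertions is essentially the paper's: the paper cites \cite[Prop.~6.1, Th\'eor\`eme~6.3]{SGA} (resp.\ \cite[Prop.~5.8]{SGA}) where you invoke vanishing of $L_{\ol R'/\ol R}$ and Illusie; these are the same argument. For the Frobenius lift you derive the isomorphism $\vphi_{R_0}^*R_0'\riso R_0'$ from the uniqueness of lifts, whereas the paper lifts the relative Frobenius morphism directly via \cite[Prop.~5.8]{SGA}; both are valid and morally identical.

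For the last assertion your route diverges from the paper's, and the divergence introduces the gap you yourself flag. The paper's proof is short precisely because it does not go through $R'/IR'$ at all: it views $R'$ and $R_0'[u]/E(u)$ as formally \'etale lifts of $\ol R'$ \emph{over $R$} for the $\bigl((\varpi)+IR\bigr)$-adic topology, and applies the uniqueness statement of the first paragraph to the pair $(R,(\varpi)+IR)$ in place of $(R_0,I)$. That is pure deformation theory along nilpotent thickenings; no henselian-pair input is needed. (The only hidden point is that $R'$, given to be $IR$-adically complete, is also $((\varpi)+IR)$-adically complete; this follows from noetherianity of $R'$, which in turn follows from noetherianity of $R'/IR'$.) Your route instead tries to first manufacture an $\ol R$-algebra section $\ol R'\ra R'/IR'$, and for this you invoke a rigidity statement for the henselian pair $(R/IR,(\varpi))$. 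As you correctly note, the standard henselian-pair equivalence is for \emph{finite} \'etale algebras, while $R'/IR'$ is only finitely presented (quasi-finite) \'etale over $R/IR$. The statement you would actually need---that a finitely presented \'etale $R/IR$-algebra with a specified identification of its mod-$(\varpi)$ fibre with $\ol R'$ is canonically isomorphic to $\ol R'\otimes_{\ol R}R/IR$---is not the standard theorem and is false for arbitrary finitely presented \'etale algebras over a henselian pair; making it work requires a finer input (Elkik/Gabber--Ramero) or a delicate local-henselization-and-glue argument. This is a genuine gap as written, and it is an unnecessary detour: once you observe that the desired $R$-algebra isomorphism $R_0'[u]/E(u)\riso R'$ is exactly a matching of formally \'etale lifts of $\ol R'$, the paper's one-line appeal to uniqueness of such lifts finishes the proof.
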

It is clear that  $R_0'$ is formally \'etale over $R_0$, so $R_0'$ is necessarily formally smooth over $W$. Since $\vphi_{R_0'}$ is uniquely determined by $\vphi_{R_0}$, there will be no confusion in denoting both by $\vphi$ in this case. 

\begin{proof}
The existence and uniqueness of $R_0'$ follows from standard deformation theory (\emph{cf.,} Proposition~6.1 and Th\'eor\`eme~6.3 in \cite[1]{SGA}). 
By Proposition~5.8 in \cite[1]{SGA}, there exists a unique map $\vphi_{R_0}^*R_0' \ra R_0'$ which lifts the relative Frobenius $\vphi_{\ol R}^*\ol R' \ra  \ol R'$.
We set $\vphi_{R_0'}:R_0'\ra R_0'$ to be the composition  $R_0'\ra \vphi_{R_0}^*R_0' \ra R_0'$, where the first map is defined by $a\mapsto 1\otimes a$ for any $a\in R_0'$, and the second map is the unique lift of the relative Frobenius. 

The last assertion follows from the uniqueness up to unique isomorphism of $R$-lift of $\ol R$, which provides a unique $R$-isomorphism  $R_0'[u]/E(u)\cong R'$ that lifts the identity map on $\ol R$. 
\end{proof}

\section{Classification of $p$-divisible groups by Dieudonn\'e crystals}\label{sec:classif}
In this section, we introduce a ``relative'' version of strongly divisible modules, and prove a generalisation of the Breuil classification of  $p$-divisible group over a base ring $R$ which satisfies the formally finite-type assumption~(\S\ref{cond:dJ}) (and some partial result for more general kind of $R$). We follow Kisin's strategy \cite[\S{A}]{kisin:fcrys} using the main results of \cite{dejong:crysdieubyformalrigid, Berthelot-Messing:DieudonneIII} and Grothendieck-Messing deformation theory \cite{messingthesis}. Along the way, we give a very brief review of crystalline Dieudonn\'e theory.

\subsection{Review: crystalline Dieudonn\'e theory}
We follow the notation and convention from  \cite{Berthelot-Breen-Messing:DieudonneII, dejong:crysdieubyformalrigid}. Let $\XX$ be a formal scheme over $\Spf \Zp$, and let $\ol\XX:=\XX\times_{\Spf \Zp}\Spec \Fp$ denote  the closed formal subscheme of $\XX$ defined by the ideal $(p)$. For example, if $\XX = \Spf (R,(p))$ then $\ol\XX = \Spec R/(p)$. Let $(\XX/\Zp)_{\CRIS}$ and $(\ol\XX/\Zp)_{\CRIS}$ denote the big fppf-crystalline topoi. 

Let $G$ be a $p$-divisible group over $\XX$ and $\ol G:=G\times_\XX \ol\XX$. One contravariantly associates to $G$ a crystal $\DD^*(G)$ of finite locally free $\cO_{\XX/\Zp}$-module in such a way that commutes with base change. See \cite{messingthesis}, \cite{Mazur-Messing}, or \cite{Berthelot-Breen-Messing:DieudonneII} for the construction (\emph{cf.,} \cite[Definition~2.4.2(b)]{dejong:crysdieubyformalrigid}). In particular, since $p=0$ in $\ol\XX$  one obtains $F:\vphi^*\DD^*(\ol G)\ra\DD^*(\ol G)$ and $V:\DD^*(\ol G)\ra\vphi^*\DD^*(\ol G)$ from the relative Frobenius and Verschiebung of $\ol G$, and we have $F\circ V = p$ and $V\circ F = p$.

Let $i_{\CRIS}:=(i_{\CRIS,*},i_{\CRIS}^*):(\ol\XX/\Zp)_{\CRIS}\ra(\XX/\Zp)_{\CRIS}$ be the morphism of topoi induced from the closed immersion $\ol\XX\hra \XX$. Then $i_{\CRIS,*}$ and $i_{\CRIS}^*$ induce quasi-inverse exact equivalences of categories between the categories of crystals of finitely presented (respectively, finite locally free) $\cO_{\ol\XX/\Zp}$-modules and $\cO_{\XX/\Zp}$-modules. (This follows from  \cite[Lemma~2.1.4]{dejong:crysdieubyformalrigid}, noting that there is a natural isomorphism $i_{\CRIS,*}\cO_{\ol\XX/\Zp} \cong \cO_{\XX/\Zp}$ as in \cite[\S5.17.3]{Berthelot-Ogus}.) Since the formation of $\DD^*$ commutes with base change, we obtain a natural isomorphism $\DD^*(G)\cong i_{\CRIS,*}(\DD^*(\ol G))$. 

Let $\DD^*(G)_{\XX}$ denote the locally free $\cO_\XX$-module obtained from the push-forward of $\DD^*(G)$ to the Zariski topos. The construction of $\DD^*(G)$ also provides the following functorial exact sequence of vector bundles which commutes with base change:
\begin{equation}\label{eqn:HodgeFiltr}
0\ra \llie(G)^*\ra \DD^*(G)_{\XX} \ra  \llie(G^\vee) \ra 0, 
\end{equation}
where $G^\vee$ is the dual $p$-divisible group.  This exact sequence defines the \emph{Hodge filtration} $\llie(G)^*\subseteq\DD^*(G)_{\XX}$ for $G$. (There are two possible ways to define the Hodge filtration -- one via universal vector extension in \cite{messingthesis}, and the other via \cite[Corollaire~3.3.5]{Berthelot-Breen-Messing:DieudonneII} -- and they coincide by \cite[Th\'eor\`eme~3.1.7]{Berthelot-Messing:DieudonneIII}.)

\begin{defnsub}\label{def:DieudonneCrys}
A \emph{Dieudonn\'e crystal} over $\XX$ is a quadruple $(\E, F, V,\Fil^1\E_\XX)$, where
\begin{enumerate}
\item\label{def:DieudonneCrys:locfree} $\E$ is a crystal of finite locally free $\cO_{\XX/\Zp}$-module.
\item\label{def:DieudonneCrys:FV} Let $\ol\E:=\E|_{\ol\XX} (=i_{\CRIS}^*\E)$, and let $\vphi:\ol\XX\ra\ol\XX$ denote the absolute Frobenius morphism. Then $F:\vphi^*\ol\E\ra\ol\E$ and $V:\ol\E\ra\vphi^*\ol\E$ are morphisms which satisfy  $F\circ V = p$ and $V\circ F = p$.
\item\label{def:DieudonneCrys:HodgeFiltr} Let $\E_\XX$  denote the  locally free $\cO_\XX$-module obtained from the push-forward of $\E$ to the Zariski topos. 
Then $\Fil^1\E_\XX \subseteq \E_\XX$ is a direct factor as an $\cO_\XX$-submodule such that $\vphi^*(\Fil^1\E_{\XX}|_{\ol\XX}) = \ker [F:\vphi^*(\E_\XX|_{\ol\XX}) \ra \E_\XX|_{\ol\XX}]$. We call $\Fil^1\E_\XX \subset \E_\XX$ the \emph{Hodge filtration}.
\end{enumerate}
\end{defnsub}
If $\XX=\ol\XX$ (i.e., $p\cO_{\XX}=0$) then (\ref{def:DieudonneCrys:HodgeFiltr}) in Definition~\ref{def:DieudonneCrys} is automatic from the other conditions (\emph{cf.,} \cite[Proposition~2.5.2]{dejong:crysdieubyformalrigid}). In particular, our definition is compatible with \cite[Definitions~2.3.4, 2.4.2(b)]{dejong:crysdieubyformalrigid} in this case. 

\begin{lemsub}\label{lem:DC}
 For any $p$-divisible group $G$ over $\XX$, the  crystal $\DD^*(G)$ associated to $G$ has a natural structure of Dieudonn\'e crystal over $\XX$.
\end{lemsub}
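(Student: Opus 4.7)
The plan is to verify each of the three conditions in Definition~\ref{def:DieudonneCrys} for $\E := \DD^*(G)$, essentially by assembling standard facts recalled in the review immediately preceding the lemma.

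For condition~(\ref{def:DieudonneCrys:locfree}), finite local freeness of $\DD^*(G)$ as a crystal of $\cO_{\XX/\Zp}$-modules is built into the construction of the contravariant Dieudonn\'e crystal (see \cite{Berthelot-Breen-Messing:DieudonneII}, as recalled in the paragraph after the review's first display). For condition~(\ref{def:DieudonneCrys:FV}), the morphisms $F$ and $V$ are defined, as noted in the review, from the relative Frobenius $\mathrm{Fr}_{\ol G/\ol\XX}:\ol G\to\vphi^*\ol G$ and the Verschiebung $\mathrm{Ver}_{\ol G/\ol\XX}:\vphi^*\ol G\to\ol G$ via functoriality of $\DD^*$; the identities $F\circ V=p$ and $V\circ F=p$ then follow directly from $\mathrm{Ver}\circ\mathrm{Fr}=[p]_{\ol G}$ and $\mathrm{Fr}\circ\mathrm{Ver}=[p]_{\vphi^*\ol G}$ together with the fact that $[p]_G$ acts as multiplication by $p$ on $\DD^*(G)$.

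For condition~(\ref{def:DieudonneCrys:HodgeFiltr}), I would take $\Fil^1\DD^*(G)_\XX := \llie(G)^*$ using the exact sequence (\ref{eqn:HodgeFiltr}). This is an $\cO_\XX$-direct factor because the quotient $\llie(G^\vee)$ is a locally free $\cO_\XX$-module (both $\llie(G)^*$ and $\llie(G^\vee)$ being locally free of complementary rank), so the sequence is locally split. The remaining content, which is the main point to check, is the compatibility
\[
\vphi^*\bigl(\Fil^1\DD^*(G)_\XX|_{\ol\XX}\bigr)=\ker\bigl[F:\vphi^*\DD^*(\ol G)_{\ol\XX}\to\DD^*(\ol G)_{\ol\XX}\bigr].
\]
Since the Hodge filtration on $\DD^*(G)_\XX$ pulls back to the Hodge filtration on $\DD^*(\ol G)_{\ol\XX}$ by the base-change compatibility of (\ref{eqn:HodgeFiltr}), this reduces to the analogous statement over $\ol\XX$, where $p=0$. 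There it is a classical fact in crystalline Dieudonn\'e theory: the Hodge filtration on $\DD^*(\ol G)_{\ol\XX}$ coincides with the image of $V:\DD^*(\ol G)_{\ol\XX}\to\vphi^*\DD^*(\ol G)_{\ol\XX}$, equivalently with $\ker F$ after applying $\vphi^*$. This is \cite[Corollaire~3.3.5]{Berthelot-Breen-Messing:DieudonneII}, and compatibility with the universal-vector-extension definition of the Hodge filtration (which is what (\ref{eqn:HodgeFiltr}) encodes via \cite{messingthesis}) is \cite[Th\'eor\`eme~3.1.7]{Berthelot-Messing:DieudonneIII}, both of which are explicitly cited in the review preceding the lemma.

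The only non-routine piece is condition~(\ref{def:DieudonneCrys:HodgeFiltr}), and for that one is essentially just quoting the characterisation of $\ker F$ in characteristic~$p$; the rest is functoriality plus the two equalities $FV=VF=p$ already recorded in the review. No new argument beyond citing \cite{Berthelot-Breen-Messing:DieudonneII, Berthelot-Messing:DieudonneIII} is needed.
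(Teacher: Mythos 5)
Your argument is correct and follows essentially the same route as the paper: observe that conditions (1) and (2) are built into the construction, and reduce condition (3) to the characteristic $p$ statement that the Hodge filtration agrees with the $F$-kernel, which is then sourced to Berthelot--Breen--Messing. The paper's own proof compresses this to a single citation of \cite[Proposition~4.3.10]{Berthelot-Breen-Messing:DieudonneII} in place of your pair \cite[Corollaire~3.3.5]{Berthelot-Breen-Messing:DieudonneII} and \cite[Th\'eor\`eme~3.1.7]{Berthelot-Messing:DieudonneIII} (and note one small slip of phrasing: the image of $V$ lives in $\vphi^*\DD^*(\ol G)_{\ol\XX}$, so the correct identification is $\vphi^*(\Fil^1)=\mathrm{im}(V)=\ker F$, as your subsequent clause makes clear).
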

\begin{proof} It suffices to show that $\Fil^1\DD^*(\ol G)_{\ol\XX}$ (the kernel of $F$ as in Definition~\ref{def:DieudonneCrys}(\ref{def:DieudonneCrys:HodgeFiltr})) coincides with the Hodge filtration for $\ol G$, which follows from  \cite[Proposition~4.3.10]{Berthelot-Breen-Messing:DieudonneII}. 
\end{proof}

\begin{rmksub}
There is an obvious notion of short exact sequences of Dieudonn\'e crystals. It follows from  \cite[Proposition~4.3.1]{Berthelot-Breen-Messing:DieudonneII} that the crystalline Dieudonn\'e functor $G\mapsto \DD^*(G)$ is exact over a base where $p$ is locally topologically nilpotent.

Also, one can describe the effect of duality on $\DD^*(G)$ \cite[\S5.3]{Berthelot-Breen-Messing:DieudonneII}; namely, $\DD^*(G^\vee)$ is the $\cO_{\XX/\Zp}$-linear dual of $\DD^*(G)$, $F$ (respectively, $V$) on $\DD^*(G^\vee)$ is induced from $V$ (respectively, $F$) on $\DD^*(G)$, and $\Fil^1\DD^*(G^\vee)_\XX$ is the annihilator of $\Fil^1\DD^*(G)_\XX$ under the natural duality pairing.
\end{rmksub}

\begin{defnsub}\label{def:FormalUnip}
We say that a Dieudonn\'e crystal $\E$ is \emph{$F$-nilpotent} if $F^n:\vphi^{n*}\E\ra\E$ factors through $p\E$. We similarly define $V$-nilpotence.
\end{defnsub}
Recall that a $p$-divisible group $G$ over $R$ is called \emph{formal} if $G[p^n]$ are infinitesimal thickening of $\Spec R$ as a scheme. If $pR=0$, then $G$ is formal if and only if $G = \varinjlim G[F^n]$ where $G[F^n]$ is the kernel of the $n$th iterated relative Frobenius morphism $F^n:G\ra \vphi^{n*}G$. A $p$-divisible group $G$ is called \emph{unipotent} if its dual is formal. Note that a $p$-divisible group is formal or unipotent if and only if all of its geometric fibres are formal or unipotent, respectively. The following lemma is now straightforward by classical Dieudonn\'e theory:
\begin{lemsub}\label{lem:FormalUnip}
A $p$-divisible group $G$ over $\XX$ is formal (respectively, unipotent) if and only if $\DD^*(G)$ is $F$-nilpotent (respectively, $V$-nilpotent).
\end{lemsub}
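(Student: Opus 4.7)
The plan is to translate formality of $G$ into nilpotence of $F$ on the Dieudonn\'e crystal $\DD^*(G)$ by working with the finite subgroup schemes $G[p^n]$ and using the exactness and duality properties of $\DD^*$. By Cartier duality for $p$-divisible groups together with the compatibility of $\DD^*$ with duality (which interchanges $F$ and $V$; see \cite[\S5.3]{Berthelot-Breen-Messing:DieudonneII}), the unipotent/$V$-nilpotent case reduces to the formal/$F$-nilpotent case, so I would only treat the latter. Both conditions depend only on $\ol G$ and $\ol\E = i_{\CRIS}^*\DD^*(G)$, so I reduce to $\XX = \ol\XX$.

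The key bridge is the following elementary fact about a finite locally free commutative $p$-power-order group scheme $H$ over a base with $p=0$: $H$ is infinitesimal if and only if some iterated relative Frobenius $F_H^m:H\to\vphi^{m*}H$ vanishes. This follows from a direct computation on the augmentation ideal of the Hopf algebra of $H$, using that a finitely generated ideal consisting of $p^m$-th-power-nilpotent elements is nilpotent. Applied to $H = G[p^n]$ for each $n$, it gives: $G$ is formal iff for each $n$ there exists $m=m(n)$ with $F_G^m = 0$ on $G[p^n]$.

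I would then apply the contravariant exactness of $\DD^*$ on $p$-divisible groups (\cite[Proposition~4.3.1]{Berthelot-Breen-Messing:DieudonneII}) to the short exact sequence $0\to G[p^n]\to G \xrightarrow{p^n} G \to 0$ to identify $\DD^*(G[p^n]) \cong \DD^*(G)/p^n\DD^*(G)$ compatibly with Frobenius. Under this identification, $F_G^m = 0$ on $G[p^n]$ becomes the statement that $F^m:\vphi^{m*}\DD^*(G) \to \DD^*(G)$ factors through $p^n\DD^*(G)$. So formality of $G$ is equivalent to: for every $n$ there is some $m(n)$ with $F^{m(n)}$ factoring through $p^n\DD^*(G)$.

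Finally, this condition is equivalent to $F$-nilpotence as in Definition~\ref{def:FormalUnip}. One implication is trivial (take $n=1$). For the converse, if $F^{n_0}$ factors as $p \cdot h$ for some $\vphi^{n_0}$-semilinear map $h$, then iterating and using semilinearity of each factor yields $F^{kn_0}$ factoring through $p^k\DD^*(G)$ for every $k$. The main obstacle is the bridge step, i.e., rigorously identifying $\DD^*(G[p^n]) \cong \DD^*(G)/p^n\DD^*(G)$ with its Frobenius structure from the exactness of $\DD^*$, as this is where the semi-linear and crystal-theoretic subtleties enter; but with that in hand, the remainder is a direct iteration argument and does not require reducing to geometric fibres.
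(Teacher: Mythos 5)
Your duality reduction, the reduction to $\XX = \ol\XX$, the observation that an infinitesimal finite locally free group scheme of $p$-power order is characterised by the vanishing of some iterated relative Frobenius, and the identification $\DD^*(G[p^n]) \cong \DD^*(G)/p^n\DD^*(G)$ from exactness of $\DD^*$ are all correct, and the forward implication (formal $\Rightarrow$ $F$-nilpotent) goes through on this basis. The gap is in the converse, and it is not where you locate the ``main obstacle.''

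To pass from ``$F^m$ is zero on $\DD^*(G[p^n])$'' back to ``$F^m_{G[p^n]}=0$ as a homomorphism of group schemes'' requires faithfulness of the crystalline Dieudonn\'e functor on finite locally free group schemes over $\ol\XX$; equivalently, after writing $F^m = p^n\cdot h$ on $\DD^*(G)$, you need full faithfulness on $p$-divisible groups so that $h = \DD^*(g)$ for some $g:G\ra\vphi^{m*}G$ and then $F^m_G = g\circ p^n$. The lemma is stated over an arbitrary formal $\Zp$-scheme $\XX$, and at this level of generality no such (full) faithfulness is asserted or true; it only becomes available under hypotheses such as the normality assumption~(\S\ref{cond:BM}) or the formally finite-type assumption~(\S\ref{cond:dJ}), neither of which is in force here. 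This is precisely why the paper records, immediately before the lemma, that formality and unipotence can be checked on geometric fibres: over an algebraically closed field the crystalline Dieudonn\'e functor is an anti-equivalence, so the faithfulness you need is free there, and one transfers the conclusion back along the fibrewise criterion. By insisting that your argument ``does not require reducing to geometric fibres'' you have discarded exactly the device that makes the reverse implication legitimate over a general base. (Your argument as written would be fine if $\XX$ were assumed to satisfy \S\ref{cond:dJ}, but that is not what the lemma says.) The step you single out as the main subtlety --- identifying $\DD^*(G[p^n])$ with $\DD^*(G)/p^n\DD^*(G)$ compatibly with $F$ --- is on the contrary unproblematic given the exactness of $\DD^*$ and the base change compatibilities already in place; the real obstruction is the faithfulness above.
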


\subsection{Filtered Frobenius modules}\label{subsec:connection}
One can understand a crystal (over an affine formal scheme) as a suitable module with connection, which allows us to describe Dieudonn\'e crystals by some  concrete objects. 

\begin{defnsub}[\emph{Cf.} {\cite[\S2.1]{Lau:2010fk}}]\label{def:Frame}
A  \emph{frame} is a tuple $(\wh D, \Fil^1\wh D, R, \vphi, \vphi_1)$ where $R=\wh D/\Fil^1\wh D$, $\vphi:\wh D\ra\wh D$ is a lift of Frobenius,  $\vphi_1 = \vphi/p:\Fil^1\wh D\ra \wh D$ is such that the ideal generated by $\vphi_1(\Fil^1\wh D)$ is the unit ideal, and $p\wh D + \Fil^1\wh D$ is contained in the Jacobson radical of $\wh D$. In this case, we will focus on the case when $\wh D$ is a torsion-free $p$-adic ring and $\Fil^1\wh D$ has a ($p$-adically continuous) divided power structure.

A morphism of frames are morphism of rings that respects all the structures. (There is more general notion of morphisms that allow $\vphi_1$'s to differ by a certain unit multiple; \emph{cf.} \cite[\S2.1]{Lau:2010fk}, we will not use this more general notion.)
 \end{defnsub}
 
\begin{rmksub}\label{rmk:whD}
Let $R$ be a $p$-adic ring, and choose a $p$-adic $\Zp$-flat 
algebra $A$, such that  $A/(p)$ locally admits a finite $p$-basis and $R\cong A/I$ for some ideal $I$. We also choose a lift of Frobenius $\vphi:A\ra A$. Let $\wh D$ denote  the $p$-adically completed divided power envelope of $A$ with respect to $I$.  Let $\Fil^1 \wh D$ denote the ideal topologically generated by the divided powers of elements in $I$. Clearly, $p\wh D+\Fil^1\wh D$ is contained in the Jacobson radical, as $\wh D$ is $p$-adic and the image of $\Fil^1\wh D$ in $\wh D/(p)$ is a nil-ideal (being a divided power ideal).

We \emph{assume} that $\wh D$ is $\Zp$-flat. This is satisfied if $\wh D = A$ (for example, when $R=A/(p)$). Another example will be given in \S\ref{subsec:settingBr}.
Note that $\wh D$ may fail to be $\Zp$-flat -- such an example is given in the proof of \cite[Proposition~A.2]{MR700767}.  

It easily follows that $\vphi:A\ra A$ uniquely extends to $\vphi:\wh D\ra \wh D$, and $\vphi(\Fil^1\wh D)\subset p\wh D$. (To see that $\vphi:\wh D\ra\wh D$ is well defined, observe that $\wh D$ is also a $p$-adically completed divided power envelope for $(p,I)\subset A$, and $\vphi(p,I)\subset (p,I)$. To see $\vphi(\Fil^1\wh D)\subset p\wh D$, just note that $\vphi:\wh D/(p)\ra \wh D/(p)$ is the $p$-th power map so it annihilates $\Fil^1\wh D/(p)$.) We set $\vphi_1:=\frac{\vphi}{p}:\Fil^1\wh D\ra \wh D$, which is well-defined as $\wh D$ is $\Zp$-flat. Although  $\vphi_1(\Fil^1\wh D)$ may not generate the unit ideal in general, this is often satisfied in practice -- indeed, one can often find an element $x\in\Fil^1\wh D$ such that $\vphi_1(x)$ is a unit. If all of these are satisfied, we obtain a frame $(\wh D, \Fil^1\wh D, R, \vphi, \vphi_1)$.

For $\wh D$ constructed from $A$ as above, let $d_{\wh D}:\wh D \ra \wh D\otimes_{A}\wh\Omega_{A}$ denote the $p$-adically continuous derivation obtained by naturally extending the universal continuous derivation $d_{A}:A\ra\wh\Omega_{A}$ by setting $d_{\wh D}(s^{[n]}):= s^{[n-1]}d_{A}(s)$ for any $s\in I$ and $n$. (Here, $s^{[n]}:=\frac{s^n}{n!}$.) 
\end{rmksub}

\begin{defnsub}\label{def:DieudonneModwHF}
For a frame $\wh D$, let $\DMFq$ denote the category of tuples $(\M, \Fil^1\M, \vphi_\M, \vphi_1)$ where 
\begin{enumerate}
\item $\M$ is a finite projective $\wh D$-module; 
\item $\Fil^1\M\subset \M$ is a $\wh D$-submodule containing $(\Fil^1 \wh D)\M$, and such that $\M/\Fil^1\M$ is projective over $R = \wh D/\Fil^1\wh D$;
\item $\vphi_\M$ is a $\vphi$-linear endomorphism of $\M$ such that $(1\otimes\vphi_\M)(\vphi^*\Fil^1\M) = p\M$;
\item $\vphi_1 = \vphi_\M/p:\Fil^1\M\ra\M$;
\end{enumerate}
If $\wh D$, constructed as in Remark~\ref{rmk:whD}, is $\Zp$-flat, 
then we let  $\DMF$ denote the category of tuples $(\M, \Fil^1\M, \vphi_\M, \vphi_1,\nabla_\M)$ where $(\M, \Fil^1\M, \vphi_\M, \vphi_1)\in\DMFq$ and $\nabla_\M:\M\ra\M\otimes_{A} \wh\Omega_{A}$ is a topologically quasi-nilpotent integrable connection over $d_{\wh D}$ which commutes with $\vphi_\M$. (\emph{Cf.} \cite[Remarks~2.2.4]{dejong:crysdieubyformalrigid}.)
\end{defnsub}

\begin{rmksub}\label{rmk:psi}
Since $\wh D$ is $\Zp$-flat, one can find a unique injective morphism $\psi_\M:\M\ra\vphi^*\M$ such that $(1\otimes\vphi_\M)\circ\psi_\M = p\id_\M$ and $\psi_\M\circ(1\otimes\vphi_\M) = p\id_{\vphi^*\M}$.
\end{rmksub}

Let $\E$ be a Dieudonn\'e crystal over $\Spf (R,(p))$, and set $\M:=\E(\wh D)$. We define:
\begin{enumerate}
\item The  linearisation  $1\otimes\vphi_{\M}$ of $\vphi_{\M}$ is induced from  $F:\vphi^*(\ol\E) \ra \ol\E$, where $\ol\E$ is the pull-back of $\E$ over $\Spec R/(p)$;
\item $\Fil^1\M \subset \M$ is the preimage of the Hodge filtration $\Fil^1 \E(R) \subset \E(R)$ by the natural projection $\M \thra \M/(\Fil^1\wh D)\M \cong \E(R)$;
\end{enumerate}

\begin{propsub}\label{prop:FiltMod}
For $\M$ associated to a Dieudonn\'e crystal $\E$ as above, we have $(\M,\Fil^1\M, \vphi_\M,\vphi_\M/p) \in \DMFq$.
If furthermore $\wh D$ is constructed as in Remark~\ref{rmk:whD} and is $\Zp$-flat,  then there exists a natural connection $\nabla_{\M}:\M\ra \M\otimes_{A} \wh\Omega_{A}$, so that $\E\rightsquigarrow(\M,\Fil^1\M, \vphi_\M,\vphi_\M/p,\nabla_\M)$ induces an exact equivalence of categories from the category of Dieudonn\'e crystals over $\Spf (R,(p))$ to $\DMF$.
\end{propsub}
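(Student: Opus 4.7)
The plan is to invoke the standard equivalence between crystals of finite locally free $\cO_{R/\Zp}$-modules on the big crystalline site of $\Spec R/(p)$ and finite projective $\wh D$-modules equipped with an integrable topologically quasi-nilpotent connection. This equivalence is available because, when $\wh D$ is constructed as in Remark~\ref{rmk:whD}, the ring $A$ is formally smooth over $\Zp$ with $A/(p)$ locally admitting a finite $p$-basis, so $\wh D \thra R/(p)$ is a PD formally-smooth thickening in the sense required by the classical crystalline formalism (compare the version in \cite[\S2.2]{dejong:crysdieubyformalrigid}). With this equivalence in hand, it remains to track the Frobenius structure and the Hodge filtration in both directions.

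In the forward direction, $\M := \E(\wh D)$ is finite projective over $\wh D$ and, under the additional hypothesis, inherits a canonical integrable topologically quasi-nilpotent connection $\nabla_\M$ from the crystal structure on $\E$. The submodule $\Fil^1\M$, being the preimage of $\Fil^1\E(R) \subseteq \E(R) = \M/(\Fil^1\wh D)\M$, visibly contains $(\Fil^1\wh D)\M$ and has projective quotient by Definition~\ref{def:DieudonneCrys}(\ref{def:DieudonneCrys:HodgeFiltr}). Viewing $\wh D$ as a PD thickening of $R/(p)$ along $\Fil^1\wh D + p\wh D$, the Frobenius lift $\vphi:\wh D\to\wh D$ is a PD endomorphism covering the absolute Frobenius on $R/(p)$, so functoriality of $\ol\E = i_{\CRIS}^*\E$ yields a $\vphi$-semilinear endomorphism $\vphi_\M:\M\to\M$ whose linearisation coincides with $F$ evaluated at $\wh D$. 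The inclusion $\vphi_\M(\Fil^1\M)\subseteq p\M$ follows by reducing mod $\Fil^1\wh D + p\wh D$, using $\vphi^*(\Fil^1 \E_\XX|_{\ol\XX}) = \ker F$ together with $\vphi(\Fil^1\wh D)\subseteq p\wh D$. The axiom $(1\otimes\vphi_\M)(\vphi^*\Fil^1\M) = p\M$ is then obtained by evaluating the Verschiebung $V:\ol\E\to\vphi^*\ol\E$ at $\wh D$, producing $\psi:\M\to\vphi^*\M$ with $(1\otimes\vphi_\M)\circ\psi = p\,\id_\M$, and showing $\psi(\M)\subseteq\vphi^*\Fil^1\M$ as the integral lift of the characteristic-$p$ inclusion $\im V\subseteq \ker F = \vphi^*(\Fil^1 \E_\XX|_{\ol\XX})$ (which holds because $FV=0$ on $\ol\XX$); the lift is furnished by a Nakayama argument exploiting projectivity of $\Fil^1\M$ and $\M$ over $\wh D$.

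For the inverse functor, given $(\M,\Fil^1\M,\vphi_\M,\vphi_1,\nabla_\M)\in \DMF$, the pair $(\M,\nabla_\M)$ reconstructs a crystal $\E$ of finite locally free $\cO_{R/\Zp}$-modules via the underlying equivalence; take $F$ to be the linearisation of $\vphi_\M$ modulo $p$. The axiom on $\Fil^1\M$ combined with a local rank computation and Nakayama implies that $1\otimes\vphi_1:\vphi^*\Fil^1\M\riso\M$ is an isomorphism, so composing its inverse with the inclusion $\vphi^*\Fil^1\M\hookrightarrow\vphi^*\M$ produces the map $\psi_\M$ of Remark~\ref{rmk:psi}, whose reduction mod $p$ serves as the Verschiebung $V$ on $\ol\E$; the relations $FV = VF = p$ and the identity $\vphi^*(\Fil^1\E_\XX|_{\ol\XX}) = \ker F$ follow directly. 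The two functors are mutually quasi-inverse because both the connection-equivalence and the extra data (filtration, Frobenius, Verschiebung) are matched by construction, and exactness of the whole equivalence reduces to exactness of the underlying equivalence between crystals and modules with connection.

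The main obstacle I anticipate is the careful verification of the integral axiom $(1\otimes\vphi_\M)(\vphi^*\Fil^1\M) = p\M$ in the forward direction, which hinges on lifting $\im V\subseteq\vphi^*(\Fil^1\E_\XX|_{\ol\XX})$ from characteristic $p$ to an integral statement over $\wh D$; once this lift is secured by Nakayama, the remainder of the argument is formal and closely parallels the absolute case treated in \cite[\S2.2]{Lau:2010fk} and in the classical crystalline Dieudonn\'e literature.
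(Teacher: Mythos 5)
Your proposal follows the same strategy as the paper's own proof: reduce to the standard equivalence between crystals over the PD base $\wh D$ and finite projective $\wh D$-modules with topologically quasi-nilpotent integrable connection, then track the Frobenius and Hodge filtration in both directions, using the Verschiebung to produce $\psi_\M$ and the crucial identity $\ker F = \im V = \vphi^*\Fil^1\E(R/p)$ over $\ol\XX$ to verify the axiom $(1\otimes\vphi_\M)(\vphi^*\Fil^1\M) = p\M$. The only organizational difference is that the paper first handles the case where $p$ is nilpotent in $R$ (which establishes the $\DMFq$ claim), then folds in the connection, and finally passes to a general $p$-adic $R$ via the system $\{\M_n := \E|_{R/(p^n)}(\wh D)\}_n$ and the isomorphism $\M \cong \varprojlim_n \M_n$; your sketch should at least flag this last reduction, since the crystalline equivalence is stated for schemes where $p$ is nilpotent rather than for $\Spf(R,(p))$ directly, but it is the same routine step the paper carries out.
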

\begin{proof}
Let us first settle the proposition when $p$ is  nilpotent in $R$ (i.e., when $\Spf (R,(p)) = \Spec R$). Note that $V:\ol\E\ra\vphi^*\ol\E$ induces a $\wh D$-linear map $\psi_\M:\M\ra\vphi^*\M$ such that $(1\otimes\vphi_\M)\circ\psi_\M = p\id_\M$ and $\psi_\M\circ(1\otimes\vphi_\M) = p\id_{\vphi^*\M}$.
By Definition~\ref{def:DieudonneCrys}(\ref{def:DieudonneCrys:HodgeFiltr}) we have $\vphi^*\Fil^1\E(R/p) =\ker (F) = \im(V)$, so it follows that $(1\otimes\vphi_\M)(\vphi^*\Fil^1\M) \subseteq p\M$. In particular, the image of $\psi_\M$ is contained in the image of $\vphi^*\Fil^1\M$ in $\vphi^*\M$. So from $(1\otimes\vphi_\M)\circ\psi_\M = p\id_\M$, it follows that $(1\otimes\vphi_\M)(\vphi^*\Fil^1\M) = p\M$. The rest of the conditions are clear.

Now let $\wh D$ be the $p$-adically completed PD envelope of $A\thra R$, where $A/(p)$ locally has a finite $p$-basis. 
Then, $\E\rightsquigarrow\E(\wh D)$ induces a natural equivalence of categories between crystals and certain modules with connection. (Indeed, by the proof of  \cite[Proposition~2.2.2]{dejong:crysdieubyformalrigid} it suffices to handle the case when $R=A/(p)$, in which case we may apply \cite[Proposition~1.3.3]{Berthelot-Messing:DieudonneIII} for some Zariski cover of $\Spec A/(p)$.) We also assume that $\wh D$ is $\Zp$-flat. Then one can naturally view $\M:=\E(\wh D)$ as an object of $\DMF$ for a Dieudonn\'e crystal $\E$ over $R$. Conversely, by inverting the above equivalence of categories $\E\rightsquigarrow \E(\wh D)$, given $\M\in\DMF$ one obtains a crystal $\E_\M$ over $R$ with a horizontal isomorphism $\M\cong \E_\M(\wh D)$, and we have  $F:\vphi^*\ol\E_\M\ra\ol\E_\M$ and $V:\ol\E_\M\ra\vphi^*\ol\E_\M$ which are induced from $1\otimes\vphi_\M$ and $\psi_\M$. (Here $\psi_\M$ is defined in Remark~\ref{rmk:psi}.) 
The Hodge filtration is given by $\Fil^1\E_\M(R) = \Fil^1\M / (\Fil^1\wh D)\M$, which satisfies Definition~\ref{def:DieudonneCrys}(\ref{def:DieudonneCrys:HodgeFiltr}). So we obtain a quasi-inverse $\M\rightsquigarrow\E_\M$ of the functor $\E\rightsquigarrow \E(\wh D)$. This proves the proposition when $p$ is locally nilpotent in $R$.

Now, let $R$ be any $p$-adic ring, and set $\M:=\E(\wh D)$ for a Dieudonn\'e crystal $\E$ over $\Spf (R,(p))$. Note that all the extra structures on $\M$ except $\Fil^1\M$ depend only on $\ol\E$, which is a Dieudonn\'e crystal over $\Spec R/(p)$. Set $\M_n:=\E|_{R/(p^n)}(\wh D)$ equipped with all the extra structure as in the proposition. Then the natural reduction map $\M \ra \M_n$ is an $\wh D$-isomorphism compatible with Frobenius structures and connections if they are defined, and identifies $\Fil^1\M_n$ with $p^n\M+ \Fil^1\M$. Therefore, we have $\M\riso \varprojlim_n\M_n$ respecting all the extra structures, and the proposition for $R$ is deduced from the proposition for $R/(p^n)$.
\end{proof}

\begin{defnsub}\label{def:vphiNilp}
Let $\wh D$ be a  frame (Definition~\ref{def:Frame}), and let $\M$ be an object of either $\DMFq$ or $\DMF$.
Then $\M$ is called \emph{$\vphi$-nilpotent} if $\vphi_{\M}^n(\M)\subset p\M$ for some $n\gg0$. Similarly, $\M$ is called \emph{$\psi$-nilpotent} if $\psi_{\M}^n(\M)\subset p(\vphi^{n*}\M)$ for $n\gg0$ where $\psi_{\M} = p(1\otimes\vphi_{\M})\iv:\M\ra\vphi^*\M$. Let $\DMFq^{\vphi\nilp}$ and $\DMFq^{\psi\nilp}$ respectively denote the full subcategories of $\vphi$-nilpotent and $\psi$-nilpotent objects. We similarly define $\DMF^{\vphi\nilp}$ and $\DMF^{\psi\nilp}$.
\end{defnsub}

Note that if $\E$ is an $F$-nilpotent (respectively, $V$-nilpotent) Dieudonn\'e crystal, then $\E(\wh D)$ is $\vphi$-nilpotent (respectively, $\psi$-nilpotent). The converse holds if $\wh D$ is constructed as in Remark~\ref{rmk:whD}. 

\begin{exasub}\label{exa:S} We give an example where we can make ``nice'' choices of $A$ and $\wh D$.
Let $\bar A$ be a $\Fp$-algebra which locally admits a finite $p$-basis. Then by Lemma~\ref{lem:lifting} there is a $p$-adic $\Zp$-flat lift $A$ with a lift of Frobenius $\vphi$. In this case, we may set $\wh D = A $ and $\Fil^1\wh D = (p)$. Then it follows from \cite[Proposition~2.5.2]{dejong:crysdieubyformalrigid} that the objects in $\DMF$ are precisely \emph{crystalline Dieudonn\'e modules} defined in \cite[Definition~2.3.4]{dejong:crysdieubyformalrigid}. 
Note that  $p\in\Fil^1\wh D$ and $\vphi_1(p)=1$, so $(R_0, pR_0, R_0/(p), \vphi,\vphi_1)$ is a frame and $\PMF{A}$ makes sense.
\end{exasub}

\begin{rmksub}[lci case]\label{rmk:Slci} We present more examples when $\wh D$ is a $p$-adic flat $\Zp$-algebra. Assume that $R$ is a noetherian $\Fp$-algebra, and all the local rings are complete intersection (i.e., $R$ is locally complete intersection). Assume also that there exists a $p$-adic $\Zp$-flat noetherian ring $A$ such that $A/(p)$ locally admits a finite $p$-basis and there is a surjective map $A\thra R$. Let $\wh D$ be the $p$-adic PD envelop of $A\thra R$ as constructed in Remark~\ref{rmk:whD}. 

We now claim that such $\wh D$ is $\Zp$-flat. For any maximal ideal $\m\subset R$, let $\wh A_\m$ denote the completion of $A$ with respect to the kernel of $A\thra R/\m$. Since $A$ is noetherian, $\wh A_\m$ is a flat $A$-algebra.
 It then follows from \cite[Proposition~3.21]{Berthelot-Ogus} that $\wh D_\m:= \wh A_\m\wh\otimes_A\wh D$ is the $p$-adically completed PD envelop of $\wh A_\m \thra \wh R_\m$, where $\wh\otimes_A$ is the $p$-adic completion of the usual tensor product. 
 
By the $p$-basis assumption on $A/(p)$, the completion $\wh A_\m/(p)$ is a complete local noetherian ring with finite $p$-basis by \cite[Lemma~1.1.2]{dejong:crysdieubyformalrigid}. By cotangent complex consideration via \cite[Lemma~1.1.1]{dejong:crysdieubyformalrigid}, $\wh A_\m/(p)$ is formally smooth over $\Fp$ so it is regular by \cite[Theorems~28.7]{matsumura:crt}. As $\wh R_\m$ is assumed to be complete intersection, the kernel of $\wh A_\m\thra \wh R_\m$ is generated by a regular sequence. In such a case, it is known that the $p$-adically completed PD envelop $\wh D_\m$ is $\Zp$-flat. (\emph{Cf.} the proof of \cite[Lemma~4.7]{deJong-Messing}.) 

Note that we have $\MaxSpec R \riso \MaxSpec \wh D$ because  $\Fil^1\wh D$  is contained in the Jacobson radical (\emph{cf.} Remark~\ref{rmk:whD}). This shows that $\set{\Spf(\wh D_\m,(p))}_{\m\in\MaxSpec R}$ is an fpqc covering of $\Spf (\wh D, (p))$. As $\wh D_\m$ is $\Zp$-flat, we have $\Zp$-flatness of $\wh D$.
\end{rmksub}

\begin{rmksub}\label{rmk:BC}
Since the definition of $\DMFq$ depends upon non-canonical choices, one cannot expect to have a notion of base change for arbitrary map $f:R\ra R'$ of $p$-adic rings. If, on the other hand, one can find $(A,\vphi)$ and $(A',\vphi)$ which induce ``frames'' for $R$ and $R'$, respectively (\emph{cf.,} \S\ref{subsec:connection}), and there is a \emph{$\vphi$-compatible} map $\wt f:A\ra A'$ which reduces to $f$, then $\wt f$ extends to a map of $p$-adically completed divided power envelopes $\wh D\ra \wh D'$, which respects all the extra structures. Note that Lemma~\ref{lem:etloc} provides some examples of $f$ where there is $\wt f$ with desired properties. See \S\ref{subsec:BC} (Ex1) for more details.

Now the scalar extension $\M\rightsquigarrow \M':=\wh D'\otimes_{\wh D} \M$ induces a functor $\DMFq\ra \PMFq{\wh D'}$   as follows: we set $\vphi_{\M'}:=\vphi_{\wh D'}\otimes\vphi_\M$, and $\Fil^1\M':=\ker[\M'\thra R'\otimes_R (\M/\Fil^1\M)]$, and 
\begin{equation}
\nabla_{\M'}(s'\otimes m) = m\otimes d_{\wh D'}(s') + s'\tim \nabla_\M(m)
\end{equation}
for any $s'\in \wh D'$ and $m\in\M$.
\end{rmksub}

\subsection{Ring $S$}\label{subsec:settingBr}
We use the notation from \S\ref{cond:Breuil}, such as $R$, $R_0$, $E(u)$, $\varpi\in R$. We also fix a lift of Frobenius $\vphi:R_0\ra R_0$, which exists by Lemma~\ref{lem:lifting}. We set $\Sig:=R_0[[u]]$ and extend $\vphi$ by $\vphi(u)=u^p$. Now, let $S$ be the $p$-adically completed divided power envelope of $\Sig$ with respect to the kernel of $\Sig\thra R$, and define $\Fil^1S$, $\vphi$, $d_S$ as in Remark~\ref{rmk:whD} (for $A=\Sig$ and $\wh D = S$). We give the $p$-adic topology to all of these rings.

Recall that $R=\Sig/E(u)$. From the assumption on $E(u)$ as in  \S\ref{cond:Breuil}, it is straightforward to check the following (using binomial coefficients):
\begin{equation}\label{eqn:S}
S=\left\{
\sum_{n\geqs0} f_n  \frac{u^n}{\lfloor n/e\rfloor!}; \text { where }\ f_n\in R_0 \text{ and } f_n \to 0\ p\text{-adically}
\right\},
\end{equation}
where  $e$ is the degree of $\PP(u)$. The equality takes place in $R_0[\ivtd p][[u]]$. It  follows that $S$ is $\Zp$-flat and $\Fil^1S$ is topologically generated by  the divided powers of $E(u)$. 

Let $c:=\vphi(\PP(u))/p$. One can check by direct computation that $c \in S\starr$, so we have $\langle\frac{\vphi}{p}(\Fil^1S)\rangle = S$.  (Indeed, the map $S\thra R_0$, induced from $u\mapsto 0$, maps $c$ to $1$.)
This shows that  $(S, \Fil^1S,R, \vphi,\vphi_1)$ is a frame in the sense of Definition~\ref{def:Frame}, and  we can apply the discussions in \S\ref{subsec:connection}. In particular, $\SMF$ is in equivalence with the category of Dieudonn\'e crystals over $\Spf (R,(\varpi))$ by Proposition~\ref{prop:FiltMod}.

In addition to $d_S$, we  define another connection $d_{S}^u:S\ra S\otimes_{R_0}\wh\Omega_{R_0}$ by $d^u_{S}(s):= d_{S}(s) \bmod{du}$ for any $s\in S$; more concretely, we have $d^u_{S}(\sum_n  \frac{u^n}{\lfloor n/e\rfloor!}f_n) = \sum_n  \frac{u^n}{\lfloor n/e\rfloor!} d_{R_0}(f_n)$  for  $f_n\in R_0$ which $p$-adically tends to $0$.

We define a derivation $N:S\ra S$ by $N:=-u\frac{\partial}{\partial u}$. For any $s\in S$ we have $d_{S}(s) = (-1/u)N(s)du + d^u_{S}(s)$, and $N\circ\vphi = p\vphi\circ N$.

In addition to $\SMF$ and $\SMFq$ (\emph{cf.,} Definition~\ref{def:DieudonneModwHF}), we make the following definitions:
\begin{defnsub}\label{def:BrMod}
Let $\SMFw$ be the category of  $(\M, \Fil^1\M, \vphi_1)\in\SMFq$ equipped with  a  topologically quasi-nilpotent integrable  connection $\nabla^u_{\M}:\M\ra\M\otimes_{R_0}\wh\Omega_{R_0}$ over $d^u_{S}$ which  commutes with $\vphi_\M$.

Let $\SMFqw$ be the category of  $(\M,\Fil^1\M, \vphi_1)\in\SMFq$ equipped with a topologically quasi-nilpotent integrable connection $\nabla_{\M_0}$ on $\M_0:=R_0\otimes_{S}\M$ which commutes with $\vphi_{\M_0} = \vphi_{R_0}\otimes\vphi_\M$. We call an object in $\SMFqw$ a \emph{Breuil $S$-module}.
\end{defnsub}

\begin{rmksub}\label{rmk:BrModAlt}
For $\M\in\SMFq$, the giving of $\nabla_\M:\M\ra \M\otimes_\Sig\wh\Omega_\Sig$ that makes $\M$ an object in $\SMF$ is equivalent to the giving of  $\nabla^u_\M:\M\ra \M\otimes_{R_0}\wh\Omega_{R_0}$ and a differential operator $N_\M:\M\ra\M$ over $N$ (i.e.,  $N_\M(sm) = N(s)m + sN_\M(m)$ for any $s\in S$ and $m\in\M$), such that
\begin{enumerate}
\item $(\M,\nabla^u_\M)\in \SMFw$;
\item $N_\M(\M)\subset u\M$;  and $-\nabla^u_{\M}\circ (u\iv N_\M) = (u\iv N_\M\otimes1_{\wh\Omega_{R_0}})\circ\nabla^u_{\M}$;
\item We have $N_{\M}\circ \vphi_\M = p\cdot\vphi_\M\circ N_\M$. 
\end{enumerate}
Given $N_\M$ and $\nabla^u_{\M}$ as above, let  $\nabla_\M:\M\ra  \M \otimes_{\Sig} \wh\Omega_{\Sig}$  be the connection defined by $\nabla_\M(m) = -u\iv N_\M(m)du+\nabla^u_{\M}(m)$ for any $m\in\M$. One can check that $(\M, \nabla_\M)\in\SMF$  if and only if  $N_\M$ and $\nabla^u_{\M}$ satisfy the above conditions. (Note that the connection $\nabla_\M$ is integrable if and only if $\nabla^u_{\M}$ is integrable and anti-commutes with $u\iv N_\M$.)
Using this description, one can define a ``forgetful functor'' $\SMF\ra\SMFw$ by forgetting $N_\M$. 

Let $I_0\subset S $ be the ideal topologically generated by $u$ and $\frac{u^{ej}}{j!}$ for $j>0$. Then we have $S/I_0 \cong R_0$, and the natural projection $S\thra R_0$ commutes with $\vphi$'s. Now one can define functors $\SMF\ra\SMFqw$ and $\SMFw \ra \SMFqw$ by ``reducing the connection modulo $I_0$. (Note that these functors are compatible with the forgetful functor $\SMF\ra\SMFw$  that we constructed in the previous paragraph.)

If $\M=\E(S)\in\SMF$ for some Dieudonn\'e crystal $\E$ over $\Spf (R,(\varpi))$, then we have $R_0\otimes_S\M\cong \E_{R/(\varpi)}(R_0)$ as objects in $\PMF{R_0}$. (To see the isomorphism is horizontal, interpret a connection as a HPD stratification in the sense of \cite[Theorem~6.6]{Berthelot-Ogus}, and work out the effect of base change. We leave the details to readers.)

We can construct the functors from $\SMF$, $\SMFw$, and $\SMFqw$ into $\PMF{R_0}$ by
 $\M\rightsquigarrow\M_0:=R_0\otimes_{S}\M$ and reducing the extra structures modulo $I_0$. (\emph{Cf.} Remark~\ref{rmk:BC}.) These mod-$I_0$ reduction functors are compatible with the functors among $\SMF$, $\SMFw$, and $\SMFqw$  defined above.
 
 Finally, by working with $\Fil^rS$ for $r\leqs p-1$ we can study  ``higher weight'' cases. Also by allowing $\nabla_\M$ to have  a ``logarithmic pole'' at $u=0$ and $\N_\M\bmod{I_0}$ to be not necessarily zero, we can handle  ``log crystals''. 
 \end{rmksub}
\begin{lemsub}\label{lem:forgetfulN}
The functors $\SMF\ra\SMFw$ and $\SMFw\ra\SMFqw$, defined in Remark~\ref{rmk:BrModAlt}, are fully faithful.
\end{lemsub}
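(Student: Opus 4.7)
Both functors are forgetful (the first forgets $N_\M$; the second restricts $\nabla^u_\M$ to $\M_0=\M/I_0\M$), so faithfulness is automatic and we must establish fullness. In both cases the strategy is identical: given a morphism $f:\M\ra\M'$ in the target category, form the defect
\[ g := f\circ(\star)_\M - (\star)_{\M'}\circ f \]
with $(\star)$ the forgotten structure, show that $g$ is $S$-linear, extract an initial smallness bound on $g(\M)$ from the hypothesis on $f$, and iterate using the $\vphi$-compatibility together with the identity $(1\otimes\vphi_\M)(\vphi^*\Fil^1\M)=p\M$ to conclude $g=0$ by the $u$-adic separation of $\M'$. The latter separation follows because $S$ is $u$-adically separated from the explicit description~\eqref{eqn:S}, and hence so is any finite projective $S$-module.

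For $\SMF\ra\SMFw$, take $g:=f\circ N_\M-N_{\M'}\circ f$. The $S$-linearity of $g$ follows from the $S$-linearity of $f$ combined with the fact that $N_\M$ and $N_{\M'}$ are differential operators over the same derivation $N$ on $S$ (the derivation terms cancel). The inclusions $N_\M(\M)\subseteq u\M$ and $N_{\M'}(\M')\subseteq u\M'$ of Remark~\ref{rmk:BrModAlt}(2) yield $g(\M)\subseteq u\M'$. The relation $N_\M\circ\vphi_\M=p\vphi_\M\circ N_\M$ of Remark~\ref{rmk:BrModAlt}(3), applied in both $\M$ and $\M'$, combined with $\vphi$-equivariance of $f$, gives
\[ g\circ\vphi_\M \;=\; p\,\vphi_{\M'}\circ g. \]
For $m\in\M$ write $pm=\sum_i s_i\vphi_\M(x_i)$ with $x_i\in\Fil^1\M$; applying $g$ and using that $\M'$ is $\Zp$-flat gives $g(m)=\sum_i s_i\vphi_{\M'}(g(x_i))$. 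Since $\vphi(u)=u^p$, an inclusion $g(\M)\subseteq u^n\M'$ upgrades to $g(\M)\subseteq u^{pn}\M'$; starting from $n=1$ and iterating yields $g(\M)\subseteq\bigcap_n u^{p^n}\M'=0$.

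For $\SMFw\ra\SMFqw$, fix local coordinates $T_1,\dots,T_d$ on $R_0$ so that $\nabla^u_\M=\sum_j N^u_{j,\M}\otimes dT_j$, and set $g_j:=f\circ N^u_{j,\M}-N^u_{j,\M'}\circ f$, again $S$-linear. Since $\partial_j u=0$, one has $\partial_j(I_0)\subseteq I_0$, so the reduction of each $N^u_j$ modulo $I_0$ is well-defined, and the hypothesis that $f$ commutes with $\nabla_{\M_0}$ translates to $g_j(\M)\subseteq I_0\M'$ for every $j$. A direct computation of the $\vphi$-compatibility in coordinates, using that $d\vphi(dT_k)=pT_k^{p-1}dT_k+p\,du_k$ is divisible by $p$ (because $\vphi$ is a Frobenius lift), produces
\[ g_j\circ\vphi_\M \;=\; p\sum_k c_{jk}\,\vphi_{\M'}\circ g_k, \quad\text{where } c_{jk}=T_k^{p-1}\delta_{jk}+\partial_j(u_k)\in R_0 \]
and $u_k:=(\vphi(T_k)-T_k^p)/p$. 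Expressing $pm=\sum_i s_i\vphi_\M(x_i)$ as before, one writes $g_j(m)$ as an $S$-linear combination of the $\vphi_{\M'}(g_k(x_i))$. The crucial additional input is the inclusion $\vphi(I_0)\subseteq u^pS$: each topological generator $u^n/\lfloor n/e\rfloor!$ of $I_0$ (with $n\geqs 1$) is sent to $u^{pn}/\lfloor n/e\rfloor!=u^p\cdot u^{p(n-1)}/\lfloor n/e\rfloor!$, and the second factor lies in $S$ because $\lfloor n/e\rfloor!$ divides $\lfloor p(n-1)/e\rfloor!$. Hence $g_j(\M)\subseteq I_0\M'$ upgrades to $g_j(\M)\subseteq u^p\M'$, and iterating with $\vphi(u^n)=u^{pn}$ gives $g_j(\M)\subseteq u^{p^n}\M'$ for all $n$, whence $g_j=0$.

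The principal obstacle is purely technical: one must carefully set up the $\vphi$-compatibility of $\nabla^u_\M$ in coordinates to extract the explicit factor of $p$ coming from $d\vphi$ (rather than having it hidden in the pullback formalism), and one must verify the divided-power inclusion $\vphi(I_0)\subseteq u^pS$ via the description~\eqref{eqn:S}. Once these ingredients are in place, the iteration is mechanical and handles both parts uniformly.
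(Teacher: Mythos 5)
Your proof is correct, and for the functor $\SMF\ra\SMFw$ it follows essentially the same iterative argument as the paper: the paper sets $\delta_f := N_{\M'}\circ f - f\circ N_\M$, notes $\delta_f(\M)\subseteq I_0\M'$, uses $\vphi$-compatibility to upgrade this to $\delta_f(\M)\subseteq\vphi^n(I_0)\M'$, and concludes from $\bigcap_n\vphi^n(I_0)\M'=0$; your version phrases the shrinking chain as $u^{p^n}\M'$, which is the same thing since $\vphi^n(I_0)\subseteq u^{p^n}S$.

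For the functor $\SMFw\ra\SMFqw$ you take a genuinely different route. The paper simply invokes Lemma~\ref{lem:Reduction}: there is a unique $\vphi$-compatible section $s:\M_0[\tfrac1p]\ra\M[\tfrac1p]$, it is horizontal, and $1\otimes s$ gives $S[\tfrac1p]\otimes_{R_0}\M_0\cong\M[\tfrac1p]$; given a morphism $f$ in $\SMFqw$, the uniqueness forces $f\circ s_\M = s_{\M'}\circ f_0$, whence $f[\tfrac1p]=1\otimes f_0$ is horizontal, and $\Zp$-flatness of $\M$ lets one pass back to $f$. Your argument instead runs the $u$-adic contraction estimate directly in local coordinates $T_1,\dots,T_d$, using that $d\vphi$ is divisible by $p$ (because $\vphi$ is a Frobenius lift) to cancel the $p$ and that $\vphi(I_0)\subseteq u^pS$ (verified from the explicit description~\eqref{eqn:S}) to start the iteration. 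This is self-contained and uniform across both halves of the lemma, avoiding the convergence estimate $\ord_p\bigl(p^{-n-1}\lfloor p^n/e\rfloor!\bigr)\to\infty$ needed for the existence half of Lemma~\ref{lem:Reduction}; the price is the coordinate bookkeeping and the divided-power inclusion. The paper's route, on the other hand, packages the statement into a lemma it reuses elsewhere (for instance, the horizontal section $s$ reappears in Theorem~\ref{thm:Faltings}), and your route would not produce the isomorphism $\M[\tfrac1p]\cong S[\tfrac1p]\otimes_{R_0}\M_0$ as a by-product. Both arguments work for all $p$.
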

\begin{proof}
The full faithfulness of the functor $\SMFw\ra\SMFqw$ follows from Lemma~\ref{lem:Reduction}.
To show the full faithfulness of $\SMF\ra\SMFw$, we want to show that any morphism  $f:\M\ra\M'$ of objects in $\SMFw$ commutes with the differential operators $N_\M$ and $N_{\M'}$.

Consider $\delta_f:= N_{\M'}\circ f - f\circ N_\M : \M \ra \M'$. By assumption we have $\delta_f(\M)\subseteq I_0\tim \M'$ where $I_0:=\ker(S\ra R_0)$. But since $\delta_f$ also commutes with Frobenius structures, for any $m\in\Fil^1\M$ we have $\delta_f(\vphi_1(m)) = c\iv\vphi_1(\PP(u)\delta_f(m))$ where $c:=\frac{\vphi}{p}(\PP(u))$. In particular, we have $\delta_f(\M) \subseteq\vphi(I_0)\tim \M'$ as $\vphi_1(\Fil^1\M)$ generates $\M$. By repeating this, we obtain $\delta_f(\M)\subseteq \bigcap_n \vphi^n(I_0) \tim\M' = \set0$.
\end{proof}

\begin{lemsub}\label{lem:Reduction}
Let $\M\in\SMFw$, and set $\M_0:=R_0\otimes_S\M\in\PMF{R_0}$. Then there exists a unique $\vphi$-compatible section $s:\M_0[\ivtd p]\ra \M[\ivtd p]$. Furthermore, $s$ is horizontal and the map $1\otimes s:S[\ivtd p]\otimes_{R_0}\M_0\riso \M[\ivtd p]$ is an isomorphism.
\end{lemsub}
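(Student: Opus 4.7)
The plan is to prove uniqueness first via a Frobenius-contraction argument in the $u$-adic topology, construct $s$ locally by a fixed-point iteration, and then deduce horizontality and the isomorphism statement. Two inputs are used throughout: first, $I_0\cdot S[\ivtd p] = u\cdot S[\ivtd p]$, visible from the explicit formula \eqref{eqn:S}; and second, $(1\otimes\vphi_{\M_0})$ is an isomorphism after inverting $p$, since $(1\otimes\vphi_\M)(\vphi^*\Fil^1\M) = p\M$ and $p\in\Fil^1 S$ force the linearisation to be surjective. In particular $\vphi_{\M_0}(\M_0[\ivtd p])$ spans $\M_0[\ivtd p]$ over $R_0[\ivtd p]$. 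For uniqueness, suppose $s,s'$ are two such sections and set $f:=s-s'$. Then $f$ is $R_0[\ivtd p]$-linear, Frobenius-compatible, and lands in $u\M[\ivtd p]$. Combining Frobenius-compatibility with $\vphi_\M(u^k\M[\ivtd p])\subseteq u^{pk}\M[\ivtd p]$ and the spanning property above, an easy induction gives $f(\M_0[\ivtd p])\subseteq u^{p^n}\M[\ivtd p]$ for all $n$; as $\M$ is finite projective over $S$ and $S[\ivtd p]\hra R_0[\ivtd p][[u]]$ is $u$-adically separated, $f=0$.

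For existence, argue Zariski-locally on $\Spec S[\ivtd p]$: choose a basis $\tilde e_1,\dots,\tilde e_r$ of $\M[\ivtd p]$ whose reductions $\bar e_j:=\tilde e_j\bmod u$ form an $R_0[\ivtd p]$-basis of $\M_0[\ivtd p]$. Let $A\in\GL_r(R_0[\ivtd p])$ and $\tilde A\in\Mat_r(S[\ivtd p])$ represent $\vphi_{\M_0}$ and $\vphi_\M$ in these bases, so $\tilde A\equiv A\pmod u$. Seeking a new basis $\tilde e(I+B)$ with $B\in\Mat_r(uS[\ivtd p])$ whose Frobenius matrix equals $A$ reduces to the fixed-point equation
\begin{equation*}
B = T(B) := (\tilde A A^{-1} - I) + \tilde A\, B^{\vphi}\, A^{-1}.
\end{equation*}
Since $\vphi(u^k)=u^{pk}$, the operator $T$ is strictly $u$-adically contractive, so the iterates $B_n:=T^n(0)$ form a Cauchy sequence converging to some $B_\infty$ in $\Mat_r(R_0[\ivtd p][[u]])$. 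Then $s(\bar e_j):=\sum_i(I+B_\infty)_{ij}\tilde e_i$ gives a $\vphi$-compatible section locally, and uniqueness glues it globally.

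For the isomorphism, $1\otimes s$ is locally multiplication by $I+B_\infty$; its inverse $(I+B_\infty)^{-1} = \sum_{k\geqs 0}(-B_\infty)^k$ converges in $\Mat_r(R_0[\ivtd p][[u]])$ and lies in $\Mat_r(S[\ivtd p])$ by the same denominator estimate. For horizontality, set
\begin{equation*}
\epsilon(\bar m) := \nabla^u_{\M}(s(\bar m)) - (s\otimes\id)(\nabla^u_{\M_0}(\bar m)) \in \M[\ivtd p]\otimes_{R_0}\wh\Omega_{R_0}[\ivtd p].
\end{equation*}
Then $\epsilon$ is $R_0[\ivtd p]$-linear, lands in $u\M[\ivtd p]\otimes_{R_0}\wh\Omega_{R_0}[\ivtd p]$ (since $\nabla^u$ descends to $\nabla^u_{\M_0}$ mod $u$), and the Frobenius-compatibility of $\nabla^u_\M$ yields $\epsilon\circ\vphi_{\M_0} = (\vphi_\M\otimes d\vphi_{R_0})\circ\epsilon$. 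The uniqueness-style contraction argument then forces $\epsilon=0$.

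The main obstacle is showing that $B_\infty$ (and $(I+B_\infty)^{-1}$) actually lie in $\Mat_r(S[\ivtd p])$ rather than merely in $\Mat_r(R_0[\ivtd p][[u]])$. This rests on a $p$-adic denominator bookkeeping: each application of $T$ multiplies denominators by a fixed constant depending only on $A$, $\tilde A$ and $A^{-1}$, and the coefficient of $u^m$ in $B_\infty$ stabilises once $p^n>m$; thus it acquires only polynomial-in-$m$ $p$-adic denominator, comfortably within the nearly-factorial growth permitted by the explicit description of $S$ in \eqref{eqn:S}.
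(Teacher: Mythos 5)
Your proof follows essentially the same route as the paper's. The paper's iteration $s_n:=(1\otimes\vphi_\M^n)\circ(\vphi^{*n}s_0)\circ(1\otimes\vphi_{\M_0}^{-n})$, started from an arbitrary section $s_0$, is your fixed-point recursion $B = (\tilde AA^{-1}-I)+\tilde AB^\vphi A^{-1}$ written without coordinates; your uniqueness and horizontality arguments are the paper's contraction argument (via $I_0 S[\ivtd p]=uS[\ivtd p]$); and both proofs close with the surjectivity of $1\otimes s$. Three remarks on where you deviate or fall short.

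First, $p\notin\Fil^1 S$: indeed $S/\Fil^1 S = R$ and $p\ne 0$ in $R$. The fact you want is that $\coker(1\otimes\vphi_\M)$ is $p$-torsion because $p\M\subseteq(1\otimes\vphi_\M)(\vphi^*\Fil^1\M)$, or equivalently that $p\in\Fil^1 R_0 = pR_0$ in the frame of Example~\ref{exa:S}. Your conclusion that $(1\otimes\vphi_{\M_0})[\ivtd p]$ is an isomorphism is correct, but the justification as written is not.

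Second, you identify the crux — that $B_\infty$ lands in $\Mat_r(S[\ivtd p])$ and not merely in $\Mat_r(R_0[\ivtd p][[u]])$ — and then wave at it. The increments $B_{n+1}-B_n$ lie in $p^{-(n+1)c}u^{p^n}\Mat_r(S)$ for a constant $c$ coming from $A^{-1}$, and $p$-adic convergence in $\Mat_r(S[\ivtd p])$ really does require the divided powers on $\Fil^1 S$: one writes $p^{-n-1}u^{p^n}=p^{-n-1}(q_n!)\,u^{p^n-q_ne}\,(u^e)^{[q_n]}$ with $q_n:=\lfloor p^n/e\rfloor$ and checks $\ord_p(p^{-n-1}q_n!)\geqs -n-1+p^{n-1}/e-1\to\infty$. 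This is exactly the paper's estimate, and it is the only hard step of the lemma; ``polynomial-in-$m$ denominator, comfortably within the nearly-factorial growth'' is the right heuristic but does not discharge the obligation.

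Third, the Neumann-series claim $(I+B_\infty)^{-1}\in\Mat_r(S[\ivtd p])$ does not follow from ``the same denominator estimate.'' The entries of $B_\infty$ lie in $uS[\ivtd p]$, not in $uS$, and $1+uS[\ivtd p]$ is not contained in $S[\ivtd p]^\times$: for instance $1+u/p$ is not a unit of $S[\ivtd p]$, since the coefficient of $u^k$ in $\sum_k(-1)^k u^k/p^k$ has valuation $-k$, while membership in $S[\ivtd p]$ would force $-k+\ord_p(\lfloor k/e\rfloor!)$ to be bounded below, which it is not. The paper instead observes that $1\otimes s$ is a surjection of finite projective $S[\ivtd p]$-modules of the same rank; if you insist on an explicit inverse, you must exploit the Frobenius relation $MA=\tilde A M^\vphi$ (with $M=I+B_\infty$) to control the denominators, not the naive geometric series.
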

\begin{proof}
Let us choose an arbitrary section $s_0:\M_0\ra \M$, and we would like to show that the following ``formula'' gives a well-defined morphism $s:\M_0[\ivtd p]\ra\M[\ivtd p]$:
\begin{align*}
s:= &\text{``}\lim_{n\ra\infty} (1\otimes\vphi_\M^n) \circ (\vphi^{*n} s_0)\circ(1\otimes\vphi_{\M_0}^{-n})\text{''}\\ 
= & s_0+\sum_{n=0}^\infty \bigg[(1\otimes\vphi_\M^{n+1}) \circ (\vphi^{*n+1} s_0)\circ(1\otimes\vphi_{\M_0}^{-n-1})  - (1\otimes\vphi_\M^n) \circ (\vphi^{*n} s_0)\circ(1\otimes\vphi_{\M_0}^{-n}) \bigg].
\end{align*}
Note that $(1\otimes\vphi_{\M_0})[\ivtd p]:\vphi^*\M_0[\ivtd p]\ra\M_0[\ivtd p]$ is an isomorphism, and the preimage of $\M_0$ is contained in $ p\iv(\vphi^*\M_0)$.

Set $s_{n+1}:=(1\otimes\vphi_\M^{n+1}) \circ (\vphi^{*n+1} s_0)\circ(1\otimes\vphi_{\M_0}^{-n-1})  - (1\otimes\vphi_\M^n) \circ (\vphi^{*n} s_0)\circ(1\otimes\vphi_{\M_0}^{-n})$. Clearly $s_1(\M_0) \subseteq p\iv u \M$, because $s_0$ is a section. By iterating this, one obtains that $s_{n+1}(\M_0) \subseteq p^{-n-1} u^{p^n} \M$ for any $n\geqs0$. Note that $p^{-n-1} u^{p^n}= p^{-n-1}(q_n!) u^{p^n-q_ne}(u^e)^{[q_n]}$ where $q_n:=\lfloor \frac{p^n}{e}\rfloor$, so we have 
\begin{equation*}
\ord_p\big(p^{-n-1}(q_n!) \big) \geqs -n-1 +p^{n-1}/e -1 \stackrel{n\ra\infty}{\to}\infty
\end{equation*}
This proves that $s:\M_0[\ivtd p]\ra\M[\ivtd p]$ is well-defined, and by construction $s$ commutes with $\vphi$'s. The map $1\otimes s:S[\ivtd p]\otimes_{R_0}\M_0\riso \M[\ivtd p]$ is a surjective map of projective $S[\ivtd p]$-modules of same rank (by Nakayama lemma), so it is an isomorphism.

Now, if there are two sections $s$ and $s'$, then $(s-s')(\M_0[\ivtd p])\subseteq I_0\M[\ivtd p]$. If both $s$ and $s'$ commute with $\vphi$'s then we obtain
\begin{multline*}
(s-s')(\M_0[1/p]) \subseteq (s-s')\big((1\otimes\vphi_{\M_0})^n(\vphi^{*n}\M_0[1/p])\big)\\
\subseteq (1\otimes\vphi_\M^n)\bigg(\vphi^{*n}\big((s-s')(\M_0[1/ p])\big)\bigg) \subseteq \vphi^n(I_0)\M[1/p].
\end{multline*}
Therefore, we have $\im(s-s')\subseteq\bigcap_n \vphi^n(I_0)\M[\ivtd p] = \set0$, which establishes the uniqueness of $s$.

It is left to show that $s$ is horizontal. Consider $\delta_s:=(s\otimes1)\circ\nabla_{\M_0} - \nabla^u_{\M}\circ s:\M_0[\ivtd p]\ra\M[\ivtd p]\otimes_{R_0}\wh\Omega_{R_0}$. By construction, $\delta_s$ commutes with $\vphi$'s and its image is divisible by $u$. So by the same argument that shows the uniqueness of $s$, we conclude that $\delta_s=0$.
\end{proof}

\begin{rmksub}[lci case]\label{rmk:Brlci}
We can extend the construction of $S$ and Lemma~\ref{lem:forgetfulN} in the following variant as well. Let $R$ be a $p$-adic flat $\Zp$-algebra which is locally complete intersection. Assume that there exists a $p$-adic noetherian flat $\Zp$-algebra $A$ such that
\begin{itemize}
\item
$A/(p)$ locally admits a finite $p$-basis;
\item
there exists a surjective map $A[[u]]\thra R$, where $u$ maps to a regular element $\varpi\in R$;
\item the kernel of $A[[u]]\thra R$ includes an element $E(u)= p+\sum_{i=1}^e a_i u^i$  with $a_i\in A$ and $a_e\in A\starr$.
\end{itemize}
Let $S_0$ be the $p$-adically completed PD envelop of $A\thra R/(\varpi)$, and $S$ be the $p$-adically completed PD envelop of $A[[u]]\thra R$. By Remark~\ref{rmk:Slci}, $S_0$ defines a frame. We also have the description of $S$ similar to (\ref{eqn:S}) with $R_0$ replaced by $S_0$, so  $S$ is also  $\Zp$-flat and $\vphi_1(E(u))\in S\starr$; in other words, $S$  defines a frame.

Now, by simply replacing $R_0$ with $S_0$, we can define $\SMF$, $\SMFw$, and $\SMFqw$, and the proof of Lemma~\ref{lem:forgetfulN} generalises to this setting.
\end{rmksub}

\subsection{Review of results by Berthelot-Messing and de\thinspace{}Jong}\label{subset:BMdJ}
Let $\bar A$ be an $\Fp$-algebra locally admitting a finite $p$-basis, and choose $(A,\vphi)$ lifting $\bar A$ and the Frobenius morphism (which exists by Lemma~\ref{lem:lifting}).
For any $p$-divisible group $G_0$ over $\bar A$ we  set   $\M_0^*(G_0):=\DD^*(G_0)(A)$, which belongs to   $\PMF{A}$ by Example~\ref{exa:S}. We recall the following results on the functors $\DD^*$ and $\M_0^*$:
\begin{enumerate}
\item\label{subset:BMdJ:dJ} Suppose that $\bar A$ is formally smooth and formally of finite type over some field $k$ which admits a finite $p$-basis. (For example, $\bar A=R/(\varpi)$ where $R$ satisfies the formally finite-type assumption~(\S\ref{cond:dJ}).) Then $\M_0^*$ is an anti-equivalence of categories. If $\ol \XX$ is a scheme that has a Zariski covering by such $\Spec  \bar A$'s, then $\DD^*$ over $\ol\XX$ is an anti-equivalence of categories. (\emph{Cf.} \cite[Main~Theorem~1]{dejong:crysdieubyformalrigid}.)
\item\label{subset:BMdJ:BM} Assume that $\ol\XX$ is  a normal $\Fp$-scheme which is locally irreducible and locally admits a finite $p$-basis. Then $\DD^*$ over $\ol\XX$ is fully faithful. (\emph{Cf.} \cite[Th\'eor\`eme~4.1.1]{Berthelot-Messing:DieudonneIII}.) The same statement holds for $\M^*_0$ if $\ol\XX$ is affine (for example, $\ol \XX = \Spec R/(\varpi)$ where $R$ satisfies the normality assumption~(\S\ref{cond:BM})). 
\end{enumerate}

Let us set up the notation for the the main theorem of this section below. Assume that $\XX$ is a formal scheme which has a Zariski covering $\{\Spf (R_\alpha, (\varpi))\}$ where $R_\alpha$ satisfies the condition in \S\ref{subsec:settingBr}. Consider the exact  functor  
\begin{equation*}
\big\{p\text{-divisible groups over }R \big\} \xra{\DD^*} 
\big\{\text{Dieudonn\'e crystals over }\XX \text{ (Definition~\ref{def:DieudonneCrys})} \big\}.
\end{equation*}
If $\XX = \Spf (R,(p))$ where $R$ satisfies the condition in \S\ref{subsec:settingBr}, then consider the exact functor 
\begin{equation*}
\M^*:\big\{p\text{-divisible groups over }R \big\} \ra \SMFqw
\end{equation*}
defined by $\M^*(G):=\DD^*(G)(S)$ for $S$ as chosen in \S\ref{subsec:settingBr}. Note that $\M^*(G)$ is \emph{a priori} an object in $\SMF$  by Proposition~\ref{prop:FiltMod}, but we view it as an object in $\SMFqw$ via the fully faithful functor $\SMF \ra \SMFqw$ constructed in Remark~\ref{rmk:BrModAlt}. (\emph{Cf.} Lemma~\ref{lem:forgetfulN}.)

\begin{thm}\label{thm:BreuilClassif}
Assume that $p>2$.

If $\XX$ is Zariski locally covered by $\Spf (R,(\varpi))$ where each $R$ satisfies the formally finite-type assumption~(\S\ref{cond:dJ}), then $\DD^*$ and $\M^*$ (if the latter is defined) are anti-equivalences of categories.

If $\XX$ is Zariski locally covered by $\Spf (R,(\varpi))$ where each $R$ satisfies the normality assumption~(\S\ref{cond:BM}), then $\DD^*$ and $\M^*$ (if the latter is defined) are fully faithful.

If $p=2$, then in both cases above the functors $\DD^*$ and $\M^*$ (if the latter is defined) are fully faithful up to isogeny. 
\end{thm}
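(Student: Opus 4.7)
The statements are Zariski-local on $\XX$, so I reduce immediately to the affine case $\XX = \Spf(R, (\varpi))$. Recall that $\M^*$ factors as the composition of $\DD^*$ with the equivalence from Dieudonn\'e crystals to $\SMF$ of Proposition~\ref{prop:FiltMod}, followed by the fully faithful forgetful functor $\SMF \to \SMFqw$ of Lemma~\ref{lem:forgetfulN}. Consequently, every fully faithful statement for $\M^*$ reduces to the corresponding one for $\DD^*$, while essential surjectivity of $\M^*$ will require the extra step of recovering an $\SMF$-object (equivalently, a Dieudonn\'e crystal) from one in $\SMFqw$.

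The plan for $\DD^*$, following Kisin \cite[\S A]{kisin:fcrys}, is to ``lift'' from the special fibre modulo $(\varpi)$ via Grothendieck-Messing theory. For full faithfulness, a morphism $f \colon \DD^*(G_2) \to \DD^*(G_1)$ of Dieudonn\'e crystals reduces modulo $(\varpi)$ to a morphism $f_0$ over $R/(\varpi)$, which by \S\ref{subset:BMdJ}(\ref{subset:BMdJ:dJ}) or (\ref{subset:BMdJ:BM}) (according to the hypothesis on $R$) is induced by a unique morphism $g_0 \colon G_{1,0} \to G_{2,0}$; since $f$ respects the Hodge filtrations on the evaluation at $R$, Grothendieck-Messing deformation theory \cite{messingthesis} lifts $g_0$ uniquely to $g \colon G_1 \to G_2$ with $\DD^*(g) = f$. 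Here the requisite PD thickening $R \twoheadrightarrow R/(\varpi)$ is honest when $p > 2$ by the divided-power computation already used to construct $S$, and remains valid up to isogeny when $p = 2$. For essential surjectivity of $\DD^*$ in the formally finite-type setting, I would start with a Dieudonn\'e crystal $\E$, reduce it to $\E_0$ over $R/(\varpi)$, apply de~Jong's equivalence \S\ref{subset:BMdJ}(\ref{subset:BMdJ:dJ}) to find $G_0$ with $\DD^*(G_0) \cong \E_0$, and use the Hodge filtration of $\E$ at $R$ to produce (via Grothendieck-Messing) a lift $G$ of $G_0$ to $R$; the uniqueness in Grothendieck-Messing then gives $\DD^*(G) \cong \E$.

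To promote essential surjectivity of $\DD^*$ to that of $\M^*$, given $\M \in \SMFqw$ I would form $R_0 \otimes_S \M \in \PMF{R_0}$ with its connection, reduce modulo $p$ to obtain an object of $\PMF{R/(\varpi)}$, apply de~Jong to extract $G_0$ over $R/(\varpi)$, and lift $G_0$ to a $p$-divisible group $G$ over $R$ using the filtration $\Fil^1 \M / (\Fil^1 S)\M \subset R \otimes_S \M \cong \DD^*(G_0)(R)$ in the role of Hodge filtration. The identification $\M^*(G) \cong \M$ in $\SMFqw$ must then be verified: both sides have the same reduction in $\PMF{R_0}$ by compatibility of $\DD^*$ with base change, carry the same Hodge filtration at $R$ by construction, and a Frobenius-descent argument in the spirit of the proof of Lemma~\ref{lem:forgetfulN} shows that the underlying $\SMFqw$-structures agree. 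The $p = 2$ assertion is obtained by running the same argument with the up-to-isogeny versions of \S\ref{subset:BMdJ}(\ref{subset:BMdJ:dJ}), (\ref{subset:BMdJ:BM}) and of Grothendieck-Messing, which are insensitive to the divided-power pathologies at $2$. The main obstacle I anticipate is this final compatibility check in the essential surjectivity of $\M^*$: bridging $\SMFqw$ and $\SMF$ requires delicate tracking of the connection data on the $R_0$-slice together with the horizontal splitting of Lemma~\ref{lem:Reduction}, and is where the bookkeeping becomes most intricate.
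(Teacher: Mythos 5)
Your reduction to the affine case and the overall strategy of reducing full faithfulness of $\M^*$ to that of $\DD^*$ and then lifting via Grothendieck--Messing from the special fibre are in the spirit of the paper, but there is a fatal gap in the central step: $R \twoheadrightarrow R/(\varpi)$ is \emph{not} a divided power thickening in general. The ideal $(\varpi)$ carries a DP structure only when $e \leqs p-1$ (one needs $\varpi^p/p! \in (\varpi)$, i.e.\ $v(\varpi^p/p) \geqs 1/e$, i.e.\ $p-1 \geqs e$); for larger ramification $\gamma_p(\varpi) \notin R$. Your assertion that this ``is honest when $p>2$ by the divided-power computation already used to construct $S$'' conflates the DP envelope $S$ of $\Sig \thra R$ with a nonexistent DP structure on $(\varpi)\subset R$. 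Consequently you cannot apply Grothendieck--Messing in a single step from $R/(\varpi)$ to $R$, and both your full faithfulness argument and your essential surjectivity argument break at exactly that point.

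The paper's actual proof is built around this obstruction. It climbs from $R/(\varpi)$ to $R$ in $e$ small steps $R/(\varpi^{i+1}) \thra R/(\varpi^{i})$ for $i=1,\dots,e-1$, each of which is a square-zero extension and thus carries the trivial DP structure compatible with $p\Zp$ (Claim~\ref{clm:BreuilClassif:Modp}); it then takes a final step from $R/(p) = R/(\varpi^e)$ to $R$ using the canonical DP structure on $pR$, which is a topologically nilpotent DP ideal precisely because $p>2$. The technical engine making these stages cohere is Lemma~\ref{lem:BreuilClassifLifting}, which produces unique $\vphi$-compatible liftings of isomorphisms between filtered Frobenius modules over the intermediate frames $\wh D_i$ obtained as completed DP envelopes of $\Sig/(u^{i+1}) \thra R/(\varpi^i)$. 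This is what turns a chain of Grothendieck--Messing lifts into a well-defined quasi-inverse $G^*$ and identifies $\M^*(G^*(\M)) \cong \M$; your phrase ``a Frobenius-descent argument in the spirit of Lemma~\ref{lem:forgetfulN}'' does not supply this. Finally, for $p=2$ the paper does not simply pass to isogeny versions everywhere: after producing a morphism over $R/p$, it invokes \cite[Lemma~1.1.3]{Katz:SerreTate} to lift $p^2 f_{R/p}$ (not $f_{R/p}$) to $R/p^2$ and then applies Grothendieck--Messing to the topologically nilpotent DP thickening $R \thra R/p^2$; the $p^2$ factor is exactly what makes this work, and your remark that the up-to-isogeny versions are ``insensitive to the divided-power pathologies at $2$'' is not a proof of this.
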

When $p=2$, we can obtain a better result when restricted to formal or unipotent $p$-divisible groups (\emph{cf.} Theorem~\ref{thm:FormalBreuil}, Corollary~\ref{cor:BreuilClassif}). 
\begin{proof}
To prove the theorem it suffices to prove the statement for $\M^*$ when $\XX = \Spf (R,(\varpi))$ (by Proposition~\ref{prop:FiltMod} and Lemma~\ref{lem:forgetfulN}), so we assume this from now on.

Let us outline the idea. First, assume that $p>2$. Then we define a functor $G^*$  on $\SMFqw^\DD$ and show that it is a quasi-inverse of $\M^*$  if $\M_0^*$ (as defined in \S\ref{subset:BMdJ}) is fully faithful (respectively, quasi-inverse up to isogeny if $\M^*_0$ is fully faithful up to isogeny). This strategy can be modified to show our claim when $p=2$.  The construction of the functor $G^*$ is motivated by the proof of  \cite[Proposition~A.6]{kisin:fcrys}, while we start with the results of de\thinspace{}Jong and Berthelot-Messing (which is recalled in \S\ref{subset:BMdJ}) instead of classical Dieudonn\'e theory.

Let $\SMFqw^\DD$ be the full subcategory of $\M\in\SMFqw$ such that $R_0\otimes_S\M$ is in the essential image of $\M^*_0$ (defined in \S\ref{subset:BMdJ}). Clearly, $\SMFqw^\DD$ contains the essential image of $\M^*$. (Note that if $R$ satisfies the formally finite-type assumption~(\S\ref{cond:dJ})  then $\SMFqw^\DD = \SMFqw$ by de\thinspace{}Jong's theorem.)

Suppose $(\M,\Fil^1\M, \vphi_1,\nabla_{\M_0})\in\SMFqw^\DD$, and set $\M_0:=R_0\otimes_{S}\M \in \PMF{R_0}$. By assumption,  there exists a $p$-divisible group $G_0$ over $R/(\varpi)$  such that $\M_0\cong \DD^*(G_0)(R_0)$. 
To construct $G$ corresponding to $\M$, we will lift $G_0$ over $R$ in a functorial way using Grothendieck-Messing deformation theory \cite{messingthesis}. 

For any non-negative integer $i$, let $I_i\subset S$ be the ideal topologically generated by $u^{i+1}$ and $\frac{u^{ej}}{j!}$ for any $ej>i$. By applying the discussion of Remark~\ref{rmk:BC} to $f:R \thra R/(\varpi^{i+1})$, $A' = \Sig/(u^{i+1})$ and $\wh D'= S_{i}:= S/I_i$, it follows that $\M/I_i\M$ has a natural structure of $\PMFq{S_i}$. Recall that for any $p$-divisible group $G_i$ over $R/(\varpi^{i+1})$, we have $\DD^*(G_i)(S_i)\in \PMFq{S_i}$ by Proposition~\ref{prop:FiltMod},

\begin{claimsub}\label{clm:BreuilClassif:Modp}
For any $0\leqs i < e$, there exist a $p$-divisible group $G_i$ over $R/(\varpi^{i+1})$ and an isomorphism  $\DD^*(G_i)(S_{i})\cong S_{i}\otimes_{S}\M$ in $\PMFq{S_i}$ which lifts the isomorphism $\DD^*(G_{i})(R_0) \cong R_0\otimes_{S}\M$. This construction $\M\rightsquigarrow G_i$ is functorial in $G_0$ and $\M$.
\end{claimsub}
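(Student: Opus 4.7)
The plan is to construct $G_i$ by lifting $G_0$ inductively along the square-zero nilpotent thickenings $R/(\varpi^{j+1}) \twoheadrightarrow R/(\varpi^j)$ for $1 \leqs j \leqs i$, applying Grothendieck--Messing deformation theory \cite{messingthesis} at each step with the Hodge filtration prescribed by $\M$. The hypothesis $i < e$ enters in two places: first, since $p \in (\varpi)$ we have $p^{i+1} = 0$ in $R/(\varpi^{i+1})$, so $p$ is nilpotent at every stage; second, it ensures that the natural ring surjection $S_j \twoheadrightarrow R/(\varpi^{j+1})$ is well-defined and PD-compatible, because each generator $u^{ek}/k!$ of $I_j$ (with $ek > j$) satisfies $ek \geqs e > j$ and hence maps to $0$ in $R/(\varpi^{j+1})$.

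\textbf{Inductive step.} The base case $j = 0$ is tautological: $S_0 = R_0$, and the isomorphism $\DD^*(G_0)(R_0) \cong \M_0$ is part of the data defining the full subcategory $\SMFqw^\DD$. Assume inductively that we have produced $G_{j-1}$ over $R/(\varpi^j)$ together with an isomorphism $\alpha_{j-1} \colon \DD^*(G_{j-1})(S_{j-1}) \riso S_{j-1} \otimes_S \M$ in $\PMFq{S_{j-1}}$ which lifts the base case. Since $(\varpi^j)/(\varpi^{j+1})$ is square-zero, it carries the trivial (hence nilpotent) PD-structure, and Grothendieck--Messing identifies lifts of $G_{j-1}$ to $R/(\varpi^{j+1})$ with direct-summand lifts of the Hodge filtration to $\DD^*(G_{j-1})(R/(\varpi^{j+1}))$. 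Using the PD-surjection $S_j \twoheadrightarrow R/(\varpi^{j+1})$ and the crystal property, we identify $\DD^*(G_{j-1})(R/(\varpi^{j+1})) \cong R/(\varpi^{j+1}) \otimes_{S_j} (S_j \otimes_S \M)$, and the image of $\Fil^1(S_j \otimes_S \M)$ furnishes the required direct-summand lift of the Hodge filtration. We let $G_j$ be the resulting $p$-divisible group produced by Grothendieck--Messing.

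\textbf{Promoting the isomorphism and identifying the obstacle.} To refine the previous step to an isomorphism $\alpha_j \colon \DD^*(G_j)(S_j) \riso S_j \otimes_S \M$ in $\PMFq{S_j}$ lifting $\alpha_{j-1}$, the observation is that both $S_j$-modules reduce to the identification constructed via Grothendieck--Messing after base change along $S_j \twoheadrightarrow R/(\varpi^{j+1})$, and further reduce to $\alpha_{j-1}$ after base change along $S_j \twoheadrightarrow S_{j-1}$; applying the crystal property of $\DD^*(G_j)$ along the PD-structured map $S_j \twoheadrightarrow R/(\varpi^{j+1})$ produces $\alpha_j$, which is $\vphi$-equivariant because $\alpha_{j-1}$ is, and which matches Hodge filtrations by the very choice made in Grothendieck--Messing. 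Functoriality in $G_0$ and $\M$ is inherited from the naturality of $\DD^*$, of the PD envelope construction, and of Grothendieck--Messing. The main delicate point (and the only real obstacle) is verifying that the image of $\Fil^1\M$ under $\M \to R/(\varpi^{j+1}) \otimes_S \M$ is actually a direct summand lifting the Hodge filtration already in place over $R/(\varpi^j)$, i.e.\ that the PD-surjections $S \twoheadrightarrow S_j \twoheadrightarrow R/(\varpi^{j+1})$ transport filtrations in the expected manner; this is precisely the compatibility that the range $i < e$ is tailored to guarantee.
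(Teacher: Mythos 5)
The overall strategy (induction on $j$, Grothendieck--Messing at each step with the Hodge filtration prescribed by $\Fil^1\M$) matches the paper's, but there is a genuine gap in the step you call ``promoting the isomorphism''.

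You invoke ``the crystal property'' to pass from the induction hypothesis $\alpha_{j-1}\colon\DD^*(G_{j-1})(S_{j-1})\riso S_{j-1}\otimes_S\M$ to an isomorphism over the larger ring. This does not work as stated. In the category $\SMFqw$ (or even $\SMFq$), $\M$ carries only a Frobenius structure, not a full integrable connection, so $S\otimes_{\vphi,\Sig}\gM$ does not determine a crystal on $\Spec R/(p)$ by itself; there is therefore no ``crystal property'' identifying $R/(\varpi^{j+1})\otimes_S\M$ with $\DD^*(G_{j-1})(R/(\varpi^{j+1}))$ out of the blue. Concretely, before you can even run Grothendieck--Messing, you need an isomorphism
\[
\DD^*(G_{j-1})(\wh D_j)\ \cong\ \wh D_j\otimes_S\M
\]
where $\wh D_j$ is the $p$-adically completed PD envelope of $\Sig/(u^{j+1})\thra R/(\varpi^j)$ --- a PD thickening of $R/(\varpi^j)$ whose quotient $\wh D_j\thra S_{j-1}$ by $I_{j-1}\wh D_j$ recovers the ring where the induction hypothesis lives. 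This is the step where the paper applies Lemma~\ref{lem:BreuilClassifLifting}: it shows, by a $\vphi$-contraction argument (the hypothesis $\vphi^n(\wh J)\subset p^{n+j_n}\wh J$ with $j_n\to\infty$), that $\alpha_{j-1}$ admits a \emph{unique} $\vphi$-compatible lift across the PD ideal $\wh J=I_{j-1}\wh D_j$. Without that lemma you have no control on the lift and no way to produce the filtration datum over $R/(\varpi^{j+1})$ needed as input to the Grothendieck--Messing step, let alone the $\vphi$-compatibility of the resulting $\alpha_j$.

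A secondary imprecision: the role of $i<e$ is not really about $I_j$ mapping to zero in $R/(\varpi^{j+1})$ (the generator $u^{j+1}$ handles that directly); it is rather that for $i<e$ the kernel of $R/(\varpi^{i+1})\thra R/(\varpi^i)$ is a square-zero ideal compatible with the canonical divided powers on $(p)$, which is exactly what Grothendieck--Messing requires, whereas the step $i=e$ requires the separate argument using the PD thickening $R\thra R/(p)$ that the paper handles after the claim (and only for $p>2$).

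Finally, your ``main delicate point'' --- compatibility of filtrations under the PD surjections --- is not where the real difficulty is; that part is formal once the iso over $\wh D_j$ is in hand. The nontrivial content is the unique $\vphi$-compatible lifting, which Lemma~\ref{lem:BreuilClassifLifting} supplies and your write-up omits.
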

The case when $i=0$ is obvious. Now assuming the claim for $i-1$ for some $0<i< e$, and we will prove the claim for $i$. Consider $\Sig_i:=\Sig/(u^{i+1}) \thra R/(\varpi^i)$, and let $\wh D_i$  be the $p$-adically completed divided power envelope of $\Sig_i$ with respect to the kernel. Then we have $\wh D_i \cong S_i$ with $\Fil^1\wh D_i = I_{i-1}+\Fil^1 S_i$. The natural projection $\Sig\thra \Sig/(u^{i+1})=\Sig_i$ induces a map $S\thra \wh D_i$ respecting all the extra structure, so we obtain the scalar extension functor $\SMFq \ra \PMFq{\wh D_i}$. Note that $\Fil^1(\wh D_i\otimes_S\M)$ is generated by $I_{i-1}\M/I_i\M$ and the image of $\Fil^1\M$. Similarly, we obtain the scalar extension functor $\PMFq{\wh D_i} \ra \PMFq{S_{i-1}}$.

By Proposition~\ref{prop:FiltMod} we have $\DD^*(G_{i-1})(\wh D_i)\in\PMFq{\wh D_i}$. Applying Lemma~\ref{lem:BreuilClassifLifting} to $\wh D = \wh D_i$ and $\wh J:=I_{i-1}\wh D_i$, we deduce that there is a unique $\vphi$-compatible isomorphism $\DD^*(G_{i-1})(\wh D_i)\cong  \wh D_i\otimes_{S}\M$ which lifts the isomorphism $\DD^*(G_{i-1})(S_{i-1})\cong S_{i-1}\otimes_{S}\M$ given by the induction hypothesis. 

Consider the divided power thickening $R/(\varpi^{i+1})\thra R/(\varpi^i)$ with the ``trivial'' divided power structure (i.e., $(\varpi^i)^{[j]}=0$ for any $j>1$). This is compatible with the canonical divided power structures on $p \Zp$. Choose a lift $G_i$ of $G_{i-1}$ which corresponds to the filtration defined by the image of  $\Fil^1\M$ in $R/(\varpi^{i+1})\otimes_{S}\M$ via Grothendieck-Messing deformation theory. Then by construction we have natural $\vphi$-compatible isomorphisms $\DD^*(G_i)(S_i)\cong\DD^*(G_{i-1})(\wh D_i)\cong S_i\otimes_S\M$, which takes $\Fil^1\DD^*(G_i)(S_i)$ to the image of $\Fil^1\M$ in $S_i\otimes_S\M$.
Note also that  the formation of $G_i$ is functorial in $G_{i-1}$ and the filtration. (See \cite[Ch.V, Theorem~1.6]{messingthesis} for the precise statement.) This proves Claim~\ref{clm:BreuilClassif:Modp}.

Consider $\Sig \thra R/(p)=R/(\varpi^{e})$ and let $\wh D_{e}$ be the $p$-adically completed divided power envelope of $\Sig$ with respect to the kernel. Then we have $\wh D_{e}\cong S$ with $\Fil^1\wh D_{e} = I_{e-1 }+ \Fil^1S$. As in the proof of Claim~\ref{clm:BreuilClassif:Modp}, we have scalar extension functors $\SMFq \ra \PMFq{\wh D_{e}}$ which sends $(\M,\Fil^1\M)$ to $\wh D_{e}\otimes_S\M\cong (\M, I_{e-1}\M+\Fil^1\M)$. Applying Lemma~\ref{lem:BreuilClassifLifting} to $\wh D=\wh D_{e}$ and $\wh J = I_{e-1}$ and proceeding similarly to the proof of Claim~\ref{clm:BreuilClassif:Modp}, we deduce that there is a unique $\vphi$-compatible isomorphism $\DD^*(G_{e-1})(\wh D_{e})\cong \M$ which lifts the isomorphism \[\DD^*(G_{e-1})(S_{e-1})\cong S_{R,e-1}\otimes_{S}\M\] in $\PMFq{S_{e-1}}$.

Now assume that $p>2$, in which case  $pR$ is a topologically nilpotent divided power ideal. Let $G_\M$ be the lift of $G_{e-1}$ which corresponds to the filtration \[\Fil^1\M/(\Fil^1S)\M\subset \M/(\Fil^1S)\M.\] (Note that $G_\M$ functorially depends on $\M$ and $G_0$.) As in the proof of Claim~\ref{clm:BreuilClassif:Modp}, we obtain a natural isomorphism $\M^*(G_\M)\cong \M$ in $\SMFqw$, which proves essential subjectivity of $\M^*$ for $p>2$.

If $\M^*_0$, defined in \S\ref{subset:BMdJ}, is fully faithful (which is the case if $R$ satisfies the normality assumption~(\S\ref{cond:BM})), then  we  define a functor $G^*$ from $\SMFqw^\DD$ to the category of $p$-divisible groups over $R$ by choosing a quasi-inverse $R_0\otimes_S\M\rightsquigarrow G_0$ of $\M_0^*$ and setting $G^*(\M):=G_\M$ to be the lift of $G_0$ produced by the procedure described above. Then by construction we have a natural isomorphism $(\M^*\circ G^*)(\M)\cong\M$ for any $\M\in\SMFqw^\DD$. We also get a natural isomorphism $(G^*\circ \M^*)(G)\cong G$ for any $p$-divisible group $G$ over $R$  from the uniqueness of each deformation step. This settles the theorem when $p>2$.

Assume that $p=2$ and we show that the functor $\M^*$ is fully faithful up to isogeny; indeed, we show that for any $p$-divisible groups  $G$ and $G'$ over $R$ and a morphism $\alpha:\M^*(G') \ra\M^*(G)$ there exists a unique morphism $f':G\ra G'$ such that $\M^*(f') = p^2\alpha$. As in the case when $p>2$, $\alpha$ gives rise to a morphism $f_{R/p}:G_{R/p} \ra G'_{R/p}$. By  \cite[Lemma~1.1.3]{Katz:SerreTate}, it follows that $p^2 f_{R/p}$ lifts to a unique morphism $f'_{R/p^2}:G_{R/p^2} \ra G_{R/p^2}'$. Applying the Grothendieck-Messing deformation theory to the (topologically nilpotent!) divided power thickening $R\thra R/p^2$, there exists a unique morphism $f':G\ra G'$ which lifts $f'_{R/p^2}$ and satisfies $\M^*(f') = p^2\alpha$.
\end{proof}

\begin{lemsub}\label{lem:BreuilClassifLifting}
Let $\wh D$ be a frame (\emph{cf.,} Definition~\ref{def:Frame}) such that $\Fil^1\wh D$ is a divided power ideal, and $\wh J\subseteq \Fil^1\wh D$ be a divided power sub-ideal such that such that $\vphi^n(\wh J) \subset p^{n+j_n}\wh J$ for some $\set{j_n}$ with $j_n\to\infty$ as $n\to\infty$. (In particular, $\wh D/\wh J$ is automatically $\Zp$-flat so $\wh D/\wh J$ has a natural frame structure induced from $\wh D$, and the reduction modulo $\wh J$ induces a functor $\DMFq \ra \PMFq{\wh D/\wh J}$.) 

Then for  $\M_1, \M_2\in\DMFq$, any isomorphism $\bar\theta:\M_1/\wh J\M_1 \riso \M_2/\wh J\M_2$ in $\PMFq{\wh D/\wh J}$ uniquely lifts to a $\vphi$-compatible $\wh D$-linear isomorphism $\theta:\M_1\riso\M_2$.
\end{lemsub}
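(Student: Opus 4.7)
The plan is to prove uniqueness and then existence of $\theta$, both via Frobenius iteration in the spirit of Lemma~\ref{lem:Reduction}.

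For \textbf{uniqueness}, let $\delta := \theta - \theta'$ be the difference of two purported $\vphi$-compatible $\wh D$-linear lifts of $\bar\theta$. Then $\delta$ is $\wh D$-linear, satisfies $\delta \circ \vphi_{\M_1} = \vphi_{\M_2} \circ \delta$, and has image in $\wh J \M_2$. The key input is the $\DMFq$-axiom $(1 \otimes \vphi_{\M_1})(\vphi^* \Fil^1 \M_1) = p \M_1$, which implies that every $m \in \M_1$ satisfies $pm = \sum_i a_i \vphi_{\M_1}(y_i)$ for some $a_i \in \wh D$ and $y_i \in \Fil^1 \M_1$. Iterating this (using the $\vphi^n$-semilinearity of $\vphi_{\M_1}^n$ to pull the $p$-factors through), for each $n \geq 1$ I obtain an expression $p^n m = \sum_{i} a_{i,n} \vphi_{\M_1}^n(y_{i,n})$ with $a_{i,n} \in \wh D$ and $y_{i,n} \in \M_1$. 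Applying $\delta$ and exploiting $\vphi$-compatibility,
\[
p^n \delta(m) = \sum_i a_{i,n} \vphi_{\M_2}^n(\delta(y_{i,n})) \in \vphi^n(\wh J) \cdot \M_2 \subseteq p^{n+j_n} \wh J \M_2,
\]
since each $\delta(y_{i,n}) \in \wh J \M_2$ and $\vphi_{\M_2}^n$ is $\vphi^n$-semilinear. By $p$-torsion-freeness of $\M_2$ (finite projective over the $\Zp$-flat ring $\wh D$), $\delta(m) \in p^{j_n} \M_2$ for every $n$. Since $j_n \to \infty$ and $\M_2$ is $p$-adically separated as a finitely generated $\wh D$-module, $\delta(m) = 0$.

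For \textbf{existence}, I construct $\theta$ by successive Frobenius approximation. Using projectivity of $\M_1$ over $\wh D$, together with projectivity of the quotients $\M_i/\Fil^1 \M_i$ over $R$ (which enables a two-stage lifting plus an adjustment by a map into $\Fil^1 \M_2/\wh J \M_2$), choose an initial $\wh D$-linear lift $\theta_0 : \M_1 \to \M_2$ of $\bar\theta$ with $\theta_0(\Fil^1 \M_1) \subseteq \Fil^1 \M_2$. With $\psi_{\M_1} : \M_1 \to \vphi^* \M_1$ as in Remark~\ref{rmk:psi}, which in fact factors through $\vphi^* \Fil^1 \M_1$ (since $\psi_{\M_1}$ inverts $1 \otimes \vphi_{\M_1,1}$), define iteratively
\[
\theta_{n+1} := p^{-1} \cdot (1 \otimes \vphi_{\M_2}) \circ (\vphi^* \theta_n) \circ \psi_{\M_1} : \M_1 \to \M_2.
\]
This is integral: $\theta_n$ preserves $\Fil^1$, hence $(\vphi^* \theta_n)(\psi_{\M_1}(\M_1)) \subseteq \vphi^* \Fil^1 \M_2$, which $(1 \otimes \vphi_{\M_2})$ sends into $p \M_2$. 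One verifies that $\theta_{n+1}$ also preserves $\Fil^1$, and that the $\vphi$-defects $\delta_n := \vphi_{\M_2} \circ \theta_n - \theta_n \circ \vphi_{\M_1}$ satisfy $\delta_{n+1}(m) = \vphi_{\M_2}((\theta_{n+1} - \theta_n)(m))$. Inducting on this relation, $\delta_n(\M_1) \subseteq \vphi^n(\wh J) \M_2 \subseteq p^{n+j_n} \wh J \M_2$, so $\theta_{n+1} - \theta_n$ lies in an increasingly small submodule of $\Hom_{\wh D}(\M_1, \M_2)$. The latter is a finitely generated, hence $p$-adically complete, $\wh D$-module, so the limit $\theta := \lim_n \theta_n$ exists. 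By construction $\theta$ lifts $\bar\theta$, is $\wh D$-linear, and is a fixed point of the iteration, which is equivalent to $\vphi$-compatibility. Finally, $\theta$ is a $\wh D$-linear isomorphism by Nakayama's lemma applied to both $\theta$ and the lift of $\bar\theta^{-1}$, using that $\wh J \subseteq \Fil^1 \wh D$ lies in the Jacobson radical by the frame axiom.

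The \textbf{main obstacle} is the integrality and contraction bookkeeping in the existence step: verifying that each $\theta_n$ preserves $\Fil^1$ (so that the $p^{-1}$ in the iteration makes sense), and that the defect $\delta_{n+1}$ shrinks from $\wh J_n$ to $\vphi(\wh J_n)$. Preservation of $\Fil^1$ is what supplies the crucial factor of $p$ in $(1 \otimes \vphi_{\M_2})(\vphi^* \Fil^1 \M_2) \subseteq p \M_2$ that cancels the $p^{-1}$, while the hypothesis $\vphi^n(\wh J) \subseteq p^{n+j_n}\wh J$ with $j_n \to \infty$ provides the contraction driving convergence.
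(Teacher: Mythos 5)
Your approach matches the paper's (both are modelled on Kisin's Lemma~A.4 in \cite{kisin:fcrys}): a Frobenius-iteration argument for uniqueness, and a Frobenius-amplitude successive-approximation argument for existence. Your uniqueness proof is correct and, if anything, spells out the iteration more explicitly than the paper, which compresses it to the single line $(\theta-\theta')(\M_1)\subseteq\bigcap_n\vphi^n(\wh J)\M_2 = 0$.

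For existence, your recursion $\theta_{n+1} = p^{-1}(1\otimes\vphi_{\M_2})\circ(\vphi^*\theta_n)\circ\psi_{\M_1}$ is exactly the paper's commuting square \eqref{eqn:BreuilClassifLifting}, and the identity $\delta_{n+1}(m) = \vphi_{\M_2}\bigl((\theta_{n+1}-\theta_n)(m)\bigr)$ is correct (it amounts to $\theta_{n+1}\circ\vphi_{\M_1} = \vphi_{\M_2}\circ\theta_n$, which is what the recursion is built to arrange). But the claimed bound $\delta_n(\M_1)\subseteq\vphi^n(\wh J)\M_2$ is too strong and the induction on it is not closed. The chain of implications runs the wrong way: the stated relation produces $\delta_{n+1}$ from $\theta_{n+1}-\theta_n$, whereas to close the induction you need to recover $\theta_{n+1}-\theta_n$ from $\delta_n$, and that passage (composing with $\psi_{\M_1}$ and dividing by $p$) costs a factor of $p$ at each stage. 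The bound one actually obtains is the paper's $(\theta_{n+1}-\theta_n)(\M_1)\subseteq(\vphi/p)^n(\wh J)\M_2$, whence $\delta_n(\M_1)\subseteq p\cdot(\vphi/p)^n(\wh J)\M_2 = p^{1-n}\vphi^n(\wh J)\M_2$, which is $p^{n-1}$ times larger than what you assert. The error does not break the convergence — $(\vphi/p)^n(\wh J)\subseteq p^{j_n}\wh J$ and $j_n\to\infty$ still forces $\theta_{n+1}-\theta_n\to 0$ — but as written the crucial contraction estimate is neither correctly stated nor derived. You should track $\theta_{n+1}-\theta_n$ directly, via the clean identity $(\theta_{n+1}-\theta_n) = p^{-1}(1\otimes\vphi_{\M_2})\circ\vphi^*(\theta_n-\theta_{n-1})\circ\psi_{\M_1}$, as the paper does, rather than trying to infer it back from the $\vphi$-defects.

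A small aside: the ``two-stage lifting plus an adjustment'' to make $\theta_0$ preserve $\Fil^1$ is unnecessary. Since $\wh J\M_2\subseteq(\Fil^1\wh D)\M_2\subseteq\Fil^1\M_2$, \emph{any} $\wh D$-linear lift of $\bar\theta$ automatically sends $\Fil^1\M_1$ into $\Fil^1\M_2 + \wh J\M_2 = \Fil^1\M_2$, which is what the paper is gesturing at with its (vacuous) replacement of $\Fil^1\M_i$ by $\wh J\M_i + \Fil^1\M_i$.
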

\begin{proof}
The proof is identical to the proof of \cite[Lemma~A.4]{kisin:fcrys}. Let us first shows the uniqueness. Let $\theta$ and $\theta'$ be two lifts as in  the statement. Then since $\theta-\theta'$ commutes with $\vphi$'s, we have $(\theta-\theta')(\M_1)\subseteq\bigcap_{n\geqs0}\vphi^n(\wh J)(\M_2)=\set0$.

Let us show the existence. Note first that $1\otimes\vphi_{\M_i}$ is injective; indeed, $\wh D$ is $p$-torsion free by assumption, and $(1\otimes\vphi_{\M_i})[\ivtd p]$ is an isomorphism because it is a surjective map of finite projective $\wh D[\ivtd p]$-modules of the same rank. Therefore $(1\otimes\vphi_{\M_1})/p:\Fil^1(\vphi^*\M_i)\ra \M_i$ is an isomorphism, where $\Fil^1(\vphi^*\M_i)\subset \vphi^*\M_i$ is the image of $\vphi^*(\Fil^1\M_i)$. (Note that $\vphi$ is not necessarily flat.)

Pick an arbitrary lift $\theta_0:\M_1\ra \M_2$ of $\bar\theta$. As $\theta$ in the statement is not required to respect $\Fil^1$'s, we may replace $\Fil^1\M_i$ by $\wh J\M_i+\Fil^1\M_i$ to assume that $\theta_0(\Fil^1\M_1)\subset \Fil^1\M_2$. Let us recursively define lifts $\theta_n$ of $\bar\theta$, such that $\theta_0$ is the chosen lift (when $n=0$), and for any  $n\in\Z_{\geqs0}$ we have the following diagram:
\begin{equation}\label{eqn:BreuilClassifLifting}
\xymatrix{
\Fil^1(\vphi^*\M_1) \ar[r]^{\vphi^*\theta_n} \ar[d]^\cong_{(1\otimes\vphi)/p} & \Fil^1(\vphi^*\M_2) \ar[d]^{(1\otimes\vphi)/p}_\cong \\
\M_1 \ar[r]_{\theta_{n+1}} & \M_2}
\end{equation}

Now we  show by induction that $(\theta_{n+1}-\theta_n)(\M_1)\subseteq (\vphi/p)^n(\wh J)\M_2$ as follows. For any $x\in \M_1$, we can find $y\in\Fil^1(\vphi^*\M_1)$ such that $(1\otimes\vphi_{\M_1})(y)=x$. When $n=0$, we have by construction:
\[
(\theta_{1}-\theta_0)(x) = \left[(1\otimes\vphi_{\M_2}/p)\circ(\vphi^*\theta_0) - \theta_0\circ(1\otimes\vphi_{\M_1}/p)\right](y) \in\wh J\M_2.
\]
Now assume that $(\theta_{n}-\theta_{n-1})(\M_1)\in (\vphi/p)^{n-1}(\wh J)\M_2$ for some $n>0$. Then we have
\[
(\theta_{n+1}-\theta_n)(x)=  \left[(1\otimes\vphi_{\M_2}/p)\circ\vphi^*(\theta_{n}-\theta_{n-1}) \right](y)\in (\vphi/p)^n(\wh J)\M_2.
\]
By the assumption on $\wh J$, the series $\theta:=$``$\lim_{n\to\infty}\theta_n$''$ = \theta_0+\sum_{n=0}^\infty(\theta_{n+1}-\theta_n)$ converges, and clearly $\theta$ satisfies all the requirements in the statement. 
\end{proof}

The following theorem can be obtained from the same proof as Theorem~\ref{thm:BreuilClassif}:
\begin{thmsub}\label{thm:FormalBreuil}
Let $p$ be any prime, including $p=2$, and assume that $R$ satisfies the formally finite-type assumption~(\S\ref{cond:dJ}). Then $\M^*$ induces an anti-equivalence of categories from the category of formal (respectively, unipotent) $p$-divisible groups over $R$ to $\SMFqw^{\vphi\nilp}$ (respectively, $\SMFqw^{\psi\nilp}$). A similar result holds for $\DD^*$ over a formal scheme base $\XX$ which can be covered by $\Spf(R,(\varpi))$ for such $R$.
\end{thmsub}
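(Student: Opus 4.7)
The plan is to repeat the proof of Theorem~\ref{thm:BreuilClassif} verbatim, handling the $p=2$ case by replacing the ordinary Grothendieck-Messing deformation theory at the final lifting step with its extension to formal (or unipotent) $p$-divisible groups, which is valid for arbitrary divided power thickenings regardless of topological nilpotence of the divided power structure.

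Given $\M\in\SMFqw^{\vphi\nilp}$, I would first apply de~Jong's theorem (\S\ref{subset:BMdJ}(\ref{subset:BMdJ:dJ})) to the mod-$I_0$ reduction $\M_0:=R_0\otimes_S\M$ to obtain a $p$-divisible group $G_0$ over $R/(\varpi)$ with $\M_0^*(G_0)\cong\M_0$; by Lemma~\ref{lem:FormalUnip}, the hypothesis $\vphi_\M^n(\M)\subset p\M$ translates to $G_0$ being formal. Then I would reproduce Claim~\ref{clm:BreuilClassif:Modp} to produce compatible lifts $G_i$ over $R/(\varpi^{i+1})$ for $0\leqs i<e$; these intermediate thickenings carry the trivial PD structure and therefore pose no new difficulty at $p=2$.

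The only place the proof of Theorem~\ref{thm:BreuilClassif} genuinely distinguishes $p>2$ is the last step, where one lifts $G_{e-1}$ over $R/pR=R/(\varpi^e)$ to a $p$-divisible group $G_\M$ over $R$ through the PD thickening $R\thra R/pR$. For $p>2$, the canonical PD structure on $pR$ is topologically nilpotent and standard Grothendieck-Messing applies; for $p=2$ this fails in general, but since $G_0$ (and hence $G_{e-1}$) is formal, I would invoke the strengthening of Grothendieck-Messing theory for formal $p$-divisible groups, which works over arbitrary PD thickenings (as recorded in \cite{messingthesis}). This produces the desired $G_\M$ corresponding to the filtration $\Fil^1\M/(\Fil^1 S)\M\subset \M/(\Fil^1 S)\M$, functorial in $G_0$ and $\M$, and yields the natural isomorphism $\M^*(G_\M)\cong\M$ in $\SMFqw^{\vphi\nilp}$.

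The remaining bookkeeping---construction of a quasi-inverse $G^*$ from de~Jong's equivalence on the mod-$\varpi$ level, full faithfulness of $\M^*$ via the uniqueness clause of Lemma~\ref{lem:BreuilClassifLifting} applied to each successive lift, and gluing from $\Spf(R,(\varpi))$ to a general formal scheme $\XX$ (using that $\DD^*$ is canonically defined)---all proceed exactly as in the proof of Theorem~\ref{thm:BreuilClassif}. The unipotent case I would treat dually, either by Cartier duality or by running the same argument with the dual form of the extended Grothendieck-Messing theorem. The main obstacle, and really the only new ingredient beyond the proof of Theorem~\ref{thm:BreuilClassif}, is this appeal to the strengthened Grothendieck-Messing theorem for formal/unipotent $p$-divisible groups at $p=2$; this is the single point in the argument where the nilpotence restrictions on both sides of the equivalence are used.
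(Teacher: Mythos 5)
Your proposal follows the paper's own proof exactly: the paper likewise runs the argument of Theorem~\ref{thm:BreuilClassif} verbatim, observing that the only step depending on $p>2$ is the final lift through $R\thra R/(p)$, and replaces ordinary Grothendieck--Messing theory there with the version valid for formal (respectively unipotent) $p$-divisible groups over arbitrary divided power thickenings. The one small caveat is your reference: the paper cites Zink (\cite[\S3.3, Corollary~97]{Zink:DisplayFormalGpAsterisq278}) for this strengthened deformation result, whereas you attribute it to \cite{messingthesis}; Messing's thesis states Grothendieck--Messing only under (topological) nilpotence of the PD ideal, and the refinement you need for formal/unipotent groups over a non-nilpotent PD thickening is due to Zink. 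With that citation corrected, your argument is the paper's.
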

\begin{proof}
For formal or unipotent $p$-divisible groups, the Grothendieck-Messing deformation theory holds for  $R\thra R/(p)$ (\emph{cf.} \cite[\S3.3, Corollary~97]{Zink:DisplayFormalGpAsterisq278}), hence the proof of Theorem~\ref{thm:BreuilClassif} works for any $p$.  Lemma~\ref{lem:FormalUnip} shows that the essential image is as desired; \emph{cf.} the discussion following Definition~\ref{def:vphiNilp}.
\end{proof}

\begin{rmksub}
In general, we cannot replace $\SMFqw$ in Theorem~\ref{thm:BreuilClassif} with $\SMFq$. On the other hand, there are cases when we can ``forget the connection'' -- see Corollary~\ref{cor:BreuilClassif} for more details.
\end{rmksub}

\begin{rmksub}[lci case]\label{rmk:lciBreuilClassif}
Let $R$, $\varpi$, $S_0$,  $S$, and $E(u)$ be as in Remark~\ref{rmk:Brlci}. We additionally assume that $R$ is \emph{excellent} (as well as locally complete intersection); excellence is satisfied, for example, if $A/(p)$ is formally finitely generated over some field, by \cite[Theorem~4]{Valabrega:ExcellencePS}. Then for a $p$-divisible group $G_0$ over $R/(\varpi)$, let $\M_0^*(G_0):=\DD^*(G_0)(S_0) \in \PMF{S_0}$. Then the functor $\M_0^*$ is fully faithful by  \cite[Theorem~4.6]{deJong-Messing}. 

For a $p$-divisible group $G$ over $R$, let $\M^*(G):=\DD^*(G)(S)\in\SMFqw$. By precisely the same proof of Theorem~\ref{thm:BreuilClassif}, we can obtain that $\M^*$ is fully faithful if $p>2$, and it is fully faithful up to isogeny if $p=2$. 
\end{rmksub}

\subsection{Base Change}\label{subsec:BC}
Let $f:R\ra R'$ be a map where both $R$ and $R'$ satisfying  the $p$-basis assumption~(\S\ref{cond:Breuil}). We choose $(\Sig,\vphi)$ and $(\Sig',\vphi)$ for $R$ and $R'$ as in \S\ref{subsec:settingBr}, respectively.
As we have  observed in Remark~\ref{rmk:BC}, the ``base change'' of the functor $\M^*$ under $f$ can be defined if there exists a $\vphi$-compatible morphism $\Sig\ra \Sig'$ which reduces to $f$.

We have already observed (in Remark~\ref{rmk:BC}), the assumption is satisfied in the following cases:
\begin{enumerate}
\item[(Ex1)] Assume that $R$ is $J$-adic and $p\in J$. Let $R'$ be a $J$-adic formally \'etale algebra such that $R/J\ra R'/JR'$ is finitely generated (i.e., the morphism $\Spf(R',JR')\ra \Spf(R,J)$ is \'etale). Set $\wt J:=\ker (R_0\thra R/J)$ and assume that $R_0$ is $J_0$-adically complete (by replacing $R_0$ with the $J_0$-adic completion if necessary). Then by Lemma~\ref{lem:etloc} one can find a unique pair $(R_0',\vphi)$ so that $R\ra R'$ lifts to $\Sig\ra\Sig':=R_0'[[u]]$ respecting $\vphi$.
\end{enumerate}

We list some other cases where this assumption is satisfied:
\begin{enumerate}
\item[(Ex2)] $R$ is a discrete valuation ring with residue field $k$ (with fintie $p$-basis) and $R'$ is a $p$-adic flat $R$-algebra such that $R'/(\varpi)$ locally admits a finite $p$-basis (\emph{cf.} Lemma~\ref{lem:sec});
\item[(Ex3)] if $R'=R[[x_i]]\langle y_j\rangle$, then we may take $R'_0=R_0[[x_i]]\langle y_i\rangle$;
\item[(Ex4)] Let $W\ra W'$ be a map of Cohen rings with residue fields admitting finite $p$-bases. By Lemma~\ref{lem:lifting}, there is a lift of Frobenius $\vphi':W'\ra W'$ which leaves $W$ invariant. Assume that $R$ and $R_0$ are $W$-algebras, and choose $\vphi:R_0\ra R_0$ over $\vphi:W\ra W$. 
Now, we set $R':=R\wh\otimes_W W'$, $R_0':= R_0\wh\otimes_W W'$, and $\vphi_{R_0'} = \vphi_{R_0}\otimes \vphi$.
\item[(Ex5)] Let $\p\subset R$ be a prime ideal containing $\varpi$,  and set $\p_0:=\ker (R_0\thra R/\p)$. (For example, if $\p=(\varpi)$ then $\p_0=(p)$.) If $R'$ is the $\varpi$-adic completion of the localisation $R_\p$ then we may take $R'_0$ to be the $\varpi$-adic completion of $(R_0)_{\p_0}$. Note that $R_0'$ and $R'$ are actually $p$-adic without noetherian-ness assumption on $R$ (\emph{cf.,} Corollaire~2 in \cite[Ch. III, \S2, no.~12, page~228]{Bourbaki:AlgComm1-4}), and $\vphi_{R_0}$ extends to $R_0'$ because $\vphi\iv(\p_0) = \p_0$. (Note that $\vphi$ induces the identity map on the underlying topological space of $\Spec R/(\varpi)$.)

The same construction works if $\p$ and $\p_0$ are finitely generated prime ideals, and $R'$ is the $\p$-adic completion of $R_\p$ and $R_0'$ is the $\p_0$-adic completion of $(R_0)_{\p_0}$.
\item[(Ex6)] Let $R'_0$ be the $p$-adic completion of $\varinjlim_\vphi (R_0)_{(p)}$, and $R':=R'_0\otimes_{R_0}R$ .
\end{enumerate}
Note that in (Ex6) case we have a $\vphi$-compatible isomorphism $R_0'\cong W(k')$ by the universal property of Witt vectors over perfect rings, where $k':=\varinjlim_\vphi\Frac(R/(\varpi))$ is the perfect closure of $\Frac(R/(\varpi))$.

\section{Review: relative $p$-adic Hodge theory}\label{sec:Brinon}
In this section, we recall (and slightly generalise) the construction and basic properties of relative $p$-adic period rings. We refer to \cite{Brinon:Habilitation} for a brief, but more complete, overview of relative $p$-adic Hodge theory.

\subsection{Period rings}\label{subsec:Acris}
Let $R$ be a \emph{normal domain} which satisfies the $p$-basis assumption~(\S\ref{cond:Breuil}). (We may soon assume further that $R$ satisfies the ``refined almost \'etaleness'' assumption~(\S\ref{cond:RAF}) later.) 
Choose a separable closure $E$ of $\Frac(R)$, and define $\ol R$ to be the union of normal $R$-subalgebras $R'\subset E$ such that $R'[\ivtd p]$ is  finite \'etale over $R[\ivtd p]$. Note that $\Spec \ol R[\ivtd p]$ is a pro-universal covering of $\Spec R[\ivtd p]$ and $\ol R$ is an integral closure of $R$ in $\ol R[\ivtd p]$. Set $\wh{\ol R} :=\varprojlim_n\ol R/(p^n)$ and $\gal_R:=\Gal(\ol R[\ivtd p]/R[\ivtd p]) = \pi_1^{\et}(\Spec R[\ivtd p], \bar\eta)$ where $\bar\eta:\Spec R[\ivtd p]\ra\Spec E$. (The ``correct'' notation for $\GRR$ should be $\gal_{R[\frac{1}{ p}]}$, but we suppress this for the typographical reason. When $R=\fo_K$ is a discrete valuation ring we allow both notations $\gal_{\fo_K} = \gal_K$.) When $R=\fo_K$, we have $\ol R = \fo_{\ol K}$, $\wh{\ol R} = \fo_{\C_K}$, and $\gal_R = \Gal(\Kbar/K)$.

Let $\ol R^\flat:=\invlim_{x\mapsto x^p} \ol R/(p)$, which is a perfect ring equipped with a natural $\gal_R$-action. (We follow the notation of Scholze \cite[Lemma~6.2]{Scholze:Perfectoid}; perhaps, $\wh{\ol R}^\flat$ would be a more precise notation as $(\wh{\ol R}[\ivtd p], \wh{\ol R})$ is a perfectoid affinoid $\fo_{\CK}$-algebra, but there is no danger of confusion for using $\ol R^\flat$. See \S\ref{subsec:perfectoid} for (slightly) more discussions on perfectoid algebras.) As in the classical case (i.e., $R=\fo_K$ with perfect residue field), there is a natural multiplicative bijection $\ol R^\flat\riso \invlim_{x\mapsto x^p} \wh{\ol R}$, defined by the component-wise reduction modulo~$p$. To see that it is an isomorphism, we construct its inverse as follows. For any $(x_n)_{n\in \Z_{\geqs0}}\in \ol R^\flat$, define
\begin{equation}
x^{(n)}:=\lim_{m\to\infty}(\tilde x_{m+n})^{p^m}
\end{equation}
for any lift $\tilde x_{m+n}\in \wh{\ol R}$ of $x_{m+n}\in \ol R/(p)$. Note that $x^{(n)}$ is independent of all the choices, and $(x_n)_{n\in \Z_{\geqs0}} \mapsto (x^{(n)})_{n\in \Z_{\geqs0}}$ is the desired inverse.

For any  $a\in \wh{\ol R}$,  there exists an element $\wt a:=(a^{(n)})\in \ol R^\flat$ with $a^{(0)} = a$. (Note that $\wt a$  is not uniquely determined by $a$.) Another useful element is $\epsilon = (\epsilon^{(n)})$ where $\epsilon^{(0)} = 1$ and $\epsilon^{(1)}\ne 1$. The choice of $\epsilon$ is equivalent to the choice of a $\Zp$-basis of $\Zp(1)$.

Consider the following map:
\begin{equation}\label{eqn:theta}
\theta:W(\ol R^\flat)\ra \wh{\ol R}\ ,\quad \theta(a_0,a_1,\cdots) := \sum_{n=0}^\infty p^n a_n^{(n)}.
\end{equation}  
The same proof as \cite[Propositions~5.1.1, 5.1.2]{Brinon:CrisDR} shows that $\theta$ is a surjective ring homomorphism with  kernel  principally generated by $\xi:=p-[\wt p]$, where $[\cdot]$ denotes the Teichm\"uller lift. 

Let $\BdR^{\nabla,+}(R)$ be the $\ker(\theta)$-adic completion of $W(\ol R^\flat)[\ivtd p]$. Then $t:=\log[\epsilon]$ makes sense as an element of $\BdR^{\nabla,+}(R)$ (and indeed, it actually lies in the classical de Rham period ring $\BdR^+(\Zp)$). Set $\BdR^\nabla(R):=\BdR^{\nabla,+}(R)[\ivtd t]$. These rings carry natural $\gal_R$-actions and filtrations $\Fil^r\BdR^\nabla(R)=t^r\BdR^{\nabla,+}(R)$, which coincides with the $\ker(\theta)$-adic filtration.

Consider the $R$-linear extension $\theta_{R}:R\otimes_{\Zp}W(\ol R^\flat)\ra \wh{\ol R}$  of $\theta$, and set $\Ainf(R):=\varprojlim_n \big(R\otimes_{\Zp}W(\ol R^\flat)\big)/\big(\theta_R\iv(p\wh{\ol R})\big)^n$. Define $\BdR^+(R)$ to be the $\ker(\theta_R)$-adic completion of $\Ainf(R)[\ivtd p]$, and $\BdR(R):=\BdR^+(R)[\ivtd t]$, where $t:=\log[\epsilon]$, which makes sense in $\BdR^+(R)$. These rings carry  natural $\gal_R$-actions. For filtration, we set $\Fil^r\BdR^+(R):=(\ker\theta_R)^r\BdR^+(R)$ for $r\geqs0$ and extend it to $\Fil^r\BdR(R):=\sum_{n\geqs-r}\ivtd{t^n}\Fil^{n+r}\BdR^+(R)$.  When $R=\fo_K$ with perfect residue field, $\BdR^{\nabla}(\fo_K) = \BdR(\fo_K)$ is the usual de~Rham period ring constructed by Fontaine \cite{fontaine:Asterisque223ExpII}. If it is possible to define a connection on $\BdR(R)$ (\emph{cf.,} Proposition~\ref{prop:PeriodRings}), then we have $\BdR^\nabla(R) = (\BdR(R))^{\nabla=0}$, which explains the notation.

We define $\Acris^\nabla(R)$ to be the $p$-adically completed divided power envelope of $W(\ol R^\flat)$ with respect to $\ker(\theta)$. The Witt vector Frobenius extends to $\vphi$ on $\Acris^\nabla(R)$. We let $\Fil^1\Acris^\nabla(R)$ denote the  ideal topologically generated by the divided powers of $\xi:=p-[\wt p]$, and $\Fil^r\Acris^\nabla(R)$ its $r$th divided power ideal. Note that $\Acris^\nabla(R)$ only depends on $\ol R$. 

Consider the $R_0$-linear extension $\theta_{R_0}:R_0\otimes_{\Zp}W(\ol R^\flat)\ra \wh{\ol R}$  of $\theta$, and define $\Acris(R)$ to be the $p$-adically completed divided power envelope of $R_0\otimes_{\Zp}W(\ol R^\flat)$ with respect to $\ker (\theta_{R_0})$.  (Note that $\Acris(R)$ depends on the choice of $\ol R$ and $R_0$, which we suppress from the notation.)

As before, $\Acris(R)$ is equipped with a Frobenius endomorphism $\vphi$ which extends $\vphi$ on $R_0$ and the Witt vector Frobenius on $W(\ol R^\flat)$. Let $\Fil^1\Acris(R)$ be the ideal topologically generated by the divided powers of $\ker(\theta_{R_0})$, and  $\Fil^r\Acris(R)$ its $r$th divided power ideal. In addition, we define a connection $\nabla:\Acris(R)\ra\Acris(R)\wh\otimes_{R_0}\wh\Omega_{R_0}$ by $W(\ol R^\flat)$-linearly extending the universal continuous derivation of $R_0$ so that $\nabla(f^{[j]}):=f^{[j-1]}\nabla(f)$ for any $f\in\Fil^1\Acris(R)$ and $j>0$. One can directly check that $\vphi$ and $\gal_R$-action on $\Acris(R)$ is horizontal. Also note that $\Acris^\nabla(R)$ is naturally embedded in $\Acris(R)$, and coincides with $\Acris(R)^{\nabla=0}$, hence the notation.

The elements $[\epsilon]\in W(\ol R^\flat)$  and the formal power series $t=\log[\epsilon]$ can be viewed in $\Acris^\nabla(R)$ and $\Acris(R)$, and have all the expected properties. (Indeed, they all lie in the ``classical period ring'' $\Acris(\Zp)$, constructed by Fontaine \cite{fontaine:Asterisque223ExpII}.) We define $\Bcris^\nabla(R):=\Acris^\nabla(R)[\ivtd t]$ and $\Bcris(R):=\Acris(R)[\ivtd t]$. Note that $p$ is invertible in these rings since $p$ divides $t^{p-1}$. (Note that $p$ divides  $t^{p-1}$ in $\Acris(\Zp)$, which is well-known.) For any $r\in\Z$, we define $\Fil^r\Bcris(R) = \sum_{n\geqs -r}\ivtd{t^{n}}\Fil^{n+r}\Acris(R)[\ivtd p]$ and similarly define $\Fil^r\Bcris^\nabla(R)$.

 \begin{defn}
A \emph{filtered $(\vphi,\nabla)$ module over $R$ (relative to $R_0$)} is defined to be a quadruple $(D, \vphi_D, \nabla_D, \Fil^\bullet D_R)$, where 
\begin{enumerate}
\item $D$ is a finite projective $R_0[\ivtd p]$-module;
\item $\vphi_D:D\ra D$ is a $\vphi$-linear endomorphism such that $1\otimes\vphi_D$ is an isomorphism;
\item $\nabla_D:D\ra D\otimes_{R_0}\wh\Omega_{R_0}$ is an integrable topologically quasi-nilpotent connection (i.e., there exists a $R_0$-lattice $\M_0\subset D$ on which $\nabla_D$ induces a topologically quasi-nilpotent connection);
\item $\Fil^\bullet D_R$ is a decreasing separated exhaustive $R$-filtration on $D_R:=R\otimes_{R_0}D$, which satisfies Griffiths transversality with respect to $\nabla_D$.
\end{enumerate}
We denote by $\MFF_{R/R_0}(\vphi,\nabla)$ the category of filtered $(\vphi,\nabla)$-modules over $R$ relative to $R_0$. For any $D\in\MFF_{R/R_0}(\vphi,\nabla)$, the \emph{Hodge-Tate weights} of $D$ are $w\in\Z$ such that $\gr^wD_R\ne 0$.
\end{defn}
One can naturally define  short exact sequences, direct sums, $\otimes$-products, duals, etc. 

\subsection{Crystalline representations}
For any $p$-adic $\GRR$-representation $V$ and $D\in\MFF_{R/R_0}(\vphi,\nabla)$, we define:
\begin{align} 
\label{eqn:Dcris} \Dcris^*(V)&:=\Hom_{\gal_R}(V,\Bcris(R))\\
\label{eqn:DdR} \DdR^*(V)&:=\Hom_{\gal_R}(V,\BdR(R))\\
\label{eqn:Vcris} \Vcris^*(D)&=\Hom_{R_0[1/p],\vphi, \nabla,\Fil^\bullet}(D,\Bcris(R)).
\end{align}
Note that $\Dcris^*(V)$ is an $R_0[\ivtd p]$-module equipped with a $\vphi$-semilinear endomorphism and a connection coming from $\Bcris(R)$,  $\DdR^*(V)$ is an  $R[\ivtd p]$-module equipped with a  filtration coming from $\BdR(R)$, and $\Vcris^*(D)$ is a $\Qp$-vector space equipped with a continuous $\GRR$-action.

Note that in order to show that these constructions yield finitely generated objects (over some suitable rings), let alone prove some natural properties, one needs to make some extra assumptions on $R$, which we explain now.

\subsection{Remark on Faltings' purity and refined almost \'etaleness}\label{subsec:BrinonSetting}
For the rest of the section, we assume that $R$ satisfies  the ``refined almost \'etaleness'' assumption~(\S\ref{cond:RAF}), which we recall now: 
\begin{description}
\item[Assumption~\S\ref{cond:RAF}]
The (irreducible)  normal extension $R$ of $R_0$ satisfies the formally finite-type assumption~(\S\ref{cond:dJ}), $R[\ivtd p]$ is finite \'etale over $R_0[\ivtd p]$,  and we have $\wh\Omega_{R_0} = \bigoplus_{i=1}^d R_0 dT_i$  for some finitely many units $T_i\in R_0\starr$. 
\end{description}
See \S\ref{cond:RAF} for examples.

This condition is slightly more general than the conditions on $R$ under which the relative crystalline period rings $\Acris^\nabla(R)$ and $\Acris(R)$ are studied in \cite{Brinon:imperfect} and \cite{Brinon:CrisDR} (\emph{cf.,} \cite[\S1, \S3]{Brinon:Habilitation}); the cases covered in the literature are when $R$ is a discrete valuation ring with the residue field $k$ admitting a finite $p$-basis \cite{Brinon:imperfect}, or when the residue field $k$ is perfect and $R$ is obtained by some combination of finite \'etale extension, localisation, and completion starting from $\fo_K[T_1^{\pm1},\cdots,T_d^{\pm1}]$  \cite{Brinon:CrisDR}.

The point of  the ``refined almost \'etaleness'' assumption~(\S\ref{cond:RAF}) is, as the name suggests, the property called  refined almost \'etaleness (i.e., the condition (RAE) in \cite[\S5]{Andreatta:GenNormRings}), which plays the key technical role in ensuring that  relative period rings are well-behaved. Suppose that the assumption \S\ref{cond:RAF} is satisfied, and  let $A$ be a normal $R$-algebra such that $A[\ivtd p]$ is finite \'etale over $R[\ivtd p]$. Set $R_{(n)}:= R[\zeta_{p^n}; T_1^{\pm 1/p^n},\cdots, T_d^{\pm 1/p^n}]$, where $\zeta_{p^n}$ is a primitive $p^n$th root of unity and $T_i$'s are as in \S\ref{cond:RAF}. We similarly define $R_{0,(n)}$. (Note that $R_{(n)}$ is a finite normal extension of $R$, and is defined over $R_0$ in the sense that $R_{(n)} = R_{0,(n)}\otimes_{R_0}R$.) We let $A_{(n)}:= R_{(n)}\otimes_R A$, which is normal $R_{(n)}$-algebra which becomes \'etale after inverting $p$. Therefore, there exists an idempotent $\mathfrak{e}_{A,n}\in (A_{(n)}\otimes_{R_{(n)}}A_{(n)})[\ivtd p]$ which corresponds to the splitting of the natural map $ (A_{(n)}\otimes_{R_{(n)}}A_{(n)})[\ivtd p] \ra A_{(n)}[\ivtd p]$. For $\epsilon\in \ivtd{p-1}\Z[\ivtd p]$, let $p^\epsilon \in \bigcup_n K(\zeta_{p^n})$ denote any element with $\ord_p(p^\epsilon) = \epsilon$, where $\ord_p$ is the valuation normalised so that $\ord_p(p) = 1$.
\begin{thmsub}\label{thm:RAF}
Under the above setting, there exists an integer $l$ (only depending on $A$) such that $p^{lp^{-n}}\mathfrak{e}_{A,n} \in  A_{(n)}\otimes_{R_{(n)}}A_{(n)}$ for any $n$. 
\end{thmsub}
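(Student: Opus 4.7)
The plan is to adapt the refined almost \'etaleness argument of Andreatta \cite[\S5]{Andreatta:GenNormRings} (generalizing Faltings' original) to the slightly broader hypothesis~\S\ref{cond:RAF}. The strategy proceeds in three steps.

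First, I would reduce the estimate to a ``standard'' log-torus situation. The hypothesis $\wh\Omega_{R_0}=\bigoplus R_0\,dT_i$ with $T_i\in R_0\starr$ means that, upon choosing a Cohen subring $W\subset R_0$, the morphism from the standard base $R_0^{\std}:=W\langle T_1^{\pm1},\ldots,T_d^{\pm1}\rangle$ (with lift of Frobenius $T_i\mapsto T_i^p$) to $R_0$ kills all relative $p$-adic differentials and hence is formally \'etale for the $p$-adic topology. By Lemma~\ref{lem:etloc}, after possibly passing to an \'etale cover (which does not affect the question, since the formation of $\mathfrak{e}_{A,n}$ and the membership in $A_{(n)}\otimes_{R_{(n)}}A_{(n)}$ is local for the \'etale topology on $R$), we may assume $R_0$ is a formally \'etale $R_0^{\std}$-algebra, and then $R=R_0[u]/E(u)$ is, up to such an \'etale base change, of exactly the type studied in \cite{Brinon:CrisDR, Andreatta:GenNormRings}.

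Second, I would invoke Faltings' almost purity in the perfectoid form \cite{Scholze:Perfectoid}. The Kummer-type tower $R_\infty:=\bigcup_n R_{(n)}$ with $R_{(n)}=R[\zeta_{p^n};T_1^{\pm1/p^n},\ldots,T_d^{\pm1/p^n}]$ has $p$-adic completion $\wt R_\infty$ which is a perfectoid $\fo_{\CK}$-algebra (the units $T_i$ together with the uniformiser $\varpi$ provide compatible $p$-power roots, and this is the point at which requiring $T_i\in R_0\starr$ rather than arbitrary elements is essential). Since $A[1/p]$ is finite \'etale over $R[1/p]$, almost purity yields that the extension $\wt R_\infty\to \wt A_\infty$ (with $\wt A_\infty$ the $p$-adic completion of the normalisation of $R_\infty\otimes_R A$) is almost \'etale: its discriminant is killed by $p^\eps$ for every $\eps\in \ivtd{p-1}\Z[\ivtd p]_{>0}$.

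Third, translate this qualitative infinite-level almost \'etaleness into the quantitative finite-level bound $p^{lp^{-n}}$. Because $A[1/p]$ is already \'etale over $R[1/p]$, the different of $A/R$ is supported on $V(p)$, so there exists some integer $l$ such that $p^l$ lies in the different of $A_{(0)}/R_{(0)}=A/R$. At each step of the Kummer tower $R_{(n)}\to R_{(n+1)}$ the tame ramification of the units $T_i$ and of $\zeta$ divides the contribution to the different by exactly~$p$, so at level $n$ the discriminant of $A_{(n)}/R_{(n)}$ is bounded by $p^{lp^{-n}}$; inverting the trace pairing then gives $p^{lp^{-n}}\mathfrak{e}_{A,n}\in A_{(n)}\otimes_{R_{(n)}}A_{(n)}$. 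The main obstacle is making this third step rigorous with the same integer $l$ working uniformly in $n$: the naive almost \'etale bound only yields an $\eps$-dependent estimate $p^\eps\mathfrak{e}_{A,n}$ with no control across levels. The refined bound requires tracking the behavior of the different under each Kummer step explicitly; this is precisely the content of \cite[\S5]{Andreatta:GenNormRings}, and the reduction in the first step ensures that his argument applies verbatim in our setting.
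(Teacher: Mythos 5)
Your first step contains a genuine gap, and it affects everything downstream. You correctly observe that, since $R_0$ is formally smooth over $W$ and $\wh\Omega_{R_0}=\bigoplus R_0\,dT_i$ with $T_i\in R_0^\times$, the map $R_0^{\std}=W\langle T_1^{\pm1},\ldots,T_d^{\pm1}\rangle\to R_0$ is formally \'etale for the $p$-adic topology (the cotangent complex $L_{R_0/R_0^{\std}}$ vanishes). However, you then conclude that ``$R$ is, up to \'etale base change, of exactly the type studied in \cite{Brinon:CrisDR, Andreatta:GenNormRings}'', and this does not follow. Andreatta's Theorem~5.1 (and Faltings' purity in the form used there) is proved for rings obtained from the standard torus by a chain of \emph{finite} \'etale extensions, localisations, and completions; formal $p$-adic \'etaleness is a strictly weaker condition and does not place $R_0$ in that class. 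For instance $R_0 = W[[X_1,\ldots,X_{d'}]]$ with $T_i=1+X_i$ is formally \'etale over $R_0^{\std}$ but is not a finite \'etale extension of it, and nothing in \S\ref{cond:RAF} forces $R_0$ to be a completion/localisation of the torus in general. Your appeal to Lemma~\ref{lem:etloc} does not close the gap either: that lemma lifts \'etale algebras along $R$, it does not convert formal \'etaleness over $R_0^{\std}$ into an honest \'etale cover. Consequently step~3 — which you yourself identify as ``precisely the content of \cite[\S5]{Andreatta:GenNormRings}'' — cannot be invoked without first establishing that Andreatta's finite-level different bounds actually apply in our setting.

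The paper takes a genuinely different route that avoids this obstruction. Instead of trying to exhibit $R_0$ as (formally) \'etale over a standard ring, it exhibits it as a \emph{quotient} of one: after reducing to the case $R=R_0$ and to $k$ algebraically closed, Bertini's theorem is used to choose a map $\imath: W[X_1^{\pm1},\ldots,X_r^{\pm1}]\to R_0$ with $r\geqslant d$, $\imath(X_i)=T_i$ for $i\leqslant d$, such that the composite surjects onto $R/J_R$; then $R':=\varprojlim_s W[X_i^{\pm1}]/(J')^s$ maps onto $R$, and formal smoothness of $R$ over $W$ yields a continuous section, so $R'\cong R[[M]]$ for some finite projective $R$-module $M$ (and $R'_{(n)}\cong R_{(n)}[[M_n]]$). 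The refined estimate for $A':=R'\otimes_R A$ over $R'$ — available from Faltings and Andreatta because $R'$ \emph{is} in their class — is then pushed down to $A$ over $R$ by the quotient map $R_{(n)}[[M_n]]\otimes_{R_{(n)}}(A_{(n)}\otimes_{R_{(n)}}A_{(n)})\thra A_{(n)}\otimes_{R_{(n)}}A_{(n)}$, which visibly sends $p^{lp^{-n}}\mathfrak{e}_{A',n}$ to $p^{lp^{-n}}\mathfrak{e}_{A,n}$. The case of non-algebraically-closed $k$ is handled at the end by base change to $W(\kbar)$. The key insight you are missing is this projection argument: what is needed is not \'etaleness between the two bases, but a surjection from the known base together with the fact that extra power-series variables only improve the estimate and can be quotiented out afterwards.

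Your steps~2 and~3 are conceptually compatible with this — once the reduction to the standard setting is in place, the perfectoid/almost-purity language is a legitimate (if anachronistic for this specific theorem) way to phrase Faltings' input — but as written the proposal cannot be completed, because the quantitative finite-level tracking in step~3 presupposes the reduction that step~1 fails to deliver.
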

\begin{proof}
This theorem is essentially due to Faltings \cite[Theorem~4, \S2b]{Faltings:AlmostEtaleExt} and Andreatta \cite[Theorem~5.1]{Andreatta:GenNormRings}, and we indicate how to deduce this theorem from the aforementioned results.

It suffices to handle the case when $R=R_0$ by viewing $A$ as a normal extension of $R_0$; indeed, via the natural projection $A_{(n)}\otimes_{R_{0,(n)}}A_{(n)}[\ivtd p] \thra A_{(n)}[\ivtd p]\otimes_{R_{(n)}}A_{(n)}[\ivtd p]$, the claim for an idempotent $\mathfrak{e}_{A,n}\in A_{(n)}\otimes_{R_{0,(n)}}A_{(n)}[\ivtd p] $ implies the claim for its image in $A_{(n)}\otimes_{R_{(n)}}A_{(n)}[\ivtd p] $. From now on, we assume that $R$ is formally finitely generated over a Cohen ring $W$ with residue field $k$, and $R/(p)$ locally admits a finite $p$-basis.

Let us first assume that $k$ is algebraically closed. Choose a $W$-algebra map $\imath:W[X_1^{\pm1},\cdots, X_r^{\pm1}] \ra R_0$ with $r\geqs d$, such that $\imath(X_i) = T_i$ for $i=1,\cdots, d$, and the composition $k [X_1^{\pm1},\cdots, X_r^{\pm1}] \xra{k\otimes\imath} R/(\varpi)\thra R/J_R$ is surjective. This is possible by Bertini's theorem (as $k$ is assumed to be algebraically closed). We set $R'_0:=\varprojlim_s W [X_1^{\pm1},\cdots, X_r^{\pm1}]/(J')^s$ where $J':= \ker(W[X_i^{\pm1}]\thra R/J_R)$, and let $R':=\fo_K\otimes_W R'_0$. 

The map $\imath$ extends to a quotient map $R'\thra R$, and it admits a continuous section since $R$ is formally smooth over $W$. (Recall that we assumed $R=R_0$.) In particular, there exists a finitely generated projective $R$-module $M$ such that $R'\cong R[[M]]:=\prod_{i\geqs0}\Sym^i_{R}M$ by  \cite[Lemma~1.3.3]{dejong:crysdieubyformalrigid}. We set $R'_{(n)}:= R'[\zeta_{p^n}; X_1^{\pm 1/p^n},\cdots, X_r^{\pm 1/p^n}]$, then the same argument as above shows that $R'_{(n)}\cong R_{(n)} [[M_n]]$ for some finitely generated projective $R_{(n)}$-module $M_n$.

Recall that the lemma is known for any finite normal $R'$-algebra $A'$ such that $A'[\ivtd p]$ is \'etale over $R'[\ivtd p]$ by   Faltings'  purity theorem \cite[Theorem~4]{Faltings:AlmostEtaleExt} (\emph{cf.,} \cite[Theorem~5.11]{Andreatta:GenNormRings}) and \cite[Theorem~5.1]{Andreatta:GenNormRings}. Now, for any normal  $R$-algebra $A$ such that $A[\ivtd p]$ is finite \'etale over $R[\ivtd p]$, we can apply the result to $A':=R'\otimes_R A$ and obtain an integer $l$ such that 
\begin{equation*}
p^{lp^{-n}}\mathfrak{e}_{A',n} \in  A'_{(n)}\otimes_{R'_{(n)}}A'_{(n)} \cong R_{(n)}[[M_n]]\otimes_{R_{(n)}}(A_{(n)}\otimes_{R_{(n)}}A_{(n)})
\end{equation*}
 for any $n\geqs0$. The image of $p^{lp^{-n}}\mathfrak{e}_{A',n}$ under the projection $R_{(n)}[[M_n]]\otimes_{R_{(n)}}(A_{(n)}\otimes_{R_{(n)}}A_{(n)}) \thra A_{(n)}\otimes_{R_{(n)}}A_{(n)}$ 
(defined by quotienting out $M_n$) is exactly $p^{lp^{-n}}\mathfrak{e}_{A,n}$, so the lemma follows when $k$ is algebraically closed.

When $k$ is not algebraically closed, we choose a map $W\ra W(\kbar)$, which always exists. Since the lemma is known for $W(\kbar)\otimes_W R$, we can deduce the lemma for $R$ by repeating the argument in the previous paragraph. 
\end{proof}

Let us list a few useful properties of these relative period rings. Although the statement is slightly more general than those found in the literature \cite{Brinon:imperfect,Brinon:CrisDR}, it is not hart to extend it to our setting.
\begin{prop}\label{prop:PeriodRings}
Assume that $R$ satisfies the ``refined almost \'etaleness'' assumption~(\S\ref{cond:RAF}). For $i=1,\cdots,d$, let $u_i\in\Acris(R)\subset\BdR(R)$ denote the image of $T_i-[\wt T_i]$, where $T_i$'s are chosen as in \S\ref{subsec:BrinonSetting}. 
\begin{enumerate}
\item\label{prop:PeriodRings:BdR} We have $\BdR^+(R) = \BdR^{\nabla,+}(R)[[u_1,\cdots,u_d]]$, and the associated graded rings are $\gr^\bullet\BdR^{\nabla,+} \cong \wh{\ol R}[\ivtd p][t]$ and $\gr^\bullet\BdR^+(R) \cong \wh{\ol{R}}[\ivtd p][t,u_1,\cdots,u_d]$.

\item\label{prop:PeriodRings:Griff}
The $\BdR^\nabla(R)$-linear connection defined by $\nabla(u_i) = 1\otimes dT_i$ satisfies the Griffiths transversality (i.e., $\nabla(\Fil^r\BdR(R)) \subseteq \Fil^{r-1}\BdR(R)\otimes_{R_0}\wh\Omega_{R_0}$).

\item\label{prop:PeriodRings:Acris} We have $\Acris(R) = \Acris^\nabla(R)\langle u_1,\cdots,u_d\rangle^{PD}$; i.e., the $p$-adically completed divided power polynomial in $u_i$'s over $\Acris^\nabla(R)$. 

\item\label{prop:PeriodRings:AcrisFil} The ideal $\Fil^r\Acris(R)$ is topologically generated by $(p-[\tilde p])^{[j_0]}u_1^{[j_1]}\cdots u_d^{[j_d]}$ for $\sum_{n=0}^d j_n\geqs r$, and we have $\Fil^r\Acris^\nabla(R) = \Fil^r\Acris(R)\cap \Acris^\nabla(R)$. In particular, $t\in\Fil^1\Acris^\nabla(R)\subset \Fil^1\Acris(R)$. 

\item\label{prop:PeriodRings:nabla} 
The connection $\nabla$ on $\Acris(R)$ is the unique $\Acris^\nabla(R)$-linear connection such that $\nabla(u_i^{[n]}) = u_i^{[n-1]}\otimes dT_i$, and $\vphi:\Acris(R)\ra\Acris(R)$ is horizontal.

\item\label{prop:PeriodRings:flatness}  Both $\BdR^+(R)$  and $\Acris(R)$ have no non-zero $t$-torsion. (In particular, $\Acris(R)$ has no non-zero $p$-torsion.) The rings $\wh{\ol{R}}[\ivtd p]$, $\BdR^+(R)$ and $\BdR(R)$ are faithfully flat over $R[\ivtd p]$, and $\Bcris(R)$ is faithfully flat over $R_0[\ivtd p]$.

\item\label{prop:PeriodRings:Inj} The natural map $R_0\otimes_{\Zp} W(\ol R^\flat) \hra R\otimes_{\Zp} W(\ol R^\flat)$ uniquely extends to $\Acris(R)\ra\BdR(R)$, which is injective, filtered, horizontal, and $\gal_R$-equivariant. Furthermore,  the natural map $R\otimes_{R_0}\Acris(R) \ra \BdR(R)$ is injective.  

\item\label{prop:PeriodRings:GalCohom} As an $R[\ivtd p]$-algebra $\BdR^+(R)$ contains $\ol R[\ivtd p]$ as a $\GRR$-stable subring, and  we have $\BdR(R)^{\gal_R} = R[\ivtd p]$. Let $\wh R_0^{\ur}$ denote the closure of maximal ind-\'etale $R_0$-subalgebra in $\wh{\ol R}$. Then $\Acris(R)$ contains $\wh R_0^{\ur}$ as a $\GRR$-stable subring, and $\Bcris(R)^{\gal_R} = R_0[\ivtd p]$.

\item\label{prop:PeriodRings:FundSeq} 
The  sequence 
$0\ra\Qp\ra\Bcris^\nabla(R)^{\vphi = 1} \ra\BdR^{\nabla}(R)/\BdR^{\nabla,+}(R) \ra 0$ is exact.
(This sequence is called the \emph{fundamental exact sequence}).
\end{enumerate}
\end{prop}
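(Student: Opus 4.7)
The overall plan is to follow the framework developed by Brinon in \cite{Brinon:CrisDR, Brinon:imperfect}, extending each statement to our slightly more general setting with the refined almost \'etaleness result (Theorem~\ref{thm:RAF}) providing the key technical input whenever a descent-type argument is needed. I would proceed roughly in the order (1), (3), (4), (2), (5), (6), (7), (8), (9), since the later parts rely on the structural descriptions established early on.

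First I would establish the structural descriptions (\ref{prop:PeriodRings:BdR}) and (\ref{prop:PeriodRings:Acris}). The key observation is that each $u_i = T_i-[\wt T_i]$ lies in $\ker(\theta_R)$ but not in $\ker(\theta)$; indeed $u_i \bmod \Fil^2$ pairs with the generator $dT_i$ of $\wh\Omega_{R_0}$. Using formal smoothness of $R_0$ over the relevant Cohen subring, the natural $\vphi$- and $\gal_R$-equivariant map $\Acris^\nabla(R)\langle u_1,\dots,u_d\rangle^{PD}\to \Acris(R)$ is well-defined; I would verify it is an isomorphism by checking the universal property of $p$-adic PD envelopes against $\theta_R$ (using that $T_i\mapsto u_i+[\wt T_i]$ gives an honest lift of $R_0$ into the target). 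Taking the $\ker(\theta_R)$-adic completion after inverting $p$ then yields (\ref{prop:PeriodRings:BdR}) and the graded ring computation. Statements (\ref{prop:PeriodRings:Griff}), (\ref{prop:PeriodRings:AcrisFil}) and (\ref{prop:PeriodRings:nabla}) then reduce to a direct computation on the explicit generators $u_i^{[n]}$ and on the PD-generators of the filtration, using $\nabla(u_i) = 1\otimes dT_i$.

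Next I would prove (\ref{prop:PeriodRings:flatness}) and (\ref{prop:PeriodRings:Inj}). The $t$-torsion freeness of $\Acris(R)$ and $\BdR^+(R)$ is inherited from the classical ring $\Acris(\Zp)$ via the faithfully flat base change provided by (\ref{prop:PeriodRings:Acris}), after noting that $\Acris^\nabla(R)$ is $\Zp$-flat by its construction as a $p$-adic PD envelope with $\Zp$-flat source $W(\ol R^\flat)$. For faithful flatness of $\BdR^+(R)$ and $\BdR(R)$ over $R[\ivtd p]$, I would mimic \cite[\S5]{Brinon:CrisDR}: one first shows flatness of the ``nabla'' rings over $R_0[\ivtd p]$ by relating to $\wh{\ol R}[\ivtd p]$ through the graded pieces, where the flatness of $\wh{\ol R}[\ivtd p]$ over $R_0[\ivtd p]$ is the content of refined almost \'etaleness (Theorem~\ref{thm:RAF}); the upgrade to $R$-coefficients then comes from (\ref{prop:PeriodRings:BdR}) and the formal variable description. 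Injectivity of $\Acris(R)\hookrightarrow\BdR(R)$ in (\ref{prop:PeriodRings:Inj}) is then immediate from the structural descriptions, since $p$-adically completed divided-power polynomials embed into formal power series; the injectivity of $R\otimes_{R_0}\Acris(R)\to\BdR(R)$ follows because $R\otimes_{R_0}\Acris(R)$ is obtained from $\Acris(R)$ by forcing $u_i$ to be nilpotent modulo $\Fil^{>0}$, and this still embeds into the $\Fil$-adic completion.

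The main obstacle is (\ref{prop:PeriodRings:GalCohom}), which will in turn imply (\ref{prop:PeriodRings:FundSeq}). The computation of $\gal_R$-invariants is a Galois-cohomological argument modeled on \cite[\S5,\S6]{Brinon:CrisDR}: one uses Theorem~\ref{thm:RAF} to compare cohomology of the tower $R_{(n)}$ with that of a perfectoid extension, computes $\HH^*(\gal_R,\wh{\ol R}[\ivtd p])$ via the Kummer/cyclotomic directions, and then propagates to $\BdR$ and $\Bcris$ through the $\Fil$-adic filtration using the graded ring descriptions from (\ref{prop:PeriodRings:BdR}). The delicate point is that our hypothesis (\S\ref{cond:RAF}) is mildly more general than Brinon's setting, so I would check that each of Brinon's inductive steps (decomposition into isotypic pieces, bounds on inverses of continuous operators) only makes use of the version of refined almost \'etaleness packaged in Theorem~\ref{thm:RAF} and of the freeness hypothesis on $\wh\Omega_{R_0}$. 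The fundamental exact sequence (\ref{prop:PeriodRings:FundSeq}) then follows from (\ref{prop:PeriodRings:GalCohom}) by the standard argument: surjectivity of $\vphi-1$ on $\ker(\theta)$-adic neighborhoods in $\Acris^\nabla(R)$ via the logarithm and $t$-adic convergence estimates, and computation of the kernel via the invariants $\Bcris^\nabla(R)^{\vphi=1, \nabla=0}\cap \BdR^{\nabla,+}(R)=\Qp$.
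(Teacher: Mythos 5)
Your proposal matches the paper's own strategy almost exactly: the paper's proof is essentially a list of citations to Brinon's \cite{Brinon:imperfect,Brinon:CrisDR} with the observation that each argument carries over to the present setting once refined almost \'etaleness (Theorem~\ref{thm:RAF}) and formal smoothness of $R_0$ over $\Zp$ are in hand, and this is precisely the plan you outline (establish the structural description of $\Acris(R)$ and $\BdR^+(R)$ via the PD-envelope universal property, read off the filtration and connection, deduce torsion-freeness and flatness via RAE, and run the Galois-cohomology computation culminating in the fundamental exact sequence).

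One local misstep: in your argument for the second half of (\ref{prop:PeriodRings:Inj}) you describe $R\otimes_{R_0}\Acris(R)$ as ``obtained from $\Acris(R)$ by forcing $u_i$ to be nilpotent modulo $\Fil^{>0}$.'' That is not what the tensor product does; $R\otimes_{R_0}\Acris(R)\cong\Acris(R)[u]/E(u)$ adjoins the Eisenstein variable $u$ (with image $\varpi$), and does not touch the elements $u_i=T_i-[\wt T_i]$ of $\Acris(R)$. The injectivity instead hinges on the fact that this is a finite free $\Acris(R)$-module of rank $e=\deg E$ and that $1,\varpi,\dots,\varpi^{e-1}$ remain $\Acris(R)$-linearly independent inside $\BdR(R)$, which one checks using the $t$-torsion-freeness from (\ref{prop:PeriodRings:flatness}) and the graded description of $\BdR^+(R)$; this is the content of \cite[Prop.~6.2.7]{Brinon:CrisDR}, which the paper cites. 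The conclusion you want is correct, but the stated explanation would not give a valid proof as written.
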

\begin{proof}
The proofs in \cite{Brinon:imperfect} and \cite{Brinon:CrisDR} work in our setting (since we have all the ingredients for the proof; especially,  Theorem~\ref{thm:RAF}), so we content with giving references for the proofs.

For (\ref{prop:PeriodRings:BdR}) and (\ref{prop:PeriodRings:Acris}), the same proof for the case when $R$ is a discrete valuation ring carries over if we work with $R_0$ instead of $\cO_{K_0}$  (\emph{cf.,} Propositions~2.9, 2.19, and 2.39 in \cite{Brinon:imperfect}); note that $R_0$ is formally smooth over $\Zp$, which suffices for the proof to work. A direct computation using (\ref{prop:PeriodRings:BdR}) shows (\ref{prop:PeriodRings:Griff}). (\emph{Cf.} \cite[Proposition~2.23]{Brinon:imperfect}, \cite[Proposition~5.3.9]{Brinon:CrisDR}.)

The statement (\ref{prop:PeriodRings:AcrisFil}) follows from (\ref{prop:PeriodRings:Acris}) and \cite[Proposition~5.1.2]{Brinon:CrisDR}, which asserts that the kernel of $\theta:W(\ol R^\flat)\thra\wh{\ol R}$ is principally generated by $p-[\wt p]$. The statement (\ref{prop:PeriodRings:nabla}) follows from a direct computation using (\ref{prop:PeriodRings:Acris})--(\ref{prop:PeriodRings:AcrisFil}) (\emph{cf.,} Proposition 6.2.5 of  \cite{Brinon:CrisDR}).

The $p$- and $t$- torsion statement in (\ref{prop:PeriodRings:flatness})  follows from  the  proofs of  Propositions~5.1.5 and 6.1.10 of \cite{Brinon:CrisDR}.
The faithful flatness statement in (\ref{prop:PeriodRings:flatness}) follows from the proofs of Th\'eor\`emes~3.2.3, 5.4.1, and  6.3.8 of \cite{Brinon:CrisDR}, which uses  refined almost \'etaleness (Theorem~\ref{thm:RAF}). 
The proofs of Propositions~6.2.1 and 6.2.7 of \cite{Brinon:CrisDR} show the injectivity statements of (\ref{prop:PeriodRings:Inj}) while the rest of (\ref{prop:PeriodRings:Inj})  can be directly checked.

To see that there is a natural embedding $\wh R_0^{\ur}\hra \Acris(R)$, note that  $\theta_{R_0}$ induces a nilpotent thickening $\Acris(R)/(p^n) \thra \ol R/(p^n)$, so by ind-\'etaleness there is a unique map $ R_0^{\ur}\ra \Acris(R)/(p^n)$ lifting the natural map $ R_0^{\ur}\ra \ol R/(p^n)$ for any $n$. The same argument shows the embedding $\ol R[\ivtd p]\hra \BdR^+(R)$ (\emph{cf.,}  \cite[Proposition~5.2.3]{Brinon:CrisDR}). 

The proofs of Propositions~5.2.12 and 6.2.9 of  \cite{Brinon:CrisDR} shows (\ref{prop:PeriodRings:GalCohom}). 
For (\ref{prop:PeriodRings:FundSeq}) see the proof of  \cite[Proposition~6.2.23]{Brinon:CrisDR}.
\end{proof}

\subsection{(Relative) crystalline $\GRR$-representations}
Assume   that  $R$ satisfies the ``refined almost \'etaleness'' assumption~(\S\ref{cond:RAF}).
One can deduce (rather formally from Proposition~\ref{prop:PeriodRings}) the following properties for  $\Dcris$, $\DdR$, and $\Vcris$ (\ref{eqn:Dcris}--\ref{eqn:Vcris}) in the same manner as  \cite[\S8]{Brinon:CrisDR}:
\begin{enumerate}
\item For any $p$-adic $\GRR$-representation $V$, the following natural maps (which respects all the structure)
\begin{align*}
\alpha_{\cris}&:\Bcris(R)  \otimes_{R_0[\ivtd p]}\Dcris^*(V) \ra \Bcris(R)\otimes_{\Qp}V\\
\alpha_{\dR}&:\BdR(R)  \otimes_{R[\ivtd p]}\DdR^*(V) \ra \BdR(R)\otimes_{\Qp}V
\end{align*}
are injective (\emph{cf.,}  \cite[Propositions~8.2.4 and 8.2.6]{Brinon:CrisDR}). If  $\alpha_{\cris}$ is an isomorphism then we say that $V$ is \emph{crystalline}. In this case, $\Dcris^*(V)$ is finitely generated projective over $R_0[\ivtd p]$ by Proposition~\ref{prop:PeriodRings}(\ref{prop:PeriodRings:flatness}), and the natural map $R\otimes_{R_0}\Dcris^*(V) \ra \DdR^*(V)$ is an isomorphism by Proposition~\ref{prop:PeriodRings}(\ref{prop:PeriodRings:Inj}). 
In particular, we may naturally view $\Dcris^*(V)\in\RMF$ (\emph{cf.,} \cite[\S8.3]{Brinon:CrisDR}). We call $D\in\RMF$  \emph{admissible} if there exists $V$ such that $D\cong\Dcris^*(V)$.
\item Let $\Repcris_{\Qp}(\gal_R)$ denote the category of crystalline $\gal_R$-representations, and let $\RMFa\subset \RMF$ denote the full subcategory of admissible objects. Then $\Dcris^*$ and $\Vcris^*$ are quasi-inverse anti-equivalences of Tannakian categories between $\Repcris_{\Qp}(\gal_R)$ and $\RMFa$ (\emph{cf.,} Th\'eor\`emes 8.4.2 and 8.5.1 of \cite{Brinon:CrisDR}).
\end{enumerate}

\begin{exasub}[Case of $p$-divisible groups]\label{exa:BT}
For any $p$-divisible group $R$, one can define $D^*(G):=(\DD^*(G)(R_0)[\ivtd p], \Fil^1\DD^*(G)(R)[\ivtd p])$, and this is clearly a  filtered $(\vphi,\nabla)$-module with Hodge-Tate weights in $\set{0,1}$. (Note that Griffiths transversality imposes no condition.) We will show later (Corollary~\ref{cor:BrinonTrihan}) that under the ``refined almost \'etaleness'' assumption~(\S\ref{cond:RAF}) we have $\Vcris^*(D^*(G)) \cong V_p(G)$ (i.e., $V_p(G)$ is crystalline and $D^*(G)$ is admissible). We deduce this from a finer statement about the integral lattice $T_p(G)\subset V_p(G)$ and $\DD^*(G)$.
\end{exasub}

\section{Relative integral $p$-adic comparison isomorphism }\label{sec:Faltings}
In this section, we prove the integral $p$-adic comparison theorem for $p$-divisible groups over $R$, which directly follows from the proof of Faltings \cite[\S6]{Faltings:IntegralCrysCohoVeryRamBase}. A similar approach to ours can be found in  \cite{BrinonTrihan:CrisImperf} over a $p$-adic discrete valuation ring with residue field admitting a finite $p$-basis, and perhaps the main result of this section was already well known to experts, but we include this section for completeness.

We continue to assume that $R$ is a normal domain which satisfies the $p$-basis assumption~(\S\ref{cond:Breuil}), and will specify when we make a stronger assumption on $R$ (namely, the ``refined almost \'etaleness'' assumption~(\S\ref{cond:RAF})). 
\subsection{The rings $S$ and $\Acris(R)$}\label{subsec:Rinfty}
Choose $\varpi^{(n)}\in\ol R$ for $n\geqs 0$ so that $\varpi^{(0)} = \varpi$ and $(\varpi^{(n+1)})^p=\varpi^{(n)}$. Note that $\wt\varpi:=(\varpi^{(n)}) $ defines an element in $\ol R^\flat$. Set $R_\infty:=\bigcup_n R[\varpi^{(n)}] \subset \ol R$, and $\GRinfty:=\Gal(\ol R[\ivtd p]/R_\infty[\ivtd p])$.

We define an $R_0$-algebra map $\Sig \ra R_0\otimes_{\Zp} W(\ol R^\flat)$ by sending $u\mapsto [\wt\varpi]$, and this naturally extends to a $\GRinfty$-invariant (but \emph{not} $\GRR$-invariant) injective map $S\ra\Acris(R)$ which respects $\vphi$, $\nabla$, filtrations, and divided power structure. 

\subsection{Integral comparison isomorphism}
Recall that for a $p$-divisible group $G$ over $R$, the (contravariant) Dieudonn\'e crystal $\DD^*(G)$ is obtained from the Lie algebra of the universal vector extension for (lifts of) $G^\vee$ (\emph{cf.,} \cite[Ch.VI]{messingthesis}). Using this definition, one can easily obtain $\DD^*(\Qp/\Zp)(S)  \cong (S,\Fil^1S, \vphi/p, d_{S})$  and $\DD^*(\Gmhat)(S)  \cong(S,S, \vphi, d_{S})$.

As in the proof of Theorem~\ref{thm:BreuilClassif}, let us denote $\M^*(G) := \DD^*(G)(S)$ for any $p$-divisible group $G$ over $R$. Then we have $\DD^*(G_{\ol R})(\Acris(R)) \cong \Acris(R)\otimes_{S} \M^*(G)$ which respects all the extra structures, except the $\GRR$-action. (Indeed the isomorphism is only $\GRinfty$-equivariant as $S\subset\Acris(R)$ is not $\GRR$-invariant but $\GRinfty$-invariant.) Let us write $\AMF{R}$ for the category of $\Acris(R)$-module equipped with $\Fil^1$, $\vphi_1$ and $\nabla$ in the exactly same way as $\SMFw$. 

As $\GRR$-modules we may naturally identify $T_p(G)\cong \Hom_{\ol R}(\Qp/\Zp, G_{\ol R})$. So we obtain a pairing
\begin{equation*}
T_p(G)\times \left(\Acris(R)\otimes_{ S}\M^*(G) \right)\ra \Acris(R)\otimes_{S}\M^*(\Qp/\Zp) = \Acris(R)
\end{equation*}
by $(x,m)\mapsto x^*m$ for any $x:\Qp/\Zp \ra G_{\ol R}$ and $m\in \DD^*(G_{\ol R})(\Acris(R))$.
Therefore we obtain the following integral comparison morphism
\begin{equation}\label{eqn:Faltings}
\rho_G:\Acris(R)\otimes_{S}\M^*(G) \ra \Acris(R)\otimes_{\Zp}T_p(G)^*.
\end{equation}
With the naturally defined extra structures on the both sides, $\rho_G$ can be naturally viewed as a morphism in $\AMF{R}$. 

\begin{rmksub}\label{rmk:rhoG}
There is a natural  $\GRR$-action on $\DD^*(G_{\ol R})(\Acris(R))$ induced from the natural $\GRR$-action on $\Acris(R)$, so we have a natural $\GRR$-action on $\Acris(R)\otimes_{S}\M^*(G)$. With this $\GRR$-action on the target, $\rho_G$ is $\GRR$-equivariant. Note that the $\GRR$-action on $\Acris(R)\otimes_{S}\M^*(G)$ does \emph{not} fix $\M^*(G)$; indeed, only $\GRinfty$ fixes $\M^*(G)$.
\end{rmksub}

\begin{thm}\label{thm:Faltings}
The map $\rho_G$ is injective with cokernel annihilated by $t$. 
In particular, each of the morphisms below
\begin{equation*}
\Bcris(R)\otimes_{R_0[\frac{1}{p}]}D^*(G) \ra \Bcris(R)\otimes_{S}\M^*(G) \xra{\rho_G[\frac{1}{p}]}  \Bcris(R)\otimes_{\Qp} V_p(G)^*
\end{equation*}
is an isomorphism, where $D^*(G)$ is defined in Example~\ref{exa:BT} and the first map is induced by the unique section $D^*(G)\ra \M^*(G) [\ivtd p]$ as in Lemma~\ref{lem:Reduction}. Furthermore, the composition $\Bcris(R)\otimes_{R_0[\frac{1}{p}]}D^*(G) \riso  \Bcris(R)\otimes_{\Qp} V_p(G)^*$ is $\GRR$-equivariant.
\end{thm}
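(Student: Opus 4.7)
The plan is to adapt Faltings' proof of \cite[\S6, Thm~7]{Faltings:IntegralCrysCohoVeryRamBase} to the relative setting, much as was done in \cite{BrinonTrihan:CrisImperf} for the case when $R$ is a discrete valuation ring with imperfect residue field. The map $G\rightsquigarrow \rho_G$ is functorial and exact in short exact sequences of $p$-divisible groups: exactness on the Dieudonn\'e side is \cite[Proposition~4.3.1]{Berthelot-Breen-Messing:DieudonneII} applied over $\Spec R/(\varpi)$, exactness of $T_p$ is standard, and flatness of $\Acris(R)$ over $\Zp$ (Proposition~\ref{prop:PeriodRings}(\ref{prop:PeriodRings:flatness})) preserves this when we tensor up. Hence the five lemma reduces checking the integral claim to the extremes of a short exact sequence in which $G$ sits.

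I would then verify the theorem directly for the two ``building blocks'' $G=\Qp/\Zp$ and $G=\Gmhat$. For $G=\Qp/\Zp$ one has $\M^*(G)\cong S$ with $T_p(G)\cong \Zp$, and $\rho_G$ is the identity on $\Acris(R)$. For $G=\Gmhat$, one has $\M^*(G)\cong S$ (with Frobenius $\vphi$ rather than $\vphi/p$) and $T_p(G)^*\cong \Zp(-1)$; under the canonical identifications $\rho_G$ is multiplication by $t=\log[\epsilon]\in\Acris^\nabla(R)$, since the period pairing $\Qp/\Zp \times \mu_{p^\infty}\to \mu_{p^\infty}$ picks out precisely this element. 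In both cases $\rho_G$ is injective with cokernel killed by $t$.

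For general $G$, I would combine these computations with Cartier duality $\M^*(G^\vee)\cong \M^*(G)^\vee$ and $T_p(G^\vee)\cong T_p(G)^\vee(1)$, which yields a natural pairing
\[
\rho_G \otimes \rho_{G^\vee}: \Acris(R)\otimes_S \big(\M^*(G)\otimes_S \M^*(G^\vee)\big) \lra \Acris(R)\otimes_{\Zp}\big(T_p(G)^*\otimes_{\Zp}T_p(G^\vee)^*\big)
\]
that factors through $\rho_{\Gmhat}$, forcing the cokernel of $\rho_G$ to be annihilated by $t$. Injectivity of $\rho_G$ then follows from the $t$-torsion freeness of $\Acris(R)$ (Proposition~\ref{prop:PeriodRings}(\ref{prop:PeriodRings:flatness})) together with a rank comparison at a geometric point of $\Spec R_0[\ivtd p]$, where classical Dieudonn\'e theory identifies both sides after inverting $p$. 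The main technical obstacle is controlling the devissage uniformly over $\Spec R$ rather than at a single point; this is handled by faithfully flat descent along $R_0[\ivtd p]\to \Bcris(R)$ (Proposition~\ref{prop:PeriodRings}(\ref{prop:PeriodRings:flatness})) and refined almost \'etaleness (Theorem~\ref{thm:RAF}), both playing the roles of the analogous ingredients used in Faltings' original argument.

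For the displayed isomorphism chain, the first arrow is an isomorphism by applying Lemma~\ref{lem:Reduction} and base changing along $S\hra \Acris(R)\to \Bcris(R)$: the unique $\vphi$-compatible section $D^*(G)\to \M^*(G)[\ivtd p]$ gives $S[\ivtd p]\otimes_{R_0[\ivtd p]} D^*(G)\riso \M^*(G)[\ivtd p]$, which after $\otimes_{S[\ivtd p]}\Bcris(R)$ yields the claim. The second arrow becomes an isomorphism upon inverting $t$ by the integral result, and hence already over $\Bcris(R)$ since $t$ is a unit there. Finally, the $\GRR$-equivariance of the composition follows from Remark~\ref{rmk:rhoG} together with the observation that the section in Lemma~\ref{lem:Reduction} is automatically $\GRR$-equivariant by its uniqueness, with respect to the $\GRR$-action on $\Bcris(R)\otimes_S\M^*(G)$ induced from that on $\Bcris(R)$.
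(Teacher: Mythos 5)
Your proposal identifies the essential computation (the explicit form of $\rho_{\Gmhat}$) and the right structural idea (Cartier duality with $\Gmhat$, following Faltings), but two of the steps as written do not hold up.

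The five-lemma / ``building blocks'' reduction is a dead end and is not what the paper does. A general $p$-divisible group over $R$ does not sit in a short exact sequence with copies of $\Qp/\Zp$ and $\Gmhat$ (or even \'etale and multiplicative pieces) as the outer terms; the $p$-divisible group of a supersingular elliptic curve is already a counterexample over $\Fpbar$. Exactness of $\rho_G$ in short exact sequences of $p$-divisible groups is true but useless here, because there is no devissage to the two objects you verify. The only role of $\Qp/\Zp$ in the paper is to identify $T_p(G)$ with $\Hom_{\ol R}(\Qp/\Zp, G_{\ol R})$, i.e., to \emph{define} $\rho_G$, not to serve as one end of a reduction.

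Your Cartier-duality step is the right idea, but as stated it does not close the argument. Knowing that $\rho_G\otimes\rho_{G^\vee}$ factors through $\rho_{\Gmhat}$ (which is multiplication by $t$) does not by itself force $\coker\rho_G$ to be killed by $t$; you would further need perfectness and compatibility statements about the pairings that your sketch does not supply. The paper runs the argument more concretely and in one direction: for any $\eta\in T_p(G)^*$, choose $y\in\Hom_{\ol R}(G_{\ol R},\wh{\mathbb G}_{m,\ol R})$ corresponding to $\eta\otimes\eps$ under the isomorphism $\Hom_{\ol R}(G_{\ol R},\wh{\mathbb G}_{m,\ol R})\riso T_p(G)^*(1)$. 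The commutative diagram relating $\rho_G$, $\rho_{\Gmhat}$, $y^*$ and $T_p(y)^*$ (equation~(\ref{eqn:FaltingsBrinonTrihan})) shows directly that $t\otimes\eta\in\im(\rho_G)$; since $\eta$ was arbitrary, $\coker\rho_G$ is annihilated by $t$. Injectivity then follows because both sides are finite projective $\Acris(R)$-modules of the same rank (so $\rho_G[\ivtd t]$ is a surjection, hence isomorphism, over $\Bcris(R)$) and $\Acris(R)$ has no $t$-torsion by Proposition~\ref{prop:PeriodRings}(\ref{prop:PeriodRings:flatness}); no rank comparison at a geometric point, no descent along $R_0[\ivtd p]\ra\Bcris(R)$, and no refined almost \'etaleness are invoked. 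Indeed the theorem is stated under only the $p$-basis assumption~(\S\ref{cond:Breuil}); assumption~\S\ref{cond:RAF} only enters for Corollary~\ref{cor:BrinonTrihan} and the construction of $\Tcris^*$.

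Your treatment of the displayed isomorphism chain and the $\GRR$-equivariance is correct and agrees with the paper. One small caveat: the relevant $\GRR$-action on $\Bcris(R)\otimes_S\M^*(G)$ is the crystalline one described in Remark~\ref{rmk:rhoG}, which does \emph{not} fix $\M^*(G)$ (only $\GRinfty$ does); ``induced from $\Bcris(R)$'' is a slight misdescription, but the uniqueness argument for the section still applies because this action commutes with $\vphi$.
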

\begin{proof}
The theorem follows if we show that $\rho_G$ (\ref{eqn:Faltings}) is injective with cokernel killed by $t$; indeed, the last assertion on $\GRR$-equivariance follows from the $\GRR$-equivariance of $\rho_G$ (Remark~\ref{rmk:rhoG}) and the $\GRR$-invariance of the section $s_{\Acris(R)}:D^*(G)\ra \Acris(R)\otimes_{S}\M^*(G)$.

The proof is exactly the same as \cite[\S6]{Faltings:IntegralCrysCohoVeryRamBase}. Let us first make $\rho_{\Gmhat}$ explicit when $G=\Gmhat$. Let $\beta:\Zp(1)\ra\Fil^1\Acris(R)$ be the map that sends $\eps=(\eps^{(n)})\in \varprojlim_n\mu_{p^n}(\ol R)\subset \ol R^\flat$ to $\log[\eps]$. Then, one can verify that the morphism
\begin{equation*}
\rho_{\Gmhat}:(\Acris(R), \Acris(R), \vphi, \nabla) \ra \Hom_{\Zp}(\Zp(1),\Acris(R))
\end{equation*} 
sends $1$ to $ \beta$, as explained in \cite[\S6]{Faltings:IntegralCrysCohoVeryRamBase}. (One way to see this is  by applying \cite[Ch. VI, Theorem~2.2]{messingthesis} to the sections over the PD completion of $\Acris(R)$. See \cite[Ch. VI, \S2.5]{messingthesis} for the construction of the morphism of Dieudonn\'e crystals corresponding to a morphism of $p$-divisible groups.) Therefore if we naturally identify  $\Hom_{\Zp}(\Z_1(1),\Acris(R))$ with $t\iv\Acris(R)$, then $\rho_{\Gmhat}$ can be identified with the natural inclusion $\Acris(R)\hra t\iv\Acris(R)$. This shows that $\rho_{\Gmhat}$ is injective and and its cokernel is killed by $t$. 

Let us now handle the general case. 
For any $y\in \Hom_{\ol R}(G_{\ol R},\wh{\mathbb G}_{m,\ol R})$ , one can check that the following  diagram commutes:
\begin{equation}\label{eqn:FaltingsBrinonTrihan}
\xymatrix{
\Acris(R)\otimes_{S}\M^*(G) \ar[r]^{\rho_G} & \Acris(R)\otimes_{\Zp}T_p(G)^* \\
\Acris(R)\otimes_{S}\M^*(\Gmhat) \ar[u]^{y^*} \ar[r]^{\rho_{\Gmhat}}& \Acris(R)\otimes_{\Zp} T_p(\Gmhat)^* \ar[u]_{\id_{\Acris(R)}\otimes T_p(y)^*}
}\end{equation}

Recall that we have a natural $\GRR$-equivariant isomorphism $\Hom_{\ol R}(G_{\ol R}, \wh{\mathbb G}_{m,\ol R})\riso T_p(G)^*(1)$ defined by sending $y:G_{\ol R}\ra \wh{\mathbb G}_{m,\ol R}$ to $x\mapsto y\circ x$ for any $x\in T_p(G)\cong \Hom_{\ol R}(\Qp/\Zp, G_{\ol R})$.
Now choose a $\Zp$-basis $\eps \in T_p(\Gmhat)$ so that $t=\beta(\eps)$, and let $\eta\in T_p(G)^*$ be such that $y$ corresponds to $\eta\otimes\eps\in T_p(G)^*(1)$ under the natural isomorphism (i.e., for any $x\in T_p(G)$, we have $y\circ x = \eta(x)\eps$). Recall that $\rho_{\Gmhat}(1\otimes1) = \beta$, and one can compute 
\begin{equation*}
\left(\id\otimes T_p(y)^*\right)(\beta):x \mapsto \beta(y\circ x) = \beta(\eps)\eta(x) = t\eta(x);
\end{equation*}
i.e., $\left(\id\otimes T_p(y)^*\right)(\beta) = t\otimes\eta$. By the commutative diagram (\ref{eqn:FaltingsBrinonTrihan}), it follows that $t\otimes\eta$ is in the image of $\rho_G$. Since $\eta\in T_p(G)^*$ can be arbitrary as we vary $y$, the theorem follows.
\end{proof}

\subsection{Galois-stable lattices}
If $R$ satisfies the ``refined almost \'etaleness'' assumption~(\S\ref{cond:RAF}) (so that we have refined almost \'etaleness and the period rings have nice properties: Proposition~\ref{prop:PeriodRings}) then one can define $\Tcris^*(\M)$ for any  $\M\in \SMFw$ as follows:
\begin{equation}\label{eqn:Tcris}
\Tcris^*(\M):= \Hom_{S,\Fil^1, \vphi^1,\nabla}(\M, \Acris(R)),
\end{equation}
where $\Acris(R)$ is viewed as an $S$-algebra  as in \S\ref{subsec:Rinfty}, and is given $\Fil^1\Acris(R)$, $\vphi_1:=\frac{\vphi}{p}:\Fil^1\Acris(R)\ra\Acris(R)$, and $\nabla:\Acris(R)\ra\Acris(R)\otimes_{R_0}\wh\Omega_{R_0}$. 

Clearly, $\Tcris^*(\M)$ is $p$-adic, and has a natural continuous $\GRinfty$-action induced from the $\GRinfty$-action on $\Acris(R)$. It is not \emph{a priori} obvious if  $\Tcris^*(\M)$ is finite free over $\Zp$, but this follows from Corollary~\ref{cor:BrinonTrihan} below. (Indeed, we also show that $\rank_{\Zp}\Tcris^*(\M) = \rank_{S}\M$.) 

Note that the map $S\ra\Acris(R)$ is only $\GRinfty$-invariant, so we only obtain $\GRinfty$-action on $\Tcris^*(\M)$. Using the differential operator $N_\M$, however, one can define a $\GRR$-action on $\Tcris^*(\M)$ that extends its natural $\GRinfty$-action. See \S\ref{subsec:GalViaN} for more details.

\begin{corsub}\label{cor:BrinonTrihan}
Suppose that $R$ satisfies the ``refined almost \'etaleness'' assumption~(\S\ref{cond:RAF}). Then  $\rho_G$ (as in (\ref{eqn:Faltings})) induces 
a $\GRinfty$-equivariant injective morphism 
\begin{equation*}
T_p(G) \ra \Hom_{\AMF{R}}(\Acris(R)\otimes_{S}\M^*(G), \Acris(R))\cong T^*_{\cris}(\nf \M^*(G)), 
\end{equation*}
which is isomorphism if $p>2$, and has cokernel annihilated by $p$ if $p=2$. In particular, it induces an isomorphism $D^*(G)\riso D^*_{\cris}(V_p(G))$ as filtered isocrystals.
\end{corsub}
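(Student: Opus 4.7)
My plan is to derive the corollary from Theorem~\ref{thm:Faltings} via an adjunction argument in the style of Faltings \cite[\S6]{Faltings:IntegralCrysCohoVeryRamBase}. First I would construct the asserted morphism as the Hom-tensor adjoint of $\rho_G$: for $x\in T_p(G)\cong\Hom_{\ol R}(\Qp/\Zp,G_{\ol R})$, the induced morphism $x^*$ of Dieudonn\'e crystals over $\Spec\ol R/(p)$, evaluated on the PD-thickening $\Acris(R)\thra\ol R/(p)$, yields a morphism $\Acris(R)\otimes_S\M^*(G)\cong\DD^*(G_{\ol R})(\Acris(R))\ra\DD^*(\Qp/\Zp)(\Acris(R))=\Acris(R)$ that respects $\Fil^1$, $\vphi_1$, and $\nabla$; its restriction to $\M^*(G)$ defines the desired $\rho_G^\vee(x)\in T^*_{\cris}(\M^*(G))$. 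The $\GRinfty$-equivariance is automatic since $S\hra\Acris(R)$ is $\GRinfty$-invariant by construction (\S\ref{subsec:Rinfty}).

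For injectivity, I would argue as follows: if $\rho_G^\vee(x)=0$, then the evaluation functional $e_x\colon\Acris(R)\otimes_{\Zp}T_p(G)^*\ra\Acris(R)$ sending $a\otimes\eta\mapsto a\cdot\eta(x)$ satisfies $e_x\circ\rho_G=0$. Since $\rho_G\otimes_{\Acris(R)}\Bcris(R)$ is an isomorphism by Theorem~\ref{thm:Faltings}, we conclude $e_x\otimes\Bcris(R)=0$, which forces $\eta(x)=0$ for every $\eta\in T_p(G)^*$, hence $x=0$.

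The hard part is the cokernel statement. Given $f\in T^*_{\cris}(\M^*(G))$, the $\Acris(R)$-linear extension $\tilde f$ composed with $(\rho_G\otimes\Bcris(R))^{-1}$ gives a $\Bcris(R)$-linear functional on $\Bcris(R)\otimes_{\Zp}T_p(G)^*$, hence a unique $\xi\in V_p(G)$ satisfying $\tilde f=e_\xi\circ(\rho_G\otimes\Bcris(R))$; the task is to show $\xi\in T_p(G)$ when $p>2$ and $p\xi\in T_p(G)$ when $p=2$. The strategy, following \cite[\S6]{Faltings:IntegralCrysCohoVeryRamBase}, begins from $t\cdot\coker(\rho_G)=0$: writing $\xi=\sum\xi_i x_i$ in a $\Zp$-basis of $T_p(G)$, this already forces $t\xi_i\in\Acris(R)$. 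Iteratively applying the $\vphi$-compatibility of $\tilde f$ together with the identity $\vphi(t)=pt$ then refines the bound to integrality of $\xi_i$ (respectively $p\xi_i$); the discrepancy between $p>2$ and $p=2$ arises at the final step and is controlled by the $p$-adic valuation of $t$ in $\Acris(R)$. All needed properties of $\Acris(R)$ are recorded in Proposition~\ref{prop:PeriodRings}, so Faltings' classical argument carries over verbatim.

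For the ``in particular'' statement, Theorem~\ref{thm:Faltings} already supplies a $\GRR$-equivariant isomorphism $\Bcris(R)\otimes_{R_0[1/p]}D^*(G)\riso\Bcris(R)\otimes_{\Qp}V_p(G)^*$ of filtered $(\vphi,\nabla)$-modules, obtained by composing the Frobenius-compatible section $D^*(G)\hra\M^*(G)[1/p]$ of Lemma~\ref{lem:Reduction} with $\rho_G\otimes\Bcris(R)$. Taking $\gal_R$-invariants using Proposition~\ref{prop:PeriodRings}(\ref{prop:PeriodRings:GalCohom}) (which gives $\Bcris(R)^{\gal_R}=R_0[1/p]$ and $\BdR(R)^{\gal_R}=R[1/p]$) then extracts the claimed isomorphism $D^*(G)\riso\Dcris^*(V_p(G))$ of filtered $(\vphi,\nabla)$-modules.
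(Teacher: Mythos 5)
Your overall route matches the paper's: derive the lattice statement from Theorem~\ref{thm:Faltings} by dualizing $\rho_G$ and using the $p$-divisibility of $t$ in $\Acris(R)$. (The paper's own proof is the single sentence: ``It is straightforward from Theorem~\ref{thm:Faltings}, noting that $p$ divides $t$ if and only if $p=2$, in which case $p=2$ divides $t$ exactly once.'') Your construction of the map as the adjoint of $\rho_G$, your injectivity argument, and your derivation of the ``in particular'' clause via $\Bcris(R)^{\gal_R}=R_0[\ivtd p]$ are all fine.

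The one thing to flag is the ``iterative Frobenius bootstrap'' in the cokernel step: it is not needed, and as described it cannot work. Once you have extracted $t\xi_i\in\Acris(R)$ for the $\Qp$-coefficients $\xi_i$ of $\xi$ in a $\Zp$-basis of $T_p(G)$, you are already done. Applying $\vphi$ to this relation gives $\vphi(t\xi_i)=pt\xi_i$ (since $\vphi|_{\Qp}=\mathrm{id}$ and $\vphi(t)=pt$), which is a \emph{weaker} integrality statement, not a stronger one; so iterating Frobenius degrades the bound rather than refining it. The correct and sufficient observation is exactly the one you append at the end: if $\xi_i = p^{-n}u$ with $u\in\Zp\starr$ and $n\geqs 0$, then $t/p^n\in\Acris(R)$, and since $\Acris(R)$ is $\Zp$-flat (Proposition~\ref{prop:PeriodRings}(\ref{prop:PeriodRings:flatness})) and $t^{p-1}/p$ is a unit in $\Acris(\Zp)\subset\Acris(R)$, one has $p\nmid t$ in $\Acris(R)$ when $p>2$ and $p^2\nmid t$ when $p=2$; hence $n=0$ (resp.\ $n\leqs1$). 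Everything else in your write-up is sound.
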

\begin{proof} 
It is straightforward from Theorem~\ref{thm:Faltings}, noting that $p$ divides $t$ if and only if $p=2$, in which case $p=2$ divides $t$ exactly once. 
\end{proof}
This corollary in particular proves that when $R$ satisfies the ``refined almost \'etaleness'' assumption~(\S\ref{cond:RAF}),   the $\GRR$-representation $V_p(G)$ is crystalline and $D^*(G)$ is admissible for any $p$-divisible group $G$ over $R$.

\begin{rmksub}\label{rmk:p2} 
It follows from the proof of Theorem~\ref{thm:Faltings} that $T_p(\Gmhat) = 2\tim\Tcris^*(\Gmhat)$ in $V_p(\Gmhat)$ (if $p=2$). 
\end{rmksub}

\subsection{Galois Action on the $\Zp$-lattice}\label{subsec:GalViaN}
Suppose that $R$ satisfies the ``refined almost \'etaleness'' assumption~(\S\ref{cond:RAF}). It follows from Theorem~\ref{thm:Faltings} that  $T^*_{\cris}(\M^*(G))$ can be viewed as a $\GRinfty$-stable $\Zp$-lattice in $V_p(G)$, and is $\GRR$-stable if $p>2$.  
In this section, we define a natural $\GRR$-action directly on $T^*_{\cris}(\M^*(G))$ for any $p$, in such a way that the natural $\GRR$-equivariant map  $T^*_{\cris}(\M^*(G))\hra V_p(G)$ is $\GRR$-stable. We generalise the construction in  \cite[\S5]{Liu:StronglyDivLattice} (\emph{cf.,} \cite[\S2.2]{Breuil:IntegralPAdicHodgeThy}).

Let us fix some notation. For any $n\geqs 0$ define a  cocycle $\epsilon^{(n)}:\GRR\ra \wh{\ol R}\starr$  as follows:
\begin{equation}
\epsilon^{(n)}(g):=  g\tim \varpi^{(n)}/\varpi^{(n)}, \textrm{ for any }g\in\GRR.  
\end{equation}
Set $\epsilon(g):=(\epsilon^{(n)}(g)) \in \ol R^\flat$, and  $t_g:=\log[\epsilon(g)] \in \Acris^\nabla(R)$. Note that for any $g\in\GRR$, $t_g$ is  a $\Zp$-multiple of $t\in\Fil^1\Acris(R)$ (where $t$ is  as in \S\ref{subsec:Acris}), and $t_g=0$ if and only if $g\in\GRinfty$.

For $\M\in \SMF$, let us  modify the $\GRR$-action on $\Acris(R)\otimes_{ S}\M$ using the differential operator $N_{\M}$ as follows:
\begin{subequations}\label{eqn:TstGK}
\begin{equation} \label{eqn:TstGK:a}
g\tim (a\otimes x) := g(a)\sum_{i=0}^\infty (t_g)^{[i]}\otimes N_{\M}^i(x), 
\end{equation}
for $g\in\GRR$, $a\in\Acris(R)$, and $ x\in\M$. Here, $(t_g)^{[i]}$ is the standard $i$th divided power; i.e., $(t_g)^{[i]}:=t_g^i/i!$ if $i>0$ and $(t_g)^{[0]}:=1$ (even when $t_g=0$).  

To see that the  sum (\ref{eqn:TstGK:a}) converges, since $N_\M(\M)\subset u\M$, it suffices to show that $(t_g)^{[i]}\to 0$ as $i\to\infty$. But this follows from  \cite[\S5.2.4]{fontaine:Asterisque223ExpII} as  $(t_g)^{[i]}\in\Acris(\Zp)$.
By the proof of \cite[Lemma~5.1.1]{Liu:StronglyDivLattice}, equation (\ref{eqn:TstGK:a}) gives a $\GRR$-action which respects $\vphi$ and the natural filtration on $\Acris(R)\otimes_{S}\M$. When $g\in\GRinfty$ we recover the natural $\GRinfty$-action on $\Acris(R)\otimes_{S}\M$.

For any $f\in T^*_{\cris}(\M)$, we $\Acris(R)$-linearly extend $f$ to $\Acris(R)\otimes_{S}\M \ra \Acris(R)$. Now the following formula clearly defines a continuous action of $g\in\GRR$ on $ T^*_{\cris}(\M)$:
\begin{equation} \label{eqn:TstGK:b}
g\tim f: x\mapsto g\tim (f(g\iv(1\otimes x))), \text{ for }x\in\M.
\end{equation}
\end{subequations}

If $\M:=\DD^*(G)(S)$ for some $p$-divisible group $G$ over $R$, then the image of the natural map $D^*(G) \hra \M[\ivtd p]$ lies in the kernel of $N_{\M}$. It then follows from Corollary~\ref{cor:BrinonTrihan} that the natural injective $\GRinfty$-map $T^*_{\cris} (\M^*(G)) \ra V_p(G)$ is indeed $\GRR$-equivariant for the $\GRR$-action on $T^*_{\cris} (\M^*(G)) $ defined as in (\ref{eqn:TstGK:a}, \ref{eqn:TstGK:b}). In particular,  the natural injective map $\rho_G:T_p(G)\hra T^*_{\cris}(\M)$ is $\GRR$-equivariant, and $\rho_G$ is a $\GRR$-isomorphism when $p>2$.

\subsection{Base change}\label{subsec:BCrhoG}
Let $R$ and $R'$ be normal domains  which satisfy the $p$-basis assumption~(\S\ref{cond:Breuil}), and consider a map
 $f:R\ra R'$ which  restricts to a $\vphi$-compatible map $R_0 \ra R'_0$ for some suitable choices. (\emph{Cf.}  \S\ref{subsec:BC}.) We also let $f:S\ra S'$ denote the map extending $f|_{R_0}$ by sending $u\mapsto u$.
 
Choose a separable closure $E'$ of $\Frac R'$ and define $\ol{R'}$ to be the union of normal finite $R'$-subalgebras of $E$ which is only ramified at $(\varpi)$, as in \S\ref{subsec:Acris}. Set $R'_\infty:=\bigcup_n R'[\varpi^{(n)}] $,  $\gal_{R'} = \Gal(\ol{R'}[\ivtd p]/R'[\ivtd p])$, and $\gal_{R'_\infty}:=\Gal(\ol{R'}[\ivtd p]/R'_\infty[\ivtd p])$. 
Let $\ol R'$, $R'_\infty$,  $\gal_{R'} $, and $\gal_{R'_\infty}$ denote the obvious objects for $R'$.
Choose $\ol f:\ol R\ra \ol{R'}$ over $f:R\ra R'$ (which is possible, and the choice is unique up to the actions by $\GRR$ and $\gal_{R'}$), and consider the map   $\gal_{R'}\ra\GRR$ of profinite groups induced by it. Under these choices, we obtain a map $\Acris(\ol f):\Acris(R)\ra\Acris(R')$  respecting all the extra structures. (In particular, $\Acris(\ol f)$ is $\gal_{R'}$-equivariant if we let $\gal_{R'}$ act on $\Acris(R)$ via the map $\gal_{R'}\ra\GRR$.)

Now one can easily see that the formation of $\rho_G$ commutes with the base change which satisfies the above assumption; in other words, for any $p$-divisible group $G$ over $R$, we have the following cartesian diagram
\begin{equation}
\xymatrix{
\Acris(R)\otimes_{S}\M^*(G)  \ar[r]^-{\rho_G} \ar[d] & \Acris(R)\otimes_{\Zp}T_p(G)^*\ar[d]^{\Acris(\ol f)\otimes1}\\
\Acris(R')\otimes_{S'}\M^*(G_{R'}) \ar[r]_-{\rho_{G_{R'}}} &\Acris(R')\otimes_{\Zp}T_p(G_{R'})^*,
}\end{equation}
where the left vertical arrow is induced by the map $\Acris(\ol f)$ and the isomorphism $S'\otimes_{S}\M^*(G)\cong\M^*(G_{R'})$ constructed in \S\ref{subsec:BC}. If both $R$ and $R'$ satisfy the ``refined almost \'etaleness'' assumption~(\S\ref{cond:RAF}), from the left vertical arrow we obtain a  $\gal_{R'_\infty}$-isomorphism $\Tcris^*(\M^*(G))\riso \Tcris^*(\M^*(G_{R'})) $, which is an isomorphism because it has a saturated image and  $\Tcris^*(\M^*(G_{R'}))$ and $\Tcris^*(\M^*(G))$ have the same $\Zp$-rank (by the diagram above). If $p>2$, this recovers the natural identification $T_p(G)\cong T_p(G_{R'})$.

\section{Kisin modules: equivalence of categories}\label{sec:CL}
The notion of Kisin modules (i.e., $\Sig$-modules of height $\leqs1$) was generalised to the case when the base is a complete regular local base with perfect residue field by Vasiu and Zink \cite{ZinkVasiu:BreuiloverRegularLocal} and Lau \cite{Lau:Frames,Lau:2010fk}, and they also constructed a natural equivalence of categories between Kisin modules and $p$-divisible groups using display theory.

We generalise the notion of Kisin modules so that it can be applied to some non-local base (\emph{cf.} Definition~\ref{def:KisMod}), and construct a natural functor from the category of $p$-divisible groups into the category of Kisin modules. The main result of this section is the construction of natural equivalence of categories between Kisin modules and Breuil modules, generalising \cite[Theorem~2.2.1]{Caruso-Liu:qst}. Combining this with Theorem~\ref{thm:BreuilClassif}, we obtain a classification theorem of $p$-divisible group when $p>2$ (Corollary~\ref{cor:RelKisin}), which works over not necessarily local base rings, but is weaker in the intersecting case (even when $p>2$). 

In this section, we  assume that $R$ satisfies the $p$-basis assumption~(\S\ref{cond:Breuil}). 
Readers are welcome to work under the normality assumption~(\S\ref{cond:BM}) though, since in the application to $p$-divisible groups and finite locally free group schemes it suffices to consider the base rings $R$ satisfying  the normality assumption~(\S\ref{cond:BM}). 

\subsection{Definitions and basic properties}\label{subset:settingBK}  
\begin{defnsub}\label{def:KisMod}
Let $R$ satisfies the $p$-basis assumption~(\S\ref{cond:Breuil}), and we use the notation from \S\ref{cond:Breuil} and \S\ref{subsec:settingBr}, such as $(\Sig, \vphi)$ and $\PP$.
A \emph{quasi-Kisin $\Sig$-module} is a pair $(\gM, \vphi_\gM)$, where
\begin{enumerate}
\item $\gM$ is a finitely generated projective $\Sig$-module;
\item $\vphi_\gM:\gM\ra\gM$ is a $\vphi$-linear map such that $\coker(1\otimes\vphi_\gM)$ is killed by $\PP$.
\end{enumerate}
Let $\SMq$  denote the category of  quasi-Kisin $\Sig$-modules.

Let $\SM$ denote the category of pairs $(\gM, \vphi_\gM)$, where $\gM$ is a  quasi-Kisin $\Sig$-module, and  $\nabla_{\M}:\M\ra\M\otimes_{\Sig}\wh\Omega_{\Sig}$ on $\M:=S\otimes_{\vphi,\Sig}\gM$  is  a topologically quasi-nilpotent integrable connection  which commutes with $\vphi_{\M} := \vphi_{S}\otimes\vphi_\gM$. 

Let $\SMqw$ denote the category of pairs  $(\gM,\nabla_{\M_0})$ where $\gM\in\SMq$ and $\nabla_{\M_0}$ is a connection on $\M_0:=R_0\otimes_{\vphi,\Sig}\gM$ which makes $\M_0$ into an object in $\PMF{R_0}$. We call an object $(\gM,\nabla_{\M_0})\in\SMqw$ a \emph{Kisin $\Sig$-module}.
\end{defnsub}
\begin{rmksub}\label{rmk:KisModSm}
The recipe in Remark~\ref{rmk:BrModAlt} defines a fully faithful functor $\SM\ra\SMqw$  by Lemma~\ref{lem:forgetfulN}, which is an equivalence of categories when $R$ satisfies the formally finite-type assumption~(\S\ref{cond:dJ}) and $p>2$, by Theorem~\ref{thm:BreuilClassif}. 
\end{rmksub}

Generalising the construction in \cite{breuil:GpSchNormField}, for any $\gM\in\SMq$ we can make $\M:=S\otimes_{\vphi,\Sig}\gM$ into an object in $\SMFq$ as follows:
\begin{gather}
\label{eqn:Filh}\Fil^1\M:=\set{x\in\M|\ 1\otimes\vphi_\gM(x) \in (\Fil^1S)\otimes_{\Sig} \gM \subset S\otimes_{\Sig} \gM } \\
\label{eqn:vphir}\vphi_1:\Fil^1\M \xra{1\otimes \vphi_\gM} (\Fil^1S)\otimes_{\Sig} \gM \xra{\vphi_1\otimes1} S\otimes_{\vphi,\Sig}\gM = \M.
\end{gather}
One can directly check that the above construction satisfies the definition of $\SMFq$. We set $\vphi_\M = \vphi_{S}\otimes\vphi_\gM$, and then $\vphi_1 = \vphi_\M/p$ defines a map $\Fil^1\M\ra \M$. Furthermore, if $\gM\in\SM$  then $S\otimes_{\vphi,\Sig}\gM$ is an object in $\SMF$.


\begin{lemsub}
Let $\gM^\bullet:=[0\ra \gM_1 \ra \gM_2 \ra \gM_3 \ra 0]$ be a sequence of maps of finite projective $\Sig$-modules. Then $\gM^\bullet $ is  exact if and only if $S\otimes_{\vphi,\Sig}\gM^\bullet$ is  exact.
\end{lemsub}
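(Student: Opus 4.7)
The plan is to derive both directions from the faithful flatness of the composite $\psi : \Sig \xra{\vphi} \Sig \hookrightarrow S$. The forward direction in fact does not require faithful flatness: since each $\gM_i$ is finite projective over $\Sig$, a short exact sequence of such modules is automatically split, hence remains (split) exact after applying $S \otimes_{\vphi,\Sig}(-)$.

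For the converse, faithful flatness of $\psi$ immediately yields that exactness of $S \otimes_{\vphi,\Sig}\gM^\bullet$ descends to exactness of $\gM^\bullet$. I would establish this in two steps. First, the $p$-basis assumption on $R_0/(p)$ makes Frobenius locally finite free there, with basis given by the standard monomials $\prod b_i^{e_i}$ for $0 \leq e_i < p$ in a local $p$-basis $\{b_i\}$; this is in particular faithfully flat. By $p$-adic Nakayama together with the $\Zp$-flatness of $R_0$, this lifts to faithful flatness of $\vphi : R_0 \to R_0$, and the extension $u \mapsto u^p$ on $\Sig = R_0[[u]]$ preserves the property. Second, $E(u) = p + \sum a_i u^i \in \Sig$ is a non-zero-divisor, as one checks inductively on the coefficients of $f$ in any relation $E(u) f = 0$, using the $\Zp$-flatness of $R_0$. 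Consequently the divided power envelope of $(E(u))$ is $\Sig$-flat in the classical sense, and passage to the $p$-adic completion $S$ preserves this flatness. Faithful flatness then reduces to checking $\m S \neq S$ for every maximal ideal $\m \subset \Sig$, which is transparent from the explicit description~(\ref{eqn:S}) since $S$ is $(p,u,\m)$-adically separated.

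The main technical point will be verifying flatness of $\Sig \hookrightarrow S$ cleanly under only the $p$-basis assumption, without appealing to strong Noetherian hypotheses on $R_0$. The cleanest route seems to be a direct analysis of the explicit form~(\ref{eqn:S}) of $S$, filtering by powers of $u$ and reducing modulo $p$ to exploit the Frobenius flatness already established on $R_0/(p)$; this reduces the claim to an associated-graded computation that can be read off from (\ref{eqn:S}).
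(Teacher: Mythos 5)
Your forward direction is fine: the sequence $\gM^\bullet$ splits because $\gM_3$ is projective, so it stays (split) exact under any additive functor.

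The converse, however, rests on a claim that is false. The composite $\psi:\Sig\xra{\vphi}\Sig\hookrightarrow S$ is \emph{not} flat, and neither is the inclusion $\Sig\hookrightarrow S$ itself; divided power envelopes are essentially never flat over the ambient ring. One sees this concretely from the explicit description (\ref{eqn:S}): the element $u^{ep}/p!$ lies in $S$, hence $u^{ep}\in p!\,S\subseteq pS$, and therefore $\psi(u^e)=u^{ep}$ reduces to $0$ in $S/pS$. Since the ideal $(u^e)\subset\Sig/(p)=(R_0/p)[[u]]$ is free of rank one, tensoring it with $S/(p)$ along $\psi$ yields $S/(p)\xra{\ \cdot\,0\ }S/(p)$, which is not injective; so $\psi\bmod p$ is not flat, and a fortiori $\psi$ is not flat. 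Your intermediate assertion that the uncompleted divided power envelope of $(E(u))$ is $\Sig$-flat fails for the same reason, and the proposed associated-graded computation via Frobenius-flatness on $R_0/(p)$ cannot rescue it, because the obstruction is already visible in the special fibre $p=0$.

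The paper's own argument is designed precisely to avoid any flatness of $\Sig\to S$. For injectivity at the left it only uses that $\vphi:\Sig\to S$ is \emph{injective} together with projectivity of the $\gM_i$, so that $\gM_1\hookrightarrow S\otimes_{\vphi,\Sig}\gM_1\hookrightarrow S\otimes_{\vphi,\Sig}\gM_2$ already forces $\gM_1\hookrightarrow\gM_2$. For the remaining right exactness it base-changes $S\otimes_{\vphi,\Sig}\gM^\bullet$ along $S\thra R_0$ (right exactness survives arbitrary base change, no flatness required) to conclude that $R_0\otimes_{\vphi,\Sig}\gM^\bullet$ is right exact, and then climbs back up to $\Sig$ by $u$-adic Nakayama combined with faithful flatness of $\vphi:R_0\to R_0$ --- which, as you correctly observe, is exactly what the $p$-basis hypothesis does give. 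That is the only flatness input the proof actually needs, and it is the only one available.
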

\begin{proof}
The ``only if'' direction is clear from $\Sig$-flatness of $\gM_3$. Assume that  $S\otimes_{\vphi,\Sig}\gM^\bullet$ is  exact. Since $\gM_i$ are $\Sig$-projective the natural maps $\gM_i\ra S\otimes_{\vphi,\Sig}\gM_i$ are injective for any $i=1,2,3$, so $\gM^\bullet$ is left exact. By Nakayama lemma and faithful flatness of $\vphi:R_0\ra R_0$, it suffices to show that $R_0\otimes_{\vphi,\Sig}\gM^\bullet$ is right exact, which follows since $R_0\otimes_{\vphi,\Sig}\gM^\bullet\cong R_0\otimes_{S}(S\otimes_{\vphi,\Sig}\gM^\bullet)$.
\end{proof}

\begin{defnsub}\label{def:psi}
Let $\gM\in\SMq$.
Since $1\otimes\vphi_\gM:\vphi^*\gM\ra\gM$ is injective, we have a unique injective $\Sig$-linear map 
\[\psi_\gM:\gM\ra\vphi^*\gM\] 
such that $(1\times\vphi_\gM)\circ\psi_\M = \PP\id_\gM$ and $\psi_\gM\otimes(1\otimes\vphi_\gM) = \PP\id_{\vphi^*\gM}$. 

We say that $\gM\in\SMq$ is $\vphi$-nilpotent if for some $n\gg1$ we have $\vphi_\gM^n(\gM)\subset (p,u)\gM$. We extend this definition to $\SM$ and $\SMqw$, and use the superscript $^{\vphi\nilp}$ for the full subcategories of $\vphi$-nilpotent objects (for example, $\SMq^{\vphi\nilp}$). We similarly define $\psi$-nilpotent objects in $\SMq$, $\SM$, and $\SMqw$, and use the superscript $^{\psi\nilp}$ for the full subcategories of $\psi$-nilpotent objects
\end{defnsub}

Let $\gM\in\SMq$, and we set $\M:=S\otimes_{\vphi,\Sig}\gM\in\SMFq$ and 
\[\M_0:=\Sig/(u) \otimes_{\vphi,\Sig}\gM \cong R_0\otimes_S\M \in \PMFq{R_0},\] where $\PMFq{R_0}$ is defined using the frame as in Example~\ref{exa:S}.
\begin{lemsub}\label{lem:nilpBreuilvsKisin}
Under the notation as above, the following are equivalent:
\begin{enumerate}
\item\label{lem:nilpBKvsKisin:Kisin}
$\gM$ is $\vphi$-nilpotent (respectively, $\psi$-nilpotent) in the sense of Definition~\ref{def:psi}.
\item\label{lem:nilpBKvsKisin:Breuil}
$\M$ is $\vphi$-nilpotent (respectively, $\psi$-nilpotent) in the sense of Definition~\ref{def:vphiNilp}.
\item\label{lem:nilpBKvsKisin:dJ}
$\M_0$ is $\vphi$-nilpotent (respectively, $\psi$-nilpotent) in the sense of Definition~\ref{def:vphiNilp}.
\end{enumerate}
\end{lemsub}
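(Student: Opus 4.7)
The plan is to reduce each notion of nilpotence to a condition on the appropriate mod-$p$ reduction, and then compare the three reductions using the obvious transition maps. More precisely, let $\bar\gM := \gM/(p,u)\gM$, viewed as an $R_0/p$-module with a $\vphi_{R_0/p}$-semilinear endomorphism $\bar\vphi_\gM$; similarly let $\bar\M := \M/p\M$ (an $S/p$-module with a semilinear endomorphism $\bar\vphi_\M$) and $\bar\M_0 := \M_0/p\M_0 \cong (R_0/p)\otimes_{\vphi,R_0/p}\bar\gM$. Each notion of $\vphi$-nilpotence is equivalent to the nilpotence of the corresponding Frobenius-semilinear endomorphism on the mod-$p$ reduction, since the modules are finitely generated over their respective base rings.

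For the equivalence \textbf{(\ref{lem:nilpBKvsKisin:Kisin})}$\Leftrightarrow$\textbf{(\ref{lem:nilpBKvsKisin:dJ})}, I compute $\bar\vphi_{\M_0}^n(r\otimes m) = \vphi^n(r)\otimes \bar\vphi_\gM^n(m)$ directly from the definitions $\M_0 = R_0\otimes_{\vphi,\Sig}\gM$ and $\vphi_{\M_0} = \vphi_{R_0}\otimes \vphi_\gM$. Since the natural map $\bar\gM \to \bar\M_0$, $m\mapsto 1\otimes m$, is injective (using that $\vphi_{R_0}:R_0/p\to R_0/p$ is faithfully flat, as $R_0/p$ is formally smooth over $\mathbb{F}_p$) and the image generates $\bar\M_0$ as an $R_0/p$-module, we see $\bar\vphi_\gM^n = 0$ if and only if $\bar\vphi_{\M_0}^n = 0$.

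For the equivalence \textbf{(\ref{lem:nilpBKvsKisin:Breuil})}$\Leftrightarrow$\textbf{(\ref{lem:nilpBKvsKisin:dJ})}, the direction \textbf{(\ref{lem:nilpBKvsKisin:Breuil})}$\Rightarrow$\textbf{(\ref{lem:nilpBKvsKisin:dJ})} is immediate from the surjection $\M\thra \M_0$ intertwining the Frobenii. For the converse, assume $\vphi_\M^n(\M) \subset p\M + I_0\M$, where $I_0 = \ker(S\thra R_0)$. The semilinearity of $\vphi_\M$ gives $\vphi_\M^{n+k}(\M) \subset p\M + \vphi^k(I_0)\M$, so it suffices to show that $\bar\vphi^k(\bar I_0) = 0$ in $S/p$ for $k$ large. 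This is the key technical point, and I would handle it using the explicit description of $S$ in \eqref{eqn:S}: since $I_0$ is topologically generated by $u$ and the divided powers $u^{ej}/j! = (u^e)^{[j]}$ for $j\geq 1$, and $\vphi(u)=u^p$, $\vphi((u^e)^{[j]}) = \frac{(pj)!}{j!}(u^e)^{[pj]}$ with $v_p\bigl(\frac{(pj)!}{j!}\bigr)=j\geq 1$, the divided-power generators die immediately modulo $p$, while $\bar\vphi^k(u) = u^{p^k} \in pS$ once $\lfloor p^k/e\rfloor \geq p$ (again by the divided-power relation $u^{eq}=q!\,(u^e)^{[q]}$). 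A continuity argument using the $p$-adic convergence condition in \eqref{eqn:S} then yields $\bar\vphi^k(\bar I_0)=0$ for all sufficiently large $k$.

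The $\psi$-nilpotence case is handled by the completely parallel argument, replacing $\vphi_\gM$, $\vphi_\M$, $\vphi_{\M_0}$ by $\psi_\gM$, $\psi_\M$, $\psi_{\M_0}$ and using the relations $(1\otimes\vphi_\gM)\circ\psi_\gM = E(u)\id_\gM$ and $(1\otimes\vphi_\M)\circ\psi_\M = p\id_\M$. The main obstacle, as in the $\vphi$-case, is the explicit calculation showing that Frobenius drives the ideal $\bar I_0 \subset S/p$ to zero in finitely many iterations; once that is in hand, the rest of the equivalence is a straightforward semilinear-algebra exercise.
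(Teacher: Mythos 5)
Your argument for the $\vphi$-nilpotence part is correct and follows the same route as the paper: (\ref{lem:nilpBKvsKisin:Kisin})$\Leftrightarrow$(\ref{lem:nilpBKvsKisin:dJ}) via faithful flatness of $\vphi$ on $R_0/p$, (\ref{lem:nilpBKvsKisin:Breuil})$\Rightarrow$(\ref{lem:nilpBKvsKisin:dJ}) via the quotient $\M\thra\M_0$, and (\ref{lem:nilpBKvsKisin:dJ})$\Rightarrow$(\ref{lem:nilpBKvsKisin:Breuil}) by iterating $\vphi_\M$ and using $\vphi^k(I_0)\subseteq pS$ for $k\gg0$. The paper simply asserts that last containment; your explicit computation with $\vphi\bigl((u^e)^{[j]}\bigr)=\frac{(pj)!}{j!}(u^e)^{[pj]}$ and with $u^{p^k}=q!\,u^{p^k-eq}(u^e)^{[q]}$ is a useful gloss.

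The claim that the $\psi$-nilpotence case is ``completely parallel'' is, however, wrong, and this is a real gap. The step (\ref{lem:nilpBKvsKisin:dJ})$\Rightarrow$(\ref{lem:nilpBKvsKisin:Breuil}) in your $\vphi$-argument crucially uses that $\vphi_\M$ is \emph{$\vphi$-semilinear}: applying $\vphi_\M$ once replaces $I_0\M$ by $\vphi(I_0)\M$, so after $k$ iterations the error term sits inside $\vphi^k(I_0)\M\subseteq p\M$. By contrast, $\psi_\M$ (and each of its pullbacks $\vphi^{k*}\psi_\M$, which are what you compose to form $\psi_\M^m$) is \emph{$S$-linear}. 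Hence $\vphi^{n*}\psi_\M\bigl(I_0\,\vphi^{n*}\M\bigr)=I_0\,\vphi^{n*}\psi_\M\bigl(\vphi^{n*}\M\bigr)\subseteq I_0\,\vphi^{(n+1)*}\M$: the ideal $I_0$ is carried along unchanged, not replaced by $\vphi(I_0)$. So from $\psi_\M^n(\M)\subseteq p\,\vphi^{n*}\M+I_0\,\vphi^{n*}\M$ you only ever conclude $\psi_\M^{n+k}(\M)\subseteq p\,\vphi^{(n+k)*}\M+I_0\,\vphi^{(n+k)*}\M$, and the ``Frobenius drives $I_0$ into $pS$'' calculation is simply never invoked. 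The fix is the one the paper uses elsewhere (see the proof of Lemma~\ref{lem:CL-FF:nilp}): pass to the dual $\gM\rightsquigarrow\gM^\vee$, under which $\psi$-nilpotence becomes $\vphi$-nilpotence. Since $\gM$, $\M$, $\M_0$ are finite projective, the functors $\gM\mapsto S\otimes_{\vphi,\Sig}\gM$ and $\gM\mapsto R_0\otimes_{\vphi,\Sig}\gM$ commute with $\Sig$-linear (resp.\ $S$-, $R_0$-linear) duality, and $1\otimes\vphi_{\gM^\vee}$ is the dual of $\psi_\gM$ (similarly for $\M$ and $\M_0$, up to a unit). Applying the $\vphi$-case to $\gM^\vee$, $\M^\vee$, $\M_0^\vee$ then gives the $\psi$-case.
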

\begin{proof}
We clearly have (\ref{lem:nilpBKvsKisin:Kisin}) $\Leftrightarrow$ (\ref{lem:nilpBKvsKisin:dJ}) and (\ref{lem:nilpBKvsKisin:Breuil})  $\Rightarrow$ (\ref{lem:nilpBKvsKisin:dJ}). The implication  (\ref{lem:nilpBKvsKisin:dJ}) $\Rightarrow$  (\ref{lem:nilpBKvsKisin:Breuil}) follows because $\vphi^n(I_0) \subset pS$ for $n\gg1$ where $I_0 = \ker (S\thra R_0)$.
\end{proof}

\begin{lem}\label{lem:CL-FF}
Assume that $R$ satisfies the $p$-basis condition~(\S\ref{cond:Breuil}).
If $p>2$ then the functors $\SMq \ra \SMFq$ and $\SM \ra \SMF$, defined by $S\otimes_{\vphi,\Sig} (\cdot)$, is fully faithful. If $p=2$ then the full faithfulness holds up to isogeny.

The similar statement holds for  $\SM$ and $\SMqw$.
\end{lem}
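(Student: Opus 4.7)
The plan is to first reduce the statement to the full faithfulness of $\SMq\to\SMFq$. A morphism in $\SM$ (respectively $\SMqw$) is by definition a morphism of the underlying objects in $\SMq$ whose base change to $\M$ (respectively to $\M_0 = R_0\otimes_{\vphi,\Sig}\gM$) commutes with the specified connection. Since these connections live on the target side of the base-change functor, full faithfulness of $\SMq\to\SMFq$ immediately upgrades to the other two cases.

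For faithfulness of $\SMq\to\SMFq$, I observe that the natural map $\iota_\gM\colon\gM\to\M = S\otimes_{\vphi,\Sig}\gM$, $x\mapsto 1\otimes x$, is injective: $\gM$ is $\Sig$-projective and $\vphi\colon\Sig\to S$ is an injection of $\Zp$-flat rings. Hence any $\SMq$-morphism is determined by its image in $\SMFq$.

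For fullness, I will adapt the strategy of \cite[\S2.2]{Caruso-Liu:qst} to the relative base. Given $f\colon\M\to\M'$ in $\SMFq$, the goal is to produce $\bar f\colon\gM\to\gM'$ in $\SMq$ with $f\circ\iota_\gM = \iota_{\gM'}\circ\bar f$. The key identity
\[
\vphi_1(E(u)\otimes x) = c\cdot (1\otimes\vphi_\gM(x))\qquad (x\in\gM,\ c:=\vphi(E(u))/p\in S^\times),
\]
combined with $\vphi_1$-compatibility of $f$, converts the desired containment $f(\iota_\gM(\gM))\subseteq\iota_{\gM'}(\gM')$ into an iterative Frobenius-descent condition amenable to successive approximation. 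I will construct $\bar f = \lim_n\bar f_n$ as a limit of $\Sig$-linear maps $\gM\to\gM'$: an initial lift $\bar f_0$ exists by $\Sig$-projectivity of $\gM$ (lifting the reduction of $f$ modulo a suitable ideal $I\subset S$), and at each stage the correction $\bar f_{n+1}-\bar f_n$ is chosen so that the residual error $(f\circ\iota_\gM - \iota_{\gM'}\circ\bar f_n)(\gM)$ lies in $\vphi^n(I)\cdot\iota_{\gM'}(\M')$. Since $\vphi^n(I)\to 0$ $p$-adically in $S$, the sequence converges. Uniqueness follows by applying the same convergence argument to the difference of two candidates, in the spirit of Lemma~\ref{lem:BreuilClassifLifting}.

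The main technical obstacle will be ensuring that the correction at each step stays inside $\gM'$ rather than merely in $\M'$. In the classical case $R=\fo_K$, Caruso--Liu exploit the regular local structure of $\Sig = W[[u]]$; in the relative setting the analogous step will use $\Sig$-projectivity of $\gM'$ together with flatness of $\vphi$ on $R_0$, which is a consequence of the $p$-basis assumption~(\S\ref{cond:Breuil}). When $p=2$, the combinatorics of divided powers entering the iterative identities forces a uniform loss of a bounded power of $p$, so the limit exists only after multiplication by a fixed power of $p$; this yields fullness of $\SMq\to\SMFq$ (and hence of the other two functors) up to isogeny.
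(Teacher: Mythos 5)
Your reduction of the $\SM$ and $\SMqw$ cases to $\SMq\ra\SMFq$ is correct and coincides with the first sentence of the paper's proof, and your faithfulness observation ($\gM\hra\M$ because $\gM$ is $\Sig$-projective and $\vphi\colon\Sig\ra S$ is injective) is also fine. But for fullness you take a genuinely different route from the paper, and the route you propose has a real gap.

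The paper does \emph{not} prove fullness by a Caruso--Liu successive-approximation argument. It proves it by reduction to the discrete-valuation-ring case with perfect residue field, which is already known from Kisin's work (\cite[Thm.~2.2.7, Prop.~A.6]{kisin:fcrys}). Concretely: set $R_0'$ to be the $p$-adic completion of $\varinjlim_\vphi (R_0)_{(p)}$ (so $R_0'\cong W(k')$ with $k'$ the perfect closure of $\Frac(R/(\varpi))$), define $\Sig',S'$ accordingly, and observe the crucial intersection identity $S\cap\Sig'=\Sig$ inside $S'[\ivtd p]$. Then a morphism $f\colon\M_1\ra\M_2$ in $\SMFq$ carries $\gM_1'$ into $\gM_2'$ after the base change (the DVR case), and $\gM_i=\M_i\cap\gM_i'$ forces $f(\gM_1)\subseteq\gM_2$. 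This sidesteps any approximation argument entirely.

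Your proposed iteration, by contrast, leaves the crux unproved. You openly flag ``the main technical obstacle will be ensuring that the correction at each step stays inside $\gM'$ rather than merely in $\M'$,'' and then assert this will follow from ``$\Sig$-projectivity of $\gM'$ together with flatness of $\vphi$ on $R_0$.'' That is exactly where the argument needs to be made, and neither projectivity nor flatness by itself gives you a mechanism for pulling a limit in $\M'$ back into the $\vphi(\Sig)$-lattice $\gM'$. Unlike the essential-surjectivity argument of Proposition~\ref{prop:CL}, where one is free to \emph{deform} the candidate lattice at each step (via the automorphisms $1+D^{(n)}$), here both lattices $\gM_1,\gM_2$ are fixed, so the iteration must converge to the restriction of $f$ itself while never leaving $\gM_2$ --- a much more rigid requirement, and you give no reason it can be met. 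Your bound on the $p$-loss when $p=2$ is likewise asserted rather than derived; in the paper that loss is simply imported from Kisin's $p=2$ statement. You should instead mimic the paper's approach: base change to a perfect-residue-field DVR and use the intersection formula.
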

\begin{proof}
It suffices to show that $\SMq \ra \SMFq$ is fully faithful when $p>2$, and $\SMq[\ivtd p] \ra \SMFq[\ivtd p]$ is fully faithful when $p=2$. When $R$ is a discrete valuation ring  with perfect residue field, this lemma is can be obtained from the classification of $p$-divisible groups (combining Theorem~2.2.7 and Proposition~A.6 in \cite{kisin:fcrys}).

To handle the general case, let $R_0'$ and $R'$ be the $p$-adic completions of $\varinjlim_\vphi R_{0(p)}$ and $\varinjlim_\vphi R_{(\varpi)}$, respectively. We accordingly define $\Sig'$ and  $S'$, etc. Then there is a natural $\vphi$-compatible map $\Sig\ra\Sig'$ lifting the natural map $R\ra R'$; \emph{cf.} \S\ref{subsec:BC} (Ex.6). Inside $S'[\ivtd p]$ we have 
\begin{equation}\label{eqn:CL-FF}
S\cap \Sig' = \Sig\text{ and } S[1/p] \cap \Sig'[1/p] = \Sig[1/p].
\end{equation}

Now, let $\gM_1,\gM_2\in \SMq$, and write $\M_i:=S\otimes_{\vphi,\Sig}\gM_i\in\SMFq$, $\gM_i':=\Sig'\otimes_\Sig\gM_i\in\PMq{\Sig'}$, and $\M_i':=S'\otimes_{\vphi,\Sig}\gM_i\in \PMFq{S'}$ for $i=1,2$. We naturally view $\gM_i$ as a submodule in $\M_i$, $\gM_i'$, and $\M_i'$.

Assume that $p>2$ and we have a morphism $f: \M_1\ra\M_2$ in $\SMFq$. We want to show that $f$ maps $\gM_1$ into $\gM_2$. Since $R'$ is a discrete valuation ring with perfect residue field, it follows that $S'\otimes_S f:\M_1'\ra\M_2'$ maps $\gM_1'$ into $\gM_2'$. Now, from (\ref{eqn:CL-FF}) we have $\gM_i = \M_i\cap \gM_i'$ for $i=1,2$, which proves  the lemma when $p>2$. The same argument works when $p=2$ by inverting $p$ everywhere.
\end{proof}

%
When $p=2$ Lemma~\ref{lem:CL-FF} can be strengthened for  $\vphi$- and $\psi$- nilpotent objects (\emph{cf.} Definition~\ref{def:psi}) as follows:
\begin{lemsub}\label{lem:CL-FF:nilp}
Assume that $p=2$ and $R$ satisfies the $p$-basis condition~(\S\ref{cond:Breuil}).
Then the functors $\SMq^{\vphi\nilp} \ra \SMFq^{\vphi\nilp}$ and $\SMq^{\psi\nilp} \ra \SMFq^{\psi\nilp}$, defined by $S\otimes_{\vphi,\Sig} (\cdot)$, are fully faithful. The same statement holds for the full subcategories of $\vphi$- and $\psi$- nilpotent objects in $\SM$ and $\SMqw$.
\end{lemsub}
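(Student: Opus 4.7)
The plan is to adapt the two-step reduction of Lemma~\ref{lem:CL-FF} --- first to the case where the base is a discrete valuation ring with perfect residue field via the perfect completion $R'$ of \S\ref{subsec:BC} (Ex6), and then to invoke in that base case the integral classification of formal (and dually, unipotent) $2$-divisible groups by $\vphi$-nilpotent (resp. $\psi$-nilpotent) Kisin modules, which is available even for $p=2$. The essential sharpening compared to Lemma~\ref{lem:CL-FF} is that we must not invert $p$ at any step; this is precisely why the statement is restricted to nilpotent objects.

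For the DVR base case, take $R = \fo_K$ with perfect residue field and $p = 2$. Since $\wh\Omega_{R_0} = 0$, connections carry no information, so $\SMq = \SMqw = \SM$ and $\SMFq = \SMFqw = \SMF$. Theorem~\ref{thm:FormalBreuil} identifies $\SMFq^{\vphi\nilp}$ (resp. $\SMFq^{\psi\nilp}$) with the opposite of the category of formal (resp. unipotent) $2$-divisible groups over $\fo_K$ via $G\rightsquigarrow \M^*(G)$. On the other hand, the classification of connected $p$-divisible groups over $\fo_K$ for $p=2$ by $\vphi$-nilpotent Kisin $\Sig$-modules (due to Kisin, Kim and Lau) produces an anti-equivalence $G\rightsquigarrow\gM^*(G)$ from the same category to $\SMq^{\vphi\nilp}$ (and similarly in the unipotent case). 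These two anti-equivalences are linked by a functorial comparison isomorphism $S\otimes_{\vphi,\Sig}\gM^*(G)\cong\M^*(G)$ in $\SMFq$, which is built into the construction of $\gM^*(G)$. Composing, we conclude that $\gM\rightsquigarrow S\otimes_{\vphi,\Sig}\gM$ is an equivalence (a fortiori fully faithful) between $\SMq^{\vphi\nilp}$ and $\SMFq^{\vphi\nilp}$ in this case, and similarly for $\psi$-nilpotent objects.

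For general $R$ satisfying the $p$-basis assumption~(\S\ref{cond:Breuil}), let $\gM_1,\gM_2\in\SMq^{\vphi\nilp}$, write $\M_i := S\otimes_{\vphi,\Sig}\gM_i$, and let $f\colon\M_1\to\M_2$ be a morphism in $\SMFq$. Base-changing along $\Sig\to\Sig'$ yields $\gM_i' := \Sig'\otimes_\Sig\gM_i\in\SMq^{\vphi\nilp}$ over $R'$ (nilpotence being manifestly preserved) and a morphism $f'\colon\M_1'\to\M_2'$ in $\PMFq{S'}$. The DVR base case gives $f'(\gM_1')\subseteq\gM_2'$. Now the first identity in (\ref{eqn:CL-FF}), applied locally after choosing trivialisations of the finite projective $\Sig$-modules $\gM_i$, implies $\gM_i = \M_i\cap\gM_i'$ inside $\M_i'$, whence $f(\gM_1)\subseteq\gM_2$. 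The argument for $\psi$-nilpotence is identical. The statements for $\SM$ and $\SMqw$ then follow formally, since morphisms in these categories are $\SMq$-morphisms required to commute with a given connection, a condition that lives entirely in the target category.

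The main obstacle is the DVR base case: specifically, the need for a $p=2$ classification of formal $2$-divisible groups by $\vphi$-nilpotent Kisin modules that is compatible with $S\otimes_{\vphi,\Sig}(\cdot)$ via the comparison $S\otimes_{\vphi,\Sig}\gM^*(G)\cong\M^*(G)$. Once this compatibility is in hand (as it is in the works of Kisin, Kim and Lau), the reduction from general $R$ to the DVR case proceeds verbatim as in Lemma~\ref{lem:CL-FF}, with only the preservation of nilpotence under base change as additional verification.
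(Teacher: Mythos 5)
Your proposal is correct and reaches the same conclusion, but it takes a somewhat different route in the base (DVR) case. The paper first observes that the duality $\gM\rightsquigarrow\gM^\vee$ on $\SMq$ (with $\vphi_{\gM^\vee}$ induced from $\psi_\gM$) exchanges $\vphi$-nilpotent and $\psi$-nilpotent objects, so it suffices to treat one of the two cases; it then reduces to the DVR case exactly as in Lemma~\ref{lem:CL-FF} and cites Proposition~1.1.9 together with Theorem~1.2.8 of \cite{Kisin:2adicBT} directly. You instead handle both nilpotency types symmetrically and, in the DVR case, go through the classification of formal (resp.\ unipotent) $2$-divisible groups via Theorem~\ref{thm:FormalBreuil} on the Breuil side and the external Kisin/Kim/Lau classification on the Kisin-module side, then compose the two anti-equivalences. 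Your route is sound, but note that it places more weight on the external input than the paper does: you need the external classification to supply both nilpotency types (which Kisin's 2-adic paper obtains for the connected case and then dualizes, so the duality trick the paper uses is not actually avoided, only relocated), \emph{and} you need the functorial comparison isomorphism $S\otimes_{\vphi,\Sig}\gM^*(G)\cong\M^*(G)$ to be part of that classification --- a point you flag but do not fully pin down. By contrast, the paper's duality reduction is cheap and internal, and the single external citation carries the rest. Your reduction from general $R$ to the DVR, the verification that $\vphi$- and $\psi$-nilpotence pass through $\Sig\to\Sig'$, and the formal deduction of the statement for $\SM$ and $\SMqw$ from the $\SMq$ case are all correctly carried out and match the paper.
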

\begin{proof}
Note that $\SMq$ has a dualith $\gM\rightsquigarrow \gM^\vee$, where the underlying $\Sig$-module of $\gM^\vee$ is a $\Sig$-linear dual of $\gM$, and $\vphi_{\gM^\vee}$ is induced from $\psi_\gM$. Under this duality, $\gM$ is $\vphi$-nilpotent if and only if $\gM^\vee$ is $\psi$-nilpotent. Therefore, it suffices to show that $\SMq^{\psi\nilp} \ra \SMFq^{\psi\nilp}$ is fully faithful.

The same proof of Lemma~\ref{lem:CL-FF} shows that it suffices to prove the lemma when $R$ is a $p$-adic discrete valuation ring with perfect residue field, which follows from combining Proposition~1.1.9 and Theorem~1.2.8 in \cite{Kisin:2adicBT}.
\end{proof}

\begin{prop}\label{prop:CL}
Assume that $R$ satisfies the $p$-basis condition~(\S\ref{cond:Breuil}).
Then the functors $\SMq \ra \SMFq$ and $\SM \ra \SMF$, defined by $S\otimes_{\vphi,\Sig} (\cdot)$, are essentially surjective.
In particular, they are equivalences of categories if $p>2$ or if they are restricted to $\vphi$- and $\psi$- nilpotent objects, and are equivalence of categories up to isogeny if $p=2$.
\end{prop}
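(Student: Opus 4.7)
The plan has two preliminary reductions. First, essential surjectivity of $\SM \to \SMF$ reduces to essential surjectivity of $\SMq \to \SMFq$: given $\M\in\SMF$ with connection $\nabla_\M$, any $\gM\in\SMq$ realising $\M$ via $S\otimes_{\vphi,\Sig}\gM\cong \M$ automatically produces an object of $\SM$, since a connection in $\SM$ lives on $\M = S\otimes_{\vphi,\Sig}\gM$ and not on $\gM$ itself (Definition~\ref{def:KisMod}). The ``in particular'' clauses will then follow by combining essential surjectivity with Lemmas~\ref{lem:CL-FF} and~\ref{lem:CL-FF:nilp}.

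The core task is therefore to produce, from a given $\M\in\SMFq$, a quasi-Kisin $\Sig$-module $\gM$ with a $\vphi_1$- and $\Fil^1$-compatible isomorphism $S\otimes_{\vphi,\Sig}\gM\cong \M$. My plan is to imitate the direct construction in \cite[\S2.2]{Caruso-Liu:qst} in the relative setting. Concretely, I would pick a finite projective $\Sig$-module $\gF$ of the correct rank together with a surjection $\gF/u\gF\thra \M_0:=R_0\otimes_S\M$, lift this to a $\Sig$-map $\gF \to \M$, and iteratively modify it using the Frobenius-correction built from $\vphi_1:\Fil^1\M\to \M$ together with $\psi_\M:=p(1\otimes\vphi_\M)^{-1}:\M\to\vphi^*\M$. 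Each correction step gains a factor whose $p$-adic valuation grows (as in the proof of Lemma~\ref{lem:Reduction}, using that $\vphi(\Fil^1S)\subseteq pS$ and $c=\vphi(E(u))/p\in S\starr$), so the iteration converges in $\M$ to a $\Sig$-submodule $\gM\subset \M$. One then verifies (i) $\gM$ is $\Sig$-projective, (ii) $\gM$ is stable under $\vphi_\M$ with cokernel of $1\otimes\vphi_\gM$ annihilated by $E(u)$, and (iii) the natural multiplication map $S\otimes_{\vphi,\Sig}\gM \to \M$ is an isomorphism compatible with $\Fil^1$ and $\vphi_1$.

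The main obstacle, compared with the DVR argument in \cite{Caruso-Liu:qst}, is that $\Sig=R_0[[u]]$ is not a power series ring over a DVR but over a general formally smooth $\Zp$-algebra $R_0$, so the Nakayama / divisor arguments used there do not apply verbatim; in particular $\Sig$-projectivity of the output $\gM$ is the most delicate point. To handle this, I plan a two-pronged approach. On the one hand, redo the Caruso--Liu iteration directly and check finite generation and projectivity locally on $\Spec \Sig$, using that $\M_0$ is $R_0$-projective by definition of $\SMFq$ and that $\Sig$ is $(p,u)$-adically complete. On the other hand, as a cross-check (and fallback for $\Sig$-projectivity), I would base-change to $\Sig'=R_0'[[u]]$ where $R_0'$ is the $p$-adic completion of $\varinjlim_\vphi R_{0(p)}$ (case (Ex6) of \S\ref{subsec:BC}); this reduces to the DVR case with perfect residue field, where \cite[\S2.2]{Caruso-Liu:qst} applies directly, and then descend using the intersection identities $\Sig = S\cap \Sig'$ and $\Sig[\tfrac1p]=S[\tfrac1p]\cap \Sig'[\tfrac1p]$ used in Lemma~\ref{lem:CL-FF}, together with faithful flatness of $\Sig\to \Sig'$ to propagate projectivity down from $\gM':=\Sig'\otimes_\Sig\gM$.
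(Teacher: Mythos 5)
Your reduction of $\SM\to\SMF$ to $\SMq\to\SMFq$ is correct, and your primary plan (a Caruso--Liu iteration) is the approach the paper takes. However, two points deserve comment.

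On your primary prong: checking $\Sig$-projectivity of the limit ``locally on $\Spec\Sig$'' is not how the difficulty gets resolved, and it is unclear that it would. The paper's actual mechanism (Lemmas~\ref{lem:SigLatt} and~\ref{lem:CL2.2.2}) is cleaner and worth knowing: one first produces a $\Sig$-projective lattice $\gN$ with $S\otimes_\Sig\gN\cong\M$ by lifting $\M/(\Fil^1 S)\M$ (which is $R$-projective by definition of $\SMFq$) to a projective $\Sig$-module and invoking Nakayama, and then the iterative correction is arranged so that each step replaces the current candidate $\gM^{(n)}$ by $(1+D^{(n)})\gM^{(n)}$ where $1+D^{(n)}$ is an $S$-linear \emph{automorphism} of $\M$ (with $D^{(n)}(\M)\subseteq p^{\mu_n}\M$, $\mu_n\to\infty$). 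Because one only ever applies automorphisms of $\M$ to a $\Sig$-projective lattice, $\Sig$-projectivity of each $\gM^{(n)}$, and hence of the limit $\gM=\bigl(\prod_n(1+D^{(n)})\bigr)(\gM^{(0)})$, is automatic. That observation --- correct the lattice by automorphisms, not by hand --- is the key point missing from your description.

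Your fallback cross-check does not work. The map $\Sig\to\Sig'$ of (Ex6) is a localisation at the generic point of the special fibre followed by a perfection and $p$-adic completion; in particular $R_0\to R_0'$ factors through the non-faithfully-flat localisation $R_0\to (R_0)_{(p)}$, so $\Sig\to\Sig'$ is \emph{not} faithfully flat (unless $R_0$ is already local with maximal ideal $(p)$), and one cannot ``propagate projectivity down'' by descent. More fundamentally, the intersection identity $\gM=\M\cap\gM'$ used in Lemma~\ref{lem:CL-FF} is a statement that presupposes $\gM$ exists --- it gives uniqueness/full faithfulness, not a construction. If you only have the Caruso--Liu output $\gM'$ over $\Sig'$ and \emph{define} $\gM:=\M\cap\gM'$, you have no a priori reason to believe $\gM$ is finitely generated, $\Sig$-projective, or that it generates $\M$ over $S$ via $\vphi$; you would be back to proving essentially the same thing the direct construction proves. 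So the first prong is not merely the safer route --- it is the only one that closes.
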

We prove the proposition later in \S\ref{subsec:PfCL}. Let us record some interesting corollaries. The following is immediate from 
\begin{corsub}\label{cor:RelKisin}
Assume that $R$ satisfies the $p$-basis condition~(\S\ref{cond:Breuil}).
If $p>2$ then  there exists an exact  contravariant functor 
\[
 \gM^*: \{p\text{-divisible groups over }R\} \ra \SM \riso \SMqw
\]
 such that for any $p$-divisible group $G$ over $R$ there exists a natural isomorphism $\DD^*(G)(S) \cong S\otimes_{\vphi,\Sig}\gM^*(G)$ in $\SMF$. Furthermore, $\gM^*$ is fully faithful if $R$ satisfies the normality assumption~(\S\ref{cond:BM}), and an anti-equivalence of categories if $R$ satisfies the formally finite-type assumption~(\S\ref{cond:dJ}).
 
 If $p=2$ then we have an exact contravariant functor $\gM^*[\ivtd p]: G\mapsto \gM^*(G)[\ivtd p]$ on the isogeny categories 
\[
\{p\text{-divisible groups over }R\} [1/p] \ra \SM[1/p],
\] 
which is  fully faithful     if $R$ satisfies the formally finite-type assumption~(\S\ref{cond:dJ}).
\end{corsub}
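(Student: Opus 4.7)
The plan is to simply compose the two classification results already established, namely Theorem~\ref{thm:BreuilClassif} (which relates $p$-divisible groups to Breuil modules) and Proposition~\ref{prop:CL} (which relates Kisin $\Sig$-modules to Breuil $S$-modules via scalar extension along $\vphi$). Since both equivalences go through $\SMF$ and $\SMFqw$, the composition is direct and no new technical input is required.

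First, I will choose a quasi-inverse of the scalar extension functor $\SM \to \SMF$, $\gM \rightsquigarrow S\otimes_{\vphi,\Sig}\gM$, whose existence is guaranteed by Proposition~\ref{prop:CL} when $p>2$. Define
\[
\gM^*(G) := \text{(quasi-inverse)}\bigl(\DD^*(G)(S)\bigr),
\]
where $\DD^*(G)(S)$ is regarded as an object of $\SMF$ via Proposition~\ref{prop:FiltMod} (and its image in $\SMqw$ under the fully faithful functor from Remark~\ref{rmk:BrModAlt} gives the object in $\SMqw$ via the identification $\SM \riso \SMqw$ of Remark~\ref{rmk:KisModSm}, which applies under the formally finite-type hypothesis for $p>2$; in general one uses Lemma~\ref{lem:forgetfulN}). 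The isomorphism $\DD^*(G)(S)\cong S\otimes_{\vphi,\Sig}\gM^*(G)$ in $\SMF$ is then tautological, and exactness of $\gM^*$ follows from exactness of $\DD^*$ (\emph{cf.} the Berthelot-Breen-Messing exactness result recalled in \S\ref{subset:BMdJ}) combined with the exactness of the equivalence in Proposition~\ref{prop:CL}.

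For the full faithfulness claim when $R$ satisfies the normality assumption~(\S\ref{cond:BM}): the functor $\M^*: G\rightsquigarrow\DD^*(G)(S)$ is fully faithful into $\SMFqw$ by Theorem~\ref{thm:BreuilClassif}, and the scalar extension $\SMqw \riso \SMFqw$ is an equivalence by Proposition~\ref{prop:CL}, so their composition $\gM^*$ is fully faithful. Similarly, when $R$ satisfies the formally finite-type assumption~(\S\ref{cond:dJ}), $\M^*$ is an anti-equivalence onto $\SMFqw$ by Theorem~\ref{thm:BreuilClassif}, hence $\gM^*$ is an anti-equivalence onto $\SMqw$.

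For $p=2$, the same composition argument works on isogeny categories: Theorem~\ref{thm:BreuilClassif} gives that $\M^*$ is fully faithful up to isogeny (under either hypothesis on $R$), and Proposition~\ref{prop:CL} gives that $\SM[\ivtd p]\to\SMF[\ivtd p]$ is an equivalence, so $\gM^*[\ivtd p]$ is well-defined and fully faithful under the formally finite-type assumption. There is no real obstacle to overcome here since all the hard work has been done in Theorem~\ref{thm:BreuilClassif} and Proposition~\ref{prop:CL}; the only minor bookkeeping issue is to confirm that the equivalence between $\SM$ and $\SMqw$ (via forgetting or reconstructing $N_\M$, Remark~\ref{rmk:BrModAlt}) is compatible with the corresponding identification between $\SMF$ and $\SMFqw$, which follows from Lemma~\ref{lem:forgetfulN} and the explicit description of the mod-$I_0$ reduction functors.
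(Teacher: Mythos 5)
Your proposal is correct and follows precisely the route the paper intends: the corollary is stated as ``immediate from'' Proposition~\ref{prop:CL}, and the derivation is exactly the composition you describe — transport $\M^*(G) = \DD^*(G)(S) \in \SMF$ back to $\SM$ via the quasi-inverse from Proposition~\ref{prop:CL}, then read off exactness, full faithfulness, and the anti-equivalence (and the $p=2$ isogeny statement) from Theorem~\ref{thm:BreuilClassif} under the respective hypotheses on $R$. Your remark about compatibility of the forgetful/mod-$I_0$ functors (Lemma~\ref{lem:forgetfulN}, Remarks~\ref{rmk:BrModAlt} and~\ref{rmk:KisModSm}) is the only bookkeeping needed, and you handle it correctly.
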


%

When $p=2$, we have the following strengthening of Corollary~\ref{cor:RelKisin}   for $\vphi$- and $\psi$- nilpotent objects:
\begin{corsub}\label{cor:FormalRelKisin}
Let $p=2$. Then the functor $\gM^*$ is defined on the category of formal and unipotent $p$-divisible groups (without passing to the isogeny categories), and is fully faithful if $R$ satisfies the normality assumption~(\S\ref{cond:BM}). If $R$ satisfies the formally finite-type assumption~(\S\ref{cond:dJ}), then $\gM^*$ induces an anti-equivalence of categories from the category of formal (respectively, unipotent) $p$-divisible groups to $\SMqw^{\vphi\nilp}$ (respectively, $\SMqw^{\psi\nilp}$). 
\end{corsub}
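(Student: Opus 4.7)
The plan is to combine Theorem~\ref{thm:FormalBreuil} with a nilpotent version of Proposition~\ref{prop:CL}, exploiting Zink's extension of Grothendieck-Messing theory to formal/unipotent $p$-divisible groups to bypass the $p^2$-isogeny issue at $p=2$. First I would establish that the scalar extension $S\otimes_{\vphi,\Sig}(-)$ restricts to an equivalence of categories $\SMqw^{\vphi\nilp}\riso \SMFqw^{\vphi\nilp}$ (and similarly for $\psi$-nilpotent objects) even at $p=2$. Essential surjectivity is immediate from Proposition~\ref{prop:CL} combined with Lemma~\ref{lem:nilpBreuilvsKisin} (which preserves the nilpotence condition under the scalar extension), while fully faithfulness is exactly Lemma~\ref{lem:CL-FF:nilp}. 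The passage from $\SMq$, $\SMFq$ to $\SMqw$, $\SMFqw$ is harmless because the connection data in both enlarged categories lives on the common module $R_0\otimes_{\vphi,\Sig}\gM \cong R_0\otimes_S\M$, hence is preserved under the equivalence.

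Next, I would reexamine the proof of Theorem~\ref{thm:BreuilClassif} restricted to formal or unipotent $p$-divisible groups at $p=2$. The only place where the hypothesis $p>2$ was used is in the last deformation step from $R/(p)$ to $R$, which requires $pR$ to be a topologically nilpotent divided power ideal; this fails at $p=2$. For formal or unipotent $G$, one instead invokes Zink's extension of Grothendieck-Messing theory to $R\thra R/(p)$ without any PD nilpotence hypothesis, exactly as in the proof of Theorem~\ref{thm:FormalBreuil}. This removes the obstruction entirely. Under the formally finite-type assumption, $\M^*$ then restricts to an anti-equivalence from formal (resp.\ unipotent) $p$-divisible groups to $\SMFqw^{\vphi\nilp}$ (resp.\ $\SMFqw^{\psi\nilp}$), which is exactly Theorem~\ref{thm:FormalBreuil}. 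Under the normality assumption alone, the same argument—replacing de\thinspace{}Jong's equivalence by the full faithfulness of $\M_0^*$ from Berthelot-Messing, \S\ref{subset:BMdJ}(\ref{subset:BMdJ:BM})—yields that $\M^*$ is fully faithful on the subcategories of formal and unipotent $p$-divisible groups, valued in $\SMFqw^{\vphi\nilp}$ and $\SMFqw^{\psi\nilp}$ respectively by Lemma~\ref{lem:FormalUnip}.

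One then defines $\gM^*$ as the composite of $\M^*$ with a quasi-inverse of the equivalence from the first paragraph. The two assertions of the corollary follow: full faithfulness under the normality assumption, and an anti-equivalence of categories onto $\SMqw^{\vphi\nilp}$ (resp.\ $\SMqw^{\psi\nilp}$) under the formally finite-type assumption. The main technical subtlety is the full faithfulness statement under the normality assumption at $p=2$: Theorem~\ref{thm:BreuilClassif} only gave full faithfulness up to isogeny in this generality, and the restriction to formal or unipotent groups is precisely what allows one to trade the problematic thickening $R\thra R/(p)$ (not topologically PD nilpotent at $p=2$) for one covered by Zink's Grothendieck-Messing theorem, recovering integral full faithfulness.
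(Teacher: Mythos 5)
Your proposal is correct and follows essentially the same route as the paper: the paper's own proof simply cites Proposition~\ref{prop:CL}, Theorem~\ref{thm:FormalBreuil}, Lemma~\ref{lem:FormalUnip}, and Lemma~\ref{lem:nilpBreuilvsKisin}, which are exactly the ingredients you assemble (with Lemma~\ref{lem:CL-FF:nilp} being what underlies the nilpotent clause of Proposition~\ref{prop:CL}). Your extra care in pointing out that the normality case of the full-faithfulness claim is not literally part of the statement of Theorem~\ref{thm:FormalBreuil} but follows from the same argument, swapping de\thinspace{}Jong's equivalence for Berthelot--Messing full faithfulness and using Zink's version of Grothendieck--Messing for the thickening $R\thra R/(p)$, is a genuine improvement in precision over the paper's terse proof, though the substance of the argument is the same.
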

\begin{proof}
This follows from Proposition~\ref{prop:CL}, Theorem~\ref{thm:FormalBreuil}, and Lemmas~\ref{lem:FormalUnip} and \ref{lem:nilpBreuilvsKisin}.
\end{proof}
There will be another strengthening of Corollary~\ref{cor:RelKisin} when $p=2$ and $R$ satisfies the formally finite-type assumption~(\S\ref{cond:dJ}); see Corollary~\ref{cor:gMFF}.

\begin{rmksub}\label{rmk:ModConnOdd}
If $R$ is a complete regular local ring with perfect residue field, then Eike~Lau \cite{Lau:2010fk} proved a stronger result than Corollary~\ref{cor:RelKisin}; namely, the classification theorem without connection, which also holds when $p=2$. Although we cannot replace $\SMqw$ in Corollary~\ref{cor:RelKisin} with $\SMq$ in general, there are cases we can ``forget the connection''; \emph{cf.} Corollary~\ref{cor:BreuilClassif}. The role of connections will be studied further in \S\ref{sec:Vasiu}.
\end{rmksub}

\begin{rmksub}[lci case]\label{rmk:lciRelKisin}
Assume that  $R/(\varpi)$  is excellent and locally complete intersection in addition to the $p$-basis assumption~(\S\ref{cond:Breuil}). Then, as mentioned in Remark \ref{rmk:lciBreuilClassif} we can still show that the functor
\[G\rightsquigarrow\M^*(G):=\DD^*(G)(S)\in\SMF\] 
is fully faithful if $p>2$ (respectively, fully faithful up to isogeny if $p=2$).  Therefore, the same holds for the functor $\gM^*$ by Proposition~\ref{prop:CL}. 
\end{rmksub}

\begin{rmksub}\label{rmk:RelKisinBC}
The functors in Proposition~\ref{prop:CL} and Corollary~\ref{cor:RelKisin} commute with the base change which satisfy the condition stated at the beginning of \S\ref{subsec:BC}. 
In particular, we can define base change for ``\'etale morphisms'' and ``completions''  as in  \S\ref{subsec:BC} (Ex1) and (Ex5). 
\end{rmksub}

\subsection{Proof of Proposition~\ref{prop:CL}}\label{subsec:PfCL}
We want to show that the functor $\SMq \ra \SMFq$ is essentially surjective. We do this by modifying the proof of \cite[Theorem~2.2.1]{Caruso-Liu:qst}. The proof also works when $p=2$ with little modification. 

Let us begin the proof of Proposition~\ref{prop:CL} with a few preliminary lemmas. From now on, we consider $(\M,\Fil^1,\vphi_1)\in \SMFq$, and set $\M_0:=R_0\otimes_{S}\M$. The following lemma is not trivial when $\M$ is not necessarily free over $S$.

\begin{lemsub}\label{lem:SigLatt}
For $\M$ as above, there exists a projective $\Sig$-module $\gN$ equipped with an $S$-isomorphism $S\otimes_{\Sig}\gN = \M$. Any two such projective $\Sig$-modules $\gN$ and $\gN'$ are (non-canonically) isomorphic.
\end{lemsub}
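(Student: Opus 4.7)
The plan is to reduce $\M$ modulo $I_0:=\ker(S\twoheadrightarrow R_0)$, lift the resulting $R_0$-module to a finite projective $\Sig$-module using $u$-adic completeness of $\Sig$, and then verify that the scalar extension to $S$ recovers $\M$ by a local computation on $\Spec R_0/p$.

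First I would observe that $\M_0:=R_0\otimes_S\M$ is finite projective over $R_0$ (projectivity being preserved under arbitrary base change) of rank equal to $\rank_S\M$. Using that $\Sig=R_0[[u]]$ is $u$-adically complete with $\Sig/u\Sig=R_0$ and $u\Sig\subset\rad(\Sig)$, the functor $P\mapsto \Sig\otimes_{R_0}P$ gives an equivalence from finite projective $R_0$-modules to finite projective $\Sig$-modules, by lifting idempotents inductively modulo $u^n$ and passing to the limit. This produces a finite projective $\Sig$-module $\gN$ lifting $\M_0$, unique up to non-canonical isomorphism.

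Next, by $\Sig$-projectivity of $\gN$ (viewing $\M$ as a $\Sig$-module via $\Sig\to S$), the $R_0$-isomorphism $\gN/u\gN\cong\M_0=\M/I_0\M$ lifts to a $\Sig$-linear map $\gN\to\M$ along the surjection $\M\twoheadrightarrow \M/I_0\M$, and extends $S$-linearly to $\phi:S\otimes_\Sig\gN\to\M$. Both source and target of $\phi$ are finite projective $S$-modules of the same rank, and $\phi$ reduces modulo $I_0$ to the chosen isomorphism. To conclude that $\phi$ is an isomorphism I would argue locally on $\Spec R_0/p$: over the local ring $S_\m^\wedge$ obtained by localizing-then-completing at a maximal ideal $\m$ of $R_0/p$, both $\gN$ and $\M$ become free of the same rank (as finite projectives over a local ring), so $\phi$ is represented by a square matrix over $S_\m^\wedge$ invertible modulo the maximal ideal of $S_\m^\wedge$, and Nakayama forces $\phi$ to be an isomorphism. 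Gluing over a Zariski cover of $\Spec R_0/p$ yields the global conclusion.

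The main obstacle is the local analysis of $S$: one must identify the maximal ideals of $S$ with those of $R_0/p$ via $S\twoheadrightarrow R_0/p$, and verify that $I_0$ lies in each such maximal ideal. This is controlled by the frame axiom $pS+\Fil^1 S\subset\rad(S)$ from Definition~\ref{def:Frame}, combined with the relation $p\equiv -\sum_{i>0}a_i u^i\bmod\Fil^1 S$ coming from $E(u)\in\Fil^1 S$, which places each generator of $I_0$ into every maximal ideal of $S$. Once $\phi$ is established as an isomorphism, uniqueness of $\gN$ up to isomorphism is immediate: given two lifts $\gN_1,\gN_2$, the induced $R_0$-isomorphism $\gN_1/u\gN_1\cong\M_0\cong\gN_2/u\gN_2$ lifts by $\Sig$-projectivity to a $\Sig$-linear map $\gN_1\to\gN_2$, which is an isomorphism by $u$-adic Nakayama.
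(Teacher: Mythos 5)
Your proof is correct and arrives at the same three-step shape as the paper's (reduce, lift, Nakayama), but through a genuinely different intermediate quotient. The paper reduces $\M$ modulo $\Fil^1 S$, producing a projective $R=\Sig/(E(u))$-module, and lifts it back along $\Sig\thra R$; you instead reduce modulo $I_0=\ker(S\thra R_0)$, producing a projective $R_0=\Sig/(u)$-module, and lift along $\Sig\thra R_0$. Your choice makes the lifting step transparent, since $\Sig=R_0[[u]]$ is manifestly $u$-adically complete, whereas the paper's choice requires a little more care to justify (lifting along an $E(u)$-adic thickening that is not literally adic). The price you pay is having to verify separately that $I_0\subseteq\rad(S)$, which is where your argument is slightly hand-wavy: the congruence $p\equiv-\sum_{i>0}a_iu^i\bmod\Fil^1 S$ does not directly place $u$ in every maximal ideal. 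The clean justification is that every maximal ideal $\m\subset S$ contains $pS+\Fil^1 S$ (frame axiom), so $S\to S/\m$ factors through $S/(pS+\Fil^1 S)\cong R_0/(p)$, and under $S\to R_0/(p)$ the generators $u$ and $u^{ej}/j!$ of $I_0$ map to zero; hence $I_0\subseteq\m$. With that in hand, the localization-and-completion in your final step is an unnecessary detour: once $I_0\subseteq\rad(S)$, Nakayama applied directly to the $S$-linear map $\phi$ of finite projectives gives surjectivity, and since the (split) kernel is then a finite projective $S$-module killed modulo $I_0$, a second application of Nakayama kills it. Your uniqueness argument matches the paper's in spirit, again just working modulo $u$ rather than modulo $E(u)$.
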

\begin{proof}
Since $\M/(\Fil^1S)\M$ is a projective $R$-module, we can lift it to a projective $\Sig$-module $\wt\gN$. Then by Nakayama lemma, there is an isomorphism $S\otimes_\Sig \wt\gN \cong \M$. If there are two such $\Sig$-modules $\wt\gN$ and $\wt\gN'$, then one can find an isomorphism $\wt\gN\cong\wt\gN'$ which lifts an isomorphism $\wt \gN/\PP\wt\gN \cong \M/(\Fil^1S)\M \cong \wt\gN'/\PP\wt\gN'$.
%
\end{proof}

\begin{lemsub}\label{lem:adaptee}
There exists an $S$-linear injective map $B:\M\ra\Fil^1\M$ such that $\PP\M\subseteq B(\M)$, $\Fil^1\M =  B(\M) +(\Fil^pS)\M$, and $\vphi_1(B(\M))$ generates $\M$.
\end{lemsub}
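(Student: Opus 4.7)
The plan is to build $B$ via an $S$-module direct sum decomposition $\M=\gL\oplus\gL^c$ adapted to the Hodge filtration, and then define $B$ to act as the identity on $\gL$ and as multiplication by $\PP$ on $\gL^c$.

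First I would produce the decomposition. Since $\M/\Fil^1\M$ is $R$-projective by definition of $\SMFq$, the sequence
\[
0\to\Fil^1\M/(\Fil^1S)\M\to R\otimes_S\M\to\M/\Fil^1\M\to0
\]
of $R$-modules splits, exhibiting $\ol\gL:=\Fil^1\M/(\Fil^1S)\M$ as an $R$-direct summand of $R\otimes_S\M$. The main technical step, and the principal obstacle, is lifting the corresponding idempotent $\ol P\in\End_R(R\otimes_S\M)$ to an $S$-linear idempotent $P\in\End_S(\M)$. This works because $\Fil^1S$ lies in $\mathrm{rad}(S)$ (its image in $S/(p)$ is a nil PD ideal and $S$ is $p$-adic), hence $(\Fil^1S)\End_S(\M)\subseteq\mathrm{rad}(\End_S(\M))$, and $\End_S(\M)$ is $\Fil^1S$-adically complete as a finitely generated $S$-module. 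Setting $\gL:=P(\M)$ and $\gL^c:=(1-P)(\M)$, both are $S$-projective direct summands of $\M$; moreover $\gL\subseteq\Fil^1\M$ (as $\gL$ reduces to $\ol\gL$ modulo $(\Fil^1S)\M$) and $\gL+(\Fil^1S)\M=\Fil^1\M$.

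With $B(x+y):=x+\PP y$ for $x\in\gL$, $y\in\gL^c$, elementary verifications give $S$-linearity, $B(\M)\subseteq\Fil^1\M$ (since $\PP\in\Fil^1S$), injectivity (using that $\PP$ is a non-zero divisor in $S$ via the embedding \eqref{eqn:S} into $R_0[\ivtd p][[u]]$), and $\PP\M\subseteq B(\M)$ (since $\PP(x+y)=B(\PP x+y)$ and $\PP x\in\gL$). For the identity $\Fil^1\M=B(\M)+(\Fil^pS)\M$, I would first observe that $\Fil^1S=\PP\tim S+\Fil^pS$, because the divided powers $\PP^{[i]}=\PP^i/i!$ lie in $\PP\tim S$ for $1\leqs i<p$; this yields $(\Fil^1S)\M=\PP\M+(\Fil^pS)\M$, whence $\Fil^1\M=\gL+(\Fil^1S)\M\subseteq B(\M)+(\Fil^pS)\M\subseteq\Fil^1\M$.

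The condition that $\vphi_1(B(\M))$ generates $\M$ follows from a uniform argument for all $p\geqs2$. First I would show $\vphi_\M(\M)\subseteq\langle\vphi_1(B(\M))\rangle_S$: for $y\in\gL^c$ one has $\vphi_1(\PP y)=c\vphi_\M(y)$ with $c=\vphi(\PP)/p\in S\starr$, so $\vphi_\M(\gL^c)=c\iv\vphi_1(\PP\gL^c)\subseteq S\tim\vphi_1(B(\M))$; for $x\in\gL\subseteq\Fil^1\M$, $\vphi_\M(x)=p\vphi_1(x)$ gives $\vphi_\M(\gL)\subseteq pS\tim\vphi_1(B(\M))$. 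Consequently, for any $s\in\Fil^1S$ and $m\in\M$, $\vphi_1(sm)=\vphi_1(s)\vphi_\M(m)\in S\tim\vphi_\M(\M)\subseteq\langle\vphi_1(B(\M))\rangle_S$, so $\vphi_1((\Fil^1S)\M)\subseteq\langle\vphi_1(B(\M))\rangle_S$. Using additivity of $\vphi_1$ together with $\Fil^1\M=\gL+(\Fil^1S)\M\subseteq B(\M)+(\Fil^1S)\M$, this gives $\vphi_1(\Fil^1\M)\subseteq\langle\vphi_1(B(\M))\rangle_S$; since $\vphi_1(\Fil^1\M)$ generates $\M$ by the height $\leqs1$ axiom on $\SMFq$, we conclude.
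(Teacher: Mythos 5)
Your construction of $B$ is the same as the paper's: lift the Hodge-filtration splitting of $R\otimes_S\M$ to an $S$-direct summand $\gL\subseteq\Fil^1\M$ of $\M$ (the paper calls this lift $\N$), and let $B$ act as the identity on $\gL$ and as multiplication by $\PP$ on a complement. One small caveat in your justification of the lift: it is not clear that $\End_S(\M)$ is $\Fil^1S$-adically complete (the powers $(\Fil^1S)^n$ need not go to zero $p$-adically in $S/(p)$, since $\Fil^1S/(p)$ is nil but not nilpotent), and this hypothesis is not needed. The cleaner route, which gives the same conclusion, is to first lift the idempotent modulo $p$ using that $\Fil^1S/(p)$ is a nil PD ideal of $S/(p)$ (every element has vanishing $p$th power, so finitely generated subideals are nilpotent), and then lift along the $p$-adic filtration using that $\End_S(\M)$ is $p$-adically complete. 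The verifications of injectivity, $\PP\M\subseteq B(\M)$, and $\Fil^1\M=B(\M)+(\Fil^pS)\M$ via $\Fil^1S=\PP S+\Fil^pS$ agree with the paper.

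Where you genuinely diverge is the last assertion, that $\vphi_1(B(\M))$ generates $\M$, and your route is in fact more robust. The paper argues via the inclusion $\vphi_1(\Fil^pS)\subseteq pS$ (followed by an implicit Nakayama over the $p$-adic ring $S$). That inclusion holds for $p\geqs 3$ but fails when $p=2$: one computes $\vphi_1(\PP^{[2]})=\vphi(\PP^{[2]})/p=p c^2/2=c^2$ with $c=\vphi(\PP)/p\in S\starr$, so $\vphi_1(\Fil^2S)=S$. Since Proposition~\ref{prop:CL} is asserted to hold for $p=2$ as well, this is a small gap in the paper's proof of the lemma. Your argument avoids it: you show directly that $\vphi_\M(\gL^c)=c\iv\vphi_1(\PP\gL^c)$ and $\vphi_\M(\gL)=p\vphi_1(\gL)$ both land in the $S$-span of $\vphi_1(B(\M))$, deduce $\vphi_1\big((\Fil^1S)\M\big)\subseteq\langle\vphi_1(B(\M))\rangle_S$ from the identity $\vphi_1(sm)=\vphi_1(s)\vphi_\M(m)$, and then conclude from $\Fil^1\M=\gL+(\Fil^1S)\M$ together with the height-$\leqs1$ axiom. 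This works uniformly for every $p\geqs 2$ and repairs the $p=2$ case.
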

\begin{proof}
Consider the ``Hodge filtration'' 
\begin{equation*}
0\ra\Fil^1\M/(\Fil^1S)\M \ra \M/(\Fil^1S)\M \ra \M/\Fil^1\M \ra0, 
\end{equation*}
which splits as  modules over $R=S/\Fil^1S$. We lift this filtration to an $S$-direct factor $\N\subset \M$ such that $\N\subset \Fil^1\M$, and choose a splitting $\M\cong \N\oplus \M/\N$. We define $B$ to be identity on $\N$ and multiplication by $\PP$ on $\M/\N$. 

By construction, we have $\Fil^1\M =  B(\M) +(\Fil^1S)\M$ and $B(\M) $ contains $\PP\M$. Since $\Fil^1 S = \PP S + \Fil^p S$, it follows that $\Fil^1\M =  B(\M) +(\Fil^pS)\M$. Now $\vphi_1(B(\M))$ has to generate $\M$ since $\frac{\vphi}{p}(\Fil^pS)\subseteq pS$.
\end{proof}

The following is the key technical lemma for proving Proposition~\ref{prop:CL}:
\begin{lemsub}\label{lem:CL2.2.2}
There exist a projective $\Sig$-module $\gM$ 
and an injective map $B:\M\ra\Fil^1\M$ which satisfies the following:
\begin{itemize}
\item There is an isomorphism $S\otimes_{\vphi,\Sig}\gM\cong \M$. (From now on, we view, via the natural inclusions, $\gM$ as a $\vphi(\Sig)$-submodule of $\M$, and  $\vphi^*\gM$ as a $\Sig$-submodule of $\M$.)
\item We have $\PP\M\subseteq B(\M)$, $\Fil^1\M =  B(\M) +(\Fil^pS)\M$, \\and $\gM=c\iv\vphi_1(B(\vphi^*\gM))$ in $\M$, where $c=\vphi(\PP)/p$.
\item The map  $B:\M \ra \Fil^1\M \subseteq \M$ takes $ \vphi^*\gM$ into itself.  
 \end{itemize}
\end{lemsub}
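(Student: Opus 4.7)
The approach is a successive approximation argument, generalising \cite[Lemma~2.2.2]{Caruso-Liu:qst} from the case where $\M$ and $\gM$ are free to the projective case; the key idea is that the pair $(\gM, B)$ is constructed as a limit of pairs $(\gM_n, B_n)$ that satisfy the first two bullet points exactly and the last two bullets up to an error that is killed by a deeper and deeper ``Frobenius-shifted'' ideal.

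First I would produce initial data: apply Lemma~\ref{lem:SigLatt} (suitably reinterpreted via $\vphi^*$) to obtain a finite projective $\Sig$-module $\gM_0$ together with a chosen $S$-isomorphism $S\otimes_{\vphi,\Sig}\gM_0\riso\M$, giving natural inclusions $\gM_0\hra\vphi^*\gM_0=\Sig\cdot\gM_0\hra\M$; then apply Lemma~\ref{lem:adaptee} to obtain an injective $S$-linear map $B_0:\M\to\Fil^1\M$ satisfying the first two bullet points. The pair $(\gM_0,B_0)$ satisfies everything except the compatibility between $B_0$ and $\vphi^*\gM_0$.

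Next I would iterate. At stage $n$, the composition $c\iv\vphi_1\circ B_n$ restricted to $\vphi^*\gM_n$ sends $\vphi^*\gM_n$ into $\M$ with image generating $\M$ as an $S$-module (by the third requirement in Lemma~\ref{lem:adaptee}); by Nakayama over $\Sig$ this image is a projective $\vphi(\Sig)$-submodule of the correct rank, so set $\gM_{n+1}:=c\iv\vphi_1(B_n(\vphi^*\gM_n))$, which differs from $\gM_n$ by a $\Sig$-linear perturbation lying in $(\vphi^n(I_0))\gM_n$, where $I_0=\ker(S\thra R_0)$. Next I would adjust $B_n$ by an $S$-linear correction $\Delta_{n+1}:\M\to(\Fil^pS)\M$ so that the new map $B_{n+1}:=B_n+\Delta_{n+1}$ stabilises $\vphi^*\gM_{n+1}$; the existence of $\Delta_{n+1}$ uses the direct factor splitting $\M\cong\N\oplus\M/\N$ from the proof of Lemma~\ref{lem:adaptee}, and preserves both conditions $\PP\M\subseteq B_{n+1}(\M)$ and $\Fil^1\M=B_{n+1}(\M)+(\Fil^pS)\M$ because these are open under perturbations by maps into $(\Fil^pS)\M$.

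The hard part will be convergence. Since $\vphi_1(\Fil^pS)\subseteq pS$ and $\bigcap_n\vphi^n(I_0)=0$ (as used in the proof of Lemma~\ref{lem:forgetfulN}), the discrepancies $\gM_{n+1}-\gM_n$ and $B_{n+1}-B_n$ lie in ever-deeper powers of the ideal generated by $p$ and $\vphi^n(I_0)$, so the limits $\gM:=\lim_n\gM_n$ and $B:=\lim_n B_n$ exist in the $(p,u)$-adic topology. The limit $\gM$ is projective, being isomorphic to $\gM_0$ via the composition of successive $\Sig$-linear isomorphisms each lifting the identity modulo a deep ideal (hence themselves automorphisms by a Nakayama argument); and the limit $B$ is injective with $\PP\M\subseteq B(\M)$ and $\Fil^1\M=B(\M)+(\Fil^pS)\M$, and by construction $B(\vphi^*\gM)\subseteq\vphi^*\gM$ and $\gM=c\iv\vphi_1(B(\vphi^*\gM))$, as required.
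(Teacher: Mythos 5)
The proposal correctly identifies the overall strategy of successive approximation following \cite[Lemma~2.2.2]{Caruso-Liu:qst}, and the initial step (Lemmas~\ref{lem:SigLatt} and \ref{lem:adaptee}) is right. However, there is a genuine gap in the iteration and in the convergence argument.

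The central difficulty, which the proposal elides, is that the ``adapted'' map $B_n$ and the ``stabilising'' map for $\vphi^*\gM_n$ are in general \emph{different} operators, and one has to track the discrepancy between them quantitatively. The paper introduces a second operator $C^{(n)}$ that stabilises $\vphi^*\gM^{(n)}$ but only \emph{approximates} $B^{(n)}$, with error $(C^{(n)}-B^{(n)})(\M)\subseteq p^n\,(\Fil^{n+p}S)\M$, and defines $\gM^{(n+1)}:=c\iv\vphi_1\bigl(C^{(n)}(\vphi^*\gM^{(n)})\bigr)$. Your recursion $\gM_{n+1}:=c\iv\vphi_1\bigl(B_n(\vphi^*\gM_n)\bigr)$ skips this: since $B_n$ need not map $\vphi^*\gM_n$ into itself, there is no reason for $\gM_{n+1}$ to be close to $\gM_n$, and in fact if $B_n$ \emph{did} stabilise $\vphi^*\gM_n$ one would already have a fixed point and the iteration would be vacuous. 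Moreover, your ``correction'' picture is internally inconsistent: you want $B_{n+1}$ to stabilise $\vphi^*\gM_{n+1}$, but the compatibility $\gM_{n+1}=c\iv\vphi_1\bigl(B_{n+1}(\vphi^*\gM_{n+1})\bigr)$ required for the induction to close is not maintained; the paper achieves it via the explicit $S$-linear automorphism $1+D^{(n)}$ of $\M$ and the formula $B^{(n+1)}:=C^{(n)}(1+D^{(n)})\iv$.

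The claimed rate of convergence is also unjustified and does not match what is actually true. You assert the perturbation taking $\gM_n$ to $\gM_{n+1}$ lies in $\vphi^n(I_0)\gM_n$ and invoke $\bigcap_n\vphi^n(I_0)=0$ (as in the proof of Lemma~\ref{lem:forgetfulN}). That argument proves a \emph{uniqueness/rigidity} statement, not convergence of an infinite sum or product. The paper instead proves $D^{(n)}(\M)\subseteq p^{\lambda_n+n}\M$ with $\lambda_n+n\to\infty$, and the entire content of the estimate is the divided-power arithmetic of Claim~\ref{clm:lambda}: applying $c\iv\vphi_1$ to $p^n\Fil^{n+p}S$ lands in $p^{n+\lambda_n}S$, where $\lambda_n=n+p-\lceil(n+p)/(p-1)\rceil$ for $p>2$ and $\lambda_n=1$ for $p=2$. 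This is precisely the step that is delicate at $p=2$ (where the gain per iteration is only one power of $p$) and that justifies $p$-adic convergence of $\sum(B^{(n+1)}-B^{(n)})$ and of $\prod(1+D^{(n)})$. Your sketch contains no analogue of Claim~\ref{clm:lambda}, and without it neither the existence of the limit $B$ nor the fact that $\gM:=\bigl(\prod_n(1+D^{(n)})\bigr)(\gM^{(0)})$ is a well-defined projective $\Sig$-submodule of $\M$ is established.
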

\begin{proof}
(\emph{Cf.} \cite[Lemma~2.2.2]{Caruso-Liu:qst}) We let $\Fil^1(\vphi^*\M) \subseteq \vphi^*\M$ denote the image of $\vphi^*(\Fil^1\M)$ in $\vphi^*\M$. As observed in the proof of Lemma~\ref{lem:BreuilClassifLifting}, we have an isomorphism $1\otimes\vphi_1:\Fil^1(\vphi^*\M)\riso \M$. 

For any $n\geqs0$ we recursively construct a projective $\Sig$-module $\gM^{(n)}$, and $S$-linear maps $B^{(n)}, C^{(n)}, D^{(n)}: \M \ra \M$ such that 
\begin{enumerate}
\item\label{lem:CL2.2.2:Latt} $S\otimes_{\vphi,\Sig}\gM^{(n)} \cong \M$ (so we view $\vphi^*\gM^{(n)}$ as a $\Sig$-submodule of $\M$, and $\gM^{(n)}$ as a $\vphi(\Sig)$-submodule of $\M$);
\item\label{lem:CL2.2.2:img} $B^{(n)}(\M), C^{(n)}(\M)\subseteq \Fil^1\M$, and $(C^{(n)} - B^{(n)})(\M) \subseteq(p^n\Fil^{n+p}S)\M$.
\item\label{lem:CL2.2.2:adaptee} we have $\PP\M\subseteq B^{(n)}(\M)$, $\Fil^1\M = B^{(n)}(\M) + (\Fil^pS)\M$, and $\gM^{(n)}=c\iv\vphi_1\big(B^{(n)}(\vphi^*\gM^{(n)})\big)$ where the equalities take place inside $\M$;
\item\label{lem:CL2.2.2:approx} $C^{(n)}$  takes $\vphi^*\gM^{(n)}$ into itself (hence, $C^{(n)}$ is automatically injective);
\item\label{lem:CL2.2.2:RecFormula} We have a recursion formula  $\gM^{(n+1)}:= c\iv \vphi_1\Big(C^{(n)}(\vphi^*\gM^{(n)})\Big)$. 
\item\label{lem:CL2.2.2:DirSys}  $1+D^{(n)}$ is an automorphism  of $\M$ which takes $\gM^{(n)}$ onto $\gM^{(n+1)}$ and such that 
$D^{(n)}(\M)\subseteq p^{\mu_n}\M$ for some strictly increasing sequence $\mu_n\in\Z_{>0}$.
\end{enumerate}
Let us first construct $\gM^{(0)}$, $B^{(0)}$, and $C^{(0)}$. Choose a projective $\Sig$-module $\gN$  with $S\otimes_\Sig\gN\cong \gM$, which exists by Lemma~\ref{lem:SigLatt}. Choose $B':\M\ra\Fil^1\M$ as in Lemma~\ref{lem:adaptee}. Set  $\gM^{(0)}:=c\iv\vphi_1\big(B'(\gN)\big)$, which is a $\vphi(\Sig)$-submodule of $\M$. We view $\gM^{(0)}$ as a $\Sig$-module by letting $s\in\Sig$ act via the multiplication by $\vphi(s)\in S$. This $\Sig$-module structure 
makes $(c\iv\vphi_1)\circ B':\gN\ra\gM^{(0)}$ into a $\Sig$-linear surjective map.  But since $\gM^{(0)}$ spans $\M$ by assumption on $B'$ (Lemma~\ref{lem:adaptee}), the $\Sig$-linear surjective map  $(c\iv\vphi_1)\circ B':\gN\ra\gM^{(0)}$ should be an isomorphism and $\gM^{(0)}$ is projective over $\Sig$ (as the source of the map is so).
Now  $\gM^{(0)}$ satisfies (\ref{lem:CL2.2.2:Latt}).

By the uniqueness assertion in Lemma~\ref{lem:SigLatt}, we can find an automorphism  of $\M$ which takes $\vphi^*\gM^{(0)}$ onto $\gN$. Let $B^{(0)}$ denote the composition of this automorphism with $B'$, which clearly satisfies (\ref{lem:CL2.2.2:adaptee}). To find $C^{(0)}$ satisfying (\ref{lem:CL2.2.2:approx}), we first observe that $S = \Sig+\Fil^p S$, so we have $\M = \vphi^*\gM^{(0)} + (\Fil^pS)\M$. Now, choose a map $C^{(0)}:\vphi^*\gM^{(0)} \ra\vphi^*\gM^{(0)}$ which lifts 
\begin{equation*}
\vphi^*\gM^{(0)} \xra{B^{(0)}} \M \thra \M/(\Fil^pS)\M \cong \vphi^*\gM^{(0)}/\PP^p(\vphi^*\gM^{(0)}).
\end{equation*}
Then by construction the image of $B^{(0)} - C^{(0)}$ is inside $(\Fil^pS)\M$.

From now on, let us assume that we have $\gM^{(n)}$, $B^{(n)}$, and $C^{(n)}$, which satisfy (\ref{lem:CL2.2.2:Latt})--(\ref{lem:CL2.2.2:approx}).  We now define  $\gM^{(n+1)}$ using the formula given in (\ref{lem:CL2.2.2:RecFormula}) 
as a $\vphi(\Sig)$-submodule of $\M$, and we view $\gM^{(n+1)}$ as a $\Sig$-module  by letting $s\in\Sig$ act via the multiplication by $\vphi(s)\in S$. The map $c\iv\vphi_1:C^{(n)}(\vphi^*\gM^{(n)}) \ra \gM^{(n+1)}$ is a $\Sig$-linear surjection, so $\gM^{(n+1)}$ would be a projective $\Sig$-module satisfying (\ref{lem:CL2.2.2:Latt})  provided that its $S$-linear span is $\M$ (i.e., $S\otimes_{\vphi,\Sig}\gM^{(n+1)} = \M$).

Now let us construct an automorphism $1+D^{(n)}$ of $\M$ which satisfies (\ref{lem:CL2.2.2:DirSys}). From this we will deduce that $\gM^{(n+1)}$ is projective over $\Sig$ and satisfies (\ref{lem:CL2.2.2:Latt}). By induction hypothesis (\ref{lem:CL2.2.2:adaptee}) on $B^{(n)}$, the $\Sig$-linear map $(c\iv\vphi_1)\circ B^{(n)}:\vphi^*\gM^{(n)} \ra \gM^{(n)}$ is an isomorphism. We set
\begin{equation}\label{eqn:Dn}
D^{(n)}: \M \xra {((c\iv\otimes\vphi_1)\circ (\vphi^*B^{(n)}))\iv} \vphi^*\M \xra{\vphi^*(C^{(n)}-B^{(n)})} \Fil^1(\vphi^*\M) \xra{c\iv\otimes\vphi_1}\M,
\end{equation} 
where $\Fil^1(\vphi^*\M)$ is the image of $\vphi^*(\Fil^1\M)$ in $\vphi^*\M$. 
By (\ref{lem:CL2.2.2:RecFormula}), the endomorphism $1+D^{(n)}$ of $\M$ takes $\gM^{(n)}$  onto $\gM^{(n+1)}$. 

\begin{claimsub}\label{clm:lambda}
We set $\lambda_n:= n+p - \lceil \frac{n+p}{p-1} \rceil$ if $p>2$,  where $\lceil \alpha \rceil = \inf\{x\in \Z|\ x\geqs \alpha\}$; and  $\lambda_n:=1$ if $p=2$.
Then we have $D^{(n)}(\M)\subseteq p^{\lambda_n+n}\M$. 
\end{claimsub}
Granting this claim, it follows that $1+D^{(n)}$ is an automorphism of $\M$,  and  $\gM^{(n+1)}$ is projective over $\Sig$ and satisfies (\ref{lem:CL2.2.2:Latt}).  (Note that $\lambda_n\geqs 1$ for any $n$.)

Let us prove Claim~\ref{clm:lambda}.
For any $s\in \Fil^1S$ and $m\in\M$ we have $\vphi_1(sm) = c\iv \vphi_1(s)\vphi_1(\PP m)$. Since $(C^{(n)} - B^{(n)})(\M) \subseteq(p^n\Fil^{n+p}S)\M$ by assumption, it suffices to show that $\vphi_1(s) \in p^{\lambda_n}S$ for any $s\in\Fil^{n+p}S$. By writing $s=\sum_{i\geqs n+p} a_i \PP^i/i!$ with $a_i\in \Sig$, this assertion is reduced to showing the inequality $i-\ord_p(i!) \geqs\lambda_n$ for any $i\geqs n+p$. When $p>2$, Claim~\ref{clm:lambda} follows from $\ord(i!) < \frac{i}{p-1}$. When $p=2$, we can directly check $i-\ord_2(i!) \geqs 1=\lambda_n$ for any $i\geqs p=2$. (Indeed, $2^n - \ord_2 (2^n!) = 1$ for any $n\geqs1$, and this is exactly when the lower bound is achieved.) This proves Claim~\ref{clm:lambda}.

Now we set 
\begin{equation}\label{eqn:Bn}
B^{(n+1)}:=C^{(n)}(1+D^{(n)})\iv = C^{(n)}\sum_{i=0}^\infty(-D^{(n)})^i.
\end{equation}
From the hypotheses on $B^{(n)}$, $C^{(n)}$, and $D^{(n)}$, it follows that $B^{(n+1)}$ satisfies  (\ref{lem:CL2.2.2:adaptee}). It remains to construct $C^{(n+1)}$ which satisfies (\ref{lem:CL2.2.2:approx}). We first observe that $B^{(n+1)}(\gM^{(n+1)}) \subseteq \gM^{(n)} = (1+D^{(n)})\iv(\gM^{(n+1)})$, so to construct $C^{(n+1)}$ it suffices to find a decomposition $D^{(n)} = D_1^{(n)} + D_2^{(n)}$ where $D_1^{(n)}(\gM^{(n)})\subseteq \gM^{(n+1)}$ and $D_2^{(n)}(\gM^{(n)})\subseteq (p^{n+1}\Fil^{n+1+p}S)\M$. Indeed, we will show that 
\begin{equation}\label{eqn:Dn2}
D^{(n)}(\M)\subseteq p^{\lambda_n+n}\M\subseteq \gM^{(n+1)}+(p^{n+1}\Fil^{n+1+p}S)\M.
\end{equation}
(Once this is done, one can repeat the argument for $n=0$ and obtain the desired expression $D^{(n)} = D_1^{(n)} + D_2^{(n)}$.)

Now to see the last inclusion in (\ref{eqn:Dn2}) it suffices to show that $p^{\lambda_n+n}S  \subseteq \Sig + p^{n+1}\Fil^{n+1+p}S$; i.e., $p^{\lambda_n+n}\frac{\PP^i}{i!}\in\Sig$ for any $i\leqs n+p$. Indeed, when $p>2$ we have $\lambda_n+n-\ord_p(i!) \geqs \lambda_n+n - \frac{i}{p-1}\geqs 0$, and when $p=2$ we have remarked that $\ord_2(i!)\leqs i-1$. Hence, we obtain $C^{(n+1)}$ as in (\ref{lem:CL2.2.2:approx}). This concludes the ``induction step''.

Now, let $\gM$ denote the direct limit of $\gM^{(n)}$; in other words, 
\begin{equation*}
\gM:=\big(\prod_{n=0}^\infty(1+D^{(n)})\big)(\gM^{(0)}),
\end{equation*}
which makes sense thanks to (\ref{lem:CL2.2.2:DirSys}). Since $\prod_{n=0}^\infty(1+D^{(n)})$ is an automorphism of $\M$, it follows that $\gM$ is projective over $\Sig$ and $\M\cong S\otimes_{\vphi,\Sig}\gM$.

We now define $B:\M\ra\Fil^1\M$ by the following limit 
\begin{equation*}
B:=\text{``}\lim_{n\to\infty}B^{(n)}\text{''}=B^{(0)}+\sum_{n=0}^\infty(B^{(n+1)} - B^{(n)}),
\end{equation*}
which converges since $(B^{(n+1)} - B^{(n)})(\M) \subset p^n\M$ by construction (\ref{eqn:Bn}). Note  that $C^{(n)}$ also converges to $B$ by induction hypothesis (\ref{lem:CL2.2.2:img}). Now all the desired properties of $B$ can be deduced from the properties of $B^{(n)}$ and $C^{(n)}$; especially from induction hypotheses (\ref{lem:CL2.2.2:adaptee}) and (\ref{lem:CL2.2.2:approx}).
\end{proof}

\begin{proof}[Proof of Proposition~\ref{prop:CL}]
Let $\gM$ and $B$ as in Lemma~\ref{lem:CL2.2.2}.
Recall that $\vphi_\M(m) = c\iv\vphi_1(\PP m)$ for any $m\in\M$. Since $B$ takes $\vphi^*\gM$ into itself and $\PP \M\subseteq B(\M)$, we have $\PP (\vphi^*\gM)\subseteq B(\vphi^*\gM)$. So from $c\iv\vphi_1(B(\vphi^*\gM))=\gM$, we see that $\gM\subset \M$ is stable under $\vphi_{\M}$. We set $\vphi_\gM:=\vphi_\M|_{\gM}$. 

We now show that $\PP \gM \subseteq (1\otimes\vphi_{\gM})(\vphi^*\gM)$. Note that $(1\otimes\vphi_\gM) (m') = c\iv \vphi_1(\PP m')$ for any $m'\in \vphi^*\gM$. For any $m\in \gM$ let $m'\in\vphi^*\gM$ be the (unique) element such that $c\iv\vphi_1(B(m')) = m$. Clearly, we have $c\iv\vphi_1\big(B(\PP m')\big) = \vphi(\PP )m$ as elements of $\M$; i.e., $(1\otimes\vphi_\gM)(m') = \PP \tim m$ as elements of $\gM$ since we defined $\Sig$-action on $\gM\subseteq\M$ via $\vphi:\Sig\ra S$. This shows that $\gM$ is a quasi-Kisin $\Sig$-module.
 
It remains to show  $\Fil^1(S\otimes_{\vphi,\Sig}\gM) = \Fil^1\M = B(\M)+(\Fil^pS)\M$. To show $\Fil^1(S\otimes_{\vphi,\Sig}\gM) \subseteq \Fil^1\M$, it suffices to observe that for any $m'\in\vphi^*\gM$, we have $(1\otimes\vphi)(m')\in \PP\gM$ if and only if $m'\in B(\vphi^*\gM)$; indeed, we have already seen the ``if'' direction, and the ``only if'' direction easily follows from $c\iv\vphi_1(B(\vphi^*\gM))=\gM$. To show $\Fil^1(S\otimes_{\vphi,\Sig}\gM) \supseteq \Fil^1\M$, it suffices to show that $\Fil^1(S\otimes_{\vphi,\Sig}\gM)\supseteq B(\vphi^*\gM)$. Note that the image of the natural inclusion $S\otimes_{\Sig}\gM \hra S\otimes_{\vphi,\Sig}\gM=\M$ is the $\vphi(S)$-span of $\gM$ in $\M$, and for any $s\in S$ the natural multiplication by $s$ on $S\otimes_{\Sig}\gM$ is translated as multiplication by $\vphi(s)$   on $\vphi(S)\tim\langle\gM\rangle$. Then we can identify $(\Fil^1S)\otimes_{\Sig}\gM$ with $\vphi(\Fil^1S)\M$, and  $(1\otimes\vphi_\gM)(B(m'))$ with $c\iv\vphi_1(\PP B(m')) $ for any $m'\in\vphi^*\gM$.  Now the claim follows from a direct computation.
\end{proof}

\begin{rmksub}
With the suitable generalisation of the notion of relative Breuil $S$-module for the ``semi-stable'' case with weights $\leqs r$ for $r< p-1$ (\emph{cf.,} Remark~\ref{rmk:BrModAlt}), Lemma~\ref{lem:CL-FF} and Proposition~\ref{prop:CL} hold in this generalisation  with the same proofs. 
\end{rmksub}

\section{Etale $\vphi$-modules and Galois representations}\label{sec:EtPhiMod}
We generalise the theory of \'etale $\vphi$-modules to our relative setting. 
All the main ingredients are in Scholze's work on perfectoid spaces \cite{Scholze:Perfectoid}. We refer to \cite{Scholze:Perfectoid} for the basic definitions on perfectoid algebras.

Throughout this section, we suppose that $R$ is a \emph{domain} which satisfies the formally finite-type assumption~(\S\ref{cond:dJ}). Furthermore, we \emph{assume} that there exists a Cohen subring $W\subset R_0$ such that $E(u)\in W[u]$; or equivalently, there exists a complete discrete valuation subring $\fo_K\subset R$ which contains $\varpi$ as a uniformiser. (If this is the case then we have $\fo_K:=W[u]/E(u)$ and $R = \fo_K\otimes_W R_0.$) We set $K:=\fo_K[1/\varpi]$. See Remark~\ref{rmk:PerfectoidCond} for the reason for this additional assumption.%

\subsection{Review: Perfectoid algebras}\label{subsec:perfectoid}
Let $L$ be a field complete with respect to a non-discrete valuation of rank $1$ (i.e., $|\bullet | :L\starr \ra \R$ which satisfies the axioms for  non-archimedean absolute value). Recall that $L$ is called a \emph{perfectoid field} if the $p$th power map $L^\circ/(p)\ra L^\circ/(p)$ is surjective, where $L^\circ$ is the valuation ring (\emph{cf.}  \cite[Definitions~3.1, Proposition~5.9]{Scholze:Perfectoid}). For example, $\wh{\Qbar}_p$ is a perfectoid field.

A Banach $L$-algebra $A$ is called \emph{perfectoid} if the subring of powerbounded elements $A^\circ\subset A$ is open and bounded and the $p$th power map $A^\circ/(p) \ra A^\circ/(p)$ is surjective  (\emph{cf.}  \cite[Definitions~5.1, Proposition~5.9]{Scholze:Perfectoid}). By a \emph{perfectoid affinoid $L$-algebra}, we mean a pair $(A,A^+)$ where $A$ is a perfectoid $L$-algebra and $A^+\subset A^\circ$ is an open and integrally closed $L^\circ$-subalgebra. As a trivial example, $(L,L^\circ)$ is a perfectoid affinoid $L$-algebra.

For an affinoid $L$-algebra $(A,A^+)$, we define $(A^\flat, A^{\flat+})$ as follows:
\[
A^{\flat+}:= \varprojlim_{x\mapsto x^p} A^+/(p);\quad A^\flat:=L^\flat\otimes_{L^{\flat\circ}}A^{\flat+},
\]
where  $L^{\flat\circ}:=\varprojlim_{x\mapsto x^p} L^\circ/(p)$ and $L^\flat = \Frac L^{\flat\circ}$. (\emph{Cf.}  \cite[Proposition~5.17, Lemma~6.2]{Scholze:Perfectoid}.) Note that $L^\flat$ is a perfectoid field of characteristic $p$, and $A^\flat$ and $A^{\flat+}$ are respectively $L^\flat$- and $L^{\flat\circ}$- algebras. We call $(A^\flat, A^{\flat+})$ the \emph{tilt} of $(A,A^+)$.

\begin{thmsub}[Scholze, {\cite[Theorem~5.2, Lemma~6.2, Theorem~7.9]{Scholze:Perfectoid}}]\label{thm:perfectoid}
The ``tilting'' $(A, A^+)\rightsquigarrow (A^\flat,A^{\flat+})$ induces an equivalence of categories from the category of perfectoid affinoid $L$-algebras to the category of perfectoid affinoid $L^\flat$-algebras. 
Furthermore, a morphism $(A,A^+)\ra (B,B^+)$ of perfectoid affinoid $L$-algebras is finite \'etale (i.e., $A\ra B$ is finite \'etale and $B^+$ is the normalisation of $A^+$ in $B$) if and only if $(A^\flat,A^{\flat+})\ra (B^\flat,B^{\flat+})$ is finite \'etale.
\end{thmsub}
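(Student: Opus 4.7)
The plan is to construct an explicit quasi-inverse to the tilting functor using Fontaine's Witt vector machinery, and then separately reduce the preservation of finite \'etaleness to the almost purity theorem via almost mathematics.

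First, I would construct the untilting functor. Given a perfectoid affinoid $L^\flat$-algebra $(B, B^+)$, fix a pseudo-uniformiser $\varpi^\flat \in L^{\flat\circ}$ whose sharp $\varpi:=(\varpi^\flat)^\sharp \in L^\circ$ is a pseudo-uniformiser of $L$. Form the ring of Witt vectors $W(B^+)$, together with the $\theta$-map $\theta_{B^+}: W(B^+) \to B^+/p$ extended as in (\ref{eqn:theta}) to a surjection onto a suitable $p$-adic completion. Using $L^\circ = W(L^{\flat\circ})/\xi$ for some generator $\xi$ of $\ker\theta$, define $A^+ := W(B^+) \wh\otimes_{W(L^{\flat\circ})} L^\circ$ and $A := A^+[\ivtd\varpi]$. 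The crucial calculations are (i) that $A^+$ is $\varpi$-adically complete and $\varpi$-torsion-free with $A^+/\varpi \cong B^+/\varpi^\flat$, and (ii) that the $p$th power map on $A^+/p$ is surjective. Both follow from the perfectness of $B^{\flat+}$ together with the structure of $W(B^+)$. This yields a perfectoid affinoid $L$-algebra whose tilt is canonically identified with $(B, B^+)$ via the multiplicative bijection $B^{\flat+} \riso \varprojlim_{x\mapsto x^p} B^+$.

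For full faithfulness, I would verify that for perfectoid affinoid $L$-algebras $(A, A^+)$ and $(C, C^+)$, any continuous $L^{\flat\circ}$-map $A^{\flat+} \to C^{\flat+}$ lifts uniquely to a continuous $L^\circ$-map $A^+ \to C^+$. The key point is the universal property of Witt vectors combined with the compatibility of the $\theta$-maps: any map $A^{\flat+} \to C^{\flat+}$ induces $W(A^{\flat+}) \to W(C^{\flat+})$, which descends modulo $\xi$ (after suitable completion) to $A^+ \to C^+$. Uniqueness is forced by the fact that $A^+ \to A^+/\varpi^n$ factors through $W(A^{\flat+})$.

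For the finite \'etale statement, the easy direction (\'etale in characteristic zero implies \'etale after tilting) follows by reducing mod $\varpi$ and using perfection. The hard direction, that finite \'etale covers of $A^\flat$ lift to $A$, is the heart of Scholze's almost purity theorem. The strategy is: first work in the almost category with respect to the maximal ideal $\mathfrak{m} \subset L^\circ$, where one shows that a finite \'etale $B^\flat/A^\flat$ lifts to an almost finite \'etale $B^+/A^+$ by reducing modulo $\varpi$, taking perfections, and using faithfully flat descent along $A^+/\varpi \cong A^{\flat+}/\varpi^\flat$. Then one upgrades from almost \'etale to genuinely \'etale using that the trace pairing becomes non-degenerate on the generic fibre and that idempotents can be lifted in perfectoid algebras.

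The main obstacle is precisely this last almost-to-honest passage in the finite \'etale step: almost mathematics naturally produces covers that are only ``almost'' finite \'etale, and showing that the non-degeneracy of the trace form holds integrally (not merely up to $\mathfrak{m}$-torsion) requires the full strength of the tilting correspondence at the level of rings plus the fact that finite \'etale covers of perfectoid algebras remain perfectoid. In particular, one must verify that the natural map $B^+ \to (B^{\flat+})^{\text{untilt}}$ constructed from a finite \'etale $B^\flat/A^\flat$ really does produce a finite \'etale extension of $A$, not merely an almost finite \'etale one; this step relies on the idempotent-lifting argument and on Faltings' extension theorem in the characteristic-$p$ perfectoid setting, where perfection trivialises the obstruction.
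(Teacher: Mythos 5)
The paper does not prove this statement: it is imported wholesale from Scholze's perfectoid paper (the references in the theorem header are to Theorem 5.2, Lemma 6.2, and Theorem 7.9 of that work), and the surrounding text of \S\ref{subsec:perfectoid} simply uses it as a black box in the construction of $\wt R_\infty$. So there is no internal proof to compare against; what you have written is a sketch of Scholze's argument.

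As a sketch it is broadly on target, and the untilting recipe $A^+ := W(B^+)\wh\otimes_{W(L^{\flat\circ})}L^\circ$ with $A = A^+[\ivtd\varpi]$ is exactly the right construction (cf.\ \cite[Remark~5.19]{Scholze:Perfectoid}, also quoted later in the paper to give $\wt R_\infty$ explicitly). But two points need repair. First, in your untilting step you write ``$\theta_{B^+}: W(B^+)\to B^+/p$'': since $(B,B^+)$ is a perfectoid affinoid $L^\flat$-algebra, $B^+$ is already a perfect $\F_p$-algebra and $B^+/p = B^+$; the $\theta$-map you want surjects from $W(B^+)$ onto the characteristic-$0$ ring $A^+$ you are trying to build, not onto a quotient of $B^+$. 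Second, the final passage from ``almost finite \'etale'' to ``finite \'etale'' is misattributed. There is no ``Faltings' extension theorem in the characteristic-$p$ perfectoid setting''; in characteristic $p$ the almost purity statement is essentially automatic, because finite \'etale algebras over perfect rings are perfect and the tilting correspondence is transparent. Faltings' and Gabber--Ramero's purity results live in mixed characteristic, and even there Scholze's own proof of Theorem~7.9 does not proceed by the route you describe (trace-form nondegeneracy plus idempotent lifting alone). It uses a more elaborate induction through perfectoid fields and an approximation argument over rational subsets of affinoid perfectoid spaces; the idempotent-lifting and trace arguments are ingredients but not the whole mechanism. If you want to present a self-contained proof rather than a citation, this last step is where the genuine work lies and your current outline would not close it.
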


Let us now introduce the perfectoid field  which will be the base field of all perfectoid algebras in characteristic~$0$. Here, we assume that $E(u)\in W[u]$ for some Cohen subring $W\subset R_0$, and let $\fo_K = W[u]/E(u)$ and $K:=\Frac \fo_K$. We fix $\ol R$ as in \S\ref{subsec:Acris}, and let $\wh{\ol R}$ be its $p$-adic completion. Then we define $L\subset \wh{\ol R}[\ivtd p]$ to be the smallest $p$-adically closed subfield  with perfect residue field which contains $K$ and $\varpi\com n$ for all $n$, where $\set{\varpi\com n}$ are compatible $p^n$th root of $\varpi$ chosen in   \S\ref{subsec:Rinfty}. In fact, $L$ is isomorphic to the $p$-adic completion of $\bigcup_{n\geqs0} (\Frac W(k^{\perf}))(\varpi\com n)$, where  $k^{\perf}:=\varinjlim_\vphi k$. Clearly, $L$ is a perfectoid field, and $(\wh{\ol R}[\ivtd p],\wh{\ol R})$ is a perfectoid affinoid $L$-algebra

Set $\wt\varpi:=(\varpi\com n)\in L^\flat$ (as in \S\ref{subsec:Rinfty}). Then we have $L^{\flat\circ} = k^{\perf}[[\wt\varpi^{1/p^\infty}]]$; i.e., the $\wt\varpi$-adic completion of $\varinjlim_\vphi k^{\perf}[[\wt\varpi]]$.  Then  $(\ol R^\flat[1/\wt\varpi],\ol R^\flat)$ is the tilt of $(\ol R[1/p],\ol R)$, so it is a perfectoid affinoid $L^\flat$-algebra (in the obvious way).

 We set $\eE_{R_\infty}^+:=\Sig/(p) = R/(\varpi)[[u]]$, and let $\wt\eE^+_{R_\infty}$ be the $u$-adic completion of the perfect closure $\varinjlim_\vphi \eE^+_{R_\infty}$. Set $\eE_{R_\infty}:=\eE^+_{R_\infty}[1/ u]$ and  $\wt\eE_{R_\infty}:=\wt\eE^+_{R_\infty}[1/ u]$. Note that $\wt\eE^+_{R_\infty} \subset \wt\eE_{R_\infty}$ is open and bounded for the $u$-adic topology, and it coincides with the subring of power-bounded elements. So by \cite[Proposition~5.9]{Scholze:Perfectoid}, $(\wt\eE_{R_\infty}, \wt\eE^+_{R_\infty})$ is a perfectoid affinoid $L^\flat$-algebra. By sending $u$ to $\wt\varpi$, we obtain a map $(\wt\eE_{R_\infty}, \wt\eE^+_{R_\infty})\hra (\ol R^\flat[1/\wt\varpi],\ol R^\flat)$ of perfectoid affinoid $L^\flat$-algebras. Using this, we view all of these rings as subrings of $\ol R^\flat[1/\wt\varpi]$.

Let $(\wt R_\infty[\ivtd p], \wt R_\infty)$ is  a perfectoid affinoid $L$-algebra whose tilt is $(\wt\eE_{R_\infty}, \wt\eE^+_{R_\infty})$. (Here, we  give the $p$-adic topology on  $(\wt R_\infty[\ivtd p], \wt R_\infty)$.) If, for example, $R=\fo_K\langle T_i\rangle_{i=1,\cdots,d}$, then we have $\wt R_\infty =L^\circ\langle T_i^{ p^{-\infty}}\rangle=\varprojlim_n L^\circ[T_i^{ p^{-\infty}}]/(p^n)$ by \cite[Proposition~5.20]{Scholze:Perfectoid}. In general, we can construct $\wt R_\infty$ explicitly as follows (\emph{cf.,} \cite[Remark~5.19]{Scholze:Perfectoid}): 
\[\wt R_\infty\cong W(\wt\eE^+_{R_\infty})\otimes_{W(L^{\flat\circ}),\theta}L^\circ,\] 
where $\theta$ is defined as in (\ref{eqn:theta}). Note also that $\wt R_\infty$ has a natural structure of $p$-adic $\wh R_\infty$-algebra using the Cartier morphism $R_0\ra W(\wt\eE^+_{R_\infty})$ (i.e., the unique $\vphi$-equivariant morphism which lifts the natural map $R_0/(p)\ra\wt\eE^+_{R_\infty}$).

Let us show that $\wt R_\infty$ is the $p$-adic completion of an integral extension of $R$ which becomes ind-\'etale after inverting $p$. Since the assertion is Zariski local on $\Spf (R,(\varpi))$, we may assume that $R/(\varpi)$ has (globally) a finite $p$-basis.  Then for any lift $\set{T_{i}}\subset R_0$ of a $p$-basis of $R/(\varpi)$ we have
\begin{equation}\label{eqn:wtRinfty}
\wt R_{\infty} \cong \wh R_{\infty} \wh\otimes_{\Zp}  \Zp\langle T_{i}^{p^{-\infty}}\rangle,
\end{equation}
where   the completed tensor product is taken with respect to the $p$-adic topology. 
Indeed, we have $\wh R_{\infty}/(\varpi) \otimes_{\Zp}  \Zp\langle T_{i}^{p^{-\infty}}\rangle \cong \wt\eE^+_{R_\infty}/(u)$ by mapping $T_i$ to the corresponding $p$-basis of $R/(\varpi)$ contained in  $\wt\eE^+_{R_\infty}/(u)$, so by the characterisation of the tilting (\emph{cf.,} Theorem~\ref{thm:perfectoid}) we have the isomorphism (\ref{eqn:wtRinfty}).
Now, the right hand side of (\ref{eqn:wtRinfty}) is the $p$-adic completion of  $ R_{\infty}\otimes_{\Zp}  \Zp[T_{i}^{p^{-\infty}}]$, which is an integral extension of $R$ that becomes ind-\'etale after inverting $p$, as desired.

Let $(\wt R_\infty[1/p],\wt R_\infty) \hra (\wh{\ol R}[1/p],\wh{\ol R})$ be the injective morphism whose tilt is $(\wt\eE_{R_\infty}, \wt\eE^+_{R_\infty})\hra (\ol R^\flat[1/\wt\varpi],\ol R^\flat)$ (defined by $u\mapsto\wt\varpi$). So we have a continuous map $\GRtinfty \ra \GRR$, where $\GRtinfty$ is the \'etale fundamental group of $\Spec \wt R_\infty[1/ p]$ (with respect to the common geometric generic point as a base point). It follows from \cite[Proposition~5.4.54]{GabberRamero} that this map is a closed embedding as $\wt R_\infty$ is the $\varpi$-adic completion of an integral $R$-subalgebra of $\ol R$ which is henselian along $(\varpi)$. (Note that a $\varpi$-adic ring is henselian along $(\varpi)$, and the property of being henselian along $(\varpi)$ is preserved under taking direct limits.) 

The following is a direct consequence of Scholze's ``almost purity theorem'' (stated in Theorem~\ref{thm:perfectoid}):
\begin{thmsub}\label{thm:NormRings}
For any fixed $\ol R$, $\ol R^\flat[1/\wt\varpi]$ can be canonically identified with the $\wt\varpi$-adic completion of the affine ring of a pro-universal covering of $\Spec \wt\eE_{R_\infty}$. Defining $\gal_{\wt\eE_{R_\infty}}$ using this pro-universal covering, there is a canonical isomorphism $\gal_{\wt\eE_{R_\infty}}\cong\GRtinfty$.
\end{thmsub}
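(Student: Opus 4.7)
The plan is to deduce Theorem~\ref{thm:NormRings} from Scholze's tilting equivalence (Theorem~\ref{thm:perfectoid}) together with the fact that $\ol R[1/p]$ already realizes a pro-universal cover of $\Spec R[1/p]$ by construction.

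First I would establish a characteristic-$0$ analogue: $\wh{\ol R}[1/p]$ is the $p$-adic completion of the affine ring of a pro-universal covering of $\Spec \wt R_\infty[1/p]$, pointed by the given embedding $\wt R_\infty[1/p]\hra\wh{\ol R}[1/p]$. Let $\RR\subset \wh{\ol R}$ be the filtered union of all normal $\wt R_\infty$-subalgebras $A\subset\wh{\ol R}$ which are finite over $\wt R_\infty$ and such that $A[1/p]$ is finite \'etale over $\wt R_\infty[1/p]$. I would check that $\Spec\RR[1/p]\to\Spec\wt R_\infty[1/p]$ is such a pro-universal cover and that its $p$-adic completion recovers $\wh{\ol R}$. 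The completion statement follows because, for every normal finite $R$-subalgebra $A_0\subset\ol R$ with $A_0[1/p]$ \'etale over $R[1/p]$, the normalisation of $A_0\cdot\wt R_\infty$ inside $\wh{\ol R}$ belongs to $\{A\}$, so $\wh\RR$ contains $\ol R\cdot\wt R_\infty$ and is therefore $p$-adically dense in $\wh{\ol R}$.

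Next I would transport the result to characteristic $p$ by tilting. Theorem~\ref{thm:perfectoid} applied to the perfectoid affinoid $L$-algebra $(\wt R_\infty[1/p],\wt R_\infty)$ yields an equivalence between finite \'etale covers on the two sides, compatible with the ambient embeddings into $\wh{\ol R}[1/p]$ and $\ol R^\flat[1/\wt\varpi]$, since $\ol R^\flat$ and $\wh{\ol R}$ are mutual tilts by construction. The system $\{A\}$ tilts to a cofinal system $\{A^{\flat+}\}$ of normal finite $\wt\eE_{R_\infty}^+$-subalgebras of $\ol R^\flat$ which are \'etale after inverting $\wt\varpi$, and $\RR^{\flat+}:=\varinjlim A^{\flat+}$ is the affine ring of a pro-universal cover of $\Spec\wt\eE_{R_\infty}$ pointed inside $\ol R^\flat[1/\wt\varpi]$. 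Because tilting is compatible with the relevant adic completions (it is defined via $\varprojlim_{x\mapsto x^p}(\cdot)/p$), the $\wt\varpi$-adic completion of $\RR^{\flat+}$ is the tilt of $\wh\RR=\wh{\ol R}$, namely $\ol R^\flat$; inverting $\wt\varpi$ gives the first assertion. The isomorphism $\gal_{\wt\eE_{R_\infty}}\cong\GRtinfty$ is then immediate from the functoriality of tilting on finite \'etale covers: both groups are the automorphism groups of the respective pointed pro-universal covers.

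The main obstacle is verifying the pro-universal property in the characteristic-$0$ step: a finite \'etale $\wt R_\infty[1/p]$-algebra $B$ need not descend to a finite \'etale cover of $R[1/p]$, so the inclusion of $B$ in $\wh{\ol R}[1/p]$ is not automatic from the definition of $\ol R$. I would resolve this using Scholze's almost purity: by the second part of Theorem~\ref{thm:perfectoid}, $(B,B^\circ)$ is a perfectoid affinoid $L$-algebra whose tilt $(B^\flat,B^{\flat+})$ is a finite \'etale cover of $(\wt\eE_{R_\infty},\wt\eE_{R_\infty}^+)$. Any such characteristic-$p$ cover embeds into $\ol R^\flat[1/\wt\varpi]$, being captured by perfection together with the pro-universal cover already coming from $\ol R$; untilting the chosen embedding produces the desired embedding of $B$ into $\wh{\ol R}[1/p]$, closing the apparent circularity between the two steps.
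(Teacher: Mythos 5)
Your overall strategy -- establish the characteristic-$0$ statement that $\wh{\ol R}[1/p]$ realizes (the $p$-adic completion of) a pro-universal cover of $\Spec\wt R_\infty[1/p]$, and then transport it to characteristic $p$ via Scholze's tilting equivalence -- is the right shape and matches the way the paper frames the result (which it states as ``a direct consequence of Scholze's almost purity theorem''). The density argument in your first step is essentially fine, though the requirement that the subalgebras $A$ be \emph{finite} over $\wt R_\infty$ is too strong: almost purity only gives that the integral closure of $\wt R_\infty$ in a finite \'etale $\wt R_\infty[1/p]$-algebra is \emph{almost} finite, so the system $\{A\}$ should be indexed by normalisations, not finite subalgebras.

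The genuine gap is in your last paragraph. You correctly identify that the pro-universal property of $\wh{\ol R}[1/p]$ over $\Spec\wt R_\infty[1/p]$ is not automatic, since a finite \'etale $\wt R_\infty[1/p]$-algebra $B$ need not descend to $R[1/p]$. But the ``resolution'' you offer is circular: you propose tilting $B$ to a finite \'etale cover $B^\flat$ of $\wt\eE_{R_\infty}$ and asserting that $B^\flat$ ``embeds into $\ol R^\flat[1/\wt\varpi]$, being captured by perfection together with the pro-universal cover already coming from $\ol R$.'' That embedding in characteristic $p$ is precisely the content of Theorem~\ref{thm:NormRings} -- there is no independent ``pro-universal cover coming from $\ol R$'' on the tilted side that you can appeal to before the theorem is proved. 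The passage from finite \'etale covers of $\Spec\wt\eE_{R_\infty}$ to $\ol R^\flat[1/\wt\varpi]$ has to come back through the untilt, so this reasoning never breaks the circle.

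The correct argument is the one the paper records in the paragraph just before the theorem, and it stays entirely in characteristic $0$: Zariski-locally, $\wt R_\infty$ is the $\varpi$-adic (equivalently $p$-adic) completion of the integral $R$-subalgebra $\widetilde R:=R_\infty\otimes_{\Zp}\Zp[T_i^{p^{-\infty}}]\subset\ol R$, and $\widetilde R$ is henselian along $(\varpi)$ (it is a filtered colimit of $\varpi$-adic rings). By Gabber--Ramero \cite[Proposition~5.4.54]{GabberRamero}, finite \'etale covers of $\Spec\wt R_\infty[1/p]$ are equivalent to finite \'etale covers of $\Spec\widetilde R[1/p]$. Since $\widetilde R[1/p]$ is ind-\'etale over $R[1/p]$ and $\widetilde R\subset\ol R$, every such cover embeds into $\ol R[1/p]$, so $\ol R[1/p]$ is a pro-universal cover of $\Spec\wt R_\infty[1/p]$ and $\GRtinfty\hra\GRR$ is a closed embedding. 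With this in hand, your tilting step and completion computation go through and finish the proof. So the missing ingredient is the Gabber--Ramero descent to the henselian subring $\widetilde R$, not another round of tilting.
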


\begin{rmksub}\label{rmk:PerfectoidCond}
Let us elaborate more on the assumption that $\varpi\in R$ is a uniformiser of  some discrete valuation subring $\fo_K\subset R$. We use this assumption to obtain the perfectoid base field $L$ such that $\wt\varpi\in L^\flat$. Without this assumption, it is unclear whether there exists a perfectoid subfield $L\subset \wh{\ol R}[\ivtd p]$ so that the ``untilt'' $\wt R_\infty$ of $\wt \eE_{R_\infty}$ contains $\varpi\com n$ for each $n\geqs 0$ and $u\in \wt \eE_{R_\infty}$ corresponds to $\wt\varpi$. 
\end{rmksub}

\subsection{\'Etale $\vphi$-modules and Galois representations}\label{subsec:EtPhi}
Let $\fo_{\Eps}$ be the $p$-adic completion of $\Sig[\ivtd u]$, and set $\Eps:=\fo_{\Eps}[\ivtd p]$. As $R$ satisfies the formally finite-type assumption~(\S\ref{cond:dJ}), the rings $\Sig$, $\fo_\Eps$, and $\Eps$ are regular (in particular, normal).
The endomorphism $\vphi:\Sig\ra\Sig$ extends to  $\fo_{\Eps}$ and  $\Eps$, which we also denote by $\vphi$. Note that $\fo_\Eps$ is a Cohen ring with residue field $\eE_{R_\infty}$, and $\vphi$ lifts the $p$th power map on the residue field.

\begin{defnsub}\label{def:EtPhiMod}
An \emph{\'etale $(\vphi,\fo_{\Eps})$-module} is a finitely generated $\fo_{\Eps}$-module $M$ equipped with a $\vphi$-linear endomorphism $\vphi_M:M \ra M$ such that the linearisation $1\otimes\vphi_M:\vphi^*M\ra M$ is an isomorphism. We say that an \'etale $(\vphi,\fo_{\Eps})$-module $M$ is \emph{projective} (respectively, \emph{torsion}) if the underlying $\fo_{\Eps}$-module $M$ is projective (respective $p$-power torsion).

Let $\EtPhiR$  denote the category of \'etale $(\vphi,\fo_{\Eps})$-modules. Let $\EtPhiPrR$ and  $\EtPhiTorR$ respectively  denote the full subcategories of projective  and torsion objects.
\end{defnsub}
Any \'etale $(\vphi,\fo_{\Eps})$-module annihilated by $p$ is automatically projective over $\eE_{R_\infty}:=\fo_{\Eps}/(p)$, which follows from the same proof as \cite[Lemma~7.10]{Andreatta:GenNormRings}. 

For any quasi-Kisin $\Sig$-module $\gM$, the scalar extension $M:=\fo_{\Eps}\otimes_{\Sig}\gM$ together with $\vphi_M:=\vphi_{\fo_{\Eps}}\otimes\vphi_\gM$ is a projective \'etale $(\vphi,\fo_{\Eps})$-module, since $\PP (u)$ is a unit in $\fo_{\Eps}$.

There exists a natural notion of subquotient, direct sum, $\otimes$-product for \'etale $\vphi$-modules. Duality is only defined for projective and torsion objects. For a projective \'etale $(\vphi,\fo_{\Eps})$-module $M$, we define a dual \'etale $(\vphi,\fo_{\Eps})$-module $M^*$ to be  the $\fo_{\Eps}$-linear dual of $M$ where $\vphi_{M^*}$ is defined so that $1\otimes\vphi_{M^*} = ((1\otimes\vphi_M)\iv)^*$. We can similarly define duality for torsion  \'etale $(\vphi,\fo_{\Eps})$-modules using Pontragin duality $M\rightsquigarrow \Hom_{\fo_{\Eps}}(M,\fo_{\Eps}\otimes_{\Zp}\Qp/\Zp)$.

The natural inclusion $\eE_{R_\infty}\hra \wt\eE_{R_\infty}$ has a unique lift $\fo_\Eps \hra W(\wt\eE_{R_\infty})$ with the property that the Witt vector Frobenius restricts to $\vphi$ on $\fo_{\Eps}$. Indeed,  the $p$-adic completion of $\varinjlim_\vphi\fo_\Eps$ can be naturally identified with $W(\eE_{R_\infty}^{\perf})$, so the desired morphism is obtained as follows:
\begin{equation}\label{eqn:EpsEmb}
\fo_\Eps \hra \varinjlim_\vphi\fo_\Eps \hra W(\eE_{R_\infty}^{\perf}) \hra W(\wt\eE_{R_\infty}).
\end{equation}
 We will view $ W(\ol R^\flat[1/\wt\varpi])$ as an $\fo_\Eps$-algebra via $u\mapsto [\wt\varpi]$. 

Let $\fo^{\ur}_\Eps$ be the integral closure of $\fo_\Eps$ in  $ W(\ol R^\flat[\ivtd u])$ and $\wh\fo^{\ur}_\Eps$ the $p$-adic closure of $\fo^{\ur}_\Eps$. 
%
Let $\Sig^{\ur}$ is the integral closure of $\Sig$ in $\wh\fo^{\ur}_\Eps$, and $\wh\Sig^{\ur}$ be its $p$-adic closure. Note that $\wh\Sig^{\ur}$ is indeed contained in  $ W(\ol R^\flat)$, not just in   $W(\ol R^\flat[1/\wt\varpi])$.

We now define the $\GRtinfty$-action on these rings. As $\fo_\Eps$ is normal, it follows from Theorem~\ref{thm:NormRings} that $\fo_\Eps^{\ur}$ is the union of finite \'etale $\fo_\Eps$-subalgebras  in $W(\ol R^\flat[1/\wt\varpi])$, and $\Aut_{\fo_\Eps}(\fo^{\ur}_{\Eps})$ is isomorphic to the fundamental group of $\Spec \fo_\Eps$ (with suitable base point).
Now, not that we have the following natural equivalences of categories induced by base change
\begin{multline}
\{\text{finite \'etale covers of }\Spec \fo_\Eps\} \riso \{\text{finite \'etale covers of }\Spec \eE_{R_\infty}\} \\
\riso \{\text{finite \'etale covers of }\Spec \wt\eE_{R_\infty} \}.
\end{multline}
Indeed, the first arrow is an equivalence because $\fo_\Eps$ is $p$-adic and finite \'etale morphisms uniquely lift under infinitesimal thickenings. The second arrow is an equivalence because $\wt\eE_{R_\infty}$ is the $u$-adic completion of a radicial extension of $\eE_{R_\infty}$ (which is henselian along $(u)$), so we may apply \cite[Proposition~5.4.54]{GabberRamero}. Now, the isomorphism $\GRtinfty\cong \gal_{\wt\eE_{R_\infty}}$ in Theorem~\ref{thm:NormRings} and the above equivalences of categories produce the following isomorphisms:
\begin{equation}\label{eqn:FundGpIsoms}
\Aut_{\fo_\Eps}(\fo^{\ur}_{\Eps})\cong\gal_{\eE_{R_\infty}}\cong\gal_{\wt\eE_{R_\infty}}\cong\GRtinfty.
\end{equation}
Since $\GRtinfty$-action on $\fo_\Eps^{\ur}$ fixes $\Sig$, it  stabilises the subring $\Sig^{\ur}\subset\fo_\Eps^{\ur}$.
Since the $\GRtinfty$-action on $\fo^{\ur}_{\Eps}$ is $p$-adically continuous, it extends to a continuous $\GRtinfty$-action on $\wh \fo^{\ur}_{\Eps}$ and $\wh\Sig^{\ur}_R$, respectively. In particular,  we obtain a $\GRtinfty$-equivariant embedding 
\begin{equation}\label{eqn:SigurToAcris}
\wh\Sig^{\ur}\hra  W(\ol R^\flat) \subseteq \Acris^\nabla(R).
\end{equation}

\begin{lemsub}
We have $\fo_\Eps = (\wh\fo^{\ur}_\Eps)^{\GRtinfty}$ and $\Sig = (\wh\Sig^{\ur})^{\GRtinfty}$. In particular, the same statement holds modulo $p^n$ as well.
\end{lemsub}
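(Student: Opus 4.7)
The plan is to prove both equalities simultaneously by first establishing the finer statements $(\fo_\Eps^{\ur}/p^n)^{\GRtinfty} = \fo_\Eps/p^n$ and $(\Sig^{\ur}/p^n)^{\GRtinfty} = \Sig/p^n$ by induction on $n$, then passing to inverse limits. The passage to the limit is automatic: $\fo_\Eps$ and $\Sig$ are $p$-adically complete, $\wh\fo_\Eps^{\ur} = \varprojlim_n \fo_\Eps^{\ur}/p^n$ and $\wh\Sig^{\ur} = \varprojlim_n \Sig^{\ur}/p^n$ (both $\fo_\Eps^{\ur}$ and $\Sig^{\ur}$ being $p$-torsion-free subrings of $W(\ol R^\flat[1/u])$), and $\GRtinfty$-invariants commute with inverse limits. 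The induction will proceed by combining a careful analysis of the base case $n=1$ with the $\Zp$-flatness of $\fo_\Eps^{\ur}$ and $\Sig^{\ur}$.

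First I would handle the base case $n=1$. For $\fo_\Eps^{\ur}/p$, write $\fo_\Eps^{\ur} = \varinjlim_\alpha \fo_\alpha$ as a filtered colimit of finite Galois \'etale $\fo_\Eps$-subalgebras (which is possible by the discussion preceding the lemma and the isomorphism (\ref{eqn:FundGpIsoms})). For each such $\fo_\alpha$, \'etale descent applied to the finite Galois cover $\fo_\Eps \to \fo_\alpha$ gives $(\fo_\alpha/p)^{\Gal(\fo_\alpha/\fo_\Eps)} = \fo_\Eps/p = \eE_{R_\infty}$; since $\GRtinfty$-invariants commute with filtered colimits of discrete modules with open stabilisers, $(\fo_\Eps^{\ur}/p)^{\GRtinfty} = \eE_{R_\infty}$. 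For $\Sig^{\ur}/p$ the inclusion $\Sig/p \subseteq (\Sig^{\ur}/p)^{\GRtinfty}$ is obvious; conversely, any $\GRtinfty$-invariant element of $\Sig^{\ur}/p$ lies in $\eE_{R_\infty} \cap (\Sig^{\ur}/p)$, and every element of $\Sig^{\ur}/p$ is integral over $\Sig/p = (R_0/(p))[[u]]$. By the formally finite-type assumption and \cite[Lemma~1.3.3(d)]{dejong:crysdieubyformalrigid}, $R_0/(p)$ is a finite product of normal noetherian domains, hence so is $(R_0/(p))[[u]]$, which is therefore integrally closed in its localisation $\eE_{R_\infty} = (R_0/(p))[[u]][1/u]$. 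This forces $(\Sig^{\ur}/p)^{\GRtinfty} \subseteq \Sig/p$ and the base case is complete.

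For the inductive step, let $\Lambda^{\ur} \in \{\fo_\Eps^{\ur},\Sig^{\ur}\}$ and $\Lambda \in \{\fo_\Eps,\Sig\}$ correspondingly. Since $\Lambda^{\ur}$ is $p$-torsion-free, the sequence
\begin{equation*}
0 \lra \Lambda^{\ur}/p \xra{\cdot p^n} \Lambda^{\ur}/p^{n+1} \lra \Lambda^{\ur}/p^n \lra 0
\end{equation*}
is short exact and $\GRtinfty$-equivariant. Taking $\GRtinfty$-invariants and using the inductive hypothesis $(\Lambda^{\ur}/p^n)^{\GRtinfty} = \Lambda/p^n$ together with the base case $(\Lambda^{\ur}/p)^{\GRtinfty} = \Lambda/p$, we obtain a left-exact sequence
\begin{equation*}
0 \lra \Lambda/p \lra (\Lambda^{\ur}/p^{n+1})^{\GRtinfty} \lra \Lambda/p^n.
\end{equation*}
Given $x \in (\Lambda^{\ur}/p^{n+1})^{\GRtinfty}$, its image in $\Lambda/p^n$ lifts to some $y \in \Lambda/p^{n+1}$, which is visibly $\GRtinfty$-invariant; then $x - y$ lies in the kernel of the right-hand map, which is identified with $\Lambda/p$, so $x \in \Lambda/p^{n+1}$. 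Passing to the inverse limit over $n$ yields the two identifications claimed in the lemma. The only delicate point in the argument is the normality input used in the base case for $\Sig^{\ur}$; once that is secured, the rest is formal, and the parallel induction handles both statements uniformly.
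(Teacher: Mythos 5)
Your proposal is correct in its overall shape and follows the same essential strategy as the paper: establish the base case modulo $p$, then bootstrap to the $p$-adic statement. The paper compresses the bootstrap into one sentence (observing that the invariants form a closed subspace of the $p$-adic ring and the mod-$p$ statement therefore propagates), while you unwind it as an explicit d\'evissage mod $p^n$, which is a legitimate (and arguably more transparent) presentation of the same completeness argument. Where you genuinely diverge is in the treatment of $\Sig$: the paper reduces the $\Sig$-statement to the $\fo_\Eps$-statement via the implicit identification $\wh\Sig^{\ur}\cap\fo_\Eps=\Sig$ (which is why its proof opens with ``it suffices to prove $\fo_\Eps\supseteq(\wh\fo^{\ur}_\Eps)^{\GRtinfty}$''), whereas you attack the $\Sig$ base case directly, using normality of $\Sig/(p)=(R_0/p)[[u]]$ and the integrality of $\Sig^{\ur}/p$ over $\Sig/p$. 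Both routes are reasonable; yours avoids an appeal to the $u$-integrality characterisation of $\Sig$ inside $\fo_\Eps$, at the price of a normality argument.

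One technical point that you leave implicit and should flag: your base case for $\Sig$ needs the reduction map $\Sig^{\ur}/p\to\wh\fo_\Eps^{\ur}/p$ to be \emph{injective}, so that the intersection $\eE_{R_\infty}\cap(\Sig^{\ur}/p)$ makes sense and captures all the invariant classes (otherwise an invariant element could die in the kernel of this map without lying in $\Sig/p$). This amounts to $p\wh\fo_\Eps^{\ur}\cap\Sig^{\ur}=p\Sig^{\ur}$, which is not formal from $\Zp$-flatness of both rings alone. A similar implicit assumption enters your identification $\wh\fo^{\ur}_\Eps=\varprojlim_n\fo^{\ur}_\Eps/p^n$ (equating the $p$-adic \emph{closure} inside $W(\ol R^\flat[1/\wt\varpi])$ with the abstract $p$-adic completion requires $p^n W(\ol R^\flat[1/\wt\varpi])\cap\fo^{\ur}_\Eps=p^n\fo^{\ur}_\Eps$). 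Both of these hold here---using that $\fo^{\ur}_\Eps$ is ind-finite-\'etale over $\fo_\Eps$ with normal-domain mod-$p$ reduction and that $\Sig^{\ur}$ sits inside $W(\ol R^\flat)$---but they deserve a sentence; the paper's one-line proof carries the same burden silently, so this is a shared rather than a new gap.
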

\begin{proof}
It suffices to prove $\fo_\Eps \supseteq (\wh\fo^{\ur}_\Eps)^{\GRtinfty}$. Since  $(\wh\fo^{\ur}_\Eps)^{\GRtinfty}\subset \wh\fo^{\ur}_\Eps$ is a closed subspace under the $p$-adic topology (by continuity of $\GRtinfty$-action), this claim follows from $\eE_{R_\infty} = \fo_\Eps/(p) \cong  (\fo^{\ur}_\Eps/(p))^{\GRtinfty}$.
\end{proof}

\begin{lemsub}\label{lem:sig}
There exists a unique $\GRtinfty$-equivariant ring endomorphism $\vphi:\wh\fo^{\ur}_{\Eps} \ra \wh\fo^{\ur}_{\Eps}$ which lifts the $p$th power map $\vphi:\fo^{\ur}_{\Eps}/(p) \ra \fo^{\ur}_{\Eps}/(p)$ and extends $\vphi:\fo_{\Eps}\ra \fo_{\Eps}$. Furthermore, this map restricts to $\vphi:\wh\Sig^{\ur}\ra\wh\Sig^{\ur}$, and the natural inclusion $\wh\fo^{\ur}_{\Eps} \hra W(\ol R^\flat[1/\wt\varpi])$ is $\vphi$-equivariant.
\end{lemsub}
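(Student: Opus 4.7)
The plan is to construct $\vphi$ on $\wh\fo^{\ur}_\Eps$ as the restriction of the Witt vector Frobenius on the ambient $W(\ol R^\flat[1/\wt\varpi])$, and to deduce uniqueness from formal étaleness of $\fo^{\ur}_\Eps$ over $\fo_\Eps$.

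For existence, I would take $\vphi_W$ to be the Witt vector Frobenius on $W(\ol R^\flat[1/\wt\varpi])$; since the base $\ol R^\flat[1/\wt\varpi]$ is perfect, $\vphi_W$ is a ring automorphism which lifts the $p$-th power map modulo $p$ and commutes with the natural $\GRtinfty$-action by functoriality of Witt vectors. By the construction of the embedding~\eqref{eqn:EpsEmb}, $\vphi_W$ restricts to the given $\vphi$ on $\fo_\Eps$. The key check is that $\vphi_W$ preserves $\fo^{\ur}_\Eps$: any $x\in\fo^{\ur}_\Eps$ satisfies a monic polynomial $P(T)\in\fo_\Eps[T]$, so $\vphi_W(x)$ satisfies $P^{\vphi}(T)\in\fo_\Eps[T]$ and is therefore integral over $\fo_\Eps$, forcing $\vphi_W(x)\in\fo^{\ur}_\Eps$. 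The same argument, together with $\vphi(\Sig)\subset\Sig$, shows that $\vphi_W$ also preserves $\Sig^{\ur}$. Since $\vphi_W$ is $p$-adically continuous, it extends to the $p$-adic completions $\wh\fo^{\ur}_\Eps$ and $\wh\Sig^{\ur}$; the $\vphi$-equivariance of the inclusion $\wh\fo^{\ur}_\Eps\hookrightarrow W(\ol R^\flat[1/\wt\varpi])$ is built into the construction.

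For uniqueness, suppose $\tilde\vphi_1,\tilde\vphi_2$ are two ring endomorphisms of $\wh\fo^{\ur}_\Eps$ satisfying the listed properties. Both lift the $p$-th power map modulo $p$, which is unique in characteristic $p$, so $\tilde\vphi_1\equiv\tilde\vphi_2\pmod{p}$. For any finite étale $\fo_\Eps$-subalgebra $A\subset\fo^{\ur}_\Eps$ and any $n\geqslant 1$, view $\wh\fo^{\ur}_\Eps/(p^{n+1})$ as an $\fo_\Eps$-algebra via $\vphi$; then $\tilde\vphi_1|_A$ and $\tilde\vphi_2|_A$ become $\fo_\Eps$-algebra maps $A\to\wh\fo^{\ur}_\Eps/(p^{n+1})$ that agree modulo the nilpotent ideal $(p)/(p^{n+1})$, so formal étaleness of $A/\fo_\Eps$ forces them to coincide. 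Taking the union over all such $A$ and passing to the $p$-adic completion yields $\tilde\vphi_1=\tilde\vphi_2$; in particular, the $\GRtinfty$-equivariance in the statement is a consequence of uniqueness rather than a genuine constraint. The only nontrivial step is the integral-closure stability of $\fo^{\ur}_\Eps$ under $\vphi_W$, which is a one-line verification, and no serious obstacle is anticipated.
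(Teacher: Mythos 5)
Your proof takes a genuinely different route to existence than the paper does, and it is a valid one. The paper works ``from below'': for each finite \'etale $\fo_\Eps$-subalgebra $A\subset\fo^{\ur}_\Eps$ it constructs (by deformation theory, as in Lemma~\ref{lem:lifting}) the unique lift of relative Frobenius $\vphi^*A\riso A$, then builds a $\vphi$-equivariant embedding $A\hra W(\ol R^\flat[1/\wt\varpi])$ analogous to \eqref{eqn:EpsEmb}, and assembles $\vphi$ on $\fo^{\ur}_\Eps=\bigcup A$ from these pieces. You work ``from above'': you observe that the Witt vector Frobenius $\vphi_W$ on $W(\ol R^\flat[1/\wt\varpi])$ already has all the required compatibilities (functoriality gives $\GRtinfty$-equivariance, and \eqref{eqn:EpsEmb} was built so that $\vphi_W|_{\fo_\Eps}=\vphi$), and that $\vphi_W$ preserves $\fo^{\ur}_\Eps$ and $\Sig^{\ur}$ by the standard integrality argument, after which one passes to $p$-adic completions. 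This is cleaner, since it sidesteps the construction of a separate $\vphi$-equivariant embedding for each $A$. For uniqueness, both approaches ultimately rely on formal \'etaleness of $\fo_\Eps\to A$; the paper invokes it once at the $A$-level, whereas you run it as an induction over powers of $p$. The only point you pass over is that the statement requires $\vphi$ to lift the $p$-th power map on $\fo^{\ur}_\Eps/(p)$, not merely to satisfy $\vphi_W(x)\equiv x^p \pmod{pW(\ol R^\flat[1/\wt\varpi])}$; to bridge the gap one should note that $\fo^{\ur}_\Eps \cap pW(\ol R^\flat[1/\wt\varpi]) = p\,\fo^{\ur}_\Eps$, which holds because each finite \'etale $\fo_\Eps$-subalgebra $A$ is a finite product of complete discrete valuation rings with uniformiser $p$ and $p$ is not a unit in $W(\ol R^\flat[1/\wt\varpi])$. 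This is a one-line addendum, not a serious obstacle, and with it your argument is complete.
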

\begin{proof}
Let us first show that for any finite \'etale $\fo_{\Eps}$-algbra $A$, there exists a unique $\fo_{\Eps}$-algbra isomorphism $\vphi^*A\ra A$ which lifts the relative Frobenius (iso)morphism $\vphi^*A/(p)\ra A/(p)$ over $\fo_{\Eps}/(p)$.  This follows from the same argument as the proof of Lemma~\ref{lem:lifting} (the existence of $\vphi:R_0\ra R_0$). The uniqueness follows from $\fo_{\Eps}$-\'etaleness of $A$, and clearly $\vphi$ sends any element integral over $\Sig$ to an element integral over $\Sig$. 

Using the unique lift of Frobenius $\vphi:A\ra A$, we can now construct a $\vphi$-equivariant embedding 
\[A\hra W(A\otimes_{\fo_\Eps}\wt\eE_{R_\infty})\]
as in (\ref{eqn:EpsEmb}). By choosing an $\wt\eE_{R_\infty}$-embedding $A\otimes_{\fo_\Eps}\wt\eE_{R_\infty} \hra \ol R^\flat[1/\wt\varpi]$, we now obtain the unique $\vphi$-equivariant lift $A\hra W(\ol R^\flat[1/\wt\varpi])$. Note that $\fo^{\ur}_\Eps$ is the union of the image of such $A$. This gives a unique lift of Frobenius $\vphi:\fo_\Eps^{\ur}\ra\fo_\Eps^{\ur}$ over $\fo_\Eps$ with respect to which the natural inclusion $\fo^{\ur}_\Eps\hra W(\ol R^\flat[1/\wt\varpi])$ is $\vphi$-equivariant. The desired lift of Frobenius $\vphi:\wh\fo_\Eps^{\ur}\ra\wh\fo_\Eps^{\ur}$ is obtained by $p$-adically extending this map. The uniqueness follows from the uniqueness of $\vphi$ on $\fo_\Eps^{\ur}$ and the density of  $\fo_\Eps^{\ur}\subset \wh\fo_\Eps^{\ur}$.

Finally, the $\GRtinfty$-equivariance claim follows from the uniqueness of $\vphi$. Indeed, for any $g\in \GRtinfty$ the uniqueness implies $g\vphi g\iv=\vphi$. 
\end{proof}

For any $M\in \EtPhiR$ and $T\in\prep(\GRtinfty)$, we define:
\begin{align}
\cT(M) &:= (\wh\fo^{\ur}_{\Eps}\otimes_{\fo_{\Eps}}M)^{\vphi=1}\\
\cD(T) &:= (\wh\fo^{\ur}_{\Eps}\otimes_{\Zp}T)^{\GRtinfty} .
\end{align}
Note that $\GRtinfty$ continuously acts on $\cT(M)$ induced by its natural action on $\wh\fo^{\ur}_{\Eps}$, and there is a natural $\vphi$-linear endomorphism on $\cD(T)$ induced by $\vphi$ on $\wh\fo^{\ur}_{\Eps}$.

For any profinite group $\gal$, let $\prep(\gal)$ denote the category of finitely generated $\Zp$-modules equipped with continuous $\gal$-action. Let $\freeprep(\gal)$ and $\torprep(\gal)$ respectively denote the full subcategories of free and torsion objects. 
\begin{prop}[{\cite[Lemma~4.1.1]{katz:antwerp350}}]\label{prop:Katz}
The constructions $\cT$ and $\cD$ are exact quasi-inverse equivalences of $\otimes$-categories between $\EtPhiR$ and $\prep(\GRtinfty)$. Furthermore, $\cT$ and $\cD$ restrict to rank-preserving equivalences of categories between $\EtPhiPrR$ and $\freeprep(\GRtinfty)$, and length-preserving equivalences of categories between $\EtPhiTorR$ and $\torprep(\GRtinfty)$. In both cases, $\cT$ and $\cD$ commute with  duality.
\end{prop}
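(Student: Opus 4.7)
\smallskip

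The plan is to follow the standard Fontaine--Katz dévissage, reducing to modules killed by $p$ and then invoking Scholze's almost purity (Theorem~\ref{thm:perfectoid}) together with the tilting identifications from \S\ref{subsec:perfectoid}. First I would check the basic ``period ring'' properties of $\wh\fo^{\ur}_{\Eps}$ that the proof needs: (i) $\wh\fo^{\ur}_\Eps$ is faithfully flat over $\fo_\Eps$; (ii) $(\wh\fo^{\ur}_\Eps)^{\GRtinfty}=\fo_\Eps$ (already noted) and the analogue modulo $p^n$; (iii) $(\wh\fo^{\ur}_\Eps)^{\vphi=1}=\Zp$; and (iv) the vanishing of $\HH^1$ of $\vphi-1$ on free $\wh\fo^{\ur}_\Eps$-modules, as well as on the tensor products $\wh\fo^{\ur}_\Eps\otimes_{\Zp}T$ for $T\in\prep(\GRtinfty)$. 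Properties (i)--(iii) reduce mod $p$ to the corresponding statements for $\fo^{\ur}_\Eps/(p)$ over $\eE_{R_\infty}$, which by Theorem~\ref{thm:NormRings} identify with statements about the affine ring of a pro-universal cover of $\Spec\wt\eE_{R_\infty}$; these are then standard facts for the perfect Tate ring $\wt\eE_{R_\infty}$.

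Next I would run the dévissage. For the torsion case, I induct on $n$ with the short exact sequence $0\to p^{n-1}M\to M\to M/p^{n-1}M\to 0$ (respectively $0\to p\iv T/T\to T/p^nT\to T/pT\to 0$ on the Galois side) and use exactness of $\cT$, $\cD$ together with five-lemma type arguments that follow from (i) and (iv). For the free case, I write $M=\varprojlim_n M/p^nM$ (and similarly for $T$) and pass to the inverse limit, noting that all transition maps are surjective and that the relevant $\invlim^1$ vanishes by the Mittag--Leffler condition. This reduces the whole proposition to the case where $M$ is an étale $(\vphi,\eE_{R_\infty})$-module (killed by $p$) and $T$ is an $\Fp$-representation of $\GRtinfty$.

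The main step, and the main obstacle, is the $p$-torsion case, where the point is to show that $\fo^{\ur}_\Eps/(p)\otimes_{\eE_{R_\infty}}M$ has a full basis of $\vphi$-invariants. Via the identification (\ref{eqn:FundGpIsoms}) and Theorem~\ref{thm:perfectoid}, this is equivalent to trivialising the étale $\vphi$-module $M\otimes_{\eE_{R_\infty}}\wt\eE_{R_\infty}$ over $\ol R^\flat[1/\wt\varpi]$. Because $\wt\eE_{R_\infty}$ is a perfect $L^\flat$-algebra, an Artin--Schreier style argument on $\vphi-1$ shows that the finite \'etale cover of $\Spec\wt\eE_{R_\infty}$ which trivialises $M$ corresponds (by Theorem~\ref{thm:perfectoid}) to a finite \'etale cover of $\Spec\wt R_\infty[1/p]$, hence to a finite quotient of $\GRtinfty$, yielding the desired $\Fp[\GRtinfty]$-module $\cT(M)$ of the same rank.

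Finally, the natural adjunction maps $\wh\fo^{\ur}_\Eps\otimes_{\fo_\Eps}\cD(T)\to \wh\fo^{\ur}_\Eps\otimes_{\Zp}T$ and $\wh\fo^{\ur}_\Eps\otimes_{\Zp}\cT(M)\to \wh\fo^{\ur}_\Eps\otimes_{\fo_\Eps}M$ are isomorphisms by the above (check mod $p$, then use completeness and flatness), so taking $\vphi$-invariants and $\GRtinfty$-invariants respectively gives the quasi-inverse property $\cT\circ\cD\cong\id$ and $\cD\circ\cT\cong\id$. Exactness, rank/length preservation, and compatibility with $\otimes$-products and duality then follow formally from these isomorphisms together with the faithful flatness of $\wh\fo^{\ur}_\Eps$ over $\fo_\Eps$ and $\Zp$.
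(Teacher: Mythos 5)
Your dévissage and reduction-to-mod-$p$ steps coincide with the paper's, but the treatment of the mod-$p$ case is where you diverge: the paper simply observes that $\eE_{R_\infty}=\Sig/(p)[1/u]$ is a normal domain and that $\GRtinfty$ has been identified with $\pi_1^{\et}(\Spec\eE_{R_\infty})$ via (\ref{eqn:FundGpIsoms}), and then cites Katz's Lemma~4.1.1 verbatim, which already classifies étale $\vphi$-modules over any normal $\Fp$-domain by $\Fp$-representations of its fundamental group. You instead re-derive the content of Katz's lemma by passing to the perfect ring $\wt\eE_{R_\infty}$, running an Artin--Schreier argument there, and invoking the tilting equivalence (Theorem~\ref{thm:perfectoid}) a second time. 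This works, but the perfectoid detour is redundant: once (\ref{eqn:FundGpIsoms}) has been established (and that is the only place Scholze's theorem is genuinely needed), the trivialisation over a finite separable extension of $\eE_{R_\infty}$ itself is classical and does not require passing to the perfection. Two smaller points worth flagging: (a) $\fo^{\ur}_\Eps/(p)$ is the union of finite étale $\eE_{R_\infty}$-subalgebras of $\ol R^\flat[1/\wt\varpi]$, not the affine ring of a pro-universal cover of $\Spec\wt\eE_{R_\infty}$ — these are different rings (the latter contains the radicial extension $\wt\eE_{R_\infty}$), though they have the same $\GRtinfty$- and $\vphi$-invariants, so your reduction still goes through; and (b) you should invoke, as the paper does after Definition~\ref{def:EtPhiMod}, that an étale $(\vphi,\fo_\Eps)$-module killed by $p$ is automatically projective over $\eE_{R_\infty}$, which is what makes the base-of-the-dévissage case well behaved.
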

\begin{proof} (\emph{Cf.} \cite[Theorem~7.11]{Andreatta:GenNormRings})
By d\'evissage, it suffices to prove the proposition for the objects killed by $p$, which is done in \cite[Lemma~4.1.1]{katz:antwerp350}; note that  $\eE_{R_\infty} $ is a normal domain (which is required for \cite[Lemma~4.1.1]{katz:antwerp350}), and we have identified $\GRtinfty$ with a fundamental group of $\Spec \eE_{R_\infty}$ in (\ref{eqn:FundGpIsoms}).
\end{proof}

When $M$ is a projective \'etale $(\vphi,\fo_{\Eps})$-module, we can define the following contravariant  functor:
\begin{equation}\label{eqn:cTstar}
\cT^*(M):=\cT(M^*) = \Hom_{\fo_{\Eps},\vphi}(M, \wh\fo^{\ur}_{\Eps}).
\end{equation}
By Proposition~\ref{prop:Katz}, we have a natural isomorphism $\cT^*(M)\cong (\cT(M))^*$. One can define $\cT^*$ for torsion \'etale $(\vphi,\fo_{\Eps})$-modules using Pontragin duality:
\begin{equation}
\cT^*(M): = \Hom_{\fo_{\Eps},\vphi}(M, \wh\fo^{\ur}_{\Eps}\otimes_{\Zp}\Qp/\Zp).
\end{equation}

Clearly, $\cT^*$ preserves exact sequences of projective or torsion objects. Furthermore, if we have a short exact sequence $0\ra \wt M' \ra\wt M \ra M \ra 0$ of \'etale $(\vphi,\fo_\Eps)$-module, where $\wt M$ and $\wt M'$ are $\fo_\Eps$-projective, and  $M$ is $p$-power torsion, then we have a natural $\GRtinfty$-equivariant short exact sequence
\begin{equation}\label{eqn:RednEtPhiMod}
0\ra\cT^*(\wt M) \ra \cT^*(\wt M') \ra\cT^*(M)\ra 0,
\end{equation} 
where the second map is defined by viewing $f:\wt M' \ra \wh\fo^{\ur}_\Eps$ as $f:\wt M \ra \wh\fo^{\ur}_\Eps\otimes_{\Zp}\Qp$ using the isomorphism $\wt M'\otimes_{\Zp}\Qp\riso\wt M\otimes_{\Zp}\Qp$.

\subsection{Base change}\label{subsec:BCGal}
Let $R$, $W\subset R_0$, and $E(u)$ be as in the beginning of \S\ref{sec:EtPhiMod}, and consider a topological adic $R_0$-algebra $R_0'$ such that $R_0'/(p)$ locally admits a finite $p$-basis and is formally finitely generated over some field $k'$. Set $R':= R_0'[u]/E(u)$, and let  $f:R\ra R'$ denote the structure morphism. (\emph{Cf.} \S\ref{cond:dJ}, \S\ref{subsec:BC}.)
Choose a map $\ol f:\ol R \ra \ol R'$ that extends $f:R\ra R'$ (as in \S\ref{subsec:BCrhoG}). We use the superscript $\prime$ to denote the construction for $R'$ (such as $\Sig'$ and $\fo_{\Eps'}$). Note that $\ol f$ induces a continuous group homomorphism $\gal_{R'}\ra \GRR$, and a continuous ring homomorphism $\ol f^\flat:\ol R^\flat\ra \ol R^{\prime\flat}$ which respects the Galois action in a suitable sense. 

As in (\ref{eqn:SigurToAcris}), we identify $\wh\Sig^{\ur}$ and $\wh\Sig^{\prime\ur}$ as subrings  of $W(\ol R^\flat)$ and $W(\ol R^{\prime\flat})$, respectively. Since the map $W(\ol f^\flat):W(\ol R^\flat)\ra W(\ol R^{\prime\flat})$ takes $\Sig$ into $\Sig'$, $W(\ol f^\flat)$ restricts to a map $\wh\Sig^{\ur} \ra \wh\Sig^{\prime\ur}$ which commutes with $\vphi$'s and $\gal_{R'_\infty}$-action if we let $\gal_{R'_\infty}$ act on $\wh\Sig^{\ur} $ via the map $\gal_{R'_\infty}\ra \GRinfty$. We also obtain a map $\wh\fo^{\ur}_{\Eps} \ra \wh\fo^{\ur}_{\Eps'}$, which respects $\vphi$ and Galois actions. 

Now for any \'etale $(\vphi,\fo_{\Eps})$-module $M$, we obtain a natural $\gal_{R'_\infty}$-equivariant map $\cT(M)\ra\cT(\fo_{\Eps'}\otimes_{\fo_{\Eps}}M)$, where $\gal_{R'_\infty}$ acts on $\cT(M)$ via the natural map  $\gal_{R'_\infty}\ra \GRinfty$. This map $\cT(M)\ra\cT(\fo_{\Eps'}\otimes_{\fo_{\Eps}}M)$ is indeed an isomorphism, as it is an injective map of finite free $\Zp$-modules of same rank with saturated image. (To see the image is saturated, note that $\Eps\cap\fo_{\Eps'} = \fo_\Eps$ inside $\Eps'$.)
Similarly, we  obtain $\cT^*(M)\riso\cT^*(\fo_{\Eps'}\otimes_{\fo_{\Eps}}M)$.

\begin{lem}\label{lem:FontaineB183}
For any quasi-Kisin $\Sig$-module $\gM$, the natural morphism 
\begin{equation*}
\Hom_{\Sig,\vphi}(\gM, \wh\Sig^{\ur}) \ra \cT^*(\gM) = \Hom_{\Sig,\vphi}(\gM, \wh\fo^{\ur}_{\Eps}),
\end{equation*}
induced by the natural inclusion $\wh\Sig^{\ur} \hra \wh\fo^{\ur}_{\Eps}$, is an isomorphism.
\end{lem}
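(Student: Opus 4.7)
Injectivity is immediate from $\wh\Sig^{\ur}\subset\wh\fo^{\ur}_{\Eps}$. For surjectivity, given a $\Sig$-linear $\vphi$-equivariant map $f\colon\gM\to\wh\fo^{\ur}_{\Eps}$, the plan is to show by induction on $n\geqs1$ that the image $f(\gM)\bmod p^n$ lies in $\wh\Sig^{\ur}/p^n$; since both $\wh\Sig^{\ur}$ and $\wh\fo^{\ur}_{\Eps}$ are $p$-adically separated and complete, passing to the inverse limit will then give $f(\gM)\subset\wh\Sig^{\ur}$. The key point is that $\wh\Sig^{\ur}/p$ coincides with the integral closure of $\Sig/p=R_0/(p)[[u]]$ inside $\eE^{\ur}_{R_\infty}=\fo^{\ur}_{\Eps}/p$, and hence with the subring of elements having non-negative value under every extension of the $u$-adic valuation $v_u$ on $\eE_{R_\infty}$; the nontrivial inclusion is obtained by lifting monic integral equations via Hensel's lemma in the complete ring $\wh\fo^{\ur}_{\Eps}$.

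For the base case $n=1$, Zariski localize on $\Spf(R,(\varpi))$ to assume $\gM$ is free with basis $e_1,\dots,e_r$. Write the matrix of $\vphi_\gM$ as $A=(A_{ij})\in M_r(\Sig)$, so $\vphi$-equivariance gives $\vphi(\bar f(\bar e_j))=\sum_i\bar A_{ij}\bar f(\bar e_i)$ in $\eE^{\ur}_{R_\infty}$. Fix any extension of $v_u$ to $\eE^{\ur}_{R_\infty}$. Since $\vphi$ is the $p$-th power map, $v_u(\vphi(x))=p\,v_u(x)$, and $\bar A_{ij}\in R_0/(p)[[u]]$ gives $v_u(\bar A_{ij})\geqs 0$. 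Setting $w_j:=v_u(\bar f(\bar e_j))$ and $w_{\min}:=\min_jw_j$, the displayed relation yields $pw_j\geqs w_{\min}$ for every $j$, whence $pw_{\min}\geqs w_{\min}$ and thus $w_{\min}\geqs 0$. As this holds for every extension of $v_u$, each $\bar f(\bar e_j)\in\wh\Sig^{\ur}/p$.

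For the inductive step, assume $f(\gM)\subset\wh\Sig^{\ur}+p^n\wh\fo^{\ur}_{\Eps}$ and choose any $\Sig$-linear lift $s\colon\gM\to\wh\Sig^{\ur}$ with $s\equiv f\pmod{p^n}$ (not required to be $\vphi$-equivariant). Then $t:=(f-s)/p^n\colon\gM\to\wh\fo^{\ur}_{\Eps}$ is $\Sig$-linear, and its mod-$p$ reduction descends to a well-defined $\Sig/p$-linear map $\bar t\colon\gM/p\to\eE^{\ur}_{R_\infty}/(\wh\Sig^{\ur}/p)$, independent of the choice of $s$. The identity $\vphi\circ f=f\circ\vphi_\gM$ together with $s\equiv f\pmod{p^n}$ forces $\vphi\circ s-s\circ\vphi_\gM\in p^n\wh\Sig^{\ur}$, so $\bar t$ is $\vphi$-equivariant in the quotient by $\wh\Sig^{\ur}/p$. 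Re-running the $v_u$-argument in this quotient (absorbing the $\wh\Sig^{\ur}/p$-error, which has $v_u\geqs 0$) forces $\bar t=0$, so $f(\gM)\subset\wh\Sig^{\ur}+p^{n+1}\wh\fo^{\ur}_{\Eps}$, completing the induction. The main technical hurdle is the valuation characterisation of $\wh\Sig^{\ur}/p$ and checking that $v_u$ admits enough extensions to $\eE^{\ur}_{R_\infty}$; both follow from the normality of $\Sig/(p)$ and the ind-\'etale nature of $\eE^{\ur}_{R_\infty}/\eE_{R_\infty}$ under the formally finite-type setting \S\ref{cond:dJ}.
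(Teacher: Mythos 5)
Your proposal takes a genuinely different route from the paper's. The paper proves the lemma by a base-change argument: it maps $R$ to $\fo_{K'}=W(k')\otimes_W\fo_K$ where $k'$ is the perfect closure of $\Frac(R/\varpi)$, applies Fontaine's Proposition~B.1.8.3 (the perfect residue field case), and then reduces the statement to the equality $\wh\Sig^{\ur}=\wh\Sig^{\prime\ur}\cap\wh\fo^{\ur}_\Eps$, which is proved by fpqc descent over the covering $\{\Spec\Sig',\Spec\fo_\Eps\}$ of $\Spec\Sig$. Your approach is a direct mod-$p^n$ induction using the $u$-adic valuation. Unfortunately, I see several gaps that the paper's route is designed to avoid.

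First, the pivotal claim — that $\wh\Sig^{\ur}/p$ is the set of $x\in\eE^{\ur}_{R_\infty}$ with $v(x)\geqs0$ for every extension $v$ of $v_u$ — is asserted rather than proved, and the justification offered does not hold up. Lifting a monic integral relation from $\Sig/p$ to $\Sig$ via Hensel's lemma needs the root to be simple mod $p$, which one cannot arrange; so the containment ``integral closure of $\Sig/p$ is contained in $\Sig^{\ur}/p$'' is already non-trivial. Moreover, a separate issue is whether $\Sig^{\ur}/p\Sig^{\ur}\to\eE^{\ur}_{R_\infty}$ is injective (equivalently whether $p\fo^{\ur}_\Eps\cap\Sig^{\ur}=p\Sig^{\ur}$), which you never address. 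And finally, $\Sig/p=(R/\varpi)[[u]]$ is \emph{not} the valuation ring of $v_u$ when $R/\varpi$ is not a field; the valuation ring of $v_u$ in $\Frac(\eE_{R_\infty})$ contains, e.g., $u/T$ for any non-unit $T\in R/\varpi$, so the ``$v_u\geqs0$'' criterion is a priori coarser than the integral-closure one. It is plausible that restricting attention to elements of $\eE^{\ur}_{R_\infty}$ (rather than $\Frac(\eE^{\ur}_{R_\infty})$) rescues the statement via a height-one-prime argument on the normal Noetherian ring $B^+$, but this is precisely the kind of argument that needs to be written out, and it is not.

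Second, the inductive step tacitly assumes that $\wh\Sig^{\ur}$ is $p$-saturated in $\wh\fo^{\ur}_\Eps$. From $\vphi\circ f=f\circ\vphi_\gM$ and $f=s+p^n t$ one only gets that $\vphi\circ s-s\circ\vphi_\gM$ takes values in $\wh\Sig^{\ur}\cap p^n\wh\fo^{\ur}_\Eps$, not in $p^n\wh\Sig^{\ur}$ as you state; passing to the latter requires $\wh\Sig^{\ur}\cap p^n\wh\fo^{\ur}_\Eps=p^n\wh\Sig^{\ur}$, which is unproven (and again is a non-trivial statement about the interplay between $p$-adic completion, integral closure, and the subring $W(\ol R^\flat)$). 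The same $p$-saturation underlies the independence of $\bar t$ from the choice of $s$. Finally, the phrase ``re-running the $v_u$-argument in the quotient $\eE^{\ur}_{R_\infty}/(\wh\Sig^{\ur}/p)$'' is not meaningful as stated, since the quotient carries no valuation; what you presumably intend is to choose a lift and control the $\wh\Sig^{\ur}/p$-ambiguity via valuation bounds, but that brings back the unresolved characterisation of $\wh\Sig^{\ur}/p$. As written, the argument has a circularity: the technical input you need is essentially of the same order of difficulty as the lemma itself, and the paper's detour through Fontaine's result and fpqc descent exists precisely to supply that input.
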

\begin{proof}
When $R = \fo_K$ with perfect residue field the proposition is proved in \cite[\S{B}, Proposition~1.8.3]{fontaine:grothfest}. For the general case, let  $k':=\varinjlim_\vphi \Frac(R/\varpi)$ be the perfect closure of $\Frac(R/\varpi)$. As discussed in \S\ref{subsec:BC} (Ex6), there is an $\fo_K$-algebra homomorphism $R\ra \fo_{K'}=W(k')\otimes_{W}\fo_K$ which satisfies the assumption in \S\ref{subsec:BC}.  
We apply the discussion in \S\ref{subsec:BCGal} to this setting.

Let us write 
$\Sig'$, $\fo_{\Eps'}$, $\wh\Sig^{\prime\ur}$, and  $\wh\fo^{\ur}_{\Eps'}$ for the rings constructed from $\fo_{K'}$. 
Now, note that for any quasi-Kisin $\Sig$-module $\gM$, the scalar extension $\Sig'\otimes_{\Sig}\gM$ is a quasi-Kisin $\Sig'$-module. In particular, we obtain a natural $\gal_{K'_\infty}$-equivariant isomorphism $\cT^*(\gM)\riso \cT^*(\Sig'\otimes_{\Sig}\gM)$ (\emph{cf.,} \S\ref{subsec:BCGal}). Now, applying the result for $\fo_{K'}$ (which is known by  \cite[\S{B}, Proposition~1.8.3]{fontaine:grothfest}), it follows that any $\vphi$-compatible map $x:\gM\ra \wh\fo^{\ur}_{\Eps'}$ has image in $\wh\Sig^{\prime\ur}\cap \wh\fo^{\ur}_{\Eps}$. To conclude it suffices to show that $\wh\Sig^{\ur} = \wh\Sig^{\prime\ur}\cap \wh\fo^{\ur}_{\Eps}$. In turn, it suffices to show that for any $s\in \Sig^{\prime\ur}\cap \fo^{\ur}_{\Eps}$ the subalgebra $\Sig[s]\subseteq \fo^{\ur}_{\Eps'}$ is finite over $\Sig$. But $\{\Spec \Sig', \Spec \fo_{\Eps}\}$ is an fpqc covering of $\Spec \Sig$, and the assumption on $s$ implies that $\Sig'[s]$ is finite over $\Sig'$ and $\fo_{\Eps}[s]$ is finite over $\fo_{\Eps}$. Now the claim follows from fpqc descent theory.
\end{proof}
%

\section{Comparison between Galois-stable lattices}\label{sec:FaltingsKisin}
Assume that $R$ satisfies the formally finite-type assumption~(\S\ref{cond:dJ}). If $p>2$ then we have the following  exact functor
\begin{equation*}
 \gM^*: \{p\text{-divisible groups over }R\} \xra{\M^*} \SMF \liso \SM,
 \end{equation*}
 which is an  anti-equivalence of categories if $p>2$; \emph{cf.} Corollary~\ref{cor:RelKisin}.
We also associated, to a $p$-divisible group $G$ over $R$, a $\GRtinfty$-representation $\cT^*(\gM^*(G))$ under some additional assumption on $R$ (as stated in \S\ref{sec:EtPhiMod}); \emph{cf.,} (\ref{eqn:cTstar}). The main goal of this section is to prove the following proposition:

\begin{prop}\label{prop:GRinftyRes}
Assume that $R$ satisfies the ``refined almost \'etaleness'' assumption~(\S\ref{cond:RAF}), and for some Cohen subring $W\subset R_0$ we have $E(u)\in W[u]$ (\emph{cf.} \S\ref{sec:EtPhiMod}).
Let $\M\in\SMF$, and suppose that $\gM\in\SMq$ is a quasi-Kisin $\Sig$-module such that $\M\cong S\otimes_{\vphi,\Sig}\gM$ as quasi-Breuil $S$-modules (which exists by Proposition~\ref{prop:CL}).
Then there is a natural $\GRtinfty$-equivariant injective map 
\begin{equation*}
 \cT^*(\gM) \hra T^*_{\cris}(\M),
\end{equation*}
which is an isomorphism if $p>2$. 
Furthermore, if $\M=\DD^*(G)(S)$ for some $p$-divisible group $G$, then (for any $p$) we have a natural $\GRtinfty$-equivariant injective map $\cT^*(\gM)\hra V_p(G)$, whose image is $T_p(G)$ .
\end{prop}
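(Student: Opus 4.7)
My plan is to first construct the natural map $\iota:\cT^*(\gM)\to \Tcris^*(\M)$, then reduce the first (``isomorphism'') claim to the $p$-divisible group case via Theorem~\ref{thm:BreuilClassif} and Corollary~\ref{cor:BrinonTrihan}, and finally prove the ``furthermore'' statement by comparing with Faltings' comparison map $\rho_G$ of Theorem~\ref{thm:Faltings}.

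For the construction of $\iota$, I use the $\vphi$-equivariant and $\GRtinfty$-equivariant embedding $\wh\Sig^{\ur}\hra\Acris^\nabla(R)\subseteq\Acris(R)$ from \eqref{eqn:SigurToAcris}, which is horizontal by construction of $\Acris^\nabla(R)$ and respects filtrations by Proposition~\ref{prop:PeriodRings}. Identifying $\cT^*(\gM)$ with $\Hom_{\Sig,\vphi}(\gM,\wh\Sig^{\ur})$ via Lemma~\ref{lem:FontaineB183}, I send $f:\gM\to\wh\Sig^{\ur}$ to its $S$-linear extension $\iota(f):\M=S\otimes_{\vphi,\Sig}\gM\to\Acris(R)$, where $\Acris(R)$ is regarded as an $S$-algebra via \S\ref{subsec:Rinfty}. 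A direct check using the definitions \eqref{eqn:Filh} of $\Fil^1\M$ and \eqref{eqn:vphir} of $\vphi_1$ shows $\iota(f)\in\Tcris^*(\M)$; horizontality is automatic since $f(\gM)\subset\Acris^\nabla(R)$, while $\GRtinfty$-equivariance and injectivity are immediate from the corresponding properties of $\wh\Sig^{\ur}\hra\Acris(R)$.

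When $p>2$, Theorem~\ref{thm:BreuilClassif} (whose hypothesis is implied by \S\ref{cond:RAF}) identifies every $\M\in\SMF$ as $\DD^*(G)(S)$ for a unique $p$-divisible group $G$ over $R$, and Corollary~\ref{cor:BrinonTrihan} gives $\rho_G:T_p(G)\riso\Tcris^*(\M)$, so the isomorphism claim reduces to constructing a $\GRtinfty$-equivariant lift $\alpha:T_p(G)\to\cT^*(\gM)$ of $\rho_G$ along $\iota$. For arbitrary $p$, I define $\alpha$ using the functoriality of $\DD^*$ on $p$-divisible groups over $\ol R$: an element $x\in T_p(G)=\Hom_{\ol R}(\Qp/\Zp,G_{\ol R})$ induces, after base change along the $\vphi$-equivariant chain $\Sig\hra\wh\Sig^{\ur}\hra W(\ol R^\flat)\subseteq\Acris(R)$ (using Remark~\ref{rmk:RelKisinBC} and the identification $\gM^*(\Qp/\Zp)=\Sig$), an $\Sig$-linear $\vphi$-compatible map $\alpha(x):\gM\to\wh\Sig^{\ur}$. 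The identity $\iota\circ\alpha=\rho_G$ then follows by unfolding the definitions and tracing through the pairing underlying \eqref{eqn:Faltings}.

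The hard part is to show $\alpha$ is surjective, which simultaneously upgrades the first claim for $p>2$ (combined with $\rho_G$ being an isomorphism via Corollary~\ref{cor:BrinonTrihan}) and yields the ``image is $T_p(G)$'' conclusion for all $p$. Both $\cT^*(\gM)$ and $T_p(G)$ are free $\Zp$-modules of rank equal to the height of $G$ (by Proposition~\ref{prop:Katz} for the former), and $\alpha$ is injective since $\iota\circ\alpha=\rho_G$ is. Adapting the strategy of \cite[proof of Theorem~2.2.7]{kisin:fcrys} to the relative setting of \S\ref{sec:EtPhiMod}, I would argue that the image of $\rho_G$ inside $\Tcris^*(\M)$ consists exactly of those $\tilde f$ whose restriction to $\gM\hra\M$ lands in the subring $\wh\Sig^{\ur}\subseteq\Acris(R)$ --- using the explicit form of $\rho_{\Gmhat}$ computed in the proof of Theorem~\ref{thm:Faltings} together with $\vphi$-saturation arguments inside $W(\ol R^\flat)$. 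The main technical obstacle is precisely this integrality control, for which refined almost \'etaleness (Theorem~\ref{thm:RAF}) and the perfectoid framework of \S\ref{subsec:perfectoid} play a crucial role.
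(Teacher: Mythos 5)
There is a genuine gap in the construction of the map $\iota:\cT^*(\gM)\to\Tcris^*(\M)$, specifically in the claim that ``horizontality is automatic since $f(\gM)\subset\Acris^\nabla(R)$.'' It is not. To lie in $\Tcris^*(\M)$, the $S$-linear extension $\iota(f):\M\to\Acris(R)$ must satisfy $\nabla\circ\iota(f)=(\iota(f)\otimes\id)\circ\nabla^u_\M$. Since $\iota(f)$ has image in $\Acris^\nabla(R)=\Acris(R)^{\nabla=0}$, the left side vanishes on $\gM\subset\M$; but the right side is $(\iota(f)\otimes\id)\bigl(\nabla^u_\M(1\otimes m)\bigr)$ for $m\in\gM$, and $\nabla^u_\M$ does \emph{not} vanish on $1\otimes\gM$ in the relative setting (indeed, the $\vphi(\Sig)$-lattice $\gM\subset\M$ is not the horizontal section of Lemma~\ref{lem:Reduction}; those are different subobjects). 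You are implicitly conflating a map lying in $\Hom_{S,\Fil^1,\vphi,\vphi_1}(\M,\Acris^\nabla(R))$ — which your construction does produce — with a map lying in $\Tcris^*(\M)$, which additionally has a horizontality constraint coupling $\nabla^u_\M$ with $\nabla$ on $\Acris(R)$. This gap is precisely what \S\ref{subsec:STilde}--\ref{subsec:Mtilde} and Lemma~\ref{lem:GalCompTilde} in the paper are built to close: by introducing the ``doubled'' divided-power ring $\wt S$ with its two embeddings $\imath_1,\imath_2$ of $S$, one identifies the non-horizontal group $\Hom_{S,\Fil^1,\vphi,\vphi_1}(\M,\Acris^\nabla(R))$ with $\wt T_{\cris}^*(\wt\M)$ (via $\imath_2$, which satisfies $\imath_2(\M)=\wt\M^{\nabla^u=0}$), and then Lemma~\ref{lem:GalCompTilde} matches the image of $\wt T_{\cris}^*(\wt\M)$ in $V_p(G')$ with that of $\Tcris^*(\M)$. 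Your outline bypasses this entirely and so does not actually produce a map into $\Tcris^*(\M)$.

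Beyond that gap, the rest of your plan is at least directionally aligned with the paper for $p>2$ (a mod-$p$ injectivity argument in the style of Kisin's Theorem~2.2.7, which the paper carries out explicitly via \eqref{eqn:GalCompModp} and $u^e$-divisibility), but it is left as a hand-wave. You also do not address the $p=2$ case of the ``furthermore'' clause at all: the paper handles it by a $\vphi$-equivariant base change $R_0\to W\bigl(\Frac(R/\varpi)^{\perf}\bigr)$ reducing to the complete discretely valued case with perfect residue field, where the statement is known. Finally, a smaller imprecision: your ``$S$-linear extension'' must be along $\vphi:\wh\Sig^{\ur}\to\Acris^\nabla(R)$ rather than the bare inclusion, because $\M=S\otimes_{\vphi,\Sig}\gM$ carries a Frobenius twist; the paper is explicit that the arrow in \eqref{eqn:GalCompSig} is ``induced by $\vphi$.''
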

If $p>2$ then the second assertion  follows from the first due to Theorem~\ref{thm:Faltings}, while it requires more work if $p=2$. When $p=2$, we can use Proposition~\ref{prop:GRinftyRes} to define the functor $\gM^*:G\rightsquigarrow \gM$ (without passing to the isogeny categories), and obtain some  full faithfulness result; \emph{cf.,} Corollary~\ref{cor:gMFF}.

Before we begin the proof, let us record an immediate corollary of Proposition~\ref{prop:GRinftyRes}:
\begin{cor}\label{cor:GRinftyRes}
Assume that $R$ satisfies the formally finite-type assumption~(\S\ref{cond:dJ}), and for some Cohen subring $W\subset R_0$ we have $E(u)\in W[u]$ (\emph{cf.} \S\ref{sec:EtPhiMod}). Let $G$ be a $p$-divisible group over $R$, and let $\gM\in\SMq$ be such that $S\otimes_{\vphi,\Sig}\gM\cong \DD^*(G)(S)$. 
Then (for any $p$) there is a natural $\GRtinfty$-equivariant isomorphism $\cT^*(\gM)\cong T_p(G)$, which recovers the isomorphism in Proposition~\ref{prop:GRinftyRes} when $R$ additionally satisfies the ``refined almost \'etaleness'' assumption~(\S\ref{cond:RAF}).
\end{cor}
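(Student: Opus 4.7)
The plan is to deduce the corollary from Proposition~\ref{prop:GRinftyRes} via a base change to a discrete valuation ring satisfying refined almost \'etaleness, while constructing the natural isomorphism in a manifestly $\GRtinfty$-equivariant manner. Specifically, I would consider $R'$ to be the $(\varpi)$-adic completion of the localisation $R_{(\varpi)}$, as in example (Ex5) of \S\ref{subsec:BC}. Then $R'$ is a complete discrete valuation ring with uniformiser $\varpi$ and residue field $\Frac(R/(\varpi))$, which inherits a finite $p$-basis from $R/(\varpi)$. By the example at the end of \S\ref{cond:RAF}, such $R'$ satisfies the ``refined almost \'etaleness'' assumption. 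By \S\ref{subsec:BC}(Ex5), \S\ref{subsec:BCGal}, and \S\ref{subsec:BCrhoG}, all the relevant constructions (including $\Sig$, $\Acris$, $\rho_G$, and $\cT^*$) are compatible with this base change.

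Next I would construct $\xi\colon \cT^*(\gM)\to T_p(G)$ intrinsically. Given $x\in \cT^*(\gM) = \Hom_{\Sig,\vphi}(\gM,\wh\Sig^{\ur})$, composing $x$ with the $\GRtinfty$-equivariant embedding $\wh\Sig^{\ur}\hra \Acris(R)$ of (\ref{eqn:SigurToAcris}) and extending scalars produces $\hat x\colon \Acris(R)\otimes_S\M \to \Acris(R)$, an $\Acris(R)$-linear, $\vphi$-equivariant, $\GRtinfty$-equivariant morphism (where $\M = S\otimes_{\vphi,\Sig}\gM$). Since Theorem~\ref{thm:Faltings} (which needs only the $p$-basis and normal-domain assumption on $R$) renders $\rho_G$ an isomorphism after inverting $t$, the composition $\hat x[1/t]\circ \rho_G^{-1}[1/t]$ corresponds to an element $\xi(x)\in \Acris(R)[1/t]\otimes_{\Zp} T_p(G)$ that is automatically $\vphi$-fixed and $\GRtinfty$-invariant. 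The assignment $x\mapsto \xi(x)$ is thus canonical and $\GRtinfty$-equivariant by construction.

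To verify that $\xi(x)$ actually lies in $T_p(G)$ and that $\xi$ is bijective, I would use base change to $R'$. By \S\ref{subsec:BCrhoG}, the construction of $\xi$ commutes with base change, and the image $\xi(x')\in \Acris(R')[1/t]\otimes T_p(G_{R'})$ lies in $T_p(G_{R'})$ by Proposition~\ref{prop:GRinftyRes}. The natural maps $\cT^*(\gM)\to \cT^*(\gM')$ (an isomorphism by \S\ref{subsec:BCGal} combined with Lemma~\ref{lem:FontaineB183}), $T_p(G)\to T_p(G_{R'})$ (an isomorphism of free $\Zp$-modules of the same rank, since the geometric $p$-power torsion of a $p$-divisible group becomes constant over the universal cover), and $\Acris(R)[1/t]\otimes T_p(G)\hra \Acris(R')[1/t]\otimes T_p(G_{R'})$ (injective by flatness of $R\to R'$ together with the explicit construction of $\Acris$) fit into a commutative square that lets one conclude $\xi(x)\in T_p(G)$ and that $\xi$ is an isomorphism. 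The main technical obstacle is this integrality descent: verifying the last injectivity and identifying $T_p(G)$ with the preimage of $T_p(G_{R'})$ inside $\Acris(R)[1/t]\otimes T_p(G)$. Upgrading the $\gal_{R'_\infty}$-equivariance obtained after base change to full $\GRtinfty$-equivariance is sidestepped by the intrinsic construction of $\xi$, which uses only $\GRtinfty$-equivariant data; this also makes clear that when $R$ itself satisfies the refined almost \'etaleness assumption, $\xi$ coincides with the map produced by Proposition~\ref{prop:GRinftyRes}.
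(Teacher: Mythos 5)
Your argument takes a genuinely different route from the paper's. The paper observes that an $R$ satisfying the formally finite-type assumption automatically has $R[1/p]$ finite \'etale over $R_0[1/p]$, so the only obstruction to the RAE condition~(\S\ref{cond:RAF}) is freeness of $\wh\Omega_{R_0}$ with a basis of unit differentials — a Zariski-local condition. It therefore covers $\Spf(R,J_R)$ by opens $\Spf(R_\alpha,J_{R_\alpha})$ each satisfying RAE, invokes the \'etale base-change compatibilities (Lemma~\ref{lem:etloc}, \S\ref{subsec:BCGal}), and applies Proposition~\ref{prop:GRinftyRes} on each piece; this is a four-line reduction. You instead pass to the $(\varpi)$-adic completion $R'$ of $R_{(\varpi)}$ (a complete DVR that does satisfy RAE, via the Brinon imperfect-residue-field case), and then try to descend.

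The difficulty with your route is exactly the one you flag but leave unresolved: to conclude $\xi(x)\in T_p(G)$ from $\xi(x')\in T_p(G_{R'})$ you need $T_p(G)$ to be the preimage of $T_p(G_{R'})$ under $\Acris(R)[1/t]\otimes T_p(G)\to \Acris(R')[1/t]\otimes T_p(G_{R'})$, which (since $T_p(G)\riso T_p(G_{R'})$) amounts to injectivity of $\Acris(R)[1/t]\to\Acris(R')[1/t]$. This is not an immediate consequence of flatness of $R\to R'$: the map is induced by $R_0\to R_0'$ \emph{and} by a chosen $\ol R\to\ol R'$, and one would have to control injectivity of $\ol R^\flat\to\ol R^{\prime\flat}$ and then of the $p$-adically completed divided power envelopes — none of which is handled by the flatness of $R\to R'$ alone, and none of which is available off the shelf in the paper. (There is also a smaller loose end: you implicitly use $T_p(G)\riso T_p(G_{R'})$ for all $p$, whereas \S\ref{subsec:BCrhoG} only records it for $p>2$; it is true in general but deserves a word.) So while the ``intrinsic $\xi$'' idea is a sensible device for obtaining full $\GRtinfty$-equivariance, the integrality descent step is a real gap, and the paper's Zariski-localisation avoids the issue altogether by never leaving the \'etale base-change setting where the relevant compatibilities are already proved.
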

\begin{proof}
By Lemma~\ref{lem:etloc} and the discussion in \S\ref{subsec:BCGal}, for any open immersion of formal schemes $\Spf (R',J_RR')\hra \Spf(R,J_R)$ we have a natural $\Zp$-isomorphism 
\[\cT^*(\gM)\riso \cT^*(\Sig_{R'}\otimes_\Sig\gM).\]
For any $(R,J_R)$ as in the statement, there exists a Zariski open cover $\set{\Spf (R_\alpha, J_{R_\alpha})}$ of $\Spf(R,J_R)$ such that each $R_\alpha$ satisfies the ``refined almost \'etaleness'' assumption~(\S\ref{cond:RAF}). (Note that $R[\ivtd p] = K\otimes_W R_0$ is  \'etale over $R_0[\ivtd p]$.) To prove the corollary, it suffices to prove it for each $R_\alpha$, for which we can apply Proposition~\ref{prop:GRinftyRes}.
\end{proof}

Recall that $\Tcris^*$ is defined via a  $\GRinfty$-invariant embedding $S\hra \Acris(R)$ which respects the suitable connections on both sides (\emph{cf.,} (\ref{eqn:Tcris})). On the other hand, $\cT^*$ is defined using the period ring $\wh\Sig^{\ur}$, which admits a $\GRtinfty$-equivariant embedding into $W(\ol R^\flat)$, hence into $\Acris^\nabla(R)$. To ``interpolate'' these two constructions, we introduce  auxiliary semilinear objects over $\wt S$ as below. 
\subsection{More divided power algebras}\label{subsec:STilde}
As in Proposition~\ref{prop:GRinftyRes}, we assume that $R$ satisfies the ``refined almost \'etaleness'' assumption~(\S\ref{cond:RAF}),  and for some Cohen subring $W\subset R_0$ we have $E(u)\in W[u]$ (\emph{cf.} \S\ref{sec:EtPhiMod}), although some constructions work in more generality.
Consider $R_0\wh\otimes_{\Zp}\Sig \thra R$ defined by $R_0$-linearly extending the natural projection $\Sig\thra \Sig/(\PP(u) )\cong R$.  (Here, $\wh\otimes$ is the completed $\otimes$-product with respect to the $p$-adic topology.) Let $\wt S$ denote the $p$-adically completed divided power envelope of $R_0\wh\otimes_{\Zp}\Sig$ with respect to $\ker (R_0\wh\otimes_{\Zp}\Sig \thra R)$, which is generated by $\PP(u) $ and $a\otimes1-1\otimes a$ for $a\in R_0$.  

Let $\Fil^1\wt S$ denote the divided power ideal with $\wt S/\Fil^1\wt S  \cong R$. Note that $\vphi\otimes\vphi$ on $R_0\otimes_{\Zp}\Sig$ extend to $\vphi$ on $\wt S$, and we have $\langle\vphi(\Fil^1\wt S)\rangle =  p\wt S$ as $\vphi(\PP(u))\in p\wt S\starr$. We set $\vphi_1:=\frac{\vphi}{p}:\Fil^1\wt S \ra \wt S$. 

We define 
a connection $d_{\wt S}:\wt S \ra \wt S\otimes_{R_0\wh\otimes \Sig}\wh\Omega_{R_0\wh\otimes \Sig}$ by extending the universal continuous derivation for $R_0\wh\otimes_{\Zp} \Sig$ in a usual way, and define $d^u_{\wt S}:\wt S \ra \wt S\otimes_{R_0}\wh\Omega_{R_0}$ by composing $d_{\wt S}$ with the natural projection $\wt S\otimes_{R_0\wh\otimes \Sig}\wh\Omega_{R_0\wh\otimes \Sig}\thra \wt S\otimes_{R_0}\wh\Omega_{R_0}$. 

One can embed $\Sig=R_0[[u]]$ into $\wt S$ in two different ways, as follows: for any $\sum_{n=0}^\infty a_nu^n\in \Sig$ with $a_n\in R_0$
\begin{align}
\imath_1:\sum_{n=0}^\infty a_nu^n &\mapsto \sum_{n=0}^\infty a_n\otimes u^n \in \wt S\\
\imath_2:\sum_{n=0}^\infty a_nu^n &\mapsto 1\otimes\left(\sum_{n=0}^\infty a_nu^n\right) \in \wt S.
\end{align}
Both maps extend to $\imath_1,\imath_2:S\hra \wt S$, which respect $\vphi$ and the divided power structures. Note that $\imath_1$ is horizontal when $S$ is given a connection $d^u_{S}$, and $\imath_2(S) = (\wt S)^{d^u_{\wt S}=0}$. Let us also consider the map
\begin{equation}\label{eqn:diagonal}
\delta:\wt S \ra S;\ a\otimes s \mapsto a\tim s \text{ for any } a\in R_0,\ s\in\Sig,
\end{equation}
which respects $\vphi$, the divided power structures, and the connections when $S$ is endowed with $d^u_{S}$. Also note that $\delta\circ\imath_1 = \delta\circ\imath_2 = \id_{S}$.

We now define a $\vphi$-compatible $\GRtinfty$-invariant $R_0$-algebra map  $\tilde\jmath:\wt S\hra \Acris(R)$ as follows. First, recall that we have a $\vphi$-compatible $\GRtinfty$-invariant map $\Sig\hra W(\ol R^\flat)$. (\emph{Cf.} (\ref{eqn:SigurToAcris}).) We may $R_0$-linearly extend this map to $R_0\otimes_{\Zp}\Sig\hra R_0\otimes_{\Zp}W(\ol R^\flat)$, which extends to $\wt S\hra \Acris(R)$.  This map, which we denote by $\tilde\jmath$, is clearly compatible with $\vphi$ and $\nabla$, and the image of $\Fil^1\wt S$ is precisely the intersection of the image of $\wt S$ and $\Fil^1\Acris(R)$. 

We remark that the embedding $S \xra{\imath_1} \wt S \xra{\tilde\jmath}\Acris(R)$ is precisely the embedding we use to define $\Tcris^*$ (\ref{eqn:Tcris}). The embedding $\Sig  \xra{\imath_2} \wt S \xra{\tilde\jmath}\Acris( R_0)$ has image in $W(\ol R^\flat)\subset \Acris^\nabla(R)$ and coincides with the map (\ref{eqn:SigurToAcris}).

\subsection{More filtered modules with connection}\label{subsec:Mtilde}
By applying Proposition~\ref{prop:FiltMod} to $\wh D = \wt S$, we obtain an equivalence of categories $\E\rightsquigarrow\E(\wt S)$ from the category of Dieudonn\'e crystals over $\Spf(R,(\varpi))$ to $\wtSMF$ (following the notation of Definition~\ref{def:DieudonneModwHF}). Since $\E\rightsquigarrow\E(\wt S)$ also defines an equivalence of categories into $\SMF$, we obtain an equivalence of categories $\SMF \riso \wtSMF$. 

We can indeed make the functor $\SMF \ra \wtSMF$ explicit as follows. The maps $\imath_1,\imath_2:S\ra \wt S$  and $\delta:\wt S\ra S$ are divided power morphisms over $R$, so they induce a horizontal $\vphi$-compatible filtered morphisms $\E(S)\underset{\delta}{\overset{\imath_j}{\rightleftharpoons}}\E(\wt S)$ (with $j=1,2$) for any Dieudonn\'e crystal $\E$ over $R$. Since $\delta\circ\imath_1 = \id_{S}$, we obtain natural isomorphisms 
\begin{equation}\label{eqn:Mtilde}
\E(\wt S) \cong \wt S\otimes_{\imath_j,S}\E(S),\quad \E(S)\cong S\otimes_{\delta,\wt S}\E(\wt S), \text{ for } j=1,2.
 \end{equation}
Set $\M:=\E(S)$ and $\wt\M:=\wt S\otimes_{\imath_1,S}\E(S)$, using $\imath_1$. Then one can check that the following coincides with the natural extra structure on $\E(\wt S)$ as in Proposition~\ref{prop:FiltMod}:
\begin{enumerate}
\item $\vphi_{\wt\M} = \vphi_{\wt S}\otimes \vphi_\M$;
\item $\nabla_{\wt\M}$ is the connection over $d_{\wt S}$, such that if for $m\in\M$ we have $\nabla_\M(m)=\sum_i m_i \otimes ds_i$ then $\nabla_{\wt\M}(1\otimes m) = \sum_i m_i \otimes d(\iota_1(s_i))$;
\item $\Fil^1\wt\M$ is the preimage of the Hodge Filtration on $\E(R)$  by the natural projection $\E(\wt S)\thra \E(R)$.  
\end{enumerate}
\begin{rmksub}\label{rmk:imaths}
Note that we can use $\imath_2$ instead of $\imath_1$ and repeat the above construction, but it follows from Proposition~\ref{prop:FiltMod} that there is a natural isomorphism $\wt\M:=\wt S\otimes_{\imath_1,S}\E(S)\cong \wt S\otimes_{\imath_2,S}\E(S)$ in $\wtSMF$.
\end{rmksub}


For any $\wt\M\in\wtSMF$, we define a connection $\nabla^u_{\wt\M}:\wt\M \ra \wt\M\otimes_{R_0} \wh\Omega_{R_0}$ compatible with $d^u_{\wt S}$ on $\wt S$ by 
\begin{equation}
\nabla^u_{\wt\M}:\wt\M \xra{\nabla_{\wt \M}} \wt\M\otimes_{R_0\wh\otimes\Sig} (\wh\Omega_{R_0\wh\otimes\Sig}) \thra \wt\M\otimes_{R_0} \wh\Omega_{R_0},
\end{equation}
where the second map is the natural projection.

The relation between $\nabla^u_\M$ and $\nabla^u_{\wt\M}$ is as follows:
\begin{equation}\label{eqn:ResNabla}
\nabla^u_{\wt \M}(\wt s\otimes m ) := \wt s\otimes \nabla^u_{\M}(m)+ m \otimes d^u_{\wt S}(\wt s),
\end{equation} 
for any  $m\in\M$ and $\wt s\in \wt  S$. Here we use $\imath_1$ to get $\wt\M\cong\wt S\otimes_{\imath_1,S}\M$. 
We will often work with $\nabla^u_{\wt\M}$ instead of  $\nabla_{\wt\M}$. 
\begin{rmksub}\label{rmk:HorSec} 
By abuse of notation, let $\imath_1:\M \ra \wt S\otimes_{\imath_1,S}\M=:\wt\M$  be the natural map, and $\imath_2:\M \ra \wt S\otimes_{\imath_2, S} \M \cong \wt \M$ the composition of the natural map and the natural isomorphism discussed in Remark~\ref{rmk:imaths}. Then while $\iota_1$ is horizontal with respect to $\nabla^u_\M$ and $\nabla^u_{\wt \M}$, one can directly see that $\imath_2(\M) = \wt\M^{\nabla^u=0}$. This observation will be used later in \S\ref{subsec:GRinftyResPf}.
\end{rmksub}

Using the embedding $\tilde\jmath:\wt S\hra \Acris(R)$  constructed in \S\ref{subsec:STilde}, we define
\begin{equation}
\wt T_{\cris}^*(\wt\M):=\Hom_{\wt S, \Fil^1,\vphi,\nabla^u}(\wt\M,\Acris(R))
\end{equation}
for $\wt\M\in\wtSMF$. Here, the morphisms are required to respect the connections $\nabla^u_{\wt\M}$ on $\wt \M$ and $\nabla$ on $\Acris(R)$ defined in \S\ref{subsec:Acris}. Note that $\wt T_{\cris}^*(\wt\M)$ is $p$-adic, and has a continuous $\GRtinfty$-action (as $\wt S\hra \Acris(R)$ is $\GRtinfty$-invariant).

For any $\M\in\SMF$, there is a $p$-divisible group $G'$ over $R$ such that $\M[\ivtd p]\cong \M^*(G')[\ivtd p]:=\DD^*(G')(S)[\ivtd p]$ by Theorem~\ref{thm:BreuilClassif}. Set $\wt\M:=\wt S\otimes_{\imath_1,S}\M\in\wtSMF$. Then we have a natural isomorphism $D^*(G'):=\DD^*(G')(R_0)[\ivtd p]\cong R_0[\ivtd p]\otimes_{\wt S}\wt\M$ as objects of $\RMF$ (\emph{cf.,} Example~\ref{exa:BT}), where we view $R_0$ as an $\wt S$-algebra via $\wt S\xra\delta S \thra R_0$, and the Hodge Filtration on $R_0[\ivtd p]\otimes_{\wt S}\wt\M$ is given by the image of $\Fil^1\wt\M$. Note that the following defines a $\vphi$-compatible section of the natural projection $\wt\M[\ivtd p]\thra D^*(G')$:
\begin{equation}\label{eqn:wts}
\wt s: D^*(G') \xra{s} \M[1/p] \xra{\imath_1} \wt\M[1/p],
\end{equation}
where $s$ is as in Lemma~\ref{lem:Reduction}, and $\iota_1$ is as in Remark~\ref{rmk:HorSec}. Clearly,  $\wt s$ is a unique $\vphi$-compatible section.
\begin{lemsub}\label{lem:GalCompTilde}
We use the notation as above. There exists a natural $\GRtinfty$-equivariant isomorphism $V_p(G') \cong \wt T_{\cris}^*(\wt\M)[\ivtd p]$, and the image of $\wt T_{\cris}^*(\wt\M)$ in $V_p(G')$ is same as the image of $\Tcris^*(\M)$ in $V_p(G')$. 
\end{lemsub}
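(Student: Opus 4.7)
The plan is to establish a natural $\GRtinfty$-equivariant bijection $\Tcris^*(\M) \cong \wt T_{\cris}^*(\wt\M)$ of $\Zp$-lattices, and to combine this with Corollary~\ref{cor:BrinonTrihan} (together with the hypothesis $\M[\ivtd p] \cong \M^*(G')[\ivtd p]$) to deduce both assertions at once. The bijection is the $\wt S$-linear extension of scalars along $\imath_1 : S \hra \wt S$: any $S$-linear map $g : \M \to \Acris(R)$, with $\Acris(R)$ viewed as an $S$-algebra via $\tilde\jmath \circ \imath_1$, extends uniquely to the $\wt S$-linear map $\wt g : \wt\M \to \Acris(R)$ defined by $\wt g(\wt s \otimes m) := \tilde\jmath(\wt s) g(m)$; conversely, any $\wt S$-linear $\wt g$ restricts along $\imath_1 : \M \hra \wt\M$ to such a $g$.

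To upgrade this to a bijection between $\Tcris^*(\M)$ and $\wt T_{\cris}^*(\wt\M)$, one must check that the extra structures are preserved in both directions. Frobenius compatibility transfers since $\tilde\jmath$ and $\imath_1$ are $\vphi$-compatible, and $\GRtinfty$-equivariance follows from the $\GRtinfty$-invariance of $\tilde\jmath$. For the Hodge filtration, I would use the identification $\Fil^1\wt\M = (\Fil^1\wt S) \cdot \wt\M + \wt S \cdot \imath_1(\Fil^1\M)$ together with $\tilde\jmath(\Fil^1\wt S) \subset \Fil^1\Acris(R)$ to see that $g$ is filtered iff $\wt g$ is. For connection compatibility, formula (\ref{eqn:ResNabla}) shows that both $(\wt g \otimes 1)\nabla^u_{\wt\M}(\wt s \otimes m)$ and $\nabla(\wt g(\wt s \otimes m))$ expand to $\tilde\jmath(\wt s) \cdot (g \otimes 1)(\nabla^u_\M(m)) + g(m) \cdot (\tilde\jmath \otimes 1)(d^u_{\wt S}(\wt s))$, exploiting the horizontality of $g$ and of $\tilde\jmath$ with respect to $d^u_{\wt S}$ and $\nabla$ on $\Acris(R)$.

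Once the bijection is established, Corollary~\ref{cor:BrinonTrihan} applied to $\M^*(G')$ gives $\Tcris^*(\M^*(G'))[\ivtd p] \cong V_p(G')$ as $\GRtinfty$-representations; since $\Tcris^*(-)[\ivtd p]$ depends only on the iso-Breuil-module and $\M[\ivtd p] \cong \M^*(G')[\ivtd p]$, this in turn yields $\Tcris^*(\M)[\ivtd p] \cong V_p(G')$. Composing with the bijection above produces the desired natural isomorphism $\wt T_{\cris}^*(\wt\M)[\ivtd p] \cong V_p(G')$, and the second assertion about images follows at once since the two $\Zp$-lattices in $V_p(G')$ are literally identified under the bijection, in a manner compatible with the natural maps to $V_p(G')$ coming from the $\vphi$-equivariant sections $s : D^*(G') \to \M[\ivtd p]$ and $\wt s = \imath_1 \circ s : D^*(G') \to \wt\M[\ivtd p]$ (cf.~(\ref{eqn:wts})). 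I expect the main obstacle to be the verification of connection compatibility in the first step, which requires careful bookkeeping of the derivation formulas for $\tilde\jmath$ and $\nabla^u_{\wt\M}$.
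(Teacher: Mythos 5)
Your proof is correct and runs along essentially the same lines as the paper's, but your organization is slightly different and, on one point, more explicit. The paper first identifies $\wt T_{\cris}^*(\wt\M)[\ivtd p]$ with $V_p(G')$ by applying Theorem~\ref{thm:Faltings} to $D^*(G')$ and using the section $\wt s$ to get a $\Bcris(R)$-linear isomorphism, and then observes that $\imath_1$ induces a surjection $\wt T_{\cris}^*(\wt\M)\thra\Tcris^*(\M)$ (citing $\delta$ as a section) compatible with the embeddings into $V_p(G')$. You instead establish the $\GRtinfty$-equivariant bijection $\Tcris^*(\M)\cong\wt T_{\cris}^*(\wt\M)$ directly as the tensor--hom adjunction for $\wt\M\cong\wt S\otimes_{\imath_1,S}\M$, with the structure preservation carried by the fact that $\tilde\jmath$ is horizontal for $(d^u_{\wt S},\nabla)$ and $\Fil^1\wt\M = (\Fil^1\wt S)\wt\M+\wt S\cdot\imath_1(\Fil^1\M)$; you then import $\Tcris^*(\M)[\ivtd p]\cong V_p(G')$ from Corollary~\ref{cor:BrinonTrihan} (a consequence of the same Theorem~\ref{thm:Faltings} the paper uses). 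The one genuine advantage of your phrasing is that it sidesteps a point the paper leaves slightly implicit: the module-theoretic section $\delta:\wt\M\to\M$ is not itself horizontal for the $\nabla^u$-connections, so its direct use to produce a preimage in $\wt T_{\cris}^*(\wt\M)$ needs a word; the canonical adjunction preimage $\wt g(\wt s\otimes m)=\tilde\jmath(\wt s)g(m)$, which you use, is the one that demonstrably preserves all the structures. Both approaches yield the same $\Zp$-lattice identification, and the final comparison of images via the two sections $s$ and $\wt s=\imath_1\circ s$ is the same in both.
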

\begin{proof}
By Theorem~\ref{thm:Faltings} we have a natural $\GRR$-isomorphism $V_p(G')\cong V^*_{\cris}(D^*(G'))$. Since the section $\wt s$ (\ref{eqn:wts}) induces an isomorphism \[1\otimes\wt s:\Bcris(R)\otimes_{R_0[1/ p]}D^*(G') \riso \Bcris(R)\otimes_{\wt S} \wt\M[\ivtd p],\] one obtains a $\GRtinfty$-equivariant isomorphism $\wt T_{\cris}^*(\wt\M)[\ivtd p]\riso V_{\cris}^*(D^*(G'))\cong V_p(G')$. 
For the second assertion, $\imath_1:\M \ra \wt\M$ (\emph{cf.,} Remark~\ref{rmk:HorSec}) induces an $\GRtinfty$-equivariant map $\wt T_{\cris}^*(\wt\M) \ra T_{\cris}^*(\M)$, which is surjective (as $\delta:\wt\M\ra\M$ defines a section) and respects the embeddings into $V_p(G')$. The lemma now follows.
\end{proof}

\subsection{Proof of Proposition~\ref{prop:GRinftyRes}}\label{subsec:GRinftyResPf} 
Consider $\M\in\SMF$ and $\gM\in\SMq$  such that $S\otimes_{\vphi,\Sig}\gM\cong \M$. Set $\wt\M:=\wt S\otimes_{\imath_1,S}\M$, as before. By Lemma~\ref{lem:GalCompTilde}, it suffices to construct a natural $\GRtinfty$-equivariant injective map $\cT^*(\gM)\ra \wt T_{\cris}^*(\wt\M)$ and show that it is an isomorphism when $p>2$.

The map $\imath_2:\M \ra \wt\M$ (\emph{cf.,} Remark~\ref{rmk:HorSec}) induces  an $\GRinfty$-isomorphism
\begin{equation}\label{eqn:GalCompNabla}
\Hom_{S,\Fil^1,\vphi,\vphi_1}(\M,\Acris^\nabla(R)) \cong \Hom_{\wt S, \Fil^1, \vphi, \nabla^u}(\wt\M,\Acris(R)) = \wt T_{\cris}^*(\wt\M).
\end{equation}
This is an isomorphism because $\iota_2(\M) = (\wt\M)^{\nabla^u=0}$. Since we have $S\otimes_{\vphi,\Sig}\gM = \M$, we obtain a $\GRtinfty$-equivariant map 
\begin{equation}\label{eqn:GalCompSig}
\cT^*(\gM)=\Hom_{\Sig,\vphi}(\gM,\wh\Sig^{\ur})\ra \Hom_{S,\Fil^1,\vphi,\vphi_1}(\M,\Acris^\nabla(R)),
\end{equation}
where the arrow is induced by $\vphi:\wh\Sig^{\ur}\ra\Acris^\nabla(R)$. This map is clearly injective. Combining (\ref{eqn:GalCompNabla}) and (\ref{eqn:GalCompSig}), we obtain a $\GRtinfty$-equivariant injective map $\cT^*(\gM)\ra  \wt T_{\cris}^*(\M)$.

Let us show that the map (\ref{eqn:GalCompSig}) is an isomorphism when $p>2$. We follow the same strategy as in \cite[Theorem~2.2.7]{kisin:fcrys}. It suffices to show that the following map, obtained by reducing the map (\ref{eqn:GalCompSig}) modulo~$p$, is an isomorphism when $p>2$:
 \begin{equation}\label{eqn:GalCompModp}
\Hom_{\Sig,\vphi}(\gM/(p),\wh\Sig^{\ur}/(p))\ra \Hom_{S,\Fil^1,\vphi,\vphi_1}(\M/(p),\Acris^\nabla(R)/(p)).
\end{equation}
Since both sides are finite-dimensional $\Fp$-vector spaces of same dimension, it suffices to prove injectivity. Note that $\Acris^\nabla(R)/(p)$ is naturally isomorphic to the divided power envelope of $\ol R^\flat$ with respect to $\wt\varpi^e$, so it easily follows that the kernel of $\vphi:\wh\Sig^{\ur}_R/(p) \ra \Acris^\nabla(R)$ is principally generated by $u^e$.  Therefore, $x:\gM/(p) \ra \wh\Sig^{\ur}/(p)$ is in the kernel of (\ref{eqn:GalCompModp}) if and only if the image of $x$ is contained in $u^e\wh\Sig^{\ur}$. 

Recall that $(1\otimes\vphi_\gM)(\gM/(p))$ contains $u^e(\gM/(p)) = \PP(u) (\gM/(p))$.
Since $x$ commutes with $\vphi$'s, we have 
\begin{equation}
u^e\tim x(m)= x(u^e m) = x((1\otimes\vphi_\gM)(m')) \in \vphi(u^e)\tim\wh\Sig^{\ur}/(p) = u^{pe}\tim\wh\Sig^{\ur}/(p) ,
\end{equation}
where $m\in\gM/(p)$ and $m'\in \vphi^*(\gM/(p))$ such that $(1\otimes\vphi_\gM)(m') = u^em$. So we have $x(\gM/(p)) \subseteq u^{(p-1)e}\tim\wh\Sig^{\ur}_R/(p)$. If $p>2$, then we can iterate this process to get $x=0$, which implies that the map (\ref{eqn:GalCompModp}) is injective. This concludes the proof of Proposition~\ref{prop:GRinftyRes} when $p>2$.

\begin{rmksub}
The map (\ref{eqn:GalCompSig}) is not necessarily an isomorphism when $p=2$. For example, set $\gM:=\gM^*(\Gmhat)$ and $\M:=\M^*(\Gmhat)$. Note that we can find a $\Sig$-basis $\e\in\gM$ such that $\vphi_\gM(\e) = \PP(u) \e$. Then one can show that the image of $\cT^*(\gM) $ in $\Tcris^*(\M)$ is  $2\tim\Tcris^*(\M)$ by the same proof as \cite[Proposition~5.4(2)]{Kim:ClassifFFGpSchOver2AdicDVR}. On the other hand, it is not difficult to show that $T_p(\Gmhat) = 2\tim \Tcris^*(\M)$, so we nonetheless have $T_p(\Gmhat) = \cT^*(\gM)$. We can show the same statement when $G'$ is a multiplicative-type $p$-divisible group.
\end{rmksub}

It remains to show that the $\Zp$-lattices $T_p(G)$ and $\cT^*(\gM^*(G))$ in $V_p(G)$ coincide when $p=2$. Recall that there exists a (unique) $\vphi$-equivariant map $R_0\ra W(\Frac(R/\varpi)^{\perf})$ (\emph{cf.,} \S\ref{subsec:BC} (Ex6)). Set $\fo_{K'}:= W(\Frac(R/\varpi)^{\perf})\otimes_{W}\fo_K$, which is an $p$-adic discrete valuation ring with perfetct residue field. As discussed in \S\ref{subsec:BCGal}, applying the base change $R\ra \fo_{K'}$ does not change the $\Qp$-vector space $V_0(G)$ and the $\Zp$-lattices $T_p(G)$ and $\cT^*(\gM^*(G))$. So in order to show $T_p(G)=\cT^*(\gM^*(G))$, we may assume that $R=\fo_{K'}$ with perfect residue field. But this is known already, independently in \cite{Lau:GalRep} and in \cite{Kim:ClassifFFGpSchOver2AdicDVR}. This concludes the proof of Proposition~\ref{prop:GRinftyRes} when $p=2$.

\subsection{Full faithfulness in the  $p=2$ case}\label{subsec:p2}
Let $p=2$. Then we have a functor
\[ \{p\text{-divisible groups}/R\}[1/p] \ra \SM[1/p];\quad G\rightsquigarrow \gM^*(G)[1/p]\]
by Corollary~\ref{cor:RelKisin}.

Using Proposition~\ref{prop:GRinftyRes}, we can improve  Corollary~\ref{cor:RelKisin} in the the following case:
\begin{corsub}\label{cor:gMFF}
Let $p=2$, and assume that $R$ satisfies the formally finite-type assumption~(\S\ref{cond:dJ}), and for some Cohen subring $W\subset R_0$ we have $E(u)\in W[u]$ (\emph{cf.} \S\ref{sec:EtPhiMod}).
Then for a $p$-divisible group $G$ over $R$, there exists a \emph{unique} $\vphi(\Sig)$-lattice $\gM\subset \DD^*(G)(S)$ with $\gM\in\SM$. Furthermore, the assignment $G\rightsquigarrow \gM$ defines a fully faithful functor:
\[
\gM^*:\{p\text{-divisible groups}/R\} \ra \SM.
\]
\end{corsub}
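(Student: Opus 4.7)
The plan is to combine the essential surjectivity from Proposition~\ref{prop:CL}, the up-to-isogeny full faithfulness from Lemma~\ref{lem:CL-FF}, and the Galois-theoretic input from Corollary~\ref{cor:GRinftyRes}, bridged by the equivalence between \'etale $(\vphi,\fo_\Eps)$-modules and $\GRtinfty$-representations (Proposition~\ref{prop:Katz}, Lemma~\ref{lem:FontaineB183}). For existence, Proposition~\ref{prop:CL} applied to $\M^*(G):=\DD^*(G)(S)\in\SMF$ produces a $\gM\in\SM$ with an isomorphism $S\otimes_{\vphi,\Sig}\gM\cong\M^*(G)$, and the natural embedding $m\mapsto 1\otimes m$ realises $\gM$ as a $\vphi(\Sig)$-submodule of $\M^*(G)$.

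For uniqueness, suppose $\gM,\gM'\subset\M^*(G)$ are two such lattices in $\SM$. The identity on $\M^*(G)$ is a morphism in $\SMF$, and by Lemma~\ref{lem:CL-FF} at $p=2$ its unique lift to $\SM[1/p]$ identifies $\gM[1/p]=\gM'[1/p]$ as $\Sig[1/p]$-submodules of $\M^*(G)[1/p]$. On the Galois side, Corollary~\ref{cor:GRinftyRes} gives natural $\GRtinfty$-equivariant identifications $\cT^*(\gM)\cong T_p(G)\cong\cT^*(\gM')$ inside $V_p(G)$; the equivalence of Proposition~\ref{prop:Katz} together with Lemma~\ref{lem:FontaineB183} converts this into $\fo_\Eps\otimes_\Sig\gM=\fo_\Eps\otimes_\Sig\gM'$ as subsets of $\Eps\otimes_\Sig\gM$. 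Choosing a $\Sig$-basis of $\gM$, the desired equality $\gM=\gM'$ then reduces to the algebraic identity
\[
\Sig=\Sig[1/p]\cap\fo_\Eps\quad\text{inside}\quad\Eps,
\]
which can be verified directly from the explicit description of $\fo_\Eps$ as Laurent series over $R_0$ with $p$-adically bounded coefficients.

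Functoriality follows by a parallel mechanism: for $f:G\to G'$, the induced morphism $\M^*(f):\M^*(G')\to\M^*(G)$ lifts uniquely up to isogeny to $\gM^*(f)[1/p]:\gM^*(G')[1/p]\to\gM^*(G)[1/p]$ by Lemma~\ref{lem:CL-FF}; its base change to $\fo_\Eps$ corresponds, via Corollary~\ref{cor:GRinftyRes} and Proposition~\ref{prop:Katz}, to the $\Zp$-integral map $T_p(f):T_p(G)\to T_p(G')$, so it is $\fo_\Eps$-integral, and the same intersection identity forces $\gM^*(f)[1/p]$ itself to restrict to an integral morphism in $\SM$. Injectivity of $\Hom(G,G')\to\Hom_{\SM}(\gM^*(G'),\gM^*(G))$ then follows from Corollary~\ref{cor:RelKisin} and the $p$-torsion-freeness of both sides. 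For surjectivity, given $\alpha\in\Hom_{\SM}(\gM^*(G'),\gM^*(G))$, Corollary~\ref{cor:RelKisin} produces a unique $f_\Qp\in\Hom(G,G')[1/p]$ with $\gM^*(f_\Qp)[1/p]=\alpha[1/p]$, and Corollary~\ref{cor:GRinftyRes} identifies $\cT^*(\alpha)$ with $T_p(f_\Qp)$, which is therefore $\Zp$-integral; the standard saturation property of the Tate module functor on $p$-divisible groups then forces $f_\Qp\in\Hom(G,G')$. The main technical obstacle throughout is the intersection identity $\Sig=\Sig[1/p]\cap\fo_\Eps$, a relative analogue of a classical trick in Kisin's theory, which requires some explicit control over the ring $\fo_\Eps$ in this relative setting.
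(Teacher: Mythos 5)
Your argument follows the paper's structure quite closely for existence, uniqueness, and functoriality: the combination of Lemma~\ref{lem:CL-FF}, Corollary~\ref{cor:GRinftyRes}, Proposition~\ref{prop:Katz}/Lemma~\ref{lem:FontaineB183}, and the intersection $\Sig = \Sig[\ivtd p]\cap\fo_\Eps$ inside $\Eps$ (which you usefully make explicit, whereas the paper leaves it implicit when it says the identification of $\fo_\Eps$-lattices ``forces $\gM=\gM'$'') is exactly the right mechanism, and your use of Corollary~\ref{cor:GRinftyRes} rather than localizing first to the RAE setting and using Proposition~\ref{prop:GRinftyRes} directly is a harmless small simplification.

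However, your fullness argument has a genuine gap at the step where you write that ``$T_p(f_\Qp)$ is $\Zp$-integral; the standard saturation property of the Tate module functor on $p$-divisible groups then forces $f_\Qp\in\Hom(G,G')$.'' There is no such standard property over a general formally finite-type base $R$: the statement that $\Hom(G,G')\to\Hom_{\Zp[\gal_R]}(T_p(G),T_p(G'))$ is injective with $p$-saturated image is precisely Tate's theorem over a $p$-adic discrete valuation ring, and its extension to the relative setting is nontrivial and is in fact close to what this circle of results is establishing. Concretely, the integrality of $T_p(f_\Qp)$ says that $f':=p^n f_\Qp\in\Hom(G,G')$ kills the $\ol R$-points of $G[p^n]$, but you need $f'$ to kill the finite flat group scheme $G[p^n]$ scheme-theoretically, which does not follow formally. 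The paper supplies the missing step in two stages: first base-change along $R\to\wh R_{(\varpi)}$ to a $p$-adic DVR (see \S\ref{subsec:BC}, (Ex5)), where Proposition~\ref{prop:GRinftyRes} together with Tate's theorem shows $f'$ kills $G[p^n]_{\wh R_{(\varpi)}}$; second, observe that the kernel of $f':G[p^n]\to G'[p^n]$ is a closed subgroup of the finite flat $G[p^n]$ which contains the full fiber over $\Spec\wh R_{(\varpi)}$, and hence must equal $G[p^n]$ over all of $R$. This DVR base-change-plus-descent argument is the essential content you elide, and without it the surjectivity claim does not go through. Once this is fixed, the rest of your proposal is sound.
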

\begin{proof}
For a $p$-divisible group $G$ over $R$, we write $\M:=\DD^*(G)(S)$ and llet  $\gM,\gM'\subset \M$
be $\vphi(\Sig)$-lattices  which are in $\SM$, which exist by Proposition~\ref{prop:CL}.   We want to show that  $\gM = \gM'$. Since this  assertion can be proved Zariski-locally, we may assume that $R$ satisfies the ``refined almost \'etaleness'' assumption~(\S\ref{cond:RAF}). Note that $\gM[\ivtd p] = \gM'[\ivtd p]$ in $\M[\ivtd p]$ by Lemma~\ref{lem:CL-FF}, and we have $T_p(G) =\cT^*(\gM) = \cT^*(\gM')$ inside $T^*_{\cris}(\M)$ by Proposition~\ref{prop:GRinftyRes}. Therefore we have $\fo_\Eps\otimes_\Sig\gM = \fo_\Eps\otimes_\Sig\gM'$ inside $\fo_\Eps\otimes_\Sig\gM[\ivtd p]$ by Proposition~\ref{prop:Katz}, which forces $\gM = \gM '$.

Now let $\gM^*(G)\in \SM$ be the unique $\vphi(\Sig)$-lattice in $\M = \DD^*(G)(S)$, and we show that the assignment $\gM:G\rightsquigarrow \gM^*(G)$ is functorial; in other words,  for any morphism $f:G\ra G'$ of $p$-divisible groups over $R$, the map $\DD^*(f)(S)$ restricts to $\gM^*(G')\ra\gM^*(G)$. Since this  assertion can be proved Zariski-locally, we may assume that $R$ satisfies the ``refined almost \'etaleness'' assumption~(\S\ref{cond:RAF}). 
By Lemma~\ref{lem:CL-FF}, there is a map $\alpha:\gM^*(G')\ra\gM^*(G)$ in $\SM$ which is the restriction of $p^n\DD^*(f)(S)$. Then, the image of $\cT^*(\alpha) :\cT^*(\gM^*(G))\ra\cT^*(\gM^*(G'))$ is divisible by $p^n$ since $\cT^*(\alpha) = p^nT_p(f)$ by Proposition~\ref{prop:GRinftyRes}. This shows that $\ivtd{p^n}\alpha:\gM^*(G')\ra\gM^*(G)$ is well-defined by Proposition~\ref{prop:Katz}. We set $\gM^*(f):=\ivtd{p^n}\alpha$, which is the restriction of $\DD^*(f)(S)$.

Let us now show the full faithfulness of $\gM$.
For $\gM:=\gM^*(G)$ and $\gM':=\gM^*(G')$ where $G$ and $G'$ are $p$-divisible groups over $R$, consider a map $\alpha:\gM'\ra \gM$ in $\SMqw$. Since we already showed that $\gM^*$ is fully faithful up to isogeny (as $\M^*$ is so by the proof of Theorem~\ref{thm:BreuilClassif}), there is a homomorphism $f':G\ra G'$ with $\gM^*(f') = p^n\alpha$ for some $n$. Applying the base change $R\ra \wh R_{(\varpi)}$ (\emph{cf.,} \S\ref{subsec:BC} (Ex5)), it follows from Proposition~\ref{prop:GRinftyRes} that $f'$ kills $G[p^n]_{R_{(\varpi)}}$. (We can apply Proposition~\ref{prop:GRinftyRes} for the base ring $\wh R_{(\varpi)}$ as it is a $p$-adic discrete valuation ring.) So the kernel of $f':G[p^n]\ra G'[p^n]$ is some finite (not necessarily flat) subgroup of $G[p^n]$ containing $G[p^n]_{R_{(\varpi)}}$, so it should be equal to $G[p^n]$.  
It now follows that there is a homomorphism $f:G\ra G'$ such that $p^nf = f'$. Clearly, $\gM^*(f)=\alpha$. Faithfulness of $\gM^*$ is proved very similarly.
\end{proof}
%

\section{Finite locally free group schemes}\label{sec:finfl}
In this section, we state some results on classifications  of finite locally free\footnote{Note that we always assume our finite locally free group schemes are commutative. We use ``finite locally free group schemes'' instead of more popular terminology ``finite flat group schemes'' as it is essential to deal with possibly non-noetherian base rings in the proof.} group schemes via torsion versions of Breuil $S$-modules and Kisin $\Sig$-modules; \emph{cf.} Theorem~\ref{thm:RelKisinFF}. The proof will be given in \S\ref{subsec:RelKisinFF}. 

Throughout this section, we let $R$ be a $p$-adic ring satisfying the $p$-basis assumption~(\S\ref{cond:Breuil}) (possibly with stronger assumptions on $R$, which will be specified), with the choice of $(\Sig,\vphi)$, $E(u)$, and $S$ as in \S\ref{subsec:settingBr}. 
\subsection{Torsion filtered $S$-modules}
The following definition is the torsion version of Breuil $S$-modules. \begin{defnsub}\label{def:torBrMod}
Let $\SMFqtor$ denote the category of tuples $(\M, \Fil^1\M, \vphi_1)$ where 
\begin{enumerate}
\item\label{def:torBrMod:M} $\M$ is a finitely generated $S$-module which is killed by some power of $p$, 
and $\Fil^1\M\subset \M$ is an $S$-submodule containing $(\Fil^1S)\M$;
\item\label{def:torBrMod:phi} $\vphi_1:\Fil^1\M\ra M$ is a $\vphi$-linear endomorphism of $\M$ such that $\vphi_1(\M)$ generates $\M$ and for any $s\in\Fil^1S$ and $m\in\M$ we have 
\[\vphi_1(sm) = \frac{\vphi_1(s)}{\vphi_1(\PP)} \vphi_1(\PP m).\]
\end{enumerate}
Note that $\vphi_1(\PP)\in S\starr$. We define $\vphi:\M\ra \M$ by 
\[\vphi_\M(m):=\vphi_1(\PP)\iv \vphi_1(\PP m)\] 
for any $m\in\M$. Then we have $\vphi_\M = p\vphi_1$ on $\Fil^1\M$.

Let  $\SMFtor$ denote the category of tuples $(\M, \Fil^1\M, \vphi_1,\nabla_\M)$ where $(\M, \Fil^1\M, \vphi_1)\in\SMFqtor$ and $\nabla_\M:\M\ra\M\otimes_{\Sig} \wh\Omega_{\Sig}$ is a quasi-nilpotent integrable connection which commutes with $\vphi_\M$.

Let $\SMFqwtor$ denote the category of tuples $(\M, \Fil^1\M, \vphi_1,\nabla_{\M_0})$ where $(\M, \Fil^1\M, \vphi_1)\in\SMFqtor$ and $\nabla_{\M_0}:\M_0\ra\M_0\otimes_{R_0} \wh\Omega_{R_0}$ is a quasi-nilpotent integrable connection on $\M_0:=R_0\otimes_S\M$ which commutes with $\vphi_{\M_0}$. 
\end{defnsub}

The proof of Lemma~\ref{lem:forgetfulN} shows that  the natural functor $\SMFtor\ra\SMFqwtor$ (defined in the same way as in Remark~\ref{rmk:BrModAlt}) is fully faithful. 

Not all objects  $\M\in\SMFtor$ are expected to come from a finite locally free group schemes (by the recipe as in the proof of Proposition~\ref{prop:RelKisinFF}), as the definition of $\SMFtor$ does not prevent $\M$ from being a $R_0$-module. For the purpose of studying the $p$-power order finite locally free group schemes, it is much more convenient to use torsion Kisin modules, which we introduce in Definition~\ref{def:torKisMod}.

\subsection{Classification via torsion Kisin modules}
The following definition is the torsion version of  Kisin $\Sig$-modules. 

\begin{defnsub}\label{def:torKisMod}
Let $\SMqtor$  denote the category of  pairs $(\gM, \vphi_\gM)$, where
\begin{enumerate}
\item $\gM$ is a finitely presented  $\Sig$-module killed by some power of $p$, and of $\Sig$-projective dimension $\leqs1$;
\item $\vphi_\gM:\gM\ra\gM$ is a $\vphi$-linear map such that $\coker(1\otimes\vphi_\gM)$ is killed by $\PP$.
\end{enumerate}

Let $\SMtor$ denote the category of pairs $(\gM,\nabla_\M)$ where $\gM\in\SMqtor$ and $\nabla_{\M}:\M\ra\M\otimes_{\Sig}\wh\Omega_{\Sig}$ on $\M:=S\otimes_{\vphi,\Sig}\gM$ is a quasi-nilpotent integrable connection  which commutes with $\vphi_{\M} := \vphi_{S}\otimes\vphi_\gM$. 

Let $\SMqwtor$ denote the category of pairs $(\gM,\nabla_{\M_0})$ where $\gM\in\SMqtor$ and $\nabla_{\M_0}:\M_0\ra\M_0\otimes_{R_0} \wh\Omega_{R_0}$ is  a quasi-nilpotent integrable  connection on $\M_0:=R_0\otimes_{\vphi,\Sig}\gM$ which commutes with $\vphi_{\M_0}$.   We call an object in $\SMqwtor$  a \emph{torsion Kisin $\Sig$-module}.

We  let  $\SMFI$, $\SMFIqw$, and $\SMFIq$ denote the full subcategories of $\gM$ such that $\gM\cong \bigoplus\gM_i$ as a $\Sig$-module where $\gM_i$ are projective over $\Sig/(p^{n_i})$.
\end{defnsub}

\begin{rmksub}\label{rmk:torKisModSm}
As before, one can define a natural functor $\SMtor \ra \SMqwtor$ by the same way as in Remark~\ref{rmk:BrModAlt}, and the same proof of Lemma~\ref{lem:forgetfulN} shows that this is fully faithful. We will see later that this functor is an equivalence of categories if $R$ satisfies the formally finite-type assumption~(\S\ref{cond:dJ}) for any $p$ (including $p=2$) -- indeed, we show that both categories are naturally anti-equivalent to the category of $p$-power order finite locally free group schemes over $R$; \emph{cf.,} Theorem~\ref{thm:RelKisinFF}.
\end{rmksub}

Let $\gM$ be an object of either $\SMqtor$ or $\SMtor$, and set $\M:=S\otimes_{\vphi,\Sig}\gM$.
One can define $\Fil^1\M\subset \M$ and $\vphi_1:\Fil^1\M\ra\M$ using the same formula for  (quasi-)Kisin modules (\emph{cf.,} (\ref{eqn:Filh}), (\ref{eqn:vphir})). 
Clearly, $\M$ with this structure is an object in $\SMFqtor$ (or $\SMFtor$ if $\gM\in\SMtor$).  

\begin{rmksub}\label{rmk:VerFF}
For $\gM\in\SMqtor$ there exists a unique (injective) $\Sig$-linear map $\psi_\gM:\gM\ra\vphi^*\gM$ such that $\psi_\gM\circ(1\otimes\vphi_\gM) = E(u)\id_{\vphi^*\gM}$ and $(1\otimes\vphi_\gM)\circ\psi_\gM = E(u)\id_\gM$. Indeed, the multiplications  by $E(u)$ on $\gM$ and $\vphi^*\gM$ are injective, and the image of $1\otimes\vphi_\gM$ contains $E(u)\gM$. 

Similarly, if $\M=S\otimes_{\vphi,\Sig}\gM$ for some $\gM\in\SMqtor$, then we set 
\[\psi_\M = \vphi_1(E(u))\iv(1\otimes\psi_\gM):\M\ra\vphi^*\M.\] 
Then we have $\psi_\M\circ(1\otimes\vphi_\M) = p\id_{\vphi^*\M}$ and $(1\otimes\vphi_\M)\circ\psi_\M = p\id_\M$. 
\end{rmksub}

\begin{prop}\label{prop:RelKisinFF}
Assume that the functor $\gM^*$  on $p$-divisible groups over $R$ is fully faithful. (\emph{cf.,} Corollaries~\ref{cor:RelKisin} and \ref{cor:gMFF}). 
Then there exists an exact fully faithful functor $\gM^*$ from the category of $p$-power order finite locally free group schemes over $R$ to $\SMtor$ with the following properties:
\begin{enumerate}
\item For any $p$-power order finite locally free group scheme $H$ over $\Spf(R,(\varpi))$, there exists a natural horizontal isomorphism $S\otimes_{\vphi,\Sig}\gM^*(H)\cong\DD^*(H)(S)$ which matches   the quotient $e_H^*\Omega_{H/R}$ of $\DD^*(H)(S)$ with 
\[
\coker (1\otimes\vphi_{\gM^*(H)}) \cong  (S\otimes_{\vphi,\Sig}\gM^*(H))/\Fil^1 (S\otimes_{\vphi,\Sig}\gM^*(H)),
\]
 and  $(F,V)$ on $\DD^*(H)(S)$ with $(1\otimes\vphi, \psi)$ on $\M^*(H)$ (\emph{cf.,} Definition~\ref{def:torBrMod}, Remark~\ref{rmk:VerFF}). 
\item If $H=\ker[d:G^0\ra G^1]$ for some isogeny $d$, then there exists a natural isomorphism $\gM^*(H)\cong \coker[\gM^*(d)]$.
\item The formation of $\gM^*$ commutes with a base change by $R\ra R'$ that lifts to some $\vphi$-equivariant map $\Sig\ra\Sig'$, where $\Sig' = R_0'[[u]]$ and $\vphi$ are chosen as in \S\ref{cond:Breuil} and \S\ref{subsec:settingBr}; \emph{cf.} \S\ref{subsec:BC}.
\end{enumerate}
\end{prop}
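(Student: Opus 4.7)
The plan is to follow the strategy of Kisin \cite[\S2.3]{kisin:fcrys}: represent every $p$-power order finite locally free group scheme $H$ over $R$ as the kernel of an isogeny of $p$-divisible groups, and then define $\gM^*(H)$ as the cokernel of the corresponding morphism of Kisin modules. First I would establish that Zariski-locally on $\Spf(R,(\varpi))$ such an $H$ fits into a short exact sequence
\[
0 \to H \to G^0 \xrightarrow{d} G^1 \to 0
\]
with $G^i$ a $p$-divisible group and $d$ an isogeny. Over sufficiently nice bases this follows by (locally) embedding $H$ into an abelian scheme $A$ and setting $G^0 = A[p^\infty]$, $G^1 = (A/H)[p^\infty]$. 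Granting this, one defines
\[
\gM^*(H) := \coker\bigl[\gM^*(d):\gM^*(G^1)\to\gM^*(G^0)\bigr],
\]
equipped with the induced Frobenius and with the connection descended to $R_0\otimes_{\vphi,\Sig}\gM^*(H)$ from those on $\gM^*(G^i)$.

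Next I would verify that $\gM^*(H)$ actually lies in $\SMtor$ rather than only in $\SMqtor$: the map $\gM^*(d)$ is injective because $d$ becomes an isomorphism after inverting $p$ while its source and target are $\Sig$-projective, so the cokernel has $\Sig$-projective dimension $\leqs 1$; the cokernel of $1\otimes\vphi_{\gM^*(H)}$ is killed by $\PP(u)$ because this holds for each $\gM^*(G^i)$ and passes to cokernels by a standard diagram chase; and the induced connection on $R_0\otimes_{\vphi,\Sig}\gM^*(H)$ is well-defined because $\gM^*(d)$ is horizontal. The identification $S\otimes_{\vphi,\Sig}\gM^*(H)\cong\DD^*(H)(S)$, together with the matching of $\Fil^1$, $\vphi_1$, the Verschiebung, and the conormal quotient $e_H^*\Omega_{H/R}$, then follows by inheritance from the resolution, since $\DD^*$ is exact on isogenies of $p$-divisible groups over a base where $p$ is locally topologically nilpotent (Berthelot-Breen-Messing) and $S\otimes_{\vphi,\Sig}(-)$ is right exact.

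Independence of the chosen resolution, functoriality in $H$, and exactness of $\gM^*$ then follow by standard homological arguments: dominating any two resolutions of $H$ by a third, lifting morphisms of $H$ to morphisms of resolutions (possibly after refinement), and applying the snake lemma. Each of these steps invokes the full faithfulness hypothesis on $\gM^*$ for $p$-divisible groups (via Corollary~\ref{cor:RelKisin} or Corollary~\ref{cor:gMFF}). Full faithfulness on finite locally free group schemes reduces formally to the $p$-divisible case by the same device. The base change compatibility is immediate from commutation of cokernels with scalar extension together with the base change property at the $p$-divisible group level.

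The main obstacle, and the reason the proof is deferred to a later section, is twofold. First, in our not-necessarily-local setting the existence of a global (or even Zariski-local) isogeny resolution of $H$ by $p$-divisible groups is not automatic and requires care. Second, verifying that the connection on $R_0\otimes_{\vphi,\Sig}\gM^*(H)$ is independent of the resolution and of the choice of connection on the ambient lattice Kisin modules is subtle in the relative setting, and most cleanly handled via the theory of moduli of connections developed in \S\ref{sec:Vasiu}. These considerations are what force the argument into the somewhat more elaborate moduli-theoretic framework, rather than the direct Raynaud-type cokernel construction that suffices over a complete discrete valuation ring.
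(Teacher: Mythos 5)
Your overall strategy matches the paper's: Raynaud locally to embed $H$ in an abelian scheme, form the isogeny resolution $[G^0\to G^1]$, define $\gM^*(H)$ as $\coker\gM^*(d)$, check the required structure, and glue over a Zariski cover of $\Spf(R,(\varpi))$. The point where you diverge in packaging: for independence of the resolution and functoriality in $H$, you propose the hands-on homological-algebra route (dominating two resolutions by a third, lifting morphisms, snake lemma), whereas the paper extracts exactly this content once and for all as Lemma~\ref{lem:glueing}, which shows that $\coh{0}$ from the triangulated category $\Isog^\bullet_R$ (two-term complexes of $p$-divisible groups with $d$ an isogeny) to finite locally free $p$-power order group schemes is fully faithful. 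Combined with full faithfulness of $\gM^*$ on $p$-divisible groups, this makes well-definedness, functoriality, and full faithfulness of the torsion $\gM^*$ essentially formal, and avoids repeating resolution-juggling at each step. Both approaches work; the paper's is cleaner and is what actually underlies the glueing of the local $\gM^*(H_{R_\alpha})$ (via \'etale base change, \emph{cf.} Lemma~\ref{lem:etloc}).

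One genuine misdirection in your write-up: you attribute the technical difficulty of this proposition in part to the need to verify that the connection on $R_0\otimes_{\vphi,\Sig}\gM^*(H)$ is well-defined, and you suggest this is "most cleanly handled via the theory of moduli of connections developed in \S\ref{sec:Vasiu}." That is not the case here. The connections on $\gM^*(G^i)$ for the $p$-divisible groups $G^i$ are canonical (they come from the Dieudonn\'e crystal), so once the resolution is fixed the connection on the cokernel is determined, and independence of the resolution is again absorbed into Lemma~\ref{lem:glueing} plus full faithfulness. The moduli of connections are needed for the \emph{essential surjectivity} statement (Theorem~\ref{thm:RelKisinFF}), where one must start from a torsion Kisin module $\gM$ with only a mod-$u$ connection $\nabla_{\M_0}$, lift to a quasi-Kisin resolution, and upgrade that connection — that is the step deferred to \S\ref{subsec:RelKisinFF} and \S\ref{sec:Vasiu}. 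Proposition~\ref{prop:RelKisinFF} itself is proved entirely within \S\ref{sec:finfl} without moduli of connections.
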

As a preparation for the proof, we recall the following theorem of Raynaud  \cite[Th\'eor\`eme~3.1.1]{Berthelot-Breen-Messing:DieudonneII}:
\begin{thmsub}\label{thm:Raynaud}
Let  $H$ be a commutative finite locally free group scheme over some scheme $X$. Then there exists a Zariski covering $\{U_\alpha\}_\alpha$ of $X$ and an abelian scheme $\mathcal{A}_\alpha$ over each $U_\alpha$ such that $H_{U_{\alpha}}$ can be embedded in $\mathcal{A}_\alpha$ as a closed subgroup.
\end{thmsub}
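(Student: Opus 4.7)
Since the conclusion is that a suitable \emph{Zariski} cover exists, the plan is to reduce immediately to the case $X = \Spec R$ with $\cO_H$ a free $R$-module of rank $n$, and then to construct an abelian scheme $\mathcal{A}/R$ into which $H$ embeds as a closed subgroup. The attack I would take follows the classical strategy of Raynaud reproduced in \cite[\S3.1]{Berthelot-Breen-Messing:DieudonneII}, and proceeds in three main stages.

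First, I would realise $H$ as a closed subgroup of a smooth affine group scheme over $R$. The left translation action of $H$ on its own coordinate algebra $\cO_H$ is faithful (by faithful flatness of $H$), and since $\cO_H$ is locally free of finite rank this gives, after shrinking $X$ to trivialise $\cO_H$, a closed immersion $H \hra \GL(\cO_H) \cong \GL_{n,R}$ of $R$-group schemes. This is standard; the Hopf-algebra identities make the image closed, and the only subtlety is to arrange, possibly after further Zariski localisation on $X$, that the trivialisation of $\cO_H$ is globally available.

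Second, I would pass from $\GL_n$ to a projective setting carrying an $H$-action, by an equivariant compactification argument. A convenient choice is to let $H$ act on a sufficiently ample projective $R$-scheme $Y$ (for instance a suitable homogeneous space for $\GL_n$ over $R$, or a product of flag varieties) via the embedding constructed above, and then modify $Y$ so that the $H$-action is \emph{free}. One way is to pass to an $H$-torsor: replacing $Y$ by a suitable twist of $H \times_R Y$ where the $H$-factor acts by translation and $Y$ is used only to supply ampleness gives a smooth projective $Y'$ over $R$ with free $H$-action and with an $H$-equivariant very ample line bundle.

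Third, I would extract the abelian scheme from $Y'$ via Picard schemes. For $Y'$ chosen so that the relative Picard functor $\Pic^0_{Y'/R}$ is representable by an abelian scheme (which can be arranged by taking $Y'$ to be, or to admit a section by, a smooth projective relative curve of large enough genus, possibly after further Zariski localisation on $X$), the quotient map $Y' \to Y'/H$ together with descent theory of line bundles produces a canonical homomorphism $H^\vee \ra \Pic^0_{Y'/R}$; dualising, one obtains a morphism $\Pic^0_{Y'/R}^\vee \to H$, and a routine verification on geometric fibres shows that $H$ embeds as a closed subgroup into an abelian scheme isogenous to $\Pic^0_{Y'/R}$.

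I expect the main obstacle to be the second and third stages in the relative, possibly non-Noetherian, setting: producing an $R$-smooth projective scheme with a \emph{free} $H$-action, and then guaranteeing that the associated Picard functor is indeed an abelian scheme. Over a field these are classical constructions; relatively one must use the representability results of Grothendieck for the Picard functor together with the fibrewise criteria for smoothness and properness, and one must shrink $X$ further on the Zariski topology so that all the relevant cohomological vanishings and dimension constancy hold uniformly. Once these are arranged the remainder of the argument is formal.
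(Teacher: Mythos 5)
The paper does not actually prove Theorem~\ref{thm:Raynaud}; it quotes it from \cite[Th\'eor\`eme~3.1.1]{Berthelot-Breen-Messing:DieudonneII} (Raynaud's embedding theorem), so your sketch has to be measured against that proof. Your first stage (Zariski localisation to trivialise $\cO_H$, then the closed immersion $H\hra\GL(\cO_H)$ via the regular representation) and your overall architecture (linearise, produce a projective situation with a free $H$-action, extract the abelian scheme from a Picard functor) do match Raynaud's argument.

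Your second stage, however, has a genuine gap that would sink the proof as written. If $H$ acts on $H\times_R Y$ by translation on the first factor, the quotient is $Y$ and the resulting $H$-torsor $H\times_R Y\ra Y$ is \emph{trivial} (it is split by the unit section of $H$); consequently the homomorphism $H^\vee\ra\Pic_{(Y'/H)/R}$ obtained by pushing the torsor out along characters is the zero map, and no embedding can be extracted from it. The essential content of Raynaud's proof is precisely the construction of a torsor whose associated map to $\Pic^0$ is a closed immersion: one runs a Bertini-type argument inside $\mathbb{P}(V\oplus R)$ (with $V=\cO_H$) to produce an $H$-stable, geometrically connected, smooth complete-intersection \emph{curve} $C$ contained in the locus where $H$ acts freely; then $D:=C/H$ is a smooth projective curve, $\Pic^0_{D/R}$ is an abelian scheme, and the geometric connectedness of $C$ is what forces $H^\vee\ra\Pic^0_{D/R}$ to be a monomorphism, hence a closed immersion since $H^\vee$ is finite. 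Your ``suitable twist'' elides exactly this step, and ``admitting a section by a curve'' does not help: the free $H$-action must live on the curve itself. Two smaller corrections: the Picard functor must be taken of the quotient $Y'/H$, not of $Y'$; and from a closed immersion $H^\vee\hra A$ one embeds $H$ by dualising the exact sequence $0\ra H^\vee\ra A\ra A/H^\vee\ra 0$ to obtain $0\ra H\ra (A/H^\vee)^\vee\ra A^\vee\ra 0$ --- dualising the inclusion alone yields a surjection $A^\vee\thra H$, which exhibits $H$ as a quotient, not a subgroup.
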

Now assume that $R$ is $p$-adic, and let $H$ be a finite locally free group scheme over $R$ of $p$-power order. (Note that there is no difference in viewing $H$ either over $\Spf (R,(p))$ or over $\Spec R$.) We apply Raynaud's theorem for $H$ over  $\Spec R$ to obtain an open affine cover $\{U_\alpha=\Spec R_{\alpha}\}$ and an abelian scheme $\mathcal{A}_\alpha$ over each $U_\alpha$. So we obtain a $p$-divisible group $G_\alpha:=\mathcal{A}_\alpha[p^\infty]$ over the $p$-adic completion $\wh R_\alpha$ of $R_\alpha$ into which $H_{\wh R_\alpha}$ can be embedded. (Note that $\{\Spf(\wh R_\alpha, (p))\}$ is a Zariski open covering of $\Spf(R,(p))$.) The same discussion holds if we work with $J_R$-adic topology if $R$ is $J_R$-adically separated and complete.

We need another lemma for  the proof of Proposition~\ref{prop:RelKisinFF}. Let us consider a  triangulated full subcategory $\Isog^\bullet_{R}$ of the derived category of fppf sheaves of abelian groups over $R$, generated by two-term complexes $G^\bullet:=[G^0\xra{d} G^1]$ at degree $[0,1]$ such that $G^i$ are $p$-divisible groups and $d$ is an isogeny. Note that the only non-zero cohomology of $G^\bullet\in\Isog^\bullet_R$ is $\coh 0(G^\bullet)$, which is a finite locally free group scheme over $R$ with $p$-power order.

\begin{lemsub}\label{lem:glueing}
The functor $\coh{0}:[G^0\xra{d} G^1]\rightsquigarrow \ker(d)$ from $\Isog^\bullet_R$ to the category of $p$-power order finite locally free group schemes over $R$ is fully faithful.
\end{lemsub}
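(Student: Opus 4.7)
The plan is to reduce the claim to a standard derived-category computation. Since $\Isog^\bullet_R$ is by definition a full triangulated subcategory of the derived category $D$ of fppf sheaves of abelian groups on $\Spec R$, morphisms in $\Isog^\bullet_R$ are the same as morphisms in $D$, and $\coh{0}$ is just the cohomological functor $H^0$ (whose values happen, in our situation, to be finite locally free group schemes).

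First I would verify that for any $G^\bullet = [G^0\xra{d} G^1]\in \Isog^\bullet_R$, the inclusion $\iota:\ker(d)\hookrightarrow G^0$, together with $0$ in degree $1$, defines a morphism of complexes $\ker(d)[0]\to G^\bullet$ which is a quasi-isomorphism. Indeed $H^0(\iota) = \id_{\ker(d)}$ by construction, and $H^1(G^\bullet) = \coker(d) = 0$ because any isogeny of $p$-divisible groups is an fppf-epimorphism (being faithfully flat with finite kernel). The same reasoning applies to $\tilde G^\bullet$.

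Combining these quasi-isomorphisms (which are invertible in $D$) one obtains canonical isomorphisms
\[
\Hom_{\Isog^\bullet_R}(G^\bullet,\tilde G^\bullet) = \Hom_{D}(G^\bullet,\tilde G^\bullet) \cong \Hom_{D}(\ker(d)[0],\ker(\tilde d)[0]).
\]
The standard $t$-structure on $D$ has the abelian category of fppf sheaves of abelian groups as its heart, so for any two objects of the heart the derived $\Hom$ in degree zero coincides with the ordinary $\Hom$. Hence the right-hand side equals $\Hom(\ker(d),\ker(\tilde d))$ computed in the category of fppf abelian sheaves, which in turn --- via the fully faithful embedding of finite locally free group schemes into fppf sheaves of abelian groups --- is $\Hom(\coh{0}(G^\bullet),\coh{0}(\tilde G^\bullet))$ in the target category. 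The composite of these canonical isomorphisms is precisely the map on $\Hom$-sets induced by $\coh{0}$, and this gives the full faithfulness.

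The only substantive input is the first step --- that an isogeny of $p$-divisible groups is fppf-surjective, so $G^\bullet$ is quasi-isomorphic to $\coh{0}(G^\bullet)$ placed in degree zero. Everything else is standard $t$-structure formalism, so I do not anticipate any serious obstacle.
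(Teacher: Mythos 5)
Your proof is correct, and it takes a genuinely different (and cleaner) route than the paper's. The paper proves faithfulness and fullness by explicit verification: for faithfulness it reduces to a morphism given by a genuine map of complexes $f^i:G^i\ra G^{\prime i}$ and produces a null-homotopy when $\coh{0}(f)=0$; for fullness, given $f:H\ra H'$, it factors through the graph $(1,f):H\ra H\times H'$ to reduce to the case of a closed immersion, then exhibits a concrete morphism in $\Isog^\bullet_R$ (via $[G^{\prime 0}\ra G^{\prime 0}/f(H)]\ra G^{\prime\bullet}$) inducing $f$. You instead observe that every object of $\Isog^\bullet_R$ is isomorphic \emph{in the ambient derived category} to its $\coh{0}$ placed in degree zero (using that an isogeny is an fppf epimorphism, so $\coh{1}=0$), and then invoke the standard $t$-structure fact $\Hom_{D}(A[0],B[0])\cong\Hom(A,B)$ for $A,B$ in the heart, together with fullness of the embedding of finite locally free group schemes into fppf abelian sheaves. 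This subsumes both directions at once and avoids the paper's mildly delicate reduction to strict maps of complexes. The conceptual argument buys brevity and robustness; the paper's explicit construction in the fullness step has the minor virtue of producing an explicit preimage in $\Isog^\bullet_R$, which the abstract argument does not display. One should just be aware that, despite the paper's wording ``triangulated full subcategory generated by,'' the category $\Isog^\bullet_R$ must be understood as consisting only of objects isomorphic to two-term isogeny complexes (not their shifts), which is exactly the reading both you and the paper's proof rely on.
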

\begin{proof}
Let $G^\bullet:=[G^0\xra{d}G^1]$ and $G^{\prime\bullet}:=[G^{\prime0}\xra{d'}G^{\prime1}]$ be objects in $\Isog^\bullet_R$, and set $H:=\coh{0}(G^\bullet)$ and $H':=\coh{0}(G^{\prime\bullet})$. 

Let $f:G^\bullet\ra G^{\prime\bullet}$ be a morphism such that $\coh 0(f)$ is a zero map, and we seek to show that $f$ is a zero map. This can be checked after composing with any isomorphism, so we may assume, without loss of generality, that $f$ is given by morphisms $f^i:G^i\ra G^{\prime i}$ of $p$-divisible groups.  If $\coh 0(f):H\ra H'$ is a zero map, then $f^0$ factors through $G^1$ giving a null homotopy for $f$. Now it easily follows that $\coh 0$ is faithful.

If $f:H\ra H'$ be any homomorphism, then we can factor $f$ into $H\xra{(1,f)} H\times H' \thra H'$. Since $H\times H':=\ker[(d,d'):G^0\times G^{\prime0} \ra G^1\times G^{\prime1}]$, we may assume that $f$ is a closed immersion by replacing $f$ by $(1,f)$ if necessary. Set $G^{\prime\prime\bullet}:= [ G^{\prime0} \ra G^{\prime0}/f(H)]$. Then we have $H = \coh{0} [ G^{\prime\prime\bullet}]$ and $f$ comes from $g:G^{\prime\prime\bullet}\ra G^{\prime\bullet}$ where $g^0=\id$ and $g^1$ is the natural projection.
\end{proof}

\begin{proof}[{Proof of Proposition~\ref{prop:RelKisinFF}/Construction}]
Let us first construct $\gM^*$. For any isogeny $d:G^0\ra G^1$ of $p$-divisible groups over $R$, $\gM:=\coker [\gM^*(d):\gM^*(G^1)\ra \gM^*(G^0)]$ is clearly a torsion Kisin $\Sig$-module. By Corollary~\ref{cor:RelKisin} (Corollary~\ref{cor:gMFF} when $p=2$) and Lemma~\ref{lem:glueing}, $\coh{0}\circ \gM^*$ induces a fully faithful functor $\Isog^\bullet_R\ra \SMtor$ which commutes with \'etale base change $\Spf(R',(\varpi))\ra \Spf(R,(\varpi))$. If $H:=\ker(d)$, then we set $\gM^*(H):=\coker [\gM^*(d)]$.

Let $H$ be a finite locally free group scheme over $R$ of $p$-power order. Then by Raynaud's theorem (Theorem~\ref{thm:Raynaud} and the subsequent discussion) there exists a Zariski covering $\{\Spf (R_\alpha, (p))\}_\alpha$ of $\Spf(R,(p))$ and $p$-divisible groups $G_\alpha$ over $R_\alpha$ such that $H_{R_\alpha}\subset G_\alpha$. 

Recall that for each $R_\alpha$ there is a natural $\Sig$-algebra $\Sig_\alpha$ together with a unique choice of $\vphi_{\Sig_\alpha}$ over $\vphi_\Sig$, such that $\Sig_\alpha/(\PP)=R_\alpha$. (\emph{Cf.} Lemma~\ref{lem:etloc}, Remark~\ref{rmk:RelKisinBC}.) Note also that $\{\Spf(\Sig_\alpha,(p,\PP))\}$ is a Zariski covering of $\Spf(\Sig,(p,\PP))$. Now, $\gM^*(H_{R_\alpha})\in\PMtor{\Sig_\alpha}$ makes sense, and the natural glueing datum on $\{H_{R_\alpha}\}$ induces a glueing datum on  $\{\gM^*(H_{R_\alpha})\}$. 
We define $\gM^*(H)\in\SMtor$ by glueing $\{\gM^*(H_{R_\alpha})\}$. 
%
\end{proof}

For $\gM\in\SMq$, we say $\gM$ is \emph{$\vphi$-nilpotent} if $\vphi_\gM^n=0$ for some $n$, and similarly define $\psi$-nilpotent objects. (\emph{Cf.} Definition~\ref{def:vphiNilp}.) 
We can improve Proposition~\ref{prop:RelKisinFF} in the following case: 
\begin{thm}\label{thm:RelKisinFF}
Assume that $R$ satisfies the formally finite-type assumption~(\S\ref{cond:dJ}). If $p>2$ then the functor $\SMFI \ra \SMFIqw$, defined in Remark~\ref{rmk:torKisModSm}, is an equivalence of categories, and the  following functors is an anti-equivalence of categories:
\[
\left\{\begin{aligned}
&p\text{-power order finite locally}\\
&\text{free group schemes }H\text{ over }R\\
&\text{such that }H[p^n]\text{ is locally free }\forall n
\end{aligned}\right\} 
\xra{\gM^*}\SMFI \riso \SMFIqw.\]

Let  $R_0=W(k)[[T_1,\cdots, T_d]]$ with perfect  field $k$ of characteristic $p>2$, and define $\vphi:R_0\ra R_0$ by extending the Witt vector Frobenius by $\vphi(T_i)=T_i^p$. Set $R = R_0[[u]]/E(u)$. Then $\gM^*$ composed with the forgetful functor $\SMtor\ra\SMqtor$ is an anti-equivalence of categories.

Allowing any prime $p$, each of the following functors is an anti-equivalence of categories:
\begin{gather*}
\left\{\begin{aligned}
&\text{infinitesimal finite locally}\\
&\text{free group schemes over }R
\end{aligned}\right\} 
\xra{\gM^*}\SM^{\vphi\nilp} \riso \SMq^{\vphi\nilp};\\
\left\{\begin{aligned}
&\text{unipotent finite locally}\\
&\text{free group schemes over }R
\end{aligned}\right\} 
\xra{\gM^*}\SM^{\psi\nilp} \riso \SMq^{\psi\nilp},
\end{gather*}
where $\gM^*$ is as in Proposition~\ref{prop:RelKisinFF}, the other arrows are suitable forgetful functors, and the superscripts $^{\vphi\nilp}$ and $^{\psi\nilp}$ respectively denote the full subcategory of $\vphi$-nilpotent and $\psi$-nilpotent objects.
\end{thm}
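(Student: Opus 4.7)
\textbf{Proof plan for Theorem~\ref{thm:RelKisinFF}.} By Proposition~\ref{prop:RelKisinFF} together with the full faithfulness of $\gM^*$ on $p$-divisible groups supplied by Corollaries~\ref{cor:RelKisin}, \ref{cor:FormalRelKisin}, and \ref{cor:gMFF}, the functor $\gM^*$ is already fully faithful in each of the three regimes under consideration. I therefore only need to establish essential surjectivity and the equivalence $\SMFI\simeq\SMFIqw$. The uniform strategy is to resolve a given torsion Kisin-type object by a length-one complex of finite projective Kisin $\Sig$-modules (carrying connection where required), promote this to an isogeny of $p$-divisible groups over $R$ via the classification already established, and take the kernel.

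Concretely, for $p>2$ under the formally finite-type hypothesis and $\gM\in\SMFIqw$, I would first build a short exact sequence $0\to\gN_1\to\gN_0\to\gM\to 0$ with $\gN_0$ and $\gN_1$ finite projective Kisin $\Sig$-modules: starting from a free surjection $\gN_0\thra\gM$, the kernel $\gN_1$ is projective because $\gM$ has $\Sig$-projective dimension $\leqs 1$ by definition of $\SMqtor$; $\vphi_\gM$ lifts to $\gN_0$ and restricts to $\gN_1$, and a snake-lemma computation shows that $\coker(1\otimes\vphi_{\gN_i})$ is killed by $\PP$. The delicate step is to lift $\nabla_{\M_0}$ on $\M_0:=R_0\otimes_\Sig\gM$ to compatible connections on $R_0\otimes_\Sig\gN_i$ making the resolution a short exact sequence in $\SMqw$. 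Granting this, Corollary~\ref{cor:RelKisin} realises the resolution as an isogeny $d:G_0\to G_1$ of $p$-divisible groups, and I set $H:=\ker d$, so Proposition~\ref{prop:RelKisinFF}(ii) yields $\gM^*(H)\cong\gM$, with target in $\SMFI$. The resulting $\gM^*(H)\in\SMFI$ reduces to the given $\gM\in\SMFIqw$ modulo $I_0$, yielding essential surjectivity of both $\gM^*$ and the forgetful functor. Full faithfulness of $\SMFI\to\SMFIqw$ is the torsion analogue of Lemma~\ref{lem:forgetfulN}: its key estimate $\bigcap_n\vphi^n(I_0)\cdot\M'=\set0$ persists because $\M'$ is $p$-power torsion and $\vphi^n(I_0)\subseteq pS$ for $n$ large.

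The any-$p$ infinitesimal and unipotent claims are handled by the same resolution recipe, substituting Corollary~\ref{cor:FormalRelKisin} for Corollary~\ref{cor:RelKisin}; $\vphi$- and $\psi$-nilpotence pass through the resolution, and at $p=2$ Grothendieck--Messing theory remains available over the topologically nilpotent divided power thickening $R\thra R/(p)$ for formal and unipotent $p$-divisible groups. For the special case $R_0=W(k)[[T_1,\ldots,T_d]]$ with $k$ perfect of characteristic $p>2$, the forgetful functor $\SMtor\to\SMqtor$ must be shown to be an equivalence; here my plan is to invoke the moduli-of-connections theory of \S\ref{sec:Vasiu} (which builds on \cite{Vasiu:ys}) to argue that the scheme parametrising connections on a given Kisin $\Sig$-module is representable by a sufficiently rigid affine scheme admitting a unique canonical section over this small base, so that the connection is recovered functorially from the underlying pair $(\gM,\vphi_\gM)$.

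The main obstacle is the connection-lifting step in the general $p>2$ case and its avatar in the forget-connection statement, both of which are to be handled by the moduli-of-connections machinery of \S\ref{sec:Vasiu}. The set of compatible connections on a Kisin $\Sig$-module forms a torsor under an appropriate module of horizontal derivations, and existence of a lift reduces to a formal-smoothness / vanishing-of-obstruction assertion that one verifies using projectivity of the $\gN_i$. I expect the principal technical burden will be in checking compatibility of these lifts with $\vphi$ and with the maps of the resolution, but conceptually each piece is a routine obstruction-theoretic computation given the representability results of \S\ref{sec:Vasiu}.
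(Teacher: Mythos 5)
Your high-level architecture is close to the paper's: reduce essential surjectivity to resolving a torsion object by finite projective Kisin modules, realise the resolution as an isogeny of $p$-divisible groups, and take the kernel. But the crucial connection-lifting step is handled by a mechanism that does not exist, and this is where the proposal breaks down.

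You assert that lifting $\nabla_{\M_0}$ to compatible connections on $R_0\otimes_{\vphi,\Sig}\gN_i$ ``reduces to a formal-smoothness / vanishing-of-obstruction assertion that one verifies using projectivity of the $\gN_i$,'' with the set of $\vphi$-compatible connections forming a torsor under horizontal derivations. This is false in general. The $\vphi$-compatibility constraint is an Artin--Schreier type system of equations, not a linear or affine system; its solution scheme $\cQ_n$ over $\Spf(R_0,(p))$ is a nontrivial finite \'etale (in fact, after stratification, a faithfully flat \'etale) covering that may have no section over $R_0$. Projectivity of $\gN_i$ gives no handle on whether a global solution exists. The actual argument (Proposition~\ref{prop:ConnRaynaud}) passes to a well-chosen connected direct summand $A_0$ of the $p$-adic completion $\wh\cQ'_\infty$ of the moduli-of-connections tower, and only there does the universal connection produce a resolution in $\PMqw{\Sig_{A_\alpha}}$ compatibly mapping onto $\gM_{A_\alpha}$. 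The base $\Spf(A_0,(p))\to\Spf(R_0,(p))$ is an ind-\'etale fpqc covering, not a Zariski or finite \'etale covering, so it is not enough to ``localise'' in the sense you envision.

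This creates two further gaps your proposal does not address. First, the ring $A_0$ need not satisfy the formally finite-type assumption (\S\ref{cond:dJ}) (it is typically non-noetherian), so neither de~Jong's classification nor Theorem~\ref{thm:BreuilClassif} applies directly over $A_0$. The paper inserts Theorem~\ref{thm:dJgen} (and its Corollary~\ref{cor:dJgen}), which re-establishes the Dieudonn\'e classification over such bases by fpqc descent from the completions at points of $\Spec A_{0,k}$, using excellence and regularity (guaranteed by Lemma~\ref{lem:Artin-Schreier}). This is a substantial technical ingredient with no analogue in your plan. Second, having built $H_{A_\alpha}$ over the ind-\'etale cover, one must descend it to $R_\alpha$ and then glue over a Zariski cover $\set{\Spf(R_\alpha,J_R R_\alpha)}$; the descent datum is produced by feeding the tautological glueing of $\Sig_{A_\alpha}\otimes_\Sig\gM$ through the full faithfulness of $\gM^*$ over the relevant normal bases. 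Your proposal stops at ``construct $H$ over $R$'' without any descent mechanism.

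Your discussion of the \textbf{Local case} ($R_0 = W(k)[[T_1,\dots,T_d]]$) is closer to correct: there the moduli of connections does admit a unique connected component lying over the closed point, which is isomorphic to $R_0$ itself, so the connection is canonically determined. Likewise the torsion analogue of Lemma~\ref{lem:forgetfulN} for full faithfulness of $\SMFI\to\SMFIqw$ is fine. But the general $p>2$, $\SMFIqw$-case is the heart of the theorem, and the obstruction-theory shortcut you propose for it would not produce a connection over $R_0$.
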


We prove this theorem later in \S\ref{subsec:RelKisinFF}. 
\begin{rmksub}
Let us review previous results on Theorem~\ref{thm:RelKisinFF}. When $R$ is a $p$-adic discrete valuation ring with perfect residue field and $p>2$, then Theorem~\ref{thm:RelKisinFF} is a theorem of Kisin \cite[Theorem~2.3.5]{kisin:fcrys}. (Note that in this case, $\wh\Omega_{R_0} = 0$ so $\nabla^0$ is forced to be zero.) The strategy of Kisin  to deduce Theorem~\ref{thm:RelKisinFF} from the classification of $p$-divisible groups can be applied in more general setting. Most general result of this type is the theorem of Eike~Lau \cite[Corollary~7.7, Proposition~8.1]{Lau:2010fk} in the case when $R$ is a complete regular local ring with perfect residue field. (Note that his result does not require $R/(\varpi)$ to be a formal power series.)

Unfortunately, there are difficulties in using the strategy of Kisin to deduce Theorem~\ref{thm:RelKisinFF} from Corollary~\ref{cor:RelKisin}. The most serious problem is the presence of connections $\nabla^0$, which cannot be removed in general (unless it is asserted in Theorem~\ref{thm:RelKisinFF}). In order to handle the connections, we use the theory of moduli of connections \cite[\S3]{Vasiu:ys}. We review (and slightly generalise) the theory in \S\ref{sec:Vasiu}.
\end{rmksub}

\subsection{Galois representations}
We continue to assume that $R$  satisfies the formally finite-type assumption~(\S\ref{cond:dJ}). Let $H$ be a $p$-power order finite locally free group scheme over $R$. Then $H(\ol R)$ is a finite torsion $\Zp$-module equipped with continuous (i.e., discrete) action of $\GRR$.  
On the other hand, under the additional assumption (stated in Proposition~\ref{prop:GalRepFF}) we obtain a torsion $\GRtinfty$-representation $\cT^*(\gM^*(H))$ since $\fo_\Eps\otimes_\Sig\gM^*(H)$ is an \'etale $(\vphi,\fo_\Eps)$-module.

\begin{propsub}\label{prop:GalRepFF}
Assume that $R$ satisfies the formally finite-type assumption~(\S\ref{cond:dJ}), and for some Cohen subring $W\subset R_0$ we have $E(u)\in W[u]$ (\emph{cf.} \S\ref{sec:EtPhiMod}). 
Then there exists a natural $\GRtinfty$-equivariant isomorphism $H(\ol R)\cong \cT^*(\gM^*(H))$ with the following properties:
\begin{enumerate}
\item If $H=\ker[d:G^0\ra G^1]$ for some isogeny $d$, then the isomorphism $H(\ol R)\cong \cT^*(\gM^*(H))$ is compatible with the isomorphism $T_p(G^i)\cong \cT^*(\gM^*(G^i))$ as in Corollary~\ref{cor:GRinftyRes}.
\item The formation of the isomorphism $H(\ol R)\cong \cT^*(\gM^*(H))$  commutes (in the obvious sense)  with a base change by $R\ra R'$ that lifts to some $\vphi$-equivariant map $\Sig\ra\Sig'$, where $\Sig' = R_0'[[u]]$ and $\vphi$ are chosen as in \S\ref{cond:Breuil} and \S\ref{subsec:settingBr}; \emph{cf.} \S\ref{subsec:BC}.
\end{enumerate}
\end{propsub}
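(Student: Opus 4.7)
The plan is to first construct the isomorphism when $H$ admits a global isogeny presentation as the kernel of a map of $p$-divisible groups, and then to Zariski-localize via Raynaud's theorem and glue. For the first step, suppose $H = \ker[d\colon G^0 \to G^1]$. On the Kisin side, Proposition~\ref{prop:RelKisinFF}(2) furnishes a short exact sequence $0 \to \gM^*(G^1) \to \gM^*(G^0) \to \gM^*(H) \to 0$; extending scalars along the flat map $\Sig \to \fo_\Eps$ preserves exactness, and (\ref{eqn:RednEtPhiMod}) then yields
$$0 \to \cT^*(\gM^*(G^0)) \to \cT^*(\gM^*(G^1)) \to \cT^*(\gM^*(H)) \to 0$$
as continuous $\GRtinfty$-modules. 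On the Galois side, taking $\ol R$-points of the fppf exact sequence $0 \to H \to G^0 \to G^1 \to 0$ and using that $G^i(\ol R)$ is $p$-divisible (so $\Ext^1_{\Z}(\Qp/\Zp, G^i(\ol R)) = 0$) produces $0 \to T_p(G^0) \to T_p(G^1) \to H(\ol R) \to 0$ as continuous $\GRR$-modules. By the naturality in Corollary~\ref{cor:GRinftyRes}, the two leftmost terms are canonically identified in a way that intertwines $T_p(d)$ with $\cT^*(\gM^*(d))$, and passing to cokernels yields the required $\GRtinfty$-equivariant isomorphism $\gamma_{G^\bullet}\colon H(\ol R) \riso \cT^*(\gM^*(H))$, which is moreover natural in the presentation $G^\bullet = [G^0 \xra{d} G^1] \in \Isog^\bullet_R$. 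Property~(1) is then built into the construction.

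For a general $H$, I would invoke Raynaud's theorem (Theorem~\ref{thm:Raynaud}) and the subsequent discussion to produce a Zariski open cover $\{\Spf(R_\alpha, J_R R_\alpha)\}$ of $\Spf(R, J_R)$ together with closed immersions $H_{R_\alpha} \hookrightarrow G_\alpha$ into $p$-divisible groups. Setting $G_\alpha^1 := G_\alpha/H_{R_\alpha}$ gives local isogeny presentations, hence local isomorphisms $\gamma_\alpha\colon H_{R_\alpha}(\ol{R_\alpha}) \riso \cT^*(\gM^*(H_{R_\alpha}))$ by the previous paragraph. The étale base change $R \to R_\alpha$ admits a unique $\vphi$-compatible lift $\Sig \to \Sig_\alpha$ by Lemma~\ref{lem:etloc}, so both $\cT^*$ and $\gM^*$ commute with this localization by Proposition~\ref{prop:RelKisinFF}(3) and the discussion in~\S\ref{subsec:BCGal}; the analogous compatibility for $H(\ol R)$ is standard. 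To glue, on any overlap $R_{\alpha\beta}$ the two isogeny presentations $G_\alpha^\bullet$ and $G_\beta^\bullet$ of $H_{R_{\alpha\beta}}$ give objects of $\Isog^\bullet_{R_{\alpha\beta}}$ with the same image under $\coh{0}$; Lemma~\ref{lem:glueing} lifts $\id_{H_{R_{\alpha\beta}}}$ to a unique morphism $G_\alpha^\bullet \to G_\beta^\bullet$ in $\Isog^\bullet_{R_{\alpha\beta}}$, which is an isomorphism since fully faithful functors reflect isomorphisms. The naturality of $\gamma_{G^\bullet}$ established in the first paragraph then forces $\gamma_\alpha$ and $\gamma_\beta$ to coincide on the overlap, so the $\gamma_\alpha$ descend to a global $\GRtinfty$-equivariant isomorphism $\gamma\colon H(\ol R) \riso \cT^*(\gM^*(H))$.

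The hard part will be the naturality and gluing argument in the second step. It rests on the fact that the identifications of Corollary~\ref{cor:GRinftyRes} are functorial in morphisms of $p$-divisible groups, which we need both for matching the two exact sequences in the first step and for the gluing itself. Once naturality is in hand, property~(2) is immediate from the base change compatibilities of each ingredient already used: Proposition~\ref{prop:RelKisinFF}(3) for $\gM^*$, \S\ref{subsec:BCGal} for $\cT^*$, and the obvious compatibility for $H(\ol R)$ under compatible choices of integral closures $\ol R \to \ol{R'}$.
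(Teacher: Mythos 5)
Your proof is correct and follows essentially the same approach as the paper: construct the isomorphism directly when $H$ has a global isogeny presentation by combining Corollary~\ref{cor:GRinftyRes} with the exact sequence \eqref{eqn:RednEtPhiMod}, then reduce to this case by Zariski localization via Raynaud's theorem. You are somewhat more explicit than the paper in justifying the gluing step --- invoking Lemma~\ref{lem:glueing} and the naturality of $\gamma_{G^\bullet}$ in $\Isog^\bullet_R$ to show independence of the local presentation --- whereas the paper simply asserts compatibility with Zariski localization.
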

\begin{proof}
If $H=\ker[d:G^0\ra G^1]$, then the $\GRtinfty$-isomorphism $T_p(G^i)\cong \cT^*(\gM^*(G^i))$ as in Corollary~\ref{cor:GRinftyRes} induces a $\GRtinfty$-isomorphism $H(\ol R)\cong \cT^*(\gM^*(H))$ with all the desired properties. (Here we use the exact sequence (\ref{eqn:RednEtPhiMod}).) But since the formation of the isomorphism commutes with Zariski localisation, it suffices to define the isomorphism on some Zariski covering where $H$ can be embedded in some $p$-divisible group.
\end{proof}

\section{Moduli of connections}\label{sec:Vasiu}
In this section we study the role of connections using Vasiu's construction of ``moduli of connections'' \cite[\S3]{Vasiu:ys}. This construction will play an important role in the classification of finite locally free group schemes in \S\ref{sec:finfl}. The main results in this section are Corollaries~\ref{cor:BreuilClassif} and \ref{cor:dJgen}.

Throughout this section, we  assume that $R$ satisfies the formally fintie-type assumption~(\S\ref{cond:dJ}) and choose $(R_0,\vphi)$ as usual. Set $R_{0,k}:= R_0/(p)(\cong R/(\varpi))$. Let  $J_R$ and $J_0(=J_{R_0})$ respectively denote  the Jacobson radicals of $R$ and $R_0$. Note that $R$ and $R_0$ are respectively $J$-adic and $J_0$-adic.

\subsection{Preliminaries}\label{subsec:IndEtBC}
Let $I\subset R$ be a $J_R$-adically closed ideal such that $(\varpi)\subseteq I \subseteq J_R$, and set $I_0:=\ker(R_0\thra R/I)$. (In practice, $I$ is either $(\varpi)$ or $J_R$.) 
Let $A$ be a $I$-adic formally \'etale $R$-algebra such that $A/IA$ is finitely generated over $R/I$ (i.e., 
$\Spec A/I^nA\ra \Spec R/I^n$ is \'etale for any $n$). 
Then applying \S\ref{subsec:BC} (Ex1), there exists a unique $I_0$-adic formally \'etale lift $R_0\ra A_0$ of $R/I\ra A/IA$, and $A_0$ can naturally be viewed as a $R_0$-subalgebra of $A$. We set $\Sig_A:=A_0[[u]]$, and define $S_A$, etc., correspondingly. More generally, if $A$ is ``ind-\'etale'' (i.e., $A$ is a $I$-adic completion of $\varinjlim_i A\com i$ for some $I$-adic formally \'etale $R$-algebras $A\com i$) then we define $A_0$ by $I_0$-adically completing $\varinjlim A_{0}\com i$, where $A_{0}\com i$ is the $R_0$-lift of $A\com i/IA\com i$, and define $\Sig_A$, $S_A$, etc., accordingly. Note that $A_0$ is flat over $W$ and $A/(\varpi)$ locally admits a finite $p$-basis, so it satisfies the requirement for $R_0$ as in \S\ref{cond:Breuil} and $A$ satisfies the $p$-basis assumption (\S\ref{cond:Breuil}) using this $A_0$ and $E(u)\in R_0[u]\subset A_0[u]$.

\subsection{Review of the construction of moduli of connections}\label{subsec:Vasiu}
For any $\M\in\SMFq$ (respectively, $\M_0\in\PMFq{R_0}$), we have $S_A\otimes_S\M\in\PMFq{S_A}$ (respectively, $A_0\otimes_{R_0}\M\in\PMFq{A_0}$). 

Let us recall (and slightly extend)  Vasiu's construction of moduli of connections \cite[Theorem~3.2]{Vasiu:ys}. Let $\M_0,\M_0'\in \PMFq{R_0}$ and a morphism $f:\M_0\ra\M_0'$ be given. Fix an $R_0$-direct factor $(\M_0)^1\subseteq\M_0$ which lifts $\Fil^1\M_0/p\M_0\subseteq \M_0/p\M_0$, and similarly choose $(\M_0')^1\subseteq\M_0'$. Set
 \[\wt\M_0:=\vphi^*\left(\M_0+ \ivtd p (\M_0)^1\right)\subset\M_0[\ivtd p].\] 
 Note that $1\otimes\vphi_{\M_0}$ induces an $R_0$-linear isomorphism $\wt\M_0\riso \M_0$, and one can recover $\M_0\in\PMFq{R_0}$ from $(\M_0, (\M_0)^1, \wt\M_0 \underset{1\otimes\vphi_{\M_0}}{\riso}\M_0)$. We similarly define $\wt\M_0'$ so that $f$ induces $\wt\M_0\ra\wt\M_0'$.
  
By passing to a Zariski covering of $\Spf (R_0, J_0)$,  we may assume that   $(\M_0)^1$, $\M_0/(\M_0)^1$, $(\M_0')^1$, $\M_0'/(\M_0')^1$, and $\wh\Omega_{R_0}$ are all free over $R_0$. Let us fix a $R_0$-basis of $\wh\Omega_{R_0}$ and an $R_0$-basis of  $\M_0$ adapted to the direct factor $(\M_0)^1$, and same for $\M_0'$. 

Consider a functor $\cQ_n$ which associates to any \'etale map $\Spf(A_0,(p))\ra\Spf (R_0,(p))$ the set of (naturally defined) equivalence classes of additive morphisms \[\nabla_{A_0,n}:A_0/(p^n)\otimes_{R_0}\M_0 \ra A_0/(p^n)\otimes_{R_0}\M_0'\otimes_{R_0}\wh\Omega_{R_0}\] which satisfies the Leibnitz rule (i.e., $\nabla_{A_0,n}(ax) = a\nabla_{A_0,n}(x) + x\otimes da$ for any $a\in A_0/(p^n)$ and $x\in\M_0$, where we identify $\wh\Omega_{A_0}\cong A_0\wh\otimes_{R_0}\wh\Omega_{R_0}$ by \'etaleness) and makes the following diagram commute:
\begin{equation}\label{eqn:ASsys1}
\xymatrix@C=50pt{
A_0/(p^n)\otimes_{R_0}\wt\M_0 \ar[r]^-{\vphi^*(\nabla_{A_0,n})} \ar[d]_-{1\otimes\vphi_{\M_0}}& 
A_0/(p^n)\otimes_{R_0}\wt\M_0'\otimes_{R_0}\wh\Omega_{R_0} \ar[d]^-{(1\otimes\vphi_{\M_0'})\otimes\id_{\wh\Omega_{R_0}}}\\
A_0/(p^n)\otimes_{R_0}\M_0\ar[r]_-{\nabla_{A_0,n}} & 
A_0/(p^n)\otimes_{R_0}\M_0'\otimes_{R_0}\wh\Omega_{R_0}
}.\end{equation}
Here we define $\vphi^*(\nabla_{A_0,n})$ by choosing an arbitrary lift of $\nabla_{A_0,n}$ over $A_0/(p^{n+1})$, and one can check without difficulty that  $\vphi^*(\nabla_{A_0,n})$  does not depend on the choice of lift. (\emph{Cf.} equation (9) in \cite[\S3.1.1]{Vasiu:ys}.)

If $\M_0 = \M_0'$ then a connection $\nabla_{A_0,n}$ on $A_0/(p^n)\otimes_{R_0}\M_0$ commutes with $\vphi_{A_0}\otimes\vphi_{\M_0}$ if and only it satisfies (\ref{eqn:ASsys1}) modulo any $p^n$. More generally, if $\M_0, \M_0'\in\PMF{R_0}$ and $f:\M_0 \ra \M_0'$ respects $\vphi$ and takes $(\M_0)^1$ into $(\M_0')^1$, then both $\nabla_{M_0'}\circ f$ and $(f\otimes\id)\circ\nabla_{\M_0}$ satisfy (\ref{eqn:ASsys1}) modulo any $p^n$.

We claim that the functor $\cQ_n$ is representable by a $p$-adic formally \'etale $R_0$-algebra (again denoted by $\cQ_n$).  Fixing $R_0$-bases as above, the condition for $\nabla_{A_0,1}$ 
to satisfy (\ref{eqn:ASsys1}) for any $A_0$ we consider is given by a system of equations of the form $\nf x = B\vphi(\nf x) + C_0$ for some matrices $B,C_0$ with entries in $R_{0,k}=R_0/(p)$, where the variables $x$ are the ``matrix entries'' of $\nabla_{A_0,1}$ with respect to the fixed bases (\emph{Cf.} the proof of \cite[Theorem~3.2]{Vasiu:ys}.) Such a system of equations  is an example of  \emph{Artin-Schreier system of equation} (defined in \cite[\S2.4]{Vasiu:ys}), and defines a faithfully flat \'etale algebra $\cQ_{1,k}$ over $R_{0,k}$ by \cite[Theorem~2.4.1]{Vasiu:ys}, and we take $\cQ_1$ to be the unique $p$-adic formally \'etale $R_0$-algebra lifting $\cQ_{1,k}$.

Now, assume that $\cQ_n$ is defined, and let $\nabla_{\cQ_n,n}$ denote the universal object. Pick an arbitrary lift $\wt\nabla_{\cQ_n,n+1}:\cQ_n/(p^{n+1})\otimes_{R_0}\M_0 \ra \cQ_n/(p^{n+1})\otimes_{R_0}\M_0\otimes_{R_0} \wh\Omega_{R_0}$ of $\nabla_{\cQ_n,n}$. For any \'etale map $\jmath:\Spf(A_0,(p))\ra \Spf(\cQ_{n},(p))$, the condition for some $\nabla_{A_0,n+1}$ to satisfy (\ref{eqn:ASsys1}) is given an system of equations of the form $\nf x = B\vphi(\nf x) + C_n$ over $\cQ_n/(p)$, where the variables are the matrix entries of $\delta_{A_0,n+1}$ where $\nabla_{A_0,n+1} = \jmath\otimes \wt\nabla_{\cQ_n,n+1}+p^n\delta_{A_0,n+1}$. (See the proof of \cite[Theorem~3.2]{Vasiu:ys} for the details.) We take $\cQ_{n+1}$ to be  the unique $p$-adic formally \'etale $\cQ_n$-algebra lifting the faithfully flat \'etale $\cQ_{n,k}$-algebra $\cQ_{n+1,k}$  defined by these equations. The system of \'etale algebras $\{\cQ_{n,k}\}$ is an example of \emph{Artin-Schreier tower} defined in \cite[\S2.4]{Vasiu:ys}.

\begin{defnsub}\label{def:ModConn}
The system $\{\cQ_n\}$ constructed above is called the \emph{moduli of connections} for $\M_0$ and $\M_0'$.
\end{defnsub}

The following lemma states that any ``connected component'' of $\Spec \cQ_\infty/(p)$ enjoys nice properties, where $\cQ_\infty:=\varinjlim_n\cQ_n$.
\begin{lemsub}\label{lem:Artin-Schreier}
Let $\{\cQ_{n}\}$ be the $p$-adic $R_0$-formally \'etale lift of some ``Artin-Schreier tower'' over $R_{0,k}:=R_0/(p)$ in the sense of  \cite[\S2.4]{Vasiu:ys}; for example, $\{\cQ_n\}$ can be a ``moduli of connections''. Let $\wh\cQ_\infty$ denote the $p$-adic completion of $\cQ_{\infty}:=\varinjlim_n\cQ_{n}$, and choose a direct summand $A_0$ of $\wh\cQ_{\infty}$ such that $\Spec A_0$ has finitely many connected components. 
Set $A_{0,k}:=A_0/(p)$. Then the following properties hold:
\begin{enumerate}
\item\label{lem:Artin-Schreier:Sm} 
Let $\p\subset A_0$ be a prime ideal containing $p$, and set $\ol \p:=\p/(p)\subset A_{0,k}$.  Then the localisation $(A_{0,k})_{\ol\p}$ is regular and excellent,\footnote{Note that this does not necessarily imply that $A_{0,k}$ is noetherian. The author does not know whether or when to expect that  $A_{0,k}$ is noetherian.} and $A_{0,k}$ locally admit a finite $p$-basis. 
\item\label{lem:Artin-Schreier:Stratif} The image of any open subset of $\Spec A_{0,k}$ in $\Spec R_{0,k}$ is open.
\end{enumerate}
\end{lemsub}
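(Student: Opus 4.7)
The plan is to reduce the lemma to standard properties of ind-\'etale towers over $R_{0,k}$ via the idempotent correspondence. Since $p$ lies in the Jacobson radical of $\wh\cQ_\infty$, idempotents lift uniquely between $A_0$ and $A_{0,k}$, and idempotents of $\cQ_{\infty,k}=\varinjlim_n\cQ_{n,k}$ are preserved by filtered colimits. First I would decompose $A_0$ into its finitely many connected components and reduce to the case where $\Spec A_0$ is connected. In this reduced case $A_{0,k}$ is connected, and its defining idempotent $e\in\cQ_{\infty,k}$ lies in some $\cQ_{N,k}$; primitivity of $e$ in $\cQ_{\infty,k}$ forces $e$ to remain primitive in each $\cQ_{n,k}$ for $n\geq N$, so that $e\cQ_{n,k}$ is a connected finite \'etale $R_{0,k}$-algebra and $A_{0,k}=\varinjlim_{n\geq N}e\cQ_{n,k}$ is a filtered direct limit of such, with faithfully flat local transition maps after localising at any compatible system of primes.

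For part (\ref{lem:Artin-Schreier:Sm}), the finite $p$-basis on $R_{0,k}$ transfers through the \'etale tower and passes to the colimit, using the same local $p$-basis from $R_{0,k}$; so $A_{0,k}$ locally admits a finite $p$-basis. For the localisation at $\ol\p$, setting $\ol\p_n:=\ol\p\cap e\cQ_{n,k}$ yields $(A_{0,k})_{\ol\p}=\varinjlim_n(e\cQ_{n,k})_{\ol\p_n}$. Each term in the colimit is a localisation of a finite \'etale $R_{0,k}$-algebra at a prime, hence a noetherian regular excellent local ring: regularity follows from that of $R_{0,k}$ (by formal smoothness over $\Fp$ via \cite[Lemma~1.1.2]{dejong:crysdieubyformalrigid}), and excellence from \cite[Theorem~4]{Valabrega:ExcellencePS}, both inherited along \'etale morphisms and localisations. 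The transition maps are faithfully flat local maps of noetherian regular local rings, so regularity of the colimit follows because the maximal ideal is generated by a regular system of parameters lifted from $(R_{0,k})_{\ol\p\cap R_{0,k}}$ (using that \'etale morphisms preserve regular systems of parameters), and excellence is transferred by verifying that the formal fibres remain geometrically regular after passing to the limit.

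For part (\ref{lem:Artin-Schreier:Stratif}), any open $U\subset\Spec A_{0,k}$ is a union of quasi-compact opens; each quasi-compact open is cut out by finitely many elements of $A_{0,k}$ which, by the direct-limit description, all come from $e\cQ_{n,k}$ for some $n$, so $U$ equals the preimage of a quasi-compact open $U_n\subset\Spec e\cQ_{n,k}$. The image of $U$ in $\Spec R_{0,k}$ then coincides with the image of $U_n$ under the \'etale (hence open) morphism $\Spec e\cQ_{n,k}\to\Spec R_{0,k}$, and is thus open; for a general open, take the union of the images of its quasi-compact pieces. The main obstacle will be the excellence claim in part (\ref{lem:Artin-Schreier:Sm}) for the filtered colimit: while regularity passes readily to the limit by the regular-system-of-parameters argument, excellence requires controlling the formal fibres of the limit local ring in a non-noetherian setting, and the argument will likely proceed either by showing that the local structure effectively stabilises after finitely many steps (using the specific form of the Artin-Schreier equations defining the tower) or by appealing to the theory of strict henselisations of excellent rings, where the analogous properties are classical.
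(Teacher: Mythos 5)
Your reduction to the connected case and the finite $p$-basis argument are fine, but there are genuine gaps in both parts.

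For part~(\ref{lem:Artin-Schreier:Sm}), your regularity argument presupposes that $(A_{0,k})_{\ol\p}$ is noetherian (otherwise ``regular local ring'' and ``regular system of parameters'' are not even well-defined), but a filtered colimit of noetherian local rings along faithfully flat local maps is not automatically noetherian. The paper closes this gap with a specific device you only gesture toward at the end: since $\Spec(A_{0,k})_{\ol\p}$ is a projective limit of \'etale neighbourhoods of $\ol\q = \ol\p\cap R_{0,k}$, there is a faithfully flat local map $(A_{0,k})_{\ol\p}\ra (R_{0,k})_{\ol\q}^{\sh}$ to the strict henselisation, which is noetherian; faithfully flat descent of ACC then gives noetherian-ness of $(A_{0,k})_{\ol\p}$. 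Regularity then comes for free from the identification of strict henselisations $(A_{0,k})_{\ol\p}^{\sh}\cong(R_{0,k})_{\ol\q}^{\sh}$ and \cite[IV$_4$, Corollaire~18.8.13]{EGA}. For excellence, you admit you have no argument; the paper invokes Greco's theorem \cite[Corollary~5.6(iv)]{Greco:ThmExcellence} on excellence of filtered ind-\'etale extensions of excellent normal rings. Without either the noetherian-ness step or the excellence step your part~(\ref{lem:Artin-Schreier:Sm}) is incomplete.

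For part~(\ref{lem:Artin-Schreier:Stratif}), your argument is incorrect. You claim that the image of $U\subset\Spec A_{0,k}$ in $\Spec R_{0,k}$ equals the image of $U_n\subset\Spec e\cQ_{n,k}$, but this needs $\Spec A_{0,k}\ra\Spec e\cQ_{n,k}$ to be surjective, which fails in general: the image of $\Spec A_{0,k}$ is the intersection of the images of $\Spec e\cQ_{m,k}$ for $m\geqs n$, an intersection of opens that need not be open. The openness of images is a genuinely nontrivial feature of Artin--Schreier towers, and the paper deduces it from Vasiu's stratification theorem \cite[Theorem~2.4.1(c)]{Vasiu:ys}: there is a stratification of $\Spec R_{0,k}$ over whose strata each $\Spec\cQ_{n,k}$ is finite \'etale, so the map $\Spec A_{0,k}\ra\Spec R_{0,k}$ becomes open \emph{and} closed over each nonempty stratum, from which openness of the image of an open set is extracted. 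Your argument has no substitute for this structural input.
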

\begin{proof}
Note that $R_0/(p)$ is regular and excellent by the theory of excellence; \emph{cf.,} \cite[Theorem~4]{Valabrega:ExcellencePS},  \cite[Lemma~1.3.3]{dejong:crysdieubyformalrigid}, and \cite[\S33.I, Theorem~79]{matsumura:CommAlg}. Set $\ol\q:=\ol\p\cap R_{0,k}$. Since $\Spec (A_{0,k})_{\ol\p}$ is an projective limit of  \'etale neighbourhoods of $\ol\q\in\Spec(R_{0,k})_{\ol\q}$, we obtain a (faithfully) flat local morphism $(A_{0,k})_{\ol\p} \ra (R_{0,k})_{\ol\q}^{\sh}$, where $(R_{0,k})_{\ol\q}^{\sh}$ is a strict henselisation at $\ol\q$. Now it follows that $(A_{0,k})_{\ol\p}$ satisfies ascending chain condition. (Recall that the strict henselisation of a noetherian local ring is noetherian \cite[IV{$_4$, Proposition~18.8.8(iv)}]{EGA}.)

Since the strict henselisation of $(A_{0,k})_{\ol\p}$ is naturally isomorphic to $(R_{0,k})_{\ol\q}^{\sh}$, $A_{0,k}$  is  regular by \cite[IV{$_4$}, Corollaire~18.8.13]{EGA}. Excellence of $A_{0,k}$ follows from \cite[Corollary~5.6(iv)]{Greco:ThmExcellence} since $R_{0,k}$ is excellent and normal.

Recall that $R_{0,k}$ locally admits a finite $p$-basis by \cite[Lemmas~1.1.3, 1.3.3]{dejong:crysdieubyformalrigid}. So  $A_{0,k}$ locally admits a finite $p$-basis since it is a direct limit of \'etale $R_{0,k}$-algebras.
 This proves (\ref{lem:Artin-Schreier:Sm}).

The assertion (\ref{lem:Artin-Schreier:Stratif}) is a consequence of  \cite[Theorem~2.4.1(c)]{Vasiu:ys}, which asserts that there exists a ``stratification'' of $\Spec R_{0,k}$ such that over each stratum $\Spec \cQ_{n,k}$ is finite \'etale cover. Therefore, the map $\Spec A_{0,k} \ra \Spec R_{0,k}$ is open and closed over each stratum (unless the fibre at the stratum is empty) by \cite[Theorem~9.6]{matsumura:crt}, from which (\ref{lem:Artin-Schreier:Stratif})  follows.
%
\end{proof}

Let us specialise to the moduli of connections $\{\cQ_n\}$ for $\M_0=\M_0'$. Under the notation as in Lemma~\ref{lem:Artin-Schreier}, let $\wh\cQ_\infty$ denote the $p$-adic completion of $\cQ_\infty:=\varinjlim\cQ_n$, and choose a quotient $A_0$ of $\wh \cQ_\infty$ which is $p$-adic and formally \'etale over $R_0$ such that $A_{0,k}:=A_0/(p)$ is constructed as in Lemma~\ref{lem:Artin-Schreier}.
\begin{propsub}\label{prop:Vasiu}
In the above setting, $A_0\otimes_{R_0}\M_0$ together with the universal connection is an object in $\PMF{A_0}$. 
\end{propsub}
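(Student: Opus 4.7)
The plan is to show that $M := A_0 \otimes_{R_0} \M_0$, equipped with $A_0\otimes_{R_0}\Fil^1\M_0$, the Frobenius $\vphi_{A_0}\otimes\vphi_{\M_0}$, and the universal connection $\nabla := \varprojlim_n \nabla_{A_0,n}$, is an object of $\PMF{A_0}$. The filtered Frobenius data is inherited from $\M_0\in\PMFq{R_0}$ by formally étale scalar extension, using that $A_0$ is $p$-adic formally étale over $R_0$, so the non-connection axioms are automatic. Moreover, $\vphi$-compatibility of $\nabla$ is built into the moduli problem: the commutativity of diagram~(\ref{eqn:ASsys1}) modulo every $p^n$ (with $\M_0=\M_0'$) is precisely the condition that $\nabla$ commutes with $\vphi_{A_0}\otimes\vphi_{\M_0}$ modulo $p^n$, and the integral compatibility follows from $p$-adic separatedness of $M\otimes_{A_0}\wh\Omega_{A_0}$. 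Thus the substance of the proposition is to verify (i) integrability, $\nabla^2=0$, and (ii) topological quasi-nilpotence.

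The key technical input for both is that the derivative $d\vphi : \vphi^*\wh\Omega_{A_0} \to \wh\Omega_{A_0}$ of any lift of Frobenius on $A_0$ is divisible by $p$: indeed, $\vphi(a)-a^p \in pA_0$ gives $d(\vphi(a))\in p\wh\Omega_{A_0}$ for every $a\in A_0$. In particular $\wedge^2 d\vphi$ is divisible by $p^2$.

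For integrability, the curvature $K:=\nabla\circ\nabla : M\to M\otimes_{A_0}\wedge^2\wh\Omega_{A_0}$ is $A_0$-linear, and a direct unwinding of $\vphi$-compatibility of $\nabla$ yields the identity
\[
K\circ (1\otimes\vphi_M) \;=\; \big((1\otimes\vphi_M)\otimes \wedge^2 d\vphi\big)\circ \vphi^*(K)
\]
between $A_0$-linear maps $\vphi^*M \to M\otimes\wedge^2\wh\Omega_{A_0}$. Because $\wedge^2 d\vphi$ is divisible by $p^2$ and $(1\otimes\vphi_M)$ is an isomorphism after inverting $p$, iterating this identity $n$ times gives $K \in p^{2n}\Hom_{A_0}(M,M\otimes\wedge^2\wh\Omega_{A_0})[1/p]$ for every $n\geq 0$. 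Since $M\otimes\wedge^2\wh\Omega_{A_0}$ is a finitely generated projective $A_0$-module, hence $p$-adically separated, we conclude $K=0$.

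For topological quasi-nilpotence, working étale-locally on $\Spec A_0$ we may assume $\wh\Omega_{A_0}$ is free with basis $\{dT_i\}_{i\in I}$ and write $\nabla=\sum_i \partial_i\otimes dT_i$. Since $d\vphi(dT_i)\in p\wh\Omega_{A_0}$, $\vphi$-compatibility forces $\nabla\circ\vphi_M$ to land in $pM\otimes\wh\Omega_{A_0}$, and inductively any iterated composition $\partial_{i_1}\circ\cdots\circ\partial_{i_N}$ composed with $\vphi_M$ lands in $p^N M$. Together with the fact that $M[1/p]$ is spanned by the image of $\vphi_M$ after inverting $p$, this will give the required mod-$p$ vanishing of all sufficiently long multi-derivatives on any local section, i.e.\ topological quasi-nilpotence in the sense of \cite[Remark~2.2.4]{dejong:crysdieubyformalrigid}. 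The main obstacle throughout will be rigorous bookkeeping of $p$-divisibility in these iteration arguments: because $(1\otimes\vphi_M)$ is only an isomorphism after inverting $p$, each step naturally takes place in $M[1/p]$, and the integral conclusions must be extracted from $p$-adic separatedness of $M$ and its tensor powers with $\wh\Omega_{A_0}$. The passage from the finite-level information (modulo each $p^n$) supplied by the moduli problem to the full statement over $A_0$ is routine, using $p$-adic completeness of $A_0$ and of $M$.
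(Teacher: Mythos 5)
The paper's own proof is a one-line citation to the last three paragraphs of Vasiu's Theorem~3.2, so your attempt to re-derive both parts from scratch is a genuinely different exercise; your integrability argument is essentially that argument, but your quasi-nilpotence argument has a gap.

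\textbf{Integrability.} Your curvature identity and iteration are correct in substance, though the claim that each iteration gains a factor $p^{2}$ is imprecise: $\wedge^{2}d\vphi$ contributes $p^{2}$ but you pay back one power of $p$ when you precompose with $(1\otimes\vphi_{M})^{-1}$ (its image only lies in $p^{-1}\vphi^{*}M$, since $\M_{0}\in\PMFq{R_{0}}$ guarantees only that $\mathrm{im}(1\otimes\vphi_{\M_{0}})\supseteq p\M_{0}$). The net gain is one power of $p$ per iteration, which still suffices to conclude $K=0$ by $p$-adic separatedness of $\Hom_{A_{0}}(M,M\otimes\wedge^{2}\wh\Omega_{A_{0}})$.

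\textbf{Topological quasi-nilpotence.} Here the argument breaks. Your claim that $\partial_{i_{1}}\cdots\partial_{i_{N}}\circ\vphi_{M}$ lands in $p^{N}M$ is false already for $N=2$: writing $\partial_{i}\vphi_{M}=\sum_{j}c_{ij}\vphi_{M}\partial_{j}$ with $c_{ij}=\partial_{i}\vphi(T_{j})\in pA_{0}$, one gets
\[
\partial_{k}\partial_{i}\vphi_{M}=\sum_{j}(\partial_{k}c_{ij})\vphi_{M}\partial_{j}+\sum_{j,l}c_{ij}c_{kl}\vphi_{M}\partial_{l}\partial_{j},
\]
and $\partial_{k}c_{ij}=\partial_{k}\partial_{i}\vphi(T_{j})$ lies only in $pA_{0}$, not $p^{2}A_{0}$ (take $\vphi(T_{j})=T_{j}^{p}+pg_{j}$: the $p\,\partial_{k}\partial_{i}g_{j}$ term is not divisible by $p^{2}$). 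So $\partial^{I}\vphi_{M}$ lands in $pM$ for $|I|\geq 1$, and no higher. But even this correct bound does not yield quasi-nilpotence mod $p$: for $m\in M$ with $pm=\sum_{\alpha}a_{\alpha}\vphi_{M}(m_{\alpha})$, the Leibniz expansion of $p\,\partial^{I}(m)$ gives terms $\partial^{J}(a_{\alpha})\,\partial^{I\setminus J}(\vphi_{M}(m_{\alpha}))$; the $J=I$ term is controlled only by $\partial^{I}(a_{\alpha})\in pA_{0}$ for $|I|$ large, and the $J\subsetneq I$ terms by $\partial^{I\setminus J}\vphi_{M}(\cdot)\in pM$. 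You obtain $p\,\partial^{I}(m)\in pM$, which is vacuous.

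The missing ingredient is the Verschiebung $\psi_{M}$. What works (and is presumably what Vasiu does): on $\bar M:=M/pM$, the submodule $\bar F:=\mathrm{im}(1\otimes\bar\vphi_{M})$ is $\bar\nabla$-stable, and the connection on $\bar F$ factors through derivatives of the $\bar A_{0}$-coefficients only (since $\bar\partial_{i}\bar\vphi_{M}=0$), hence is quasi-nilpotent because $\bar\partial_{T_{i}}^{p}=0$ on $\bar A_{0}$. The quotient $\bar M/\bar F$ is identified via the horizontal map $\bar\psi_{M}$ (using $\psi_{M}\circ(1\otimes\vphi_{M})=p\,\mathrm{id}$ and $p$-torsion freeness of $M$ to see $\mathrm{im}(1\otimes\bar\vphi_{M})=\ker\bar\psi_{M}$) with a submodule of $\vphi^{*}\bar M$ carrying the $\vphi$-pullback connection, which reduces mod $p$ to the trivial connection in the $1\otimes m$ direction because $d\vphi\in p\wh\Omega_{A_{0}}$, hence is again quasi-nilpotent. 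Quasi-nilpotence of $\bar\nabla$ on $\bar M$ then follows from the short exact sequence $0\to\bar F\to\bar M\to\bar M/\bar F\to0$ of modules with connection. Your argument, using only $\vphi_{M}$, cannot reach elements of $\bar M$ outside $\bar F$.
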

\begin{proof}
The universal connection is integrable and topologically quasi-nilpotent, which is shown in the last three paragraphs in the proof of \cite[Theorem~3.2]{Vasiu:ys}. 
\end{proof}

\subsection{Special cases}
There are some special cases where the moduli of connection $\{\cQ_\infty\}$ turns out to be very close to $R_0$. We begin with some definitions:

Let $\{\cQ_n\}$ be the moduli of connections for $\M_0, \M_0'\in\PMFq{R_0}$, and consider one of the following cases.
\begin{description}
\item[Formal/Unipotent case]  $\M_0$ and $\M_0'$ are either both $\vphi$-nilpotent or both $\psi$-nilpotent (in the sense of Definition~\ref{def:vphiNilp}).
\item[Local case] Assume that $k$ is perfect, $R_0=W(k)[[T_1,\cdots,T_d]]$, and $\vphi$ satisfies $d\vphi(T_i)/p\in \m_{R_0}\wh\Omega_{R_0}$ where $\m_{R_0}$ is the maximal ideal (e.g.,  $\vphi(T_i) = T_i^p$). We allow $\M_0$ and $\M_0'$ to be arbitrary.
\end{description}
The proof of \cite[Theorem~3.3.1]{Vasiu:ys} shows that in the above cases there exists a \emph{unique} geometric point of $\Spec \cQ_n/(p)$ over any geometric closed point of $\Spec R_{0,k}$. Set $J_0:=\ker (R_0\thra R_{\red})$ as before. 
From this, one can check that 
there exists a unique connected component $\Spf (A_0\com n,(p))\subset \Spf(\cQ_n,(p))$ for each $n$ which intersects with the fibre over some closed point in $\Spec R_{0,k}$, and furthermore we have $R_0\riso A_0\com n$ for all $n$. (Indeed,  all closed points lie in $\Spec R/J_R$, so if we set $J_0:=\ker(R_0\thra R_{\red})$ then $R_0/J_0 \ra \cQ_nJ_0\cQ_n$ is an isomorphism for all $n$ because it is \'etale and radicial covering. Now exploit the $J_0$-adic completeness of $R_0$.) 
In particular, it follows that for any $\M_0, \M_0'\in\PMFq{R_0}$ in one of the above cases, one can uniquely define a connection on each of $\M_0$ and $\M_0'$ so that they become an object of $\PMF{R_0}$ by Proposition~\ref{prop:Vasiu}, and any $\vphi$-equivariant map $\M_0\ra\M_0'$ is horizontal. 
Combining this with Theorems~\ref{thm:BreuilClassif} and \ref{thm:FormalBreuil}, we obtain:
\begin{corsub}\label{cor:BreuilClassif}
In the \textbf{Local case} above with $p>2$, the functors $\gM^*$ and $\M^*$ below
\[
\xymatrix@C=40pt{
\{p\text{-divisible groups}/R\}  \ar@/^1.5pc/[rr]^-{\gM^*} \ar[r]^-{\M^*} &\SMFq &\SMq \ar[l]^{S\otimes_{\vphi,\Sig}(-)}_-{\cong}
},
\]
defined by forgetting the connections,  are  anti-equivalences of categories.

Let $p$ be any prime. Then $\gM^*$ and $\M^*$ induce anti-equivalences of categories from the category of formal $p$-divisible groups over $R$ to $\SMq^{\vphi\nilp}$ and $\SMFq^{\vphi\nilp}$, and similarly for unipotent $p$-divisible groups.  
\end{corsub}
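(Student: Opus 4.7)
The plan is to deduce the corollary by establishing that, in the \textbf{Local case} with $p>2$ or in the formal/unipotent setting for any $p$, the forgetful functors
\[
\SMFqw\to\SMFq,\qquad \SMqw\to\SMq
\]
(restricted to the appropriate $\vphi$- or $\psi$-nilpotent subcategories in the latter case) are equivalences of categories. Combined with the anti-equivalences $\M^*$ and $\gM^*$ from Theorem~\ref{thm:BreuilClassif} and Corollary~\ref{cor:RelKisin} for $p>2$, and with Theorem~\ref{thm:FormalBreuil} and Corollary~\ref{cor:FormalRelKisin} for any $p$ on formal, respectively unipotent, $p$-divisible groups, this will yield all the assertions of the corollary. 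The translation between (uni)potence of a $p$-divisible group and $\vphi$- or $\psi$-nilpotence of the associated Breuil and Kisin modules is furnished by Lemmas~\ref{lem:FormalUnip} and~\ref{lem:nilpBreuilvsKisin}.

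Unwinding the definitions of $\SMFqw$ and $\SMqw$, the assertion that each forgetful functor is an equivalence reduces to showing that the forgetful functor $\PMF{R_0}\to\PMFq{R_0}$ is an equivalence on the relevant subcategories. For this I would invoke Vasiu's moduli of connections $\{\cQ_n\}$ recalled in~\S\ref{subsec:Vasiu}, which, for $\M_0,\M_0'\in\PMFq{R_0}$ with fixed lifts of the Hodge filtrations, represents the functor of connections from $\M_0$ to $\M_0'$ that are $\vphi$-compatible modulo $p^n$ in the sense of diagram~\eqref{eqn:ASsys1}.

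The crucial input, already recorded in the paragraph preceding the corollary and originating from the proof of \cite[Theorem~3.3.1]{Vasiu:ys}, is that in either the \textbf{Local case} or whenever $\M_0$ and $\M_0'$ are simultaneously $\vphi$-nilpotent or simultaneously $\psi$-nilpotent, every geometric closed point of $\Spec R_{0,k}$ has a unique geometric preimage in $\Spec\cQ_n/(p)$. Combined with $J_0$-adic completeness of $R_0$ and the fact that every closed point of $\Spec R$ lies in $\Spec R/J_R$, this forces the unique connected component of $\Spf(\cQ_n,(p))$ meeting such a closed fibre to be canonically identified with $\Spf(R_0,(p))$. Applied to $(\M_0,\M_0)$ this supplies a unique $\vphi$-compatible connection on each $\M_0$, which by Proposition~\ref{prop:Vasiu} automatically lies in $\PMF{R_0}$, giving essential surjectivity. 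Applied to $(\M_0,\M_0')$ and to the two competing morphisms $\nabla_{\M_0'}\circ f$ and $(f\otimes\mathrm{id})\circ\nabla_{\M_0}$ attached to a $\vphi$-compatible $f$, it forces these to coincide, so $f$ is automatically horizontal and full faithfulness follows.

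The main technical point I expect to need care is the verification that the two competing morphisms attached to $f$ really are $\cQ_n$-points of the moduli, so that the uniqueness may be applied; this rests on the observation at the end of~\S\ref{subsec:Vasiu} that both $\nabla_{\M_0'}\circ f$ and $(f\otimes\mathrm{id})\circ\nabla_{\M_0}$ satisfy~\eqref{eqn:ASsys1} modulo every $p^n$, provided that the fixed filtration lifts $(\M_0)^1$ and $(\M_0')^1$ are chosen compatibly with $f$, which is possible since $f$ already respects the Hodge filtrations on $\M$ and $\M'$. Once the forgetful equivalences are in place, the first statement of the corollary follows by composing with $\M^*$, $\gM^*$, and the Breuil--Kisin comparison of Proposition~\ref{prop:CL} and Lemma~\ref{lem:CL-FF}; the second statement follows analogously by restricting to the formal or unipotent subcategories and using Lemma~\ref{lem:CL-FF:nilp} to handle the compatibility between $\SMq^{\vphi\nilp}$ and $\SMFq^{\vphi\nilp}$ (and similarly for $\psi$-nilpotent objects) at $p=2$.
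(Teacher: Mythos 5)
Your proposal follows essentially the same route as the paper: reduce to showing that the forgetful functors $\SMFqw\to\SMFq$ and $\SMqw\to\SMq$ (on the appropriate subcategories) are equivalences, using Vasiu's moduli of connections together with the uniqueness of geometric points from the proof of \cite[Theorem~3.3.1]{Vasiu:ys} and the $J_0$-adic completeness of $R_0$, and then compose with Theorem~\ref{thm:BreuilClassif}, Theorem~\ref{thm:FormalBreuil}, Proposition~\ref{prop:CL}, and Lemmas~\ref{lem:CL-FF}, \ref{lem:CL-FF:nilp}, \ref{lem:FormalUnip}, \ref{lem:nilpBreuilvsKisin}.

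The one soft spot is the suggestion that the filtration lifts $(\M_0)^1$ and $(\M_0')^1$ may be ``chosen compatibly with $f$''; in Vasiu's construction these lifts are fixed once and for all when the moduli $\cQ_n$ for the pair $(\M_0,\M_0')$ is built, and cannot be re-chosen morphism by morphism. This is not a real gap, however: the hypothesis ``$f((\M_0)^1)\subseteq(\M_0')^1$'' stated before the corollary is stronger than what is actually needed. What matters for diagram~\eqref{eqn:ASsys1} to make sense is that $f$ induce a map $\wt\M_0\to\wt\M_0'$, i.e.\ $f(\M_0+p^{-1}(\M_0)^1)\subseteq\M_0'+p^{-1}(\M_0')^1$, and this is automatic from $f(\Fil^1\M_0)\subseteq\Fil^1\M_0'$ because $(\M_0)^1\subseteq\Fil^1\M_0$ while $\Fil^1\M_0'=(\M_0')^1+p\M_0'$. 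With that observation in place of the ``re-choose the lifts'' step, your argument is complete.
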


\begin{rmksub}
Corollary~\ref{cor:BreuilClassif} can be alternatively obtained using display theory for formal $p$-divisible groups and Dieudonn\'e display theory (even in the \textbf{Local case}  when $p=2$); \emph{cf.} \cite{Zink:DisplayFormalGpAsterisq278, Lau:DisplayFormalBT} and \cite{Lau:2010fk}. Indeed, the ``display'' corresponding to a $p$-divisible group can be recovered from $\SMFq$ (without connection).
 \end{rmksub}

Let us now state the main result of this section:
\begin{thm}\label{thm:dJgen}
Let $A_0$ be a finite product of $p$-adic $\Zp$-flat domains which satisfies the conclusions of Lemma~\ref{lem:Artin-Schreier}(\ref{lem:Artin-Schreier:Sm}); namely, $A_{0,k}:=A_0/(p)$ locally admits a finite $p$-basis, and for any prime ideal $\p\subset A_0$ containing $p$, $(A_0)_\p/(p)$ is regular and excellent. We also fix a lift of Frobenius $\vphi=\vphi_{A_0}:A_0\ra A_0$ over $\vphi_W$, which exists by Lemma~\ref{lem:lifting}.

Then, the contravariant functor  
\begin{equation*}
\M_0^*:\big\{p\text{-divisible groups over }A_{0,k}:=A_0/(p) \big\} \ra \PMF{A_0},
\end{equation*}
given by $G_{A_{0,k}}\rightsquigarrow\DD^*(G_{A_{0,k}})(A_0)$ is an anti-equivalence of categories.
\end{thm}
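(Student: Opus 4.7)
My plan is to verify full faithfulness and essential surjectivity of $\M_0^*$ separately.

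\emph{Full faithfulness} follows from the Berthelot-Messing result recalled as case (\ref{subset:BMdJ:BM}) of \S\ref{subset:BMdJ}. The assumption that each localisation $(A_{0,k})_{\bar\p}$ is regular forces $\Spec A_{0,k}$ to be locally irreducible and normal; combined with the local finite $p$-basis hypothesis on $A_{0,k}$ and the affineness of $\Spec A_{0,k}$, this places us precisely in the setting of that result, so $\M_0^*$ is fully faithful.

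For \emph{essential surjectivity}, given $\M_0\in\PMF{A_0}$, I would first Zariski-localise on $\Spec A_{0,k}$ in order to assume a global finite $p$-basis on $A_{0,k}$. For each prime $\p\subset A_0$ with $\p\supset (p)$, the $(p,\p)$-adic completion $\wh{(A_0)_\p}$ is a complete Noetherian $\Zp$-flat local ring whose mod-$p$ reduction is a complete regular local $\Fp$-algebra with residue field $k(\bar\p)$ admitting a finite $p$-basis. By the Cohen structure theorem this reduction is isomorphic to $k(\bar\p)[[x_1,\dots,x_n]]$, which satisfies the formally finite-type assumption~(\S\ref{cond:dJ}). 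De\thinspace Jong's theorem (case (\ref{subset:BMdJ:dJ}) of \S\ref{subset:BMdJ}) applied to this completion therefore produces a unique $p$-divisible group $G_{\bar\p}$ over $\wh{A_{0,k,\bar\p}}$ whose contravariant Dieudonn\'e crystal evaluates to the pullback of $\M_0$, and the family $\{G_{\bar\p}\}$ is canonically compatible by the full faithfulness already established.

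The \emph{main obstacle} is the algebraisation step: assembling the compatible family $\{G_{\bar\p}\}$ into a genuine $p$-divisible group $G$ over $A_{0,k}$ with $\M_0^*(G)\cong\M_0$. I would handle this by covering $\Spec A_{0,k}$ by affine opens on which the restriction of $\M_0$ can be realised as a pullback from a formally-finite-type $\Fp$-subalgebra (exploiting that the Frobenius structure on $\M_0$ over each open involves only finitely many matrix coefficients, together with Lemma~\ref{lem:lifting} to lift such subalgebras to $\Zp$-flat Frobenius-equivariant rings), applying de\thinspace Jong to these subalgebras to produce local $p$-divisible groups, and gluing the resulting local pieces by full faithfulness. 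The uniqueness in de\thinspace Jong's theorem then forces the global $G$ to pull back correctly at every formal completion, yielding $\M_0^*(G)\cong\M_0$.
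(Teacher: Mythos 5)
Your full faithfulness argument agrees with the paper's (Berthelot--Messing via \S\ref{subset:BMdJ}(\ref{subset:BMdJ:BM}) plus Proposition~\ref{prop:FiltMod}), and the first move in essential surjectivity --- passing to the $\p$-adic completion at each $\p$, noting $\wh{(A_{0,k})}_{\ol\p}$ is a formal power series ring with finite $p$-basis, and invoking de\thinspace Jong there --- also matches the paper. The gap is in your ``algebraisation'' step, and it is not a technicality.

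You propose to find, Zariski-locally, a formally-finite-type $\Fp$-subalgebra $\ol B\subset A_{0,k}$ through which $(\M_0,\vphi_{\M_0},\nabla_{\M_0},\Fil^1\M_0)$ factors, lift $\ol B$ to a $\Zp$-flat ring $B_0$ via Lemma~\ref{lem:lifting}, apply de\thinspace Jong over $\ol B$, and then base-change to $A_{0,k}$. Two things go wrong. First, de\thinspace Jong's theorem (\S\ref{subset:BMdJ}(\ref{subset:BMdJ:dJ})) requires $\ol B$ to be \emph{formally smooth} and formally of finite type over a field with finite $p$-basis; a finitely generated sub-$\Fp$-algebra of $A_{0,k}$ generated by the finitely many matrix entries of $\vphi_{\M_0}$, $\nabla_{\M_0}$, and the Hodge projection need not be formally smooth, or normal, or have anything to do with de\thinspace Jong's hypotheses --- and $A_{0,k}$ here is typically a huge pro-\'etale extension (even possibly non-noetherian, see the footnote to Lemma~\ref{lem:Artin-Schreier}), so one cannot simply quote noetherian approximation in the usual form. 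Second, and more fundamentally, even when $\ol B$ can be chosen nicely, the lift of Frobenius $\vphi_{B_0}$ that Lemma~\ref{lem:lifting} produces has no reason to be the restriction of $\vphi_{A_0}$ to $B_0$; Remark~\ref{rmk:NonuniqueLift} explicitly warns that lifts of Frobenius are non-canonical and need not be compatible along a given map of formally smooth rings. Without a $\vphi$-compatible inclusion $B_0\hra A_0$, the base change of Dieudonn\'e-crystal data from $B_0$ to $A_0$ is simply not defined in the framework of Remark~\ref{rmk:BC}, so the local $p$-divisible group over $\ol B$ cannot be transported to $A_{0,k}$ with the correct Dieudonn\'e crystal.

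The paper circumvents both problems by going in the opposite direction: rather than shrinking to a finitely generated subalgebra, it enlarges to the completion $\wh{(A_{0,k})}_{\ol\p}$ and performs \emph{fpqc descent} of $p$-divisible groups along $\Spec\wh{(A_{0,k})}_{\ol\p}\ra\Spec(A_{0,k})_{\ol\p}$ and then along $\coprod_{\ol\p}\Spec(A_{0,k})_{\ol\p}\ra\Spec A_{0,k}$. Lifts of Frobenius extend \emph{uniquely} to localisations and completions (cf.\ \S\ref{subsec:BC}(Ex5)), so the $\vphi$-compatibility issue disappears. The crux, and the actual role of the excellence hypothesis you did not use, is Claim~\ref{clm:ExcNormal}: the double and triple tensor products $\wh{(A_{0,k})}_{\ol\p}\otimes_{(A_{0,k})_{\ol\p}}\wh{(A_{0,k})}_{\ol\p}$ (and the triple) are normal and locally admit a finite $p$-basis, because $(A_{0,k})_{\ol\p}\ra\wh{(A_{0,k})}_{\ol\p}$ is flat with geometrically regular fibres by excellence. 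This normality is what lets one apply Berthelot--Messing full faithfulness over these tensor-product rings, turning a descent datum on Dieudonn\'e crystals (which comes for free) into a descent datum on $p$-divisible groups. Without this step there is no way to glue the local $p$-divisible groups, and your appeal to ``full faithfulness'' alone does not supply it, because the rings over which one must check compatibility are precisely these non-noetherian tensor products, not $A_{0,k}$ itself.
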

%

Let $\{\cQ_n\}$ denote a moduli of connections for $\M_0=\M_0'\in\PMF{R_0}$, and let $\wh\cQ_\infty$ denote the $p$-adic completion of $\cQ_\infty:=\varinjlim\cQ_n$, as usual.
By Lemma~\ref{lem:Artin-Schreier}(\ref{lem:Artin-Schreier:Sm}), Theorem~\ref{thm:dJgen} can be applied to any direct factor $A_0$ of $\wh\cQ_\infty$ which is a finite product of domains.

Let us first record a consequence. 
Let $A_0$ be as in Theorem~\ref{thm:dJgen}, and set $A:=A_0\otimes_{R_0}R \cong A_0[u]/E(u)$, which is a $\varpi$-adic formally \'etale $R$-algebra. Set $\Sig_A:=A_0[[u]]$ and construct $S_A$, etc., accordingly.
\begin{corsub}\label{cor:dJgen} 
In the above setting, the contravariant functor
\begin{equation*}
\M^*:\big\{p\text{-divisible groups over }A \big\} \ra \PMFqw{S_A},
\end{equation*}
given by $G\rightsquigarrow\DD^*(G)(S_A)$ is an exact anti-equivalence of categories.
\end{corsub}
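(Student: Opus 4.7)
The plan is to mimic the proof of Theorem~\ref{thm:BreuilClassif}, substituting Theorem~\ref{thm:dJgen} for de\thinspace{}Jong's equivalence on the special fibre. First I would check that $(A_0,\vphi)$ together with $E(u)\in A_0[u]$ equips $A$ with the data of the $p$-basis assumption~(\S\ref{cond:Breuil}): $A_0$ is a $p$-adic flat $\Zp$-algebra with $A_0/(p)$ locally admitting a finite $p$-basis by hypothesis, $A\cong A_0[u]/E(u)$ by definition, and $E(u)$ is Eisenstein. Hence $\Sig_A:=A_0[[u]]$ and its $p$-adically completed divided power envelope $S_A$ are defined as in \S\ref{subsec:settingBr}, and give a frame in the sense of Definition~\ref{def:Frame} by the same computation (\ref{eqn:S}). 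All the formalism of \S\ref{subsec:connection} then carries over without modification: Proposition~\ref{prop:FiltMod} gives an exact equivalence between $\PMF{S_A}$ and the category of Dieudonn\'e crystals over $\Spf(A,(\varpi))$, and the forgetful functor $\PMF{S_A}\to\PMFqw{S_A}$ is fully faithful by Lemma~\ref{lem:forgetfulN}.

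Next I would construct a quasi-inverse $G^*$ of $\M^*$. Given $\M\in\PMFqw{S_A}$, set $\M_0:=A_0\otimes_{S_A}\M\in\PMF{A_0}$ and invoke Theorem~\ref{thm:dJgen} to obtain a $p$-divisible group $G_0$ over $A_{0,k}=A/(\varpi)$ with a natural isomorphism $\M^*_0(G_0)\cong\M_0$. Then I would run the induction of Claim~\ref{clm:BreuilClassif:Modp} verbatim: for each $0\leqs i<e$ one produces by Grothendieck-Messing deformation theory a $p$-divisible group $G_i$ over $A/(\varpi^{i+1})$ together with a $\vphi$-compatible identification $\DD^*(G_i)(S_{A,i})\cong S_{A,i}\otimes_{S_A}\M$ lifting the previous stage, where $S_{A,i}:=S_A/I_i$ as in \S\ref{sec:classif}. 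The matching of the crystalline value of the deformation at level $i-1$ with the reduction of $\M$ at level $i$ is provided by Lemma~\ref{lem:BreuilClassifLifting}, which is stated in purely frame-theoretic language and applies to the auxiliary PD-envelope $\wh D_i$ over $\Sig_A/(u^{i+1})\thra A/(\varpi^i)$ without change. Assuming $p>2$, so that $pA$ carries topologically nilpotent divided powers, a final application of Grothendieck-Messing to $A\thra A/(p)=A/(\varpi^e)$, using $\Fil^1\M/(\Fil^1S_A)\M$ as the Hodge filtration, produces a $p$-divisible group $G_\M$ over $A$ with a natural isomorphism $\M^*(G_\M)\cong\M$ in $\PMFqw{S_A}$.

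Functoriality and the natural isomorphisms $G^*\circ\M^*\cong\id$ and $\M^*\circ G^*\cong\id$ then follow from the uniqueness built into each deformation step (again Lemma~\ref{lem:BreuilClassifLifting}) together with the anti-equivalence statement of Theorem~\ref{thm:dJgen} on the special fibre, in exactly the same manner as the conclusion of the proof of Theorem~\ref{thm:BreuilClassif}. Exactness of $\M^*$ is automatic from exactness of $\DD^*$, and exactness of $G^*$ follows from its stepwise construction together with exactness of the crystalline Dieudonn\'e functor and of Grothendieck-Messing's lift.

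The main obstacle, as in Theorem~\ref{thm:BreuilClassif}, is precisely that $A_0$ does \emph{not} satisfy the formally finite-type assumption~(\S\ref{cond:dJ}) in general: the spectrum $\Spec A_{0,k}$ is only a connected component of an Artin-Schreier tower over $R_{0,k}$, and need not be of finite type over any field. This is exactly the obstruction that Theorem~\ref{thm:dJgen} is designed to remove; once it is granted, the remaining steps are purely formal consequences of the general lemmas of \S\ref{subsec:connection}, \S\ref{subsec:settingBr}, and \S\ref{sec:classif}, none of which use any hypothesis on the base beyond $\Zp$-flatness and the local existence of a finite $p$-basis modulo $p$, both of which hold for $A_0$ by the assumptions of the statement.
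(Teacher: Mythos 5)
Your proposal is correct in substance for $p>2$ and follows exactly the route the paper takes: apply the deformation-theoretic construction from the proof of Theorem~\ref{thm:BreuilClassif} over $A$, with Theorem~\ref{thm:dJgen} replacing de\thinspace{}Jong's equivalence on the special fibre. In particular you correctly identify that $A$ satisfies the normality assumption~(\S\ref{cond:BM}) (so the proof of Theorem~\ref{thm:BreuilClassif} gives full faithfulness and the frame-theoretic lemmas all apply), while essential surjectivity requires essential surjectivity of $\M^*_0$ on the special fibre, which is precisely what Theorem~\ref{thm:dJgen} supplies in place of the formally finite-type assumption. Your unpacking of the induction via Claim~\ref{clm:BreuilClassif:Modp} and Lemma~\ref{lem:BreuilClassifLifting} is accurate. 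The paper's own proof is shorter, simply citing Theorems~\ref{thm:BreuilClassif} and \ref{thm:dJgen}.

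The only gap is that you leave the case $p=2$ unaddressed: the corollary as stated has no hypothesis on $p$, and Theorem~\ref{thm:dJgen} holds for all $p$. The paper treats $p=2$ separately: essential surjectivity is asserted to follow from Theorems~\ref{thm:BreuilClassif} and \ref{thm:dJgen}, while full faithfulness is obtained from Corollary~\ref{cor:gMFF} (rather than from Theorem~\ref{thm:BreuilClassif}, which at $p=2$ gives only full faithfulness up to isogeny). Corollary~\ref{cor:gMFF} is applicable over $A$ thanks to Lemma~\ref{lem:Artin-Schreier}(\ref{lem:Artin-Schreier:Sm}), which guarantees that $A_{0,k}$ has the regularity, excellence, and finite-$p$-basis properties needed for the Galois-theoretic arguments of \S\ref{sec:FaltingsKisin}. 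If you intend to prove the corollary only for $p>2$ you should say so; if you intend to match the paper's statement you need this additional ingredient for $p=2$.
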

\begin{proof}
The full assertion for $p>2$ and the essential surjectivity claim for $p=2$ follows from Theorems~\ref{thm:BreuilClassif} and \ref{thm:dJgen}. When $p=2$, full faithfulness of $\M^*$ follows from Corollary~\ref{cor:gMFF}, which can be applied thanks to Lemma~\ref{lem:Artin-Schreier}(\ref{lem:Artin-Schreier:Sm}).
\end{proof}

We first begin with the following standard commutative algebra lemma, which will be used in the proof of Theorem~\ref{thm:dJgen}:
\begin{lemsub}\label{lem:BourbakiComplGrade}
Let $B$ be a (not necessarily noetherian) ring, and $I\subset B$ a \emph{finitely generated} ideal. Let  $\wh B:=\varprojlim B/I^n$ denote the $I$-adic completion, equipped with the inverse limit topology. Then the topological closure of $I^n$ in $\wh B$  is $I^n\wh B$, and the natural topology of $\wh B$ coincides with the  $I\wh B$-adic topology. Furthermore, if $B/I$ is noetherian, then so is $\wh B$. 
\end{lemsub}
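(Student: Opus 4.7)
The plan is to verify the three assertions in order, extracting mileage from the finite generation of $I$ via its consequence that for each $n$, the power $I^n$ is generated as a $B$-module by the finite set of monomials of total degree $n$ in a chosen set of generators $x_1,\ldots,x_r$ of $I$. Throughout, we give $\wh B$ the inverse-limit topology, whose basic open neighbourhoods of $0$ are the kernels $\ker(\wh B\to B/I^n)$.

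First I would prove the key identity $I^n\wh B=\ker(\wh B\to B/I^n)$, from which both (1) and (2) are immediate: the right hand side is closed (and in fact open) in $\wh B$, while any $I^n \wh B$-adic neighbourhood $I^n\wh B$ is identified with this explicit kernel. The inclusion $\subseteq$ is clear. For the reverse inclusion, take $b=(b_m)_{m\geq 0}\in\wh B$ with $b_n\equiv 0\pmod{I^n}$, and choose lifts $b_m\in I^n$ for $m\geq n$ satisfying $b_{m+1}-b_m\in I^m$. Writing $b_n=\sum_{|\alpha|=n} c_{n,\alpha}\, x^\alpha$ and, inductively using $b_{m+1}-b_m\in I^m=I^{m-n}\cdot I^n$, choosing increments $e_{m,\alpha}\in I^{m-n}$ so that $b_{m+1}=\sum_\alpha (c_{m,\alpha}+e_{m,\alpha})x^\alpha$, one obtains compatible sequences $c_\alpha:=(c_{m,\alpha})_m\in\wh B$ satisfying $b=\sum_{|\alpha|=n}c_\alpha x^\alpha\in I^n\wh B$. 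The main obstacle is precisely this compatible choice of coefficients, but finite generation makes it routine: each increment lies in $I^{m-n}$ so the sequences are Cauchy.

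Having established (1) and (2), assertion (3) reduces to the standard principle that a ring complete and separated with respect to a finitely generated ideal $\gI$, whose associated graded is noetherian, is itself noetherian. Here $\gI=I\wh B$ is finitely generated (by $x_1,\ldots,x_r$), and by the isomorphisms $\wh B/I^n\wh B\cong B/I^n$ from step~(1) the graded pieces $I^n\wh B/I^{n+1}\wh B\cong I^n/I^{n+1}$ are finitely generated $(B/I)$-modules, so $\gr_{\gI}\wh B$ is a finitely generated $(B/I)$-algebra generated in degree one, hence noetherian whenever $B/I$ is. A Cauchy-sequence argument (lifting a chain of ideals in $\gr_{\gI}\wh B$ to $\wh B$) then gives noetherianness of $\wh B$; this is Bourbaki \emph{Alg\`ebre Commutative}, Ch.~III, \S2, No.~10, Cor.~5.

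In summary, the whole lemma is a direct consequence of Bourbaki's development of $I$-adic completion for finitely generated ideals, and the only step that requires care is the explicit decomposition used in the proof of the key identity $I^n\wh B=\ker(\wh B\to B/I^n)$, where finite generation of $I$ enters essentially to guarantee that the formal series $\sum c_\alpha x^\alpha$ can be organised into genuine elements of $\wh B$.
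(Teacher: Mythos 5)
Your proof is correct and takes essentially the same route as the paper: the paper handles the first assertion by citing Bourbaki (AC, Ch.\ III, \S2, no.\ 12, Cor.\ 2), whereas you derive the key identity $I^n\wh B=\ker(\wh B\to B/I^n)$ directly from a chosen finite generating set of $I$, and this identity does indeed yield both the closure statement and the identification of the inverse-limit and $I\wh B$-adic topologies. For noetherianness your argument matches the paper's — establish that $\gr_{I\wh B}\wh B\cong\gr_I B$ is a finitely generated $(B/I)$-algebra in degree one and invoke the completeness criterion from Bourbaki — though note that the paper cites AC Ch.\ III, \S2, no.\ 9, Corollaire 1 rather than the ``no.\ 10, Cor.\ 5'' you give; you may wish to double-check that reference.
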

We will apply this lemma when $I=(p)$.
\begin{proof}
The first assertion (on the completion of $I^n$ and the topology of $\wh B$) is exactly Corollaire~2 in \cite[Ch. III, \S2, no.~12, page~228]{Bourbaki:AlgComm1-4}. Since  $I$ is finitely generated, the associated graded ring $\bigoplus_{n\geqs0} I^n/I^{n+1}$ for $\wh B$ is finitely generated over $B/I$, so it is noetherian if $B/I$ is noetherian. Now the noetherian-ness claim follows from Corollaire~1 in \cite[Ch. III, \S2, no.~9, page~217]{Bourbaki:AlgComm1-4}.
\end{proof}

\begin{proof}[Proof of Theorem~\ref{thm:dJgen}]
Full faithfulness of $\M_0^*$ follows from  \cite[Th\'eor\`eme~4.1.1]{Berthelot-Messing:DieudonneIII} and Proposition~\ref{prop:FiltMod}. We prove essential surjectivity by generalising the proof  of \cite[Theorem~3.4.1]{Vasiu:ys}: the main idea is fpqc  descent from the completion at each point of $\Spec A_{0,k}$. (The strategy of descending from completion using the theory of excellent rings appeared in the proof of \cite[Theorem~103]{Zink:DisplayFormalGpAsterisq278} as well.)

Let us set up the notation. Let $\p\subset A_0$ be a prime ideal containing $p$, and set $\ol\p:=\p/(p)$ and $k(\p):=\Frac(A_0/\p) = \Frac(A_{0,k}/\ol\p)$. Let $B_{0,\p}$ be the $p$-adic completion of  $(A_0)_\p$. Note that $B_{0,\p}/(p) = (A_{0,k})_{\ol\p}$, 
which is noetherian by assumption. Applying Lemma~\ref{lem:BourbakiComplGrade}, if follows that $B_{0,\p}$ is $p$-adic and noetherian. 

Let $\wh{(A_0)}_\p$ be the $\p$-adic completion of $(A_0)_\p$. One can easily check that  $\wh{(A_0)}_\p$ is faithfully flat over $B_{0,\p}$, and $\wh{(A_0)}_\p/(p)$ is naturally isomorphic to the $\ol\p$-adic completion of $(A_{0,k})_{\ol\p}$. Since  $(A_{0,k})_{\ol\p}$ is regular, $\wh{(A_0)}_\p/(p)$ is isomorphic to a formal power series ring over $k(\p)$ with finitely many variables, and $\wh{(A_0)}_\p$ is isomorphic to a formal power series ring over some Cohen ring of $k(\p)$ with finitely many variables.

Note that $(A_{0,k})_{\ol\p}$ has a finite $p$-basis because the relative Frobenius morphism  $(A_{0,k})_{\ol\p}\ra A_{0,k}\otimes_{\vphi,A_{0,k}} (A_{0,k})_{\ol\p}$ is an isomorphism. (Indeed, $(A_{0,k})_{\ol\p}=\varinjlim_{f\notin\ol\p} A_{0,k}[\ivtd f]$, and the relative Frobenius morphism is clearly an isomorphism for the $A_{0,k}$-algebra $A_{0,k}[\ivtd f]$.) Hence, $\wh{(A_0)}_\p/(p)$, being the $\ol\p$-adic completion of $(A_{0,k})_{\ol\p}$, has a finite $p$-basis by \cite[Lemma~1.1.3]{dejong:crysdieubyformalrigid}.

As observed in \S\ref{subsec:BC} (Ex5), $\vphi_{A_0}$ uniquely extends to lifts of Frobenius on $B_{0,\p}$ and $\wh{(A_0)}_\p$. Therefore, for any $\M_0\in\PMF{A_0}$, the scalar extensions $\wh{(A_0)}_\p\otimes_{A_0}\M_0$ and $B_{0,p} \otimes_{A_0}\M_0$ can respectively be viewed as objects in $\PMF{\wh{(A_0)}_\p}$ and $\PMF{B_{0,\p}}$. 

Now, by \cite[Main~Theorem~1]{dejong:crysdieubyformalrigid}, there exists a $p$-divisible group $\wh G_{\ol\p}$ over $\wh{(A_0)}_\p/(p)$ equipped with an isomorphism $\DD^*(\wh G_{\ol\p})(\wh{(A_0)}_\p)\cong\wh{(A_0)}_\p\otimes_{A_0}\M_0$ as objects in $\PMF{\wh{(A_0)}_\p}$. We will proceed by descending  $\wh G_{\ol\p}$ to a $p$-divisible group over $(A_{0,k})_{\ol\p}$, and then to a $p$-divisible group over $A_{0,k}$. To produce descent data for $p$-divisible groups, we will first produce descent data for ``Dieudonn\'e crystals'', and show full faithfulness of  crystalline Dieudonn\'e functors over relevant base schemes.

 Viewing $B_{0,\p}\otimes_{A_0}\M_0$ and $\wh{(A_0)}_\p\otimes_{A_0}\M_0$, respectively,  as vector bundles on $\Spf(B_{0,\p},(p))$ and $\Spf(\wh{(A_0)}_\p,(p))$, the ``vector bundle'' $\wh{(A_0)}_\p\otimes_{A_0}\M_0$ carries a descent datum for the fpqc covering $\Spf(\wh{(A_0)}_\p,(p))\ra\Spf(B_{0,\p},(p))$, as it is a pull back of the ``vector bundle'' $B_{0,\p}\otimes_{A_0}\M_0$. To make it more precise, we obtain a projective system of descent data on $\{\wh{(A_0)}_\p/(p^n)\otimes_{A_0}\M_0\}_n$ for faithfully flat maps $\{B_{0,p}/(p^n) \ra \wh{(A_0)}_\p/(p^n)\}_n$. It is now clear that the standard results on full faithfulness and effectivity of descent can be extended to our setting. In particular, we can consider ``descent data with Frobenius structure and connections'', and the  natural Frobenius structure and connection on $B_{0,\p}\otimes_{A_0}\M_0$ can be uniquely recovered from the descent datum on $\wh{(A_0)}_\p\otimes_{A_0}\M_0$.

For the rest of the proof, we let $\wh\otimes$ denote the $p$-adic completion of the $\otimes$-product. 
We respectively define  $i_1,i_2: \wh{(A_0)}_\p\ra\wh{(A_0)}_\p\wh\otimes_{B_{0,\p}}\wh{(A_0)}_\p$ by $i_1:a\mapsto a\wh\otimes1$ and $i_2:a\mapsto 1\wh\otimes a$ for any $a\in \wh{(A_0)}_\p$. Then the descent datum on $\wh{(A_0)}_\p\otimes_{A_0}\M_0$ can be interpreted as an $\wh{(A_0)}_\p\wh\otimes_{B_{0,\p}}\wh{(A_0)}_\p$-linear isomorphism 
\begin{equation}\label{eqn:descent}
\left(\wh{(A_0)}_\p\wh\otimes_{B_{0,\p}}\wh{(A_0)}_\p\right)\otimes_{i_1,A_0}\M_0 \riso \left(\wh{(A_0)}_\p\wh\otimes_{B_{0,\p}}\wh{(A_0)}_\p\right)\otimes_{i_2,A_0}\M_0 
\end{equation}
which satisfies the ``standard'' cocycle condition that involves scalar extensions by different choices of maps  $\wh{(A_0)}_\p\wh\otimes_{B_{0,\p}}\wh{(A_0)}_\p\ra \wh{(A_0)}_\p\wh\otimes_{B_{0,\p}}\wh{(A_0)}_\p\wh\otimes_{B_{0,\p}}\wh{(A_0)}_\p$; actually,  the cocycle condition 
coincides with the usual cocycle condition for the descent datum for modules, except that $\otimes$ is replaced by $\wh\otimes$. 
Since the Frobenius structure and connection on  $\wh{(A_0)}_\p\otimes_{A_0}\M_0$ are induced from $B_{0,p}\otimes_{A_0}\M_0$, the isomorphism (\ref{eqn:descent}) is an isomorphism in $\PMF{\wh{(A_0)}_\p\wh\otimes_{B_{0,\p}}\wh{(A_0)}_\p}$. Note also that the descent datum (\ref{eqn:descent}) can be interpreted as an isomorphism
\[\DD^*(i_1^*\wh G_{\ol\p}) \riso \DD^*(i_2^*\wh G_{\ol\p}) \]
of Dieudonn\'e crystals over $\Spec(\wh{(A_{0,k})}_{\ol\p}\otimes_{(A_{0,k})_{\ol\p}}\wh{(A_{0,k})}_{\ol\p})$, which satisfies some natural cocycle condition (\emph{cf.,} Proposition~\ref{prop:FiltMod}).

In order to show that this descent datum induces a descent datum on $\wh G_{\ol\p}$ for the fpqc covering $\Spec \wh{(A_{0,k})}_{\ol\p} \ra \Spec(A_{0,k})_{\ol\p}$, we need the following claim:
 \begin{claimsub}\label{clm:ExcNormal}
The rings  $\wh{(A_{0,k})}_{\ol\p}\otimes_{(A_{0,k})_{\ol\p}}\wh{(A_{0,k})}_{\ol\p}$ and $\wh{(A_{0,k})}_{\ol\p}\otimes_{(A_{0,k})_{\ol\p}}\wh{(A_{0,k})}_{\ol\p}\otimes_{(A_{0,k})_{\ol\p}}\wh{(A_{0,k})}_{\ol\p}$ are normal and admit a finite $p$-basis.
\end{claimsub}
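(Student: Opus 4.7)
Set $R := (A_{0,k})_{\ol\p}$ and $\wh R := \wh{(A_{0,k})}_{\ol\p}$. By Lemma~\ref{lem:Artin-Schreier}(\ref{lem:Artin-Schreier:Sm}), $R$ is regular and excellent, and $A_{0,k}$ locally admits a finite $p$-basis. Since a finite $p$-basis is preserved under localization (the $p$-monomials in a $p$-basis of $A_{0,k}[1/f]$ remain a basis over the $p$-th powers after further localization), $R$ itself admits a finite $p$-basis; by \cite[Lemma~1.1.3]{dejong:crysdieubyformalrigid}, so does $\wh R$. Both are Noetherian regular local rings, hence normal.

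The idea is to realize the tensor products as filtered colimits of Noetherian rings for which the claims are already known. By excellence of $R$, the completion map $R \hookrightarrow \wh R$ is a regular homomorphism, so N\'eron--Popescu desingularization writes
\[
\wh R \;=\; \varinjlim_{\lambda} A_{\lambda}
\]
as a filtered colimit of smooth $R$-algebras of finite type along flat transition maps. Each $A_{\lambda}$ is regular (being smooth over a regular ring) and Zariski-locally admits a finite $p$-basis: one can extend the finite $p$-basis of $R$ by \'etale-local sections of the relative cotangent complex (\emph{cf.}~\cite[Lemma~1.1.2]{dejong:crysdieubyformalrigid} and \cite[Ch.~III, Proposition~2.2.2]{Illusie:CplxCotI}).

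Now rewrite
\[
\wh R \otimes_R \wh R \;=\; \varinjlim_{\lambda}\bigl(\wh R \otimes_R A_{\lambda}\bigr),
\]
each factor $\wh R \otimes_R A_{\lambda}$ being smooth over $\wh R$, hence a Noetherian regular normal ring Zariski-locally admitting a finite $p$-basis. The property ``every prime localization is regular'', normality, and ``locally admits a finite $p$-basis'' are all local-on-$\Spec$ conditions stable under flat filtered colimits, since any prime of $\varinjlim_{\lambda}(\wh R \otimes_R A_{\lambda})$ is the contraction of a prime at some finite stage, reducing the claim to the known Noetherian smooth stage. Iterating the same colimit argument, writing the triple tensor product as $\varinjlim_{\lambda,\mu,\nu}(A_{\lambda}\otimes_R A_{\mu}\otimes_R A_{\nu})$ and noting that tensor products of smooth $R$-algebras are smooth, establishes the assertion for the three-fold tensor product.

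The main obstacle is that the tensor products themselves are not a priori Noetherian, so ``normal'' and ``admits a finite $p$-basis'' must be interpreted as local conditions on the prime spectrum rather than as global structural statements. Resolving this via the Popescu filtration is the essential point; once that is in place, everything reduces to properties of smooth morphisms along a flat colimit system, which are well-behaved.
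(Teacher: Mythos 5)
Your argument reaches the right conclusion but takes a genuinely different (and heavier) route than the paper. The paper exploits excellence in the most economical way: write $\wh R := \wh{(A_{0,k})}_{\ol\p}$ as a filtered \emph{union} of finitely generated normal $R$-subalgebras $B^{(i)}$ (possible because normalization is finite over an excellent ring), observe that each $\wh R\otimes_R B^{(i)}$ is normal by Matsumura's descent of normality along flat maps with normal fibers (here $R\to\wh R$ is a regular morphism because $R$ is excellent), and conclude by passing to the filtered union, for which the transition maps are injective. You instead invoke N\'eron--Popescu desingularization to write $\wh R=\varinjlim_\lambda A_\lambda$ with $A_\lambda$ smooth over $R$, and then get regularity (hence normality) of each $\wh R\otimes_R A_\lambda$ from smoothness over the regular ring $\wh R$. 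Both are correct, but Popescu is a substantially deeper theorem than the Nagata/finite-normalization property the paper needs, so the paper's route is preferable in terms of the strength of the ingredients used.

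One claim you make is false as stated: Popescu's theorem does \emph{not} produce a presentation with flat transition maps $A_\lambda\to A_\mu$; in general the transition maps in the Popescu system are neither flat nor injective. The phrase ``stable under flat filtered colimits'' is therefore not applicable. Fortunately you do not actually need flatness: normality is local on $\Spec$, localization commutes with filtered colimits, and a filtered colimit of normal local Noetherian rings (hence normal local domains) along local ring homomorphisms is a normal local domain even without any flatness or injectivity hypothesis -- any integrality relation for an element of the fraction field of the colimit already holds at a finite stage where the denominator is nonzero, because an element of $A_\lambda$ that dies in the colimit already dies in some $A_\mu$. So the gap is in the stated justification, not in the result. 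I would also push back a little on treating the finite $p$-basis assertion as automatic from the colimit: the $p$-basis size of $\wh R\otimes_R A_\lambda$ can grow with $\lambda$, so ``each stage has one'' does not formally give a finite $p$-basis in the limit. The correct observation (which also underlies the paper's ``clear'') is that the $p$-monomials in a lift of a $p$-basis of $R$ remain a $p$-basis of $\wh R\otimes_R\wh R$ and of the triple tensor product directly, since $\wh R\cong (\wh R)^p\otimes_{R^p}R$ and tensoring such identifications gives $\wh R\otimes_R\wh R\cong(\wh R\otimes_R\wh R)^p\otimes_{R^p}R$.
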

The $p$-basis assertion is clear.
To show normality, let us write $\wh{(A_{0,k})}_{\ol\p}=\varinjlim_i B\com i$ where $B\com i$'s are finitely generated \emph{normal} $(A_{0,k})_{\ol\p}$-subalgebras in $\wh{(A_{0,k})}_{\ol\p}$; indeed, this is possible as the normalisation of any finitely generated $(A_{0,k})_{\ol\p}$-algebra is finite by excellence of $(A_{0,k})_{\ol\p}$. 

Recall that the natural map $(A_{0,k})_{\ol\p}\ra\wh{(A_{0,k})}_{\ol\p}$ is  flat with geometrically regular fibres by  excellence of $(A_{0,k})_{\ol\p}$. In particular, $B\com i \ra \wh{(A_{0,k})}_{\ol\p}\otimes_{(A_{0,k})_{\ol\p}}B\com i$ is flat with geometrically regular fibres. Since the source of the map is normal, it follows from (Corollary of) \cite[Theorem~23.9]{matsumura:crt} that $ \wh{(A_{0,k})}_{\ol\p}\otimes_{(A_{0,k})_{\ol\p}}B\com i$ is normal. Normality of $\wh{(A_{0,k})}_{\ol\p}\otimes_{(A_{0,k})_{\ol\p}}\wh{(A_{0,k})}_{\ol\p} = \varinjlim_i\left( \wh{(A_{0,k})}_{\ol\p}\otimes_{(A_{0,k})_{\ol\p}}B\com i\right) $ is now clear. We similarly show that $\wh{(A_{0,k})}_{\ol\p}\otimes_{(A_{0,k})_{\ol\p}}\wh{(A_{0,k})}_{\ol\p}\otimes_{(A_{0,k})_{\ol\p}}\wh{(A_{0,k})}_{\ol\p}$ is normal. This proves Claim~\ref{clm:ExcNormal}.

It now follows from Claim~\ref{clm:ExcNormal} that the functors $\DD^*(-)(\wh{(A_0)}_\p\wh\otimes_{B_{0,\p}}\wh{(A_0)}_\p)$ and $\DD^*(-)(\wh{(A_0)}_\p\wh\otimes_{B_{0,\p}}\wh{(A_0)}_\p\wh\otimes_{B_{0,\p}}\wh{(A_0)}_\p)$ on $p$-divisible groups (over suitable bases) are fully faithful. (\emph{Cf.,} \cite[Th\'eor\`eme~4.1.1]{Berthelot-Messing:DieudonneIII}, Proposition~\ref{prop:FiltMod}. Recall that   $(\wh{(A_0)}_\p\wh\otimes_{B_{0,\p}}\wh{(A_0)}_\p)/(p)=\wh{(A_{0,k})}_{\ol\p}\otimes_{(A_{0,k})_{\ol\p}}\wh{(A_{0,k})}_{\ol\p}$, and similarly for the triple tensor product.) Therefore, the descent datum on  $\wh{(A_0)}_\p\otimes_{A_0}\M_0$ induces a descent datum on $\wh G_{\ol\p}$. By descending the finite flat group scheme $\wh G_{\ol\p}[p^n]$ for each $n$, we obtain a $p$-divisible group $G_{\ol\p}$ over $(A_{0,k})_{\ol\p}$ such that $\DD^*(G_{\ol\p})(B_{0,\p})\cong B_{0,\p}\otimes_{A_0}\M_0$. 

Now, a similar (but much easier) consideration produces a descent datum on $\{G_{\ol p}\}_{\ol\p\in\Spec A_{0,k}}$ with respect to the fpqc covering $\{\Spec ( A_{0,k})_{\ol\p}\}$ of $\Spec A_{0,k}$. (Note that Claim~\ref{clm:ExcNormal} becomes rather straightforward  for $(A_{0,k})_{\ol\p}\otimes_{A_{0,k}}(A_{0,k})_{\ol\p'}$ and $(A_{0,k})_{\ol\p}\otimes_{A_{0,k}}(A_{0,k})_{\ol\p'}\otimes_{A_{0,k}}(A_{0,k})_{\ol\p''}$, where $\ol\p,\ol\p',\ol\p''\in\Spec A_{0,k}$.) The resulting $p$-divisible group $G_{A_{0,k}}$ comes equipped with a natural isomorphism $\DD^*(G_{A_{0,k}})(A_0)\cong \M_0$, as desired. This concludes the proof of Theorem~\ref{thm:dJgen}.
\end{proof}
\subsection{Proof of Theorem~\ref{thm:RelKisinFF}}\label{subsec:RelKisinFF}
Assume that $R$ satisfies the formally finite-type assumption~(\S\ref{cond:dJ}), and let $I\subset R$ be an ideal with $(\varpi)\subset I \subset J_R$. We set $I_0:=\ker (R_0\thra R/I)$.
Recall from \S\ref{subsec:IndEtBC} that for any $I$-adic formally \'etale $R$-algebra $A$ such that $A/IA$ is an inductive limit of \'etale $R/I$-algebras, we have a canonical choice of $R_0$-subalgebra $A_0\subset A$ and a lift of Frobenius $\vphi_{A_0}$ over $\vphi_{R_0}$. We set $\Sig_A:=A_0[[u]]$ (with $\vphi_{\Sig_A}$ extended by $u\mapsto u^p$), and  $I_{\Sig}:=\ker(\Sig\thra R/I)$. Clearly $\Sig$ is $I_{\Sig}$-adically separated and complete. If $I=J_R$, then we write $J_\Sig:=I_\Sig$. (If $I=(\varpi)$ for example,  then $I_{\Sig} = (p,\PP(u))$.)

If  $\{\Spf(A_\alpha,I A_\alpha)\}$ is an ind-\'etale fpqc covering of $\Spf (R, I)$ in the formal scheme sense (i.e., for each $n$, $\Spec A_\alpha/I^nA_\alpha$ are ind-\'etale over $\Spec R/I^n$ and form a fpqc covering), then the corresponding $\{\Spf(\Sig_{A_\alpha}, I_\Sig\Sig_{A_\alpha})\}$ is an ind-\'etale fpqc  covering of $\Spf(\Sig, I_\Sig)$. (Here, ind-\'etale fpqc coverings are defined in the formal scheme sense.) For any $\gM$ either in $\SMqwtor$ or $\SMqtor$, we let $\gM_{A_\alpha}:=\Sig_{A_\alpha}\otimes_\Sig\gM$ denote the scalar extension. 
Note that we will actually work with ind-\'etale fpqc coverings that may not be \'etale coverings; \emph{cf.,}  Proposition~\ref{prop:ConnRaynaud}.

\begin{lemsub}\label{lem:FrobRaynaud}
For any $\gM\in\SMqtor$, there exists a Zariski open covering  $\{\Spf(R_\alpha,J_RR_\alpha)\}$ of  $\Spf (R,J_R)$ and $\gN_\alpha, \gN_\alpha'\in\PMq{\Sig_{R_\alpha}}$ such that for each $\alpha$ there is a $\vphi$-equivariant exact sequence \[0\ra\gN_\alpha' \ra \gN_\alpha\ra \gM_\alpha:=\Sig_{R_\alpha}\otimes_\Sig\gM \ra 0.\] 
%
\end{lemsub}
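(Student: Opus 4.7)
The plan is to realise $\gM_\alpha := \Sig_{R_\alpha}\otimes_\Sig\gM$ locally as the cokernel of an isogeny between two finitely generated projective quasi-Kisin $\Sig_{R_\alpha}$-modules.  The crucial technical input beyond the hypothesis $\mathrm{pd}_\Sig\gM\leqs 1$ is the flatness of $\vphi:\Sig\ra\Sig$: since $R_0/(p)$ locally admits a finite $p$-basis it becomes a finite free module over its own Frobenius image, so $\vphi$ is flat on $R_0/(p)$; by the $p$-adic local criterion for flatness this lifts to flatness of $\vphi_{R_0}$, and then of $\vphi_\Sig$ (using that $R_0[[u]]$ is a free $R_0[[u^p]]$-module of rank $p$).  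This flatness will be used to preserve exact sequences under $\vphi^*$.

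I would begin by choosing a Zariski open covering $\{\Spf(R_\alpha,J_RR_\alpha)\}$ of $\Spf(R,J_R)$ fine enough that, on each piece, the finitely generated projective modules appearing in the construction may be taken to have constant rank (in particular, stably free, shrinking further if needed).  Over each piece, pick a finite free $\Sig_{R_\alpha}$-module $\gN_\alpha$ together with a $\Sig_{R_\alpha}$-linear surjection $\pi:\gN_\alpha\thra\gM_\alpha$, and lift $\vphi_{\gM_\alpha}\circ\pi$ through $\pi$ to a $\vphi$-linear endomorphism $\vphi_{\gN_\alpha}:\gN_\alpha\ra\gN_\alpha$ by projectivity of $\gN_\alpha$.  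Set $\gN_\alpha':=\ker\pi$.  Since $\gM_\alpha$ is finitely presented and of $\Sig_{R_\alpha}$-projective dimension $\leqs 1$ (preserved under the flat base change $\Sig\ra\Sig_{R_\alpha}$), $\gN_\alpha'$ is finitely generated projective, and $\vphi_{\gN_\alpha}$ restricts to $\vphi_{\gN_\alpha'}$ on $\gN_\alpha'$.

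To ensure $\gN_\alpha\in\PMq{\Sig_{R_\alpha}}$, I would enlarge the generating set of $\gN_\alpha$ to include, for each generator $e_i$ mapped to a generator of $\gM_\alpha$, an additional basis element $g_i$ on which the extended $\vphi_{\gN_\alpha}$ is defined by the formula $\vphi_{\gN_\alpha}(g_i):=E(u)e_i-(1\otimes\vphi_{\gN_\alpha})(\tilde x_i)$, where $\tilde x_i\in\vphi^*\gN_\alpha$ is a lift of a preimage of $E(u)\pi(e_i)$ under $1\otimes\vphi_{\gM_\alpha}$ (such preimages exist because $\coker(1\otimes\vphi_{\gM_\alpha})$ is killed by $E(u)$).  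Iterating this enlargement a bounded number of times (once for the original generators, then for the $g_i$'s) forces $E(u)\gN_\alpha\subseteq\im(1\otimes\vphi_{\gN_\alpha})$, while the kernel of the extended $\pi$ remains finitely generated projective by the pd $\leqs 1$ hypothesis.

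The quasi-Kisin property of $\gN_\alpha'$ then follows formally from the snake lemma applied to
\[
\xymatrix@C=20pt@R=18pt{
0 \ar[r] & \vphi^*\gN_\alpha' \ar[r] \ar[d]_-{1\otimes\vphi_{\gN_\alpha'}} & \vphi^*\gN_\alpha \ar[r] \ar[d]^-{1\otimes\vphi_{\gN_\alpha}} & \vphi^*\gM_\alpha \ar[r] \ar[d]^-{1\otimes\vphi_{\gM_\alpha}} & 0 \\
0 \ar[r] & \gN_\alpha' \ar[r] & \gN_\alpha \ar[r] & \gM_\alpha \ar[r] & 0
}
\]
whose top row is exact by $\vphi$-flatness of $\Sig_{R_\alpha}$.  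Because $1\otimes\vphi_{\gM_\alpha}$ is injective by Remark~\ref{rmk:VerFF}, the connecting map vanishes and we obtain a short exact sequence $0\ra\coker(1\otimes\vphi_{\gN_\alpha'})\ra\coker(1\otimes\vphi_{\gN_\alpha})\ra\coker(1\otimes\vphi_{\gM_\alpha})\ra 0$; the middle term is $E(u)$-killed by construction, so the same holds for its submodule $\coker(1\otimes\vphi_{\gN_\alpha'})$.  The main obstacle is engineering the enlargement of $\gN_\alpha$ so that $E(u)$-annihilation of the cokernel is achieved simultaneously with preservation of the projective dimension of the kernel; everything else is formal manipulation with $\vphi$-flatness and the snake lemma.
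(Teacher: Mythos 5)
The snake-lemma step at the end is fine, and the opening observations (flatness of $\vphi_{\Sig}$, projectivity of $\gN_\alpha'$ from $\operatorname{pd}_{\Sig}\gM\leqs1$, lifting $\vphi_{\gM_\alpha}\circ\pi$ through a free projective cover) are all correct. But the heart of your construction—the iterative enlargement of $\gN_\alpha$ so that $\coker(1\otimes\vphi_{\gN_\alpha})$ becomes killed by $E(u)$—has a genuine gap. After the first enlargement you adjoin $g_i$ with $\vphi_{\gN_\alpha}(g_i)=E(u)e_i-(1\otimes\vphi_{\gN_\alpha})(\tilde x_i)$, which puts $E(u)e_i$ in the image; note that $\vphi$-equivariance then forces $\pi(g_i)=0$. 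In the second pass the analogous recipe for the $g_i$'s degenerates: the preimage of $E(u)\pi(g_i)=0$ under the injective map $1\otimes\vphi_{\gM_\alpha}$ is $0$, so you set $\vphi_{\gN_\alpha}(h_i)=E(u)g_i$, which puts $E(u)g_i$ in the image but leaves $E(u)h_i$ with no reason to lie in the image of $1\otimes\vphi_{\gN_\alpha}$. You assert that "a bounded number of iterations (once for the original generators, then for the $g_i$'s)" closes the loop, but each pass creates new generators whose $E(u)$-multiples are again outside the image, and nothing in your recipe breaks the cycle. There is no evident way to choose the free parameters (the lifts $\tilde y_i\in\vphi^*\gN'_\alpha$, or a clever self-referential $\vphi(h_i)=E(u)h_i+\cdots$) that simultaneously handles $E(u)g_i$ and $E(u)h_i$, because your formulas only ever produce relations like $E(u)(g_i\pm h_i)\in\operatorname{im}$, not the two inclusions separately.

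The paper avoids this entirely by \emph{not} trying to repair a naive lift of Frobenius. Instead it chooses the data so that $\coker(1\otimes\vphi_{\gN_\alpha})$ is, by construction, a free $R$-module $\wt L$: pick $\wt L$ free over $R$ surjecting onto $L:=\coker(1\otimes\vphi_\gM)$, pick a free $\gN$ surjecting onto both $\gM$ and $\wt L$ (enlarging by $\Sig^r$ if needed so that $\wt\gF:=\ker(\gN\thra\wt L)$ surjects onto $\gF:=\operatorname{im}(1\otimes\vphi_\gM)$), observe that $\wt\gF$ is projective because $R=\Sig/E(u)$ has $\Sig$-projective dimension one, localize so $\wt\gF_\alpha$ and $\gN_\alpha$ become free of the same rank, and then \emph{define} $1\otimes\vphi_{\gN_\alpha}$ to be a compatible isomorphism $\vphi^*\gN_\alpha\cong\wt\gF_\alpha$ followed by the inclusion $\wt\gF_\alpha\hra\gN_\alpha$. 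This forces $\coker(1\otimes\vphi_{\gN_\alpha})=\wt L_\alpha$, which is automatically killed by $E(u)$ because it is an $R_\alpha$-module—no iteration at all. Quasi-Kisin-ness of $\gN_\alpha'$ then follows as in your final paragraph (or, as in the paper, from the embedding $\coker(1\otimes\vphi_{\gN_\alpha'})\hra\wt L_\alpha$ obtained after inverting $p$ and using $\gM_\alpha[\ivtd p]=0$). If you want to salvage the spirit of your approach, the fix is exactly this: do not lift a Frobenius and then adjust it; instead prescribe the image of $1\otimes\vphi_{\gN_\alpha}$ in advance to be the kernel of a surjection onto a free $R$-module, which makes the $E(u)$-annihilation automatic in one shot.
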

\begin{proof}
We prove this lemma by adapting the proof of  \cite[Lemma~2.3.4]{kisin:fcrys} as follows. For $\gM\in\SMqtor$,  set $L:=\coker(1\otimes\vphi_\gM)$ and $\gF:=\im(1\otimes\vphi_\gM)$. Choose a finite free $R$-module $\wt L$ which surjects onto $L$ and a finite-rank free $\Sig$-module $\gN$ which surjects onto both $\gM$ and $\wt L$. Set $\wt\gF:=\ker(\gN\thra\wt L)$. We may assume that $\wt \gF$ surjects onto $\gF$ under the natural projection $\gN\thra \gM$ by replacing, if necessary, $\gN$ by $\gN\oplus\Sig^r$ where $\Sig^r$ surjects onto $\gF$ and maps to zero in $\wt L$. To summarise, we have the following commutative diagram with exact rows:
\[\xymatrix{
0 \ar[r] & \wt\gF \ar[r] \ar@{->>}[d] & \gN \ar[r] \ar@{->>}[d]& \wt L \ar[r] \ar@{->>}[d]& 0\\
0 \ar[r] & \gF \ar[r] & \gM \ar[r] &  L \ar[r] & 0}.\]
Since $\wt L$ is free over $R$ and $R$ is of $\Sig$-projective dimension $1$, it follows that $\wt\gF$ is projective over $\Sig$. We choose $\set{R_\alpha}$ so that $\wt\gF_\alpha:=\Sig_{R_\alpha}\otimes_\Sig\wt\gF$ is free over $\Sig_{R_\alpha}$ for each $\alpha$. (Note that since $\Sig_{R_\alpha}$ is $J_\Sig$-adic, a finitely generated projective $\Sig_{R_\alpha}$-module $\wt\gF_\alpha$ is free if and only if $\wt\gF_\alpha/J_\Sig\wt\gF_\alpha$ is free over $\Sig_{R_\alpha}/J_\Sig\Sig_{R_\alpha} = R_\alpha/J_RR_\alpha$. So we take $R_\alpha$ so that $\wt\gF_\alpha/J_\Sig\wt\gF_\alpha$ is free over $R_\alpha/J_RR_\alpha$.) Since both $\wt\gF_\alpha$ and $\gN_\alpha$ are free of the same rank, we may choose an isomorphism $\vphi^*\gN_\alpha\cong\wt\gF_\alpha$, and define $\vphi_{\gN_\alpha}$  so that   we have:
 \[1\otimes\vphi_{\gN_\alpha}:\vphi^*\gN_\alpha\cong\wt\gF_\alpha\hra\gN_\alpha.\]
It is clear that $(\wt\gM_\alpha,\vphi_{\wt\gM_\alpha})\in\PMqtor{\Sig_{R_\alpha}}$ and the natural projection $\wt\gM_\alpha\thra\gM_\alpha$ is $\vphi$-equivariant.

Set $\wt\gM_\alpha':=\ker(\wt\gM_\alpha\thra\gM_\alpha)$, which is a $\vphi$-stable submodule of $\wt\gM_\alpha$. We claim that $\wt\gM_\alpha'\in\PMqtor{\Sig_{R_\alpha}}$. 
Set $\wt L_\alpha':=\coker(1\otimes\vphi_{\wt\gM_\alpha'})$. Then  $\wt L_\alpha'$ naturally embeds into $\wt L_\alpha:=R_\alpha\otimes_R \wt L$ since $\wt L_\alpha'[\ivtd p] = \wt L_\alpha[\ivtd p]$ and $R$ is $\Zp$-flat. It follows that $\wt L_\alpha'$ is annihilated by $\PP$.
\end{proof}
%
\begin{propsub}\label{prop:ConnRaynaud}
For any $\gM\in\SMFIqw$, there exists a Zariski open covering $\{\Spf(R_\alpha,J_RR_\alpha)\}$  of $\Spf (R,J_R)$, such that  for each $\alpha $ there exist  a faithfully flat ind-\'etale map $\Spf(A_{\alpha},(\varpi))\ra\Spf(R_\alpha,(\varpi))$ and $\gN_{A_\alpha}\in\PMqw{\Sig_{A_\alpha}}$ with the following properties:
\begin{enumerate}
\item\label{prop:ConnRaynaud:base} for each  $\alpha$,  $A_{\alpha,0}$ satisfies the assumption for $A_0$ over $R_{\alpha,0}$ in Theorem~\ref{thm:dJgen};
\item\label{prop:ConnRaynaud:connection} for each $\alpha$, there is a $\vphi$-equivariant surjective map $\gN_{A_\alpha}  \thra \gM_{A_\alpha}:=\Sig_{A_\alpha}\otimes_\Sig\gM$ such that 
the induced map $\N_{A_\alpha,0}:=A_{\alpha,0}\otimes_{\vphi,\Sig_{A_0}}\gN_{A_0}\thra \M_{A_\alpha,0}:=A_{\alpha,0}\otimes_{\vphi,\Sig}\gM$ respects connections.
\end{enumerate} 
\end{propsub}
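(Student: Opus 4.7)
The plan is to combine Lemma~\ref{lem:FrobRaynaud}, which produces a surjection from a projective quasi-Kisin module onto $\gM$ locally but ignores connections, with a variant of Vasiu's moduli of connections (\S\ref{subsec:Vasiu}) that incorporates compatibility with the prescribed connection on $\M_0$. Applying Lemma~\ref{lem:FrobRaynaud} to $\gM \in \SMFIqw$, viewed as an object of $\SMqtor$ via the forgetful functor, produces a Zariski open covering $\{\Spf(R_\alpha, J_R R_\alpha)\}$ of $\Spf(R, J_R)$ together with, for each $\alpha$, a $\vphi$-equivariant surjection $\gN_\alpha \thra \gM_\alpha := \Sig_{R_\alpha} \otimes_\Sig \gM$ with $\gN_\alpha \in \PMq{\Sig_{R_\alpha}}$. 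Fix $\alpha$ and suppress it; set $\N_0 := R_0 \otimes_{\vphi,\Sig} \gN$ and $\M_0 := R_0 \otimes_{\vphi,\Sig} \gM$, and let $p: \N_0 \thra \M_0$ denote the induced $\vphi$-equivariant surjection, which is a morphism in $\PMFq{R_0}$. The datum of $\gM \in \SMFIqw$ equips $\M_0$ with a $\vphi$-compatible topologically quasi-nilpotent integrable connection $\nabla_{\M_0}$, so what remains is to produce, after a suitable ind-\'etale base change $A_0 / R_0$, a $\vphi$-compatible connection $\nabla_{\N_0}$ on $A_0 \otimes_{R_0} \N_0$ making it an object of $\PMF{A_0}$ and satisfying $(p \otimes \id_{\wh\Omega_{R_0}}) \circ \nabla_{\N_0} = \nabla_{\M_0} \circ p$ after base change.

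To construct $A_0$, I consider the functor $\cQ'$ on ind-\'etale $R_0$-algebras which parametrises such compatible connections at each truncation level, as in \S\ref{subsec:Vasiu}. After a further Zariski refinement that trivialises $\wh\Omega_{R_0}$, chooses direct factors $(\N_0)^1 \subset \N_0$ and $(\M_0)^1 \subset \M_0$ compatible with $p$ in the sense of \S\ref{subsec:Vasiu}, and splits $p$ as an $R_0$-module map (which is possible locally because $\N_0$ is $R_0$-projective and $\M_0$ is finitely presented), one finds a Leibniz-compatible ``initial'' connection $\nabla^*_0$ on $\N_0$ with $(p \otimes \id) \circ \nabla^*_0 = \nabla_{\M_0} \circ p$. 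Every other such Leibniz-compatible, $p$-compatible connection then has the form $\nabla^*_0 + \xi$ for an $R_0$-linear map $\xi : \N_0 \to \K_0 \otimes_{R_0} \wh\Omega_{R_0}$, where $\K_0 := \ker(p)$, and the $\vphi$-compatibility condition (\ref{eqn:ASsys1}) for $\nabla^*_0 + \xi$ translates modulo~$p$ into an Artin-Schreier system $\nf x = B \vphi(\nf x) + C$ for the matrix entries of $\xi$, exactly as in Vasiu's construction, with analogous systems at each lifting step $n \to n+1$. This yields an Artin-Schreier tower $\{\cQ'_n\}$ over $R_0/(p)$ in the sense of \cite[\S2.4]{Vasiu:ys}, lifting uniquely to a tower of $p$-adic formally \'etale $R_0$-algebras whose $p$-adic completion I denote $\wh\cQ'_\infty$. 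The same topological-quasi-nilpotence and integrability argument as in the last three paragraphs of the proof of \cite[Theorem~3.2]{Vasiu:ys} then shows that the universal connection makes $\wh\cQ'_\infty \otimes_{R_0} \N_0$ into an object of $\PMF{\wh\cQ'_\infty}$, whose projection via $p \otimes \id$ equals $\nabla_{\M_0}$ by construction.

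Finally, by Lemma~\ref{lem:Artin-Schreier}(\ref{lem:Artin-Schreier:Sm}), any direct factor $A_{\alpha,0}$ of $\wh\cQ'_\infty$ which is a finite product of $p$-adic $\Zp$-flat domains satisfies the hypotheses on $A_0$ in Theorem~\ref{thm:dJgen}. Using Lemma~\ref{lem:Artin-Schreier}(\ref{lem:Artin-Schreier:Stratif}) together with the faithful flatness of each level of the Artin-Schreier tower, I refine the covering $\{R_\alpha\}$ so that for every $\alpha$ one may choose such an $A_{\alpha,0}$ which is additionally faithfully flat over $R_{\alpha,0}$; then $A_\alpha := A_{\alpha,0} \otimes_{R_{\alpha,0}} R_\alpha$ is a faithfully flat ind-\'etale $R_\alpha$-algebra, and setting $\gN_{A_\alpha} := \Sig_{A_\alpha} \otimes_{\Sig_{R_\alpha}} \gN_\alpha$ equipped with the connection inherited on $\N_{A_\alpha, 0}$ from $\wh\cQ'_\infty$ gives the required object of $\PMqw{\Sig_{A_\alpha}}$ with the correct surjection onto $\gM_{A_\alpha}$ and compatibility of connections. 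The principal obstacle will be the precise verification that imposing the additional linear condition $(p \otimes \id) \circ \nabla_{\N_0} = \nabla_{\M_0} \circ p$ preserves the Artin-Schreier structure of Vasiu's construction at every truncation level; the key input is that both $\nabla_{\M_0}$ and $p$ are $\vphi$-compatible, which forces the recursive obstruction on the perturbation $\xi$ to remain of Artin-Schreier type rather than becoming a more general equation that need not admit an \'etale solution. A subsidiary technical matter is the Zariski refinement needed to secure faithful flatness (as opposed to mere flatness) of the chosen $A_{\alpha,0}$ over $R_{\alpha,0}$ after passing to a direct factor with finitely many connected components.
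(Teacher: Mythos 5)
Your route is the paper's route: apply Lemma~\ref{lem:FrobRaynaud} to obtain a local $\vphi$-equivariant surjection $\gN\thra\gM$ with $\gN$ projective, then run a variant of Vasiu's moduli of connections (\S\ref{subsec:Vasiu}) with the horizontality of the projection $\N_0\thra\M_0$ imposed as an extra Artin--Schreier condition, and finally extract a suitable direct factor $A_{\alpha,0}$ using Lemma~\ref{lem:Artin-Schreier} and Proposition~\ref{prop:Vasiu}. The paper cuts out a universal quotient $\cQ'_{n,k}$ inside the full moduli $\cQ_{n,k}$ by imposing horizontality and checks it is still an Artin--Schreier system; you instead fix a reference horizontal connection $\nabla^*_0$ and parametrize compatible connections by perturbations $\xi:\N_0\to\K_0\otimes_{R_0}\wh\Omega_{R_0}$ with $\K_0=\ker(p)$. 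These are the same moduli problem in different coordinates, and the $\vphi$-equivariance of $p$ and $\nabla_{\M_0}$ does indeed guarantee (as you observe) that the recursion in $\xi$ stays of Artin--Schreier type, so the substance matches.

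One step is justified incorrectly. You claim the initial connection $\nabla^*_0$ exists because one can ``split $p$ as an $R_0$-module map.'' It cannot be split: $\N_0$ is $R_0$-projective (hence torsion-free, as $R_0$ is a domain after the Zariski refinement), while $\M_0$ is $p$-power torsion, so every $R_0$-linear map $\M_0\to\N_0$ is zero. The correct argument does not need a splitting of $p$: choose any connection $\nabla'_0$ on the projective module $\N_0$; then $\nabla_{\M_0}\circ p - (p\otimes\id)\circ\nabla'_0$ is an $R_0$-linear map $\N_0\to\M_0\otimes_{R_0}\wh\Omega_{R_0}$, and projectivity of $\N_0$ (exactness of $\Hom_{R_0}(\N_0,-)$ applied to the surjection $\N_0\otimes\wh\Omega_{R_0}\thra\M_0\otimes\wh\Omega_{R_0}$) lets you lift it to an $\eta:\N_0\to\N_0\otimes_{R_0}\wh\Omega_{R_0}$; set $\nabla^*_0:=\nabla'_0+\eta$. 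With that repair the proposal is sound and matches the paper.
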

\begin{proof}
In order to prove the proposition, we may replace $R$ by one of $R_\alpha$'s as in Lemma~\ref{lem:FrobRaynaud}, so that we have $\gN\in\SMq$ with $\vphi$-equivariant surjective map $\gN\thra\gM$. (\emph{Cf.,} Lemma~\ref{lem:FrobRaynaud}.) The main idea is to find an ind-\'etale fpqc covering over which $\N_0:=R_0\otimes_{\vphi,\Sig}\gN$ admits a connection as in the statement. 
We use some slight variant of Vasiu's construction of ``moduli of connections'' (\emph{cf.,} \cite[Theorem~3.2]{Vasiu:ys}, and also \S\ref{subsec:Vasiu} of this paper).

Let us set up the notation. Set $\M_0:=R_0\otimes_{\vphi,\Sig}\gM\cong R_0\otimes_S(S\otimes_{\vphi,\Sig}\gM)$. Since $S\otimes_{\vphi,\Sig}\gM \in\SMFtor$, one can view $\M_0\in\PMFtor{R_0}$ where all the extra structure is induced from $S\otimes_{\vphi,\Sig}\gM$. Similarly, $\N_0:=R_0\otimes_{\vphi,\Sig}\gN$ can naturally be viewed as an object in $\PMFq{R_0}$.  We also fix a $R_0$-direct factor  $(\N_0)^1\subseteq \N_0$ which lifts $(\Fil^1\N_0)/p\N_0\subseteq \N_0/p\N_0$.

Assume that $p^n\gM=0$. 
Let $\{\Spf(R_\alpha, J_RR_\alpha)\}$ be a Zariski covering of $\Spf (R,J_R)$ with the following properties: 
\begin{itemize}
\item $(\N_0)^1$, $\N_0/ (\N_0)^1$, and $\wh\Omega_{R_0}$ become free over $R_{\alpha,0}$; 
\item the underlying $R_0$-modules of both $\M_0$ and the kernel of $\N_0/p^n\N_0\thra \M_0$ become isomorphic to $\bigoplus (R_{\alpha,0}/p^i)^{d_i}$ after the base change. 
\end{itemize}
We may (and do) replace $R$ by one of $R_\alpha$, and drop the subscript $_\alpha$ from now on. 

We now construct  an ind-\'etale fpqc covering $\Spf(A_0,(p))\ra \Spf (R_0,(p))$ as follows.
Recall that for each $n\geqs1$ we have a faithfully flat $p$-adic formally \'etale $R$-algebra $\cQ_n$ such that for any $p$-adic formally \'etale $R_0$-algebra $B_0$, $\Hom_{R_0}(\cQ_n,B_0)$ is naturally in bijection with the set of connections on $B_0/(p^n)\otimes_{R_0}\N_0$ which satisfy the commutative diagram (\ref{eqn:ASsys1}). One can see without difficulty that there is a universal quotient $ \cQ_{1,k}'$ of $\cQ_1/(p)$ such that the natural projection $\cQ_{1,k}'\otimes_{R_0}\N_0\thra\cQ_{1,k}'\otimes_{R_0}\M_0$ is horizontal when we give a universal connection on the source. Furthermore, by writing down this condition in terms of the ``matrix entries'' of a universal connection, one sees that $\cQ_{1,k}'$ is defined by equations of the form $x' = B'\vphi(x')+C'$ where $B'$ and $C'$ are matrices over $R_0/(p)$ and $x'$ is a column vector of variables. One can check (\emph{cf.,} \cite[Theorem~2.4.1]{Vasiu:ys}) that $\cQ_{1,k}'$  is faithfully flat \'etale over $R_0/(p)$, hence $\Spec \cQ_{1,k}'$ is a union of connected components in $\Spec \cQ_1/(p)$. Let $\cQ_1'$ denote  the unique $p$-adic formally \'etale lift of $\cQ_{1,k}'$, which can naturally be viewed as a quotient of $\cQ_1$. 

We can repeat this construction to obtain a union of connected components $\Spf (\cQ_{n}',(p)) \subset \Spf(\cQ_n,(p))$ where $p^n\gM=0$, with the property that $\cQ_n'/(p^n)\otimes_{R_0}\N_0 \thra \cQ_n'\otimes_{R_0}\M_0 $ is horizontal. (Note that both $p^i\N_0/p^{i+1}\N_0$ and $p^i\M_0/p^{i+1}\M_0$ are free over $R_0/(p)$ by assumption.) Pick a formally \'etale lift $\cQ_n'$ of $\cQ_{n,k}'$ and view it as a quotient of $\cQ_n$.  For any $i\geqs n$, we set $\cQ_i':=\cQ_i\otimes_{\cQ_n}\cQ_n'$, and let $\wh \cQ'_\infty$ denote the $p$-adic completion of $\cQ_\infty':=\varinjlim_i\cQ_i'$. 

Note that each $\Spf(\cQ_n',(p))$ is faithfully flat and \'etale over $\Spf(R_0,(p))$, so we can choose a direct summand $A_0$ of $ \wh\cQ_\infty'$, such that $A_0$ is a finite product of domains and $\Spec A_0/(p)$ surjects onto $\Spec R_0/(p)$ (which is possible by Lemma~\ref{lem:Artin-Schreier}(\ref{lem:Artin-Schreier:Stratif})). As $\wh\cQ'_\infty$ is a direct factor of $\wh\cQ_\infty$, $A_0$ satisfies (\ref{prop:ConnRaynaud:base}) in the statement by Lemma~\ref{lem:Artin-Schreier}(\ref{lem:Artin-Schreier:Sm}). The universal connection on $\wh\cQ'_\infty$ induces a  connection on $A_0\otimes_{R_0}\N_0$, which makes it an object in $\PMF{A_0}$ by  Proposition~\ref{prop:Vasiu}, and satisfies (\ref{prop:ConnRaynaud:connection}) in the statement by construction. Now  we set $A:=A_0\otimes_W\fo_K$ and $\gN_{A}:= \Sig_{A}\otimes_{\Sig}\gN$. 
 %
%
%
%
\end{proof}

Now let us prove Theorem~\ref{thm:RelKisinFF}. By Proposition~\ref{prop:RelKisinFF}, we only need to show essential surjectivity.
From Lemma~\ref{lem:FrobRaynaud} and Corollary~\ref{cor:BreuilClassif}, one easily obtain Theorem~\ref{thm:RelKisinFF} except the assertions involving $\SMFIqw$. We now show the remaining part of Theorem~\ref{thm:RelKisinFF}.

By local flatness criterion (for modules over $\Sig/(p^i)$), for a $p$-power order finite locally free group scheme $H$ over $R$, $H[p^i]$ is locally free over $R$ for each $i$ if and only if $\gM^*(H)\in\SMFIqw$. So by Proposition~\ref{prop:RelKisinFF} and Remark~\ref{rmk:torKisModSm} it remains to show that for any $\gM\in\SMFIqw$ there exists a finite locally free group scheme $H$ over $R$ such that $\gM^*(H)\cong \gM$. 

For a given $\gM\in\SMFIqw$ we choose $\{R_\alpha\}$, $\{A_\alpha\}$ and  $\gN_{A_\alpha} \in\PMqw{\Sig_\beta}$ as in Proposition~\ref{prop:ConnRaynaud}, and set $\gN'_{A_\alpha}:=\ker(\gN_{A_\alpha}\thra \gM_{A_\alpha})$, which is an object in $\PMqw{\Sig_{A_\alpha}}$. By Corollary~\ref{cor:dJgen}, we obtain an isogeny of $p$-divisible groups $G_{A_\alpha} \ra G_{A_\alpha}'$ over $\Spf(A_\alpha,(\varpi))$ corresponding to the natural inclusion $\gN_{A_\alpha}'\hra\gN_{A_\alpha}$, and set $H_{A_\alpha}:=\ker(G_{A_\alpha}\ra G_{A_\alpha}')$. Clearly, we have $\gM^*(H_{A_\alpha}) \cong \Sig_{A_\alpha}\otimes_\Sig\gM$. We will produce $H$ over $R$ by first descending $H_{A_\alpha}$ over $R_\alpha$, and glue them together.

We write $\wh\otimes_{R_\alpha}$ for the $\varpi$-adically completed tensor product, and $\wh\otimes_{\Sig_\alpha}$ for the $(p,\PP(u))$-adically completed tensor product (or equivalently, $(p,u)$-adically completed tensor product). Consider maps  $i_1,i_2:A_\alpha \ra A_\alpha\wh\otimes_{R_\alpha}A_\alpha$ defined by $i_1:a\mapsto a\wh\otimes1$ and $i_2:a\mapsto 1\wh\otimes a$ for $a\in A_\alpha$, and use the same notation to denote the maps $i_1,i_2:\Sig_{A_\alpha}\ra\Sig_{A_\alpha}\wh\otimes_{\Sig_{R_\alpha}}\Sig_{A_\alpha}$ defined similarly. 

Since $\Spf(A_\alpha,(\varpi))\ra \Spf(R_\alpha,(\varpi))$ is faithfully flat map of formal schemes (i.e., $\Spec A_\alpha/(\varpi^n)\ra \Spec R_\alpha/(\varpi^n)$ is faithfully flat for each $n$), we can apply fpqc descent theory. 
A descent datum on $H_{A_\alpha}$ is an isomorphism
\begin{equation}\label{eqn:DescentFinFl}
i_1^*H_{A_\alpha} \liso i_2^*H_{A_\alpha}
\end{equation}
of finite locally free group schemes over $A_\alpha\wh\otimes_{R_\alpha}A_\alpha$ which satisfies the natural cocycle conditions. By setting $A_{\alpha,k}:=A_\alpha/(\varpi)$ and $R_{\alpha,k}:=R_\alpha/(\varpi)$, the tensor products $A_{\alpha,k}\otimes_{R_{\alpha,k}}A_{\alpha,k}$ and $A_{\alpha,k}\otimes_{R_{\alpha,k}}A_{\alpha,k}\otimes_{R_{\alpha,k}}A_{\alpha,k}$ are normal and locally admit finite $p$-basis. (For normality, note that these rings are filtered direct limits of \'etale $R_{\alpha,k}$-algebras, which are normal.) Therefore, by the base change property and full faithfulness of $\gM^*$ as stated in Proposition~\ref{prop:RelKisinFF}
, we obtain a descent datum on $H_{A_\alpha}$ as in  (\ref{eqn:DescentFinFl}) from the ``$(p,u)$-adically continuous descent datum''
\begin{equation}\label{eqn:DescentTorKis}
(\Sig_{A_\alpha}\wh\otimes_{\Sig_{R_\alpha}}\Sig_{A_\alpha})\otimes_{i_1,\Sig}\gM \riso(\Sig_{A_\alpha}\wh\otimes_{\Sig_{R_\alpha}}\Sig_{A_\alpha})\otimes_{i_2,\Sig}\gM
\end{equation}
obtained from the fact that $\Sig_{A_\alpha}\otimes_\Sig\gM$ is the scalar extension of $\Sig_{R_\alpha}\otimes_\Sig\gM$. Therefore, we obtain $H_{R_\alpha}$ over $\Spf(R_\alpha,(\varpi))$ such that $\gM^*(H_{R_\alpha})\cong \Sig_{R_\alpha}\otimes_\Sig\gM$. 

A similar consideration produces a glueing datum on $\{H_{R_\alpha}\}_\alpha$, so we obtain a finite locally group scheme $H$ over $R$ with $\gM^*(H)\cong \gM$.
This concludes the proof of Theorem~\ref{thm:RelKisinFF}.
\bibliography{bib}

\def\cprime{$'$}
\providecommand{\bysame}{\leavevmode\hbox to3em{\hrulefill}\thinspace}
\providecommand{\MR}{\relax\ifhmode\unskip\space\fi MR }
\providecommand{\MRhref}[2]{%
  \href{http://www.ams.org/mathscinet-getitem?mr=#1}{#2}
}
\providecommand{\href}[2]{#2}
\begin{thebibliography}{BBM82}

\bibitem[And06]{Andreatta:GenNormRings}
Fabrizio Andreatta, \emph{Generalized ring of norms and generalized
  {$(\phi,\Gamma)$}-modules}, Ann. Sci. {\'E}cole Norm. Sup. (4) \textbf{39}
  (2006), no.~4, 599--647. \MR{2290139 (2007k:12006)}

\bibitem[BBM82]{Berthelot-Breen-Messing:DieudonneII}
Pierre Berthelot, Lawrence Breen, and William Messing, \emph{Th{\'e}orie de
  {D}ieudonn{\'e} cristalline. {II}}, Lecture Notes in Mathematics, vol. 930,
  Springer-Verlag, Berlin, 1982. \MR{667344 (85k:14023)}

\bibitem[BK]{BlockKato:Dieudonne}
Spencer Bloch and Kazuya Kato, \emph{{$p$}-divisible groups and {Dieudonn\'e}
  crystals}, unpublished. {\tt
  http://staff.science.uva.nl/\~{}bmoonen/pDivDieudCryst.pdf}, 46 pages.

\bibitem[BM90]{Berthelot-Messing:DieudonneIII}
Pierre Berthelot and William Messing, \emph{Th{\'e}orie de {D}ieudonn{\'e}
  cristalline. {III}. {T}h{\'e}or{\`e}mes d'{\'e}quivalence et de pleine
  fid{\'e}lit{\'e}}, The {G}rothendieck {F}estschrift, {V}ol.\ {I}, Progr.
  Math., vol.~86, Birkh{\"a}user Boston, Boston, MA, 1990, pp.~173--247.
  \MR{1086886 (92h:14012)}

\bibitem[BO78]{Berthelot-Ogus}
Pierre Berthelot and Arthur Ogus, \emph{Notes on crystalline cohomology},
  Princeton University Press, Princeton, N.J., 1978. \MR{MR0491705 (58
  \#10908)}

\bibitem[BO83]{MR700767}
P.~Berthelot and A.~Ogus, \emph{{$F$}-isocrystals and de {R}ham cohomology.
  {I}}, Invent. Math. \textbf{72} (1983), no.~2, 159--199. \MR{700767
  (85e:14025)}

\bibitem[Bou06]{Bourbaki:AlgComm1-4}
N.~Bourbaki, \emph{\'{E}l\'ements de math\'ematique. {A}lg\`ebre commutative.
  {C}hapitres 1 \`a 4}, Springer, Berlin, 2006, Reprint of the 1985 original.

\bibitem[Bre98]{breuil:GpSchNormField}
Christophe Breuil, \emph{Sch{\'e}mas en groupes et corps des normes},
  Preprint,\hfill\\ {\tt www.ihes.fr/\~{}breuil/PUBLICATIONS/groupesnormes.pdf}
  (1998).

\bibitem[Bre00]{Breuil:GrPDivGrFiniModFil}
\bysame, \emph{Groupes {$p$}-divisibles, groupes finis et modules filtr\'es},
  Ann. of Math. (2) \textbf{152} (2000), no.~2, 489--549. \MR{MR1804530
  (2001k:14087)}

\bibitem[Bre02]{Breuil:IntegralPAdicHodgeThy}
\bysame, \emph{Integral {$p$}-adic {H}odge theory}, Algebraic geometry 2000,
  {A}zumino ({H}otaka), Adv. Stud. Pure Math., vol.~36, Math. Soc. Japan,
  Tokyo, 2002, pp.~51--80. \MR{MR1971512 (2004e:11135)}

\bibitem[Bri06]{Brinon:imperfect}
Olivier Brinon, \emph{Repr{\'e}sentations cristallines dans le cas d'un corps
  r{\'e}siduel imparfait}, Ann. Inst. Fourier (Grenoble) \textbf{56} (2006),
  no.~4, 919--999. \MR{2266883 (2007h:11131)}

\bibitem[Bri08]{Brinon:CrisDR}
\bysame, \emph{Repr{\'e}sentations {$p$}-adiques cristallines et de de {R}ham
  dans le cas relatif}, M{\'e}m. Soc. Math. Fr. (N.S.) (2008), no.~112, vi+159.
  \MR{2484979 (2010a:14034)}

\bibitem[Bri10]{Brinon:Habilitation}
\bysame, \emph{Th\'eorie de {H}odge {$p$}-adique et
  {$(\varphi,\Gamma)$}-modules r\'elatifs}, preprint (2010).

\bibitem[BT08]{BrinonTrihan:CrisImperf}
Olivier Brinon and Fabien Trihan, \emph{Repr{\'e}sentations cristallines et
  {$F$}-cristaux: le cas d'un corps r{\'e}siduel imparfait}, Rend. Semin. Mat.
  Univ. Padova \textbf{119} (2008), 141--171. \MR{2431507 (2009f:14034)}

\bibitem[CL09]{Caruso-Liu:qst}
Xavier Caruso and Tong Liu, \emph{Quasi-semi-stable representations}, Bull.
  Soc. Math. France \textbf{137} (2009), no.~2, 185--223. \MR{MR2543474}

\bibitem[dJ95]{dejong:crysdieubyformalrigid}
A.~J. de~Jong, \emph{Crystalline {D}ieudonn{\'e} module theory via formal and
  rigid geometry}, Inst. Hautes {\'E}tudes Sci. Publ. Math. \textbf{82} (1995),
  5--96. \MR{1383213 (97f:14047)}

\bibitem[dJM99]{deJong-Messing}
A.~J. de~Jong and W.~Messing, \emph{Crystalline {D}ieudonn{\'e} theory over
  excellent schemes}, Bull. Soc. Math. France \textbf{127} (1999), no.~2,
  333--348. \MR{1708635 (2001b:14075)}

\bibitem[EGA]{EGA}
\emph{\'{E}l\'ements de g\'eom\'etrie alg\'ebrique, {I}--{IV}}, Inst. Hautes
  \'Etudes Sci. Publ. Math. (1960-1967), no.~4, 8, 11, 17, 20, 24, 28, and 32.

\bibitem[Fal89]{Faltings:Ccris}
Gerd Faltings, \emph{Crystalline cohomology and {$p$}-adic
  {G}alois-representations}, Algebraic analysis, geometry, and number theory
  ({B}altimore, {MD}, 1988), Johns Hopkins Univ. Press, Baltimore, MD, 1989,
  pp.~25--80. \MR{1463696 (98k:14025)}

\bibitem[Fal99]{Faltings:IntegralCrysCohoVeryRamBase}
\bysame, \emph{Integral crystalline cohomology over very ramified valuation
  rings}, J. Amer. Math. Soc. \textbf{12} (1999), no.~1, 117--144.
  \MR{MR1618483 (99e:14022)}

\bibitem[Fal02]{Faltings:AlmostEtaleExt}
\bysame, \emph{Almost {\'e}tale extensions}, Ast{\'e}risque (2002), no.~279,
  185--270, Cohomologies $p$-adiques et applications arithm{{\'e}}tiques, II.
  \MR{1922831 (2003m:14031)}

\bibitem[Fon82]{Fontaine:BT}
Jean-Marc Fontaine, \emph{Sur certains types de repr{\'e}sentations
  {$p$}-adiques du groupe de {G}alois d'un corps local;\ construction d'un
  anneau de {B}arsotti-{T}ate}, Ann. of Math. (2) \textbf{115} (1982), no.~3,
  529--577. \MR{MR657238 (84d:14010)}

\bibitem[Fon90]{fontaine:grothfest}
\bysame, \emph{Repr\'esentations {$p$}-adiques des corps locaux. {I}}, The
  Grothendieck Festschrift, Vol.\ II, Progr. Math., vol.~87, Birkh{\"a}user
  Boston, Boston, MA, 1990, pp.~249--309. \MR{MR1106901 (92i:11125)}

\bibitem[Fon94]{fontaine:Asterisque223ExpII}
\bysame, \emph{Le corps des p\'eriodes {$p$}-adiques}, Ast\'erisque (1994),
  no.~223, 59--111, With an appendix by Pierre Colmez, P\'eriodes $p$-adiques
  (Bures-sur-Yvette, 1988). \MR{MR1293971 (95k:11086)}

\bibitem[GR03]{GabberRamero}
Ofer Gabber and Lorenzo Ramero, \emph{Almost ring theory}, Lecture Notes in
  Mathematics, vol. 1800, Springer-Verlag, Berlin, 2003. \MR{2004652
  (2004k:13027)}

\bibitem[Gre76]{Greco:ThmExcellence}
Silvio Greco, \emph{Two theorems on excellent rings}, Nagoya Math. J.
  \textbf{60} (1976), 139--149. \MR{0409452 (53 \#13207)}

\bibitem[Ill71]{Illusie:CplxCotI}
Luc Illusie, \emph{Complexe cotangent et d{\'e}formations. {I}}, Lecture Notes
  in Mathematics, Vol. 239, Springer-Verlag, Berlin, 1971. \MR{0491680 (58
  \#10886a)}

\bibitem[Kat73]{katz:antwerp350}
N.\thinspace{}M. Katz, \emph{$p$-adic properties of modular schemes and modular
  forms}, Modular functions of one variable, III (Proc. Internat. Summer
  School, Univ. Antwerp, Antwerp, 1972) (Berlin), Springer, 1973, pp.~69--190.
  Lecture Notes in Mathematics, Vol. 350.

\bibitem[Kat81]{Katz:SerreTate}
N.~Katz, \emph{Serre-{T}ate local moduli}, Algebraic surfaces ({O}rsay,
  1976--78), Lecture Notes in Math., vol. 868, Springer, Berlin, 1981,
  pp.~138--202. \MR{638600 (83k:14039b)}

\bibitem[Kim12]{Kim:ClassifFFGpSchOver2AdicDVR}
Wausu Kim, \emph{The classification of $p$-divisible groups over $2$-adic
  discrete valuation rings}, Math. Res. Lett. \textbf{19} (2012), no.~01,
  121--141.

\bibitem[Kis06]{kisin:fcrys}
Mark Kisin, \emph{Crystalline representations and {$F$}-crystals}, Algebraic
  geometry and number theory, Progr. Math., vol. 253, Birkh{\"a}user Boston,
  Boston, MA, 2006, pp.~459--496. \MR{MR2263197 (2007j:11163)}

\bibitem[Kis09]{Kisin:2adicBT}
\bysame, \emph{Modularity of 2-adic {B}arsotti-{T}ate representations}, Invent.
  Math. \textbf{178} (2009), no.~3, 587--634. \MR{MR2551765}

\bibitem[Lau08]{Lau:DisplayFormalBT}
Eike Lau, \emph{Displays and formal {$p$}-divisible groups}, Invent. Math.
  \textbf{171} (2008), no.~3, 617--628. \MR{2372808 (2009j:14058)}

\bibitem[Lau10a]{Lau:GalRep}
Eike Lau, \emph{Displayed equations for {Galois} representations}, Preprint,
  {arXiv:1012.4436} (2010).

\bibitem[Lau10b]{Lau:Frames}
Eike Lau, \emph{Frames and finite group schemes over complete regular local
  rings}, Doc. Math. \textbf{15} (2010), 545--569. \MR{2679066 (2011g:14107)}

\bibitem[Lau10c]{Lau:2010fk}
Eike Lau, \emph{A relation between {D}ieudonn\'e displays and crystalline
  {D}ieudonn\'e theory}, Preprint, {arXiv:1006.2720} (2010).

\bibitem[Liu08]{Liu:StronglyDivLattice}
Tong Liu, \emph{On lattices in semi-stable representations: a proof of a
  conjecture of {B}reuil}, Compos. Math. \textbf{144} (2008), no.~1, 61--88.
  \MR{MR2388556}

\bibitem[Mat80]{matsumura:CommAlg}
Hideyuki Matsumura, \emph{Commutative algebra}, second ed., Mathematics Lecture
  Note Series, vol.~56, Benjamin/Cummings Publishing Co., Inc., Reading, Mass.,
  1980. \MR{575344 (82i:13003)}

\bibitem[Mat86]{matsumura:crt}
\bysame, \emph{Commutative ring theory}, Cambridge Studies in Advanced
  Mathematics, vol.~8, Cambridge University Press, Cambridge, 1986, Translated
  from the Japanese by M. Reid. \MR{879273 (88h:13001)}

\bibitem[Mes72]{messingthesis}
William Messing, \emph{The crystals associated to {B}arsotti-{T}ate groups:
  with applications to abelian schemes}, Lecture Notes in Mathematics, Vol.
  264, Springer-Verlag, Berlin, 1972. \MR{0347836 (50 \#337)}

\bibitem[MM74]{Mazur-Messing}
B.~Mazur and William Messing, \emph{Universal extensions and one dimensional
  crystalline cohomology}, Lecture Notes in Mathematics, Vol. 370,
  Springer-Verlag, Berlin, 1974. \MR{0374150 (51 \#10350)}

\bibitem[Sch11]{Scholze:Perfectoid}
Peter Scholze, \emph{Perfectoid spaces}, Preprint, {arXiv:1111.4914} (2011).

\bibitem[Sch12]{Scholze:CdR}
\bysame, \emph{p-adic {Hodge} theory for rigid-analytic varieties}, Preprint,
  {arXiv:1205.3463} (2012).

\bibitem[SGA]{SGA}
\emph{S\'eminaire de {G\'eom\'etrie Alg\'ebrique} du {B}ois {M}arie,
  {I}--{VII}}, Lecture Notes in Mathematics (1970-1973), Vols. 151, 152, 153,
  225, 269, 270, 288, 305, 340, 569, and 589, Springer-Verlag, Berlin;
  Documents Math\'ematiques (Paris) [Mathematical Documents (Paris)], 3 and 4,
  Soci\'et\'e Math\'ematique de France, Paris.

\bibitem[Val75]{Valabrega:ExcellencePS}
Paolo Valabrega, \emph{On the excellent property for power series rings over
  polynomial rings}, J. Math. Kyoto Univ. \textbf{15} (1975), no.~2, 387--395.
  \MR{0376677 (51 \#12852)}

\bibitem[Vas12]{Vasiu:ys}
Adrian Vasiu, \emph{A motivic conjecture of {Milne}}, to appear in J. Reine
  Angew. Math. (2012).

\bibitem[VZ10]{ZinkVasiu:BreuiloverRegularLocal}
Adrian Vasiu and Thomas Zink, \emph{Breuil's classification of {$p$}-divisible
  groups over regular local rings of arbitrary dimension}, Algebraic and
  arithmetic structures of moduli spaces ({S}apporo 2007), Adv. Stud. Pure
  Math., vol.~58, Math. Soc. Japan, Tokyo, 2010, pp.~461--479. \MR{2676165}

\bibitem[Zin02]{Zink:DisplayFormalGpAsterisq278}
Thomas Zink, \emph{The display of a formal {$p$}-divisible group},
  Ast{\'e}risque (2002), no.~278, 127--248, Cohomologies $p$-adiques et
  applications arithm{{\'e}}tiques, I. \MR{MR1922825 (2004b:14083)}

\end{thebibliography}
\bibliographystyle{amsalpha}

\end{document}